\newcommand{\red}[1]{{\color{red}{#1}}}
\tikzset{->-/.style={decoration={
  markings,
  mark=at position #1 with {\arrow{>}}},postaction={decorate}}}
  \tikzset{-<-/.style={decoration={
  markings,
  mark=at position #1 with {\arrowreversed[red]{latex'}}},postaction={decorate}}}
\newcommand{\la}{\langle}
\newcommand{\ra}{\rangle}
\newcommand{\Reals}{\mathbb{R}}
\newcommand{\nut}{P}
\newcommand{\bolt}{B}
\newcommand{\sgn}{\mathop{\rm sgn}}
\newcommand{\sign}{\mathop{\rm sign}}
\newcommand{\Riem}{\mathop{\rm Riem}}
\newcommand{\Scal}{\mathop{\rm Scal}}
\newcommand{\cE}{\mathcal{E}}
\newcommand{\cB}{\mathcal{B}}
\newcommand{\cZ}{\mathcal{Z}}
\newcommand{\cR}{\mathcal{R}}
\newcommand{\cM}{\mathcal{M}}
\newcommand{\cN}{\mathcal{N}}
\newcommand{\cF}{\mathcal{F}}
\newcommand{\wh}{\widehat}
\newcommand{\Th}{\Theta}
\newcommand{\al}{\alpha}
\newcommand{\be}{\beta}
\newcommand{\si}{\sigma}
\theoremstyle{plain}
\newtheorem{thm}{Theorem}[section]
\newtheorem{cor}[thm]{Corollary}
\newtheorem{lemma}[thm]{Lemma}
\newtheorem{definition}[thm]{Definition}
\newtheorem{prop}[thm]{Proposition}
\newtheorem{conj}{Conjecture}
\newtheorem{remark}[thm]{Remark}
\renewcommand{\d}{\mathrm{d}}
\DeclareMathOperator{\CP}{\mathbb{C}\mathbb{P}}
\DeclareMathOperator{\RP}{\mathbb{R}\mathbb{P}}
\newcommand{\SU}{\text{SU}}
\newcommand{\nnuts}{n_{\text{\rm nuts}}}
\newcommand{\nbolts}{n_{\text{\rm bolts}}} 
\newcommand{\GenVec}{\nu}
\newcommand{\Lie}{\mathcal{L}}
\newcommand{\Mring}{\mathring{M}}
\newcommand{\cMring}{\mathring{\cM}}
\newcommand{\gring}{\mathring{g}}
\newcommand{\Zbb}{\mathbb{Z}}
\renewcommand{\P}[1]{{{#1}^{\hspace{-.5pt}\scaleto{+}{4pt}}}}
\newcommand{\M}[1]{{{#1}^{\hspace{-.5pt}\scaleto{-}{4pt}}}}
\newcommand{\PM}[1]{{{#1}^{\hspace{-.5pt}\scaleto{\pm}{3.5pt}}}}
\newcommand{\MP}[1]{{{#1}^{\hspace{-.5pt}\scaleto{\mp}{3.5pt}}}}
\newcommand{\eps}{\epsilon}
\newcommand{\Fixed}{F}
\def\Wcal{\mathcal{W}}
\def\Fcal{\mathcal{F}}
\def\Ecal{\mathcal{E}}
\def\Qcal{\mathcal{Q}}
\def\Scal{\mathcal{S}}
\def\Ical{\mathcal{I}}
\def\sbold{\mathbf{s}}
\title{Gravitational instantons with $S^1$ symmetry} 
\author[S. Aksteiner]{Steffen Aksteiner}
\email{steffen.aksteiner@aei.mpg.de}
\address{Albert Einstein Institute, Am M\"uhlenberg 1, D-14476 Potsdam, Germany }
\author[L. Andersson]{Lars Andersson}
\email{lars.andersson@bimsa.cn}
\address{Beijing Institute of Mathematical Sciences and Applications, Beijing 101408, China}
\author[M. Dahl]{Mattias Dahl}
\email{dahl@math.kth.se}
\address{Institutionen f\"or Matematik, Kungliga Tekniska H\"ogskolan, 100 44 Stockholm, Sweden} 
\author[G. Nilsson]{Gustav Nilsson}
\email{gustav.nilsson@aei.mpg.de}
\address{Albert Einstein Institute, Am M\"uhlenberg 1, D-14476 Potsdam, Germany }
\author[W. Simon]{Walter Simon}
\email{walter.simon@univie.ac.at}
\address{Fakult\"at f\"ur Mathematik, Universit\"at Wien, Oskar-Morgenstern Platz 1, 1090 Vienna, Austria}
\numberwithin{equation}{section}
\begin{document}

\begin{abstract}
Uniqueness results for asymptotically locally flat and asymptotically flat $S^1$-symmetric gravitational instantons are proved using a divergence identity of the type used in uniqueness proofs for static black holes, combined with results derived from the $G$-signature theorem. Our results include a proof of the $S^1$-symmetric version of the Euclidean Black Hole Uniqueness conjecture, a uniqueness result for the Taub-bolt family of instantons, as well as a proof that an ALF $S^1$-symmetric instanton with the topology of the Chen--Teo family of instantons is Hermitian.  
\end{abstract} 

\maketitle
\tableofcontents

\section{Introduction}\label{sec:intro}

A complete Ricci-flat four-manifold with at least quadratic curvature decay is called a gravitational instanton. Apart from their intrinsic interest as geometric objects, the study of gravitational instantons is motivated by ideas from Yang--Mills theory and Quantum Gravity \cite{hawking:1977:grav:instantons}\footnote{Although the notion of gravitational instanton is sometimes introduced with additional assumptions, for example that the space is hyperk\"ahler, we make no such a priori assumptions in this paper.}. 
In this paper we contribute to the classification of instantons by proving a set of uniqueness results for asymptotically locally flat (ALF) $S^1$-symmetric instantons, making assumptions only on their topology. One of our results is the $S^1$-symmetric version of the Riemannian signature version of the Black Hole Uniqueness conjecture of General Relativity, known as the Euclidean Black Hole Uniqueness Conjecture, which states that an instanton on $S^4\setminus S^1$ is in the Kerr family of gravitational instantons, the Wick-rotated version of the Kerr family of black hole solutions of General Relativity.  We shall focus on ALF instantons that admit an effective $S^1$-action by isometries, generated by a Killing field with uniformly bounded norm, and use the term $S^1$-instantons for these, cf.  Definition \ref{def:S1inst} below.

Some early results related to the present work were presented by one of the authors in \cite{simon:1995,mars:simon:1999}. In those papers,  additional assumptions on the structure of the fixed point set were needed, as well as some non-trivial technical assumptions that are removed here. The facts about the structure of the fixed point set that were used in the just mentioned papers are applications of the index theorems  to instantons with $S^1$ symmetry, as presented by Gibbons and Hawking in \cite{gibbons:hawking:1979}, see also \cite{1979NuPhB.157..377G}. In this paper, we instead follow the approach of Jang. See \cite{2018JGP...133..181J} and references therein. This method, which makes use of the full power of the $G$-signature formula, gives more precise information and enables us to eliminate assumptions on the fixed point set of the $S^1$-action.

Recall that an ALF instanton has cubic volume growth and boundary at infinity $L$ of topology $S^1\times S^2$, $S^3$, or a quotient thereof, see Definition \ref{def:BG}. The case when $L$ is a circle bundle over $S^2$ or $\RP^2$ is called ALF-$A_k$ or ALF-$D_k$, respectively. In the case of  ALF-$A_k$ instantons, $k = - e -1$, where $e$ is the Euler characteristic of the circle bundle over $S^2$. The case ALF-$A_{-1}$ when the boundary at infinity is a trivial circle bundle over $S^2$ is called asymptotically flat (AF). 

The most well-known AF instanton is the Euclidean Kerr Instanton, a 2-parameter family of instantons on  $S^4 \setminus S^1 \cong S^2 \times \Reals^2$, with the Wick-rotated  Kerr metric \cite{1977PhRvD..15.2752G,1978RSPSA.358..467G,1998PhRvD..59b4009H}. 

The Euclidean Kerr Instanton is toric, that is it admits an effective $T^2$-action by isometries, and is algebraically special, of Petrov type $D^+D^-$, but does not have special holonomy. In addition to the Kerr Instanton, there is the remarkable Chen--Teo Instanton \cite{2011PhLB..703..359C}, a 2-parameter family of toric AF instantons on $\CP^2 \setminus S^1$. Two of the authors have recently proved that the Chen--Teo Instanton is one-sided algebraically special \cite{MR4809333}, and hence also Hermitian \cite{MR707181}, i.e. it admits an integrable complex structure compatible with the metric. In view of this fact, all known examples of instantons are Hermitian. Both the  Kerr and Chen--Teo instantons are $S^1$-symmetric for suitable values of the parameters satisfying a rationality condition\footnote{For general parameter values, both Kerr and Chen--Teo admit a Killing field with bounded norm, which however generates closed orbits only for a restricted set of parameters.}. 

The Chen--Teo instanton is a counterexample to one of the 1970's era uniqueness conjectures for gravitational instantons, the Euclidean Black Hole Uniqueness Conjecture \cite[Conjecture 2]{gibbons:1978:survey}, which stated that a non-flat AF instanton is in the Kerr family\footnote{Note that the Kerr family includes the spherically symmetric Schwarzschild Instanton, and that a flat ALF instanton is, up to a rescaling,  isometric to $\Reals^3 \times S^1$.}. This shows that the Euclidean Black Hole Uniqueness Conjecture can hold only with additional assumptions, and moreover indicates that the classification problem for gravitational instantons merits renewed attention. 

Examples of ALF-$A_k$ instantons include the hyperk\"ahler Taub--NUT instanton \cite{hawking:1977:grav:instantons,lebrun:1989} with topology $S^4 \setminus \{\text{\rm pt.}\}$ and the Petrov type $D^+D^-$ Taub-bolt instanton \cite{1978PhLB...78..249P} with topology $\CP^2 \setminus \{\text{\rm pt.}\}$. These are both $\SU(2)$-symmetric, and in particular toric and $S^1$-symmetric. The remaining ALF-$A_k$ examples are the Gibbons--Hawking multi-Taub--NUT solutions and the ALF-$D_k$ instantons with the $S^1$-symmetric Atiyah--Hitchin metrics, and the Cherkis--Hitchin--Kapus\-tin--Ivanov--Lindstr\"om--Ro\v{c}ek metrics, see \cite{MR2177322,chen2019gravitational}, which are all hyperkähler. See also \cite{MR2855540}. The collinear multi-Taub--NUT instantons are toric and $S^1$-symmetric. 

For gravitational instantons with special geometry, some classification results are known. There is a complete classification of  hyperk\"ahler instantons with total curvature in $L^2$ \cite{chen2015gravitational,chen2019gravitational,chen2021gravitational,2021arXiv210812991S}. Further, toric ALF Hermitian instantons were recently classified by Biquard and Gauduchon \cite{Biquard:Gauduchon}, and shown to belong to one of the Kerr, Chen--Teo, (multi-)Taub--NUT, or Taub-bolt families. The general classification problem remains open. See also \cite{2021LMaPh.111..133K} for uniqueness results on toric instantons without assumption on special geometry. 

The following theorem states the main results of this paper.  
\begin{thm}\label{thm:main-intro} Let $(\cM, g_{ab})$ be an $S^1$-instanton of ALF-$A_k$ type. Then the following holds. 
\begin{enumerate} 
\item \label{point:Kerr} 
If $\cM \cong S^4 \setminus S^1$, then $(\cM, g_{ab})$ is in the Kerr family of instantons.
\item \label{point:T-b} 
If $\cM \cong \CP^2 \setminus \{\text{\rm pt.}\}$, then $(\cM, g_{ab})$ is in the Taub-bolt family of instantons.
\item \label{point:3} 
If $\cM \cong \CP^2 \setminus S^1$, then $(\cM, g_{ab})$ is Hermitian. 
\end{enumerate} 
\end{thm}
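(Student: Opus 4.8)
The plan is to handle all three cases through a single core mechanism: a divergence identity that forces a Simon--Mars-type tensor to vanish, which makes one half of the Weyl tensor degenerate and hence, by Derdzinski's theorem \cite{MR707181}, makes the Ricci-flat instanton Hermitian. This already settles case \eqref{point:3}. For the more rigid topologies of \eqref{point:Kerr} and \eqref{point:T-b} I would then use the fixed-point data, together with a classification of Hermitian ALF instantons, to identify the specific family. The two ingredients advertised in the introduction enter as follows: the $G$-signature theorem fixes the number and local isotropy type of the components of the fixed-point set of the $S^1$-action (the nuts and bolts), while a divergence identity of the kind used in black-hole uniqueness proofs converts this local data and the ALF asymptotics into global rigidity.

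First I would set up the dimensional reduction. Writing $\xi$ for the generator of the $S^1$-action and $V = g_{ab}\xi^a\xi^b$ for its squared norm, the quotient $\cMring/S^1$ of the free part carries a reduced harmonic-map-type system for $V$ and a twist one-form; since the manifolds in question are simply connected, the twist has a global potential and assembles with $V$ into a complex Ernst potential. The natural object to test is a Simon--Mars-type tensor $\cS$, built algebraically from $\xi$, $\d\xi$ and the self-dual Weyl tensor, designed so that $\cS \equiv 0$ is equivalent to $W^+$ being degenerate, i.e.\ to the metric being one-sided algebraically special. The heart of the argument is then a Bochner/divergence identity of the schematic form
\begin{equation}
\int_{\cM} |\cS|^2\, w \, \d\mathrm{vol}_g \;=\; \int_{L} (\cdots) \;+\; \sum_{\text{nuts}} (\cdots) \;+\; \sum_{\text{bolts}} (\cdots),
\end{equation}
where $w$ is a positive weight (a suitable power of $V$) chosen to make the bulk integrand manifestly non-negative, $L$ is the conformal boundary at infinity, and the remaining sums run over the components of the fixed-point set.

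I would then show that every term on the right vanishes. At $L$ this should follow from the ALF-$A_k$ fall-off together with the prescribed topology of the circle bundle at infinity. At each nut and bolt the local contribution is governed by the normal form of the $S^1$-action, whose isotropy weights are exactly the data computed from the $G$-signature theorem in the preceding sections; the claim is that these weights force each local term to vanish. With the right-hand side equal to zero, the non-negativity of the bulk integrand gives $\cS \equiv 0$, hence $W^+$ is degenerate and the instanton is Hermitian, proving \eqref{point:3}.

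For \eqref{point:Kerr} and \eqref{point:T-b} I would upgrade the Hermitian conclusion to an identification of the family. Once the instanton is Hermitian and Ricci-flat it is conformally K\"ahler, and the resulting structure produces a second Killing field which can be arranged to commute with $\xi$; the restrictive fixed-point structure of these two topologies is what makes this enhancement to an effective $T^2$-action possible, so that the Biquard--Gauduchon classification of toric ALF Hermitian instantons \cite{Biquard:Gauduchon} applies and yields one of the Kerr, Chen--Teo, (multi-)Taub--NUT, or Taub-bolt families. The topology then selects the answer: $\cM \cong S^4\setminus S^1$ with $A_k$ asymptotics leaves only Kerr, and $\cM \cong \CP^2 \setminus \{\text{\rm pt.}\}$ leaves only Taub-bolt. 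This extra rigidity is not available for the topology in \eqref{point:3}, which is why only the Hermitian conclusion is claimed there. I expect the main obstacle to be the boundary analysis at the fixed-point set: making the divergence identity rigorous requires a precise cancellation between the $V \to 0$ degeneration of the reduced fields and the local geometry of the nuts and bolts, and it is precisely here that the sharp $G$-signature input is indispensable. The $T^2$-enhancement and the verification that the topological constraint leaves a single family are secondary, but still non-trivial, points.
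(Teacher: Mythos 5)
Your overall architecture matches the paper's: a divergence identity whose non-negative bulk term vanishes precisely when one half of the Weyl tensor is degenerate, boundary contributions evaluated at infinity and at the fixed-point set, $G$-signature input on the nut/bolt data, and Biquard--Gauduchon to finish the rigid cases. But there are two genuine gaps in how you propose to close the argument. First, your plan to ``show that every term on the right vanishes'' is not how the identity can be made to work, and it would fail: the boundary contributions are individually nonzero. The flux at infinity is strictly negative (it equals $-2\pi\ell_\infty\chi[O]$ up to an inequality coming from the orbit lengths), each nut contributes $\pm 4\pi^2|\kappa^1\pm\kappa^2|/(\kappa^1\kappa^2)$ and each bolt $4\pi^2\chi[\bolt]/|\kappa|$. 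Non-negativity of the bulk only gives that the \emph{sum} of these terms is $\geq 0$; the actual work is to prove the reverse inequality $\leq 0$ combinatorially, using the weight-balance and companion-nut lemmas derived from the $G$-signature formula (Lemmas~\ref{lem:2.6}--\ref{lem:3.6}) applied to the rational expression \eqref{eq:G-sign}. Equality, not termwise vanishing, is what forces $\PM{V}\equiv 0$. You also omit the singular set $\{\PM{\Fcal}=0\}$, on which the current is not defined; one must show it is $2$-rectifiable (via the nodal-set theorem for the closed and coclosed form $\PM{\Fcal}_{ab}$) so that its boundary contribution can be excised with vanishing flux.

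Second, your route for points \eqref{point:Kerr} and \eqref{point:T-b} --- Hermitian $\Rightarrow$ conformally K\"ahler $\Rightarrow$ a second commuting Killing field $\Rightarrow$ toric --- relies on a step that is precisely the open part of Conjecture~\ref{conj:Herm-class}: it is not known that a one-sided Hermitian ALF instanton is toric (the paper flags this as only ``plausible''). The paper avoids this by running the divergence identity for \emph{both} signs and showing that for the topologies $S^4\setminus S^1$ and $\CP^2\setminus\{\text{\rm pt.}\}$ equality holds for $+$ and $-$ simultaneously, so the space is Petrov type $D^+D^-$; toricity then follows from the ambitoric classification \cite{2013arXiv1302.6975A}, after which \cite{Biquard:Gauduchon} identifies the family. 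For $\CP^2\setminus S^1$ equality can only be forced for one sign, which is exactly why only the Hermitian conclusion is available there. As written, your proposal proves \eqref{point:3} (modulo the boundary-term issue above) but does not reach \eqref{point:Kerr} or \eqref{point:T-b}.
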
 
Point \ref{point:3} of Theorem \ref{thm:main-intro} is not a rigidity result in the same sense as the first two points. However, it can be viewed as a step towards rigidity for the Chen--Teo family in view of the following conjecture.   
\begin{conj}[\protect{\cite[Conjecture 1]{MR4809333}}] \label{conj:Herm-class} Let $(\cM, g_{ab})$ be a non-K\"ahler ALF  Hermitian instanton. Then $(\cM, g_{ab})$ is one of Kerr, Chen--Teo, Taub-bolt, or Taub--NUT with the orientation opposite to the hyperk\"ahler orientation. 
\end{conj}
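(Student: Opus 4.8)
The plan is to reduce the classification to the toric case and then invoke the Biquard--Gauduchon classification \cite{Biquard:Gauduchon} of toric ALF Hermitian instantons, which produces precisely the four families in the statement, combined with Theorem \ref{thm:main-intro} for the topologies it covers. The starting point is the structure theory of Ricci-flat Hermitian four-manifolds. Since $(\cM, g_{ab})$ is Hermitian and Ricci-flat but not K\"ahler, the Riemannian Goldberg--Sachs theorem forces the self-dual Weyl tensor $W^+$ (taken with respect to the $J$-compatible orientation) to have a repeated eigenvalue; note that this automatically selects the non-hyperk\"ahler orientation, so that reverse-oriented Taub--NUT, and not its K\"ahler avatar, occurs. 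As in the analysis of the Chen--Teo case \cite{MR4809333,MR707181}, a suitable power of $|W^+|$ then rescales $g_{ab}$ to a metric $\tilde g_{ab}$ that is K\"ahler with respect to $J$, and by Derdzinski's conformally-Einstein criterion $\tilde g_{ab}$ is extremal. The first step is therefore to produce the Hamiltonian holomorphic Killing field $K = J\nabla_{\tilde g}\, s_{\tilde g}$, which is also Killing for $g_{ab}$; this $K$ is nontrivial precisely because $|W^+|$, hence $s_{\tilde g}$, is non-constant in the non-K\"ahler case. Controlling the ALF asymptotics of $\tilde g$ and of $s_{\tilde g}$, I would verify that $K$ has uniformly bounded $g$-norm and closed orbits, exhibiting $(\cM, g_{ab})$ as an $S^1$-instanton in the sense of Definition \ref{def:S1inst}.

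Second, I would pin down the asymptotic type and topology. The ALF-$D_k$ examples are all hyperk\"ahler, hence K\"ahler, and are excluded by the non-K\"ahler hypothesis, so $(\cM, g_{ab})$ is of ALF-$A_k$ type and the apparatus of Theorem \ref{thm:main-intro} applies. Feeding the holomorphicity of the $S^1$-action into the $G$-signature analysis underlying Theorem \ref{thm:main-intro} constrains the fixed-point data and should leave only the diffeomorphism types $S^4\setminus S^1$, $\CP^2\setminus\{\text{\rm pt.}\}$, $\CP^2\setminus S^1$, and $S^4\setminus\{\text{\rm pt.}\}$. For the first two, Theorem \ref{thm:main-intro}, points \ref{point:Kerr} and \ref{point:T-b}, immediately identify $(\cM, g_{ab})$ as Kerr or Taub-bolt; the case $S^4\setminus\{\text{\rm pt.}\}\cong\Reals^4$ must be treated by a separate rigidity argument identifying the unique non-flat ALF Hermitian $S^1$-instanton on $\Reals^4$ with reverse-oriented Taub--NUT.

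The main obstacle is the Chen--Teo case $\cM\cong\CP^2\setminus S^1$: here Theorem \ref{thm:main-intro}, point \ref{point:3}, yields only that the metric is Hermitian, which is already our hypothesis, while the identification with the Chen--Teo family requires the toric hypothesis of \cite{Biquard:Gauduchon}. The crux is thus to upgrade the canonical $S^1$-symmetry generated by $K$ to a full $T^2$-action. My plan is to extract a second, commuting Killing field from the asymptotic circle symmetry of the ALF end and to prove, using the extremal K\"ahler equation for $\tilde g$ together with the ALF boundary conditions, that this asymptotic symmetry extends to a global Killing field commuting with $K$; equivalently, one must rule out ALF Hermitian instantons whose isometry group is strictly one-dimensional. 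Establishing this extension is the hardest and least routine step, and is the essential reason the statement remains a conjecture rather than a theorem. Once toricity is secured, \cite{Biquard:Gauduchon} closes the argument uniformly across all four topologies, yielding that $(\cM, g_{ab})$ is Kerr, Chen--Teo, Taub-bolt, or reverse-oriented Taub--NUT.
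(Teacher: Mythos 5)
This statement is Conjecture \ref{conj:Herm-class}: the paper does not prove it, and offers only the remark immediately following it, which sketches precisely the reduction you describe --- a non-K\"ahler ALF Hermitian instanton is locally conformal to an extremal K\"ahler metric, carries a bounded (Hamiltonian) Killing field, and would be classified by \cite{Biquard:Gauduchon} once toricity is established. Your outline therefore coincides with the paper's intended route, and you are right, and commendably explicit, that upgrading the canonical one-parameter symmetry to a $T^2$-action is the missing step. Since that step is supplied neither by you nor by the paper, what you have is an attack plan, not a proof, which is consistent with the statement's status in the paper as an open conjecture.

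Beyond the gap you flag, three steps of your plan would not go through as written. First, you propose to ``verify that $K$ has uniformly bounded $g$-norm and closed orbits, exhibiting $(\cM, g_{ab})$ as an $S^1$-instanton in the sense of Definition \ref{def:S1inst}''; this is false in general. As the paper notes in a footnote, Kerr and Chen--Teo admit bounded Killing fields whose orbits close only for a rational parameter ratio $\Omega/\kappa$, so Kerr with irrational ratio is a non-K\"ahler ALF Hermitian instanton to which the $S^1$ machinery of Theorem \ref{thm:main-intro} does not apply. The correct case structure is the dichotomy from Yau's theorem \cite{MR448379} cited after Conjecture \ref{conj:KVF-class}: either the orbits close (giving an $S^1$-instanton), or the closure of the flow in the isometry group is already a torus, in which case $(\cM,g_{ab})$ is toric and \cite{Biquard:Gauduchon} applies directly --- so the non-closed-orbit regime is the easy one, not an obstruction, while the genuinely hard case is pure $S^1$ symmetry. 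Second, your exclusion of the ALF-$D_k$ type by observing that the known ALF-$D_k$ examples are hyperk\"ahler is circular: an inventory of examples cannot rule out the asymptotic type for a hypothetical non-K\"ahler Hermitian instanton, and no argument is given that the boundary at infinity of such an instanton cannot fiber over $\RP^2$. Third, the claim that the $G$-signature analysis ``should leave only'' the four diffeomorphism types $S^4\setminus S^1$, $\CP^2\setminus\{\text{\rm pt.}\}$, $\CP^2\setminus S^1$, $S^4\setminus\{\text{\rm pt.}\}$ is not established anywhere: Theorem \ref{thm:main-intro} assumes these topologies as hypotheses, the paper's concluding remarks explicitly defer the classification of $S^1$-actions on general ALF instantons to forthcoming work, and the $\Reals^4$/reverse-oriented Taub--NUT rigidity you invoke is likewise asserted without proof. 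Each of these, together with the toricity extension, would have to be closed before the conjecture becomes a theorem.
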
 
\begin{remark} 
An ALF non-K\"ahler Hermitian instanton admits a bounded Killing field, and is locally conformal to an extremal K\"ahler space. By analogy with the classification of compact Hermitian-Einstein spaces \cite{MR2899877},  it is plausible that a non-K\"ahler Hermitian instanton is actually toric, which in view of the classification result of \cite{Biquard:Gauduchon} would imply the conjecture.  
\end{remark} 
We further make the following progressively stronger conjectures, closely related to Conjecture \ref{conj:Herm-class}. 
\begin{conj} \label{conj:KVF-class} The conclusion of Conjecture \ref{conj:Herm-class} holds for a non-K\"ahler ALF gravitational instanton which admits a non-vanishing Killing field with bounded norm.
\end{conj}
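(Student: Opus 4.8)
The plan is to reduce Conjecture \ref{conj:KVF-class} to the Hermitian classification of Conjecture \ref{conj:Herm-class} by first promoting the bounded Killing field to a genuine isometric torus action, and then invoking Theorem \ref{thm:main-intro} together with the toric classification results of \cite{2021LMaPh.111..133K,Biquard:Gauduchon}. Throughout, the non-K\"ahler hypothesis is used to exclude the flat model $\Reals^3 \times S^1$.

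Let $\xi$ denote the non-vanishing bounded Killing field and set $G = \overline{\{\exp(t\xi) : t \in \Reals\}} \subset \mathrm{Isom}(\cM, g_{ab})$. The first, and I expect most delicate, point is to show that $G$ is a torus, i.e.\ that the flow of $\xi$ is precompact. Boundedness of $|\xi|$ alone is not enough, since $\Reals^3 \times S^1$ carries bounded Killing fields generating non-precompact translations; these, however, are incompatible with nontrivial ALF asymptotics. Concretely, I would expand $\xi$ on the ALF end: rotations of the asymptotic $\Reals^3$ have linearly growing norm and are excluded, while the subleading terms of the metric (encoding mass and NUT charge) obstruct asymptotic translations unless the space is exactly flat. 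Hence a non-trivial bounded Killing field must be asymptotic to a nonzero constant multiple of the fibre generator of the asymptotic $S^1$; a unique-continuation argument rules out a field decaying at infinity. Since the fibre circle is compact, the flow is precompact near infinity, and by completeness globally, so $G \cong T^{r}$ for some $r \geq 1$.

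With $G \cong T^{r}$ the argument bifurcates. If $r \geq 2$ then $(\cM, g_{ab})$ is toric, and I would quote the classification of toric ALF instantons \cite{2021LMaPh.111..133K} to place it in the stated list. If $r = 1$, then the flow of $\xi$ is periodic and $\xi$ itself generates an effective $S^1$-action with bounded generator, so $(\cM, g_{ab})$ is an $S^1$-instanton of ALF-$A_k$ type. At this point the underlying topology must be pinned down: using the $G$-signature theorem and the fixed-point analysis developed in this paper (following Jang), I would constrain the orbit space and the fixed-point data of the $S^1$-action so as to reduce $\cM$ to one of the finitely many diffeomorphism types appearing in Theorem \ref{thm:main-intro}. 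The cases $\cM \cong S^4 \setminus S^1$ and $\cM \cong \CP^2 \setminus \{\text{\rm pt.}\}$ then yield Kerr and Taub-bolt respectively.

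The main obstacle is the remaining case $\cM \cong \CP^2 \setminus S^1$: here Point \ref{point:3} of Theorem \ref{thm:main-intro} only gives that $(\cM, g_{ab})$ is Hermitian, and upgrading this to membership in the Chen--Teo family is exactly the content of the still-open Conjecture \ref{conj:Herm-class}. The argument therefore reduces Conjecture \ref{conj:KVF-class} to the combination of (i) the asymptotic rigidity and precompactness step for the bounded Killing field, which I expect to be technical but tractable, and (ii) the Hermitian classification, which is genuinely hard; as the remark indicates, (ii) would in turn follow from showing that a non-K\"ahler ALF Hermitian instanton must be toric, after which \cite{Biquard:Gauduchon} closes the case.
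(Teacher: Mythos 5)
The statement you are asked to prove is labelled as a conjecture, and the paper does not prove it: it only offers a remark outlining the same reduction you propose (bounded Killing field $\Rightarrow$ $S^1$ or toric symmetry, the former via Yau's results on isometry groups rather than your ad hoc asymptotic expansion, then Theorem \ref{thm:main-intro} for the three named topologies), and explicitly states that the toric case without special geometry remains open. Your proposal is honest about being a reduction rather than a proof, and in that sense it parallels the paper's own remark. However, two of your intermediate steps overstate what is available. First, in the $r=1$ case you assert that the $G$-signature theorem and the fixed-point analysis ``reduce $\cM$ to one of the finitely many diffeomorphism types appearing in Theorem \ref{thm:main-intro}.'' This is not established anywhere: Theorem \ref{thm:main-intro} is conditional on the topology being $S^4\setminus S^1$, $\CP^2\setminus\{\text{pt.}\}$, or $\CP^2\setminus S^1$, and the classification of $S^1$-actions on general ALF instantons (which would be needed to exclude, or separately treat, all other ALF-$A_k$ topologies) is deferred by the authors to forthcoming work. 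Second, in the $r\geq 2$ case you quote \cite{2021LMaPh.111..133K} as a classification of toric ALF instantons that ``places it in the stated list''; those results carry additional hypotheses and do not give an unconditional classification --- indeed the paper's own remark identifies the toric case without special geometry as precisely the open part of the conjecture, so this citation cannot close it.

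Beyond these two overstatements, your identification of the remaining obstacles is accurate: even granting the topological reduction, the case $\cM\cong\CP^2\setminus S^1$ only yields Hermitian, and upgrading Hermitian to membership in the Chen--Teo family is the content of the still-open Conjecture \ref{conj:Herm-class}. So the correct assessment of your proposal is that it reproduces the paper's intended reduction strategy but presents as available two ingredients (topological rigidity of ALF $S^1$-instantons, and the toric classification without special geometry) that are themselves open; with those flagged as hypotheses rather than facts, your outline matches the paper's remark.
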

\begin{remark} 
In view of the results of  Yau \cite{MR448379}, the existence of one Killing field with bounded norm for a non-flat ALF instanton implies that one has either  $S^1$ or toric symmetry. See also \cite[\S 2]{gibbons:hawking:1979}. The results in the present paper imply that an ALF instanton with the topology of Kerr, Taub-bolt or Chen-Teo must be Hermitian. Therefore the only part of the conjecture that is open is the case of toric instantons without special geometry. 
\end{remark}
Finally, we mention the following conjecture, analogous to those made in \cite{gibbons:1978:survey}, and stronger than Conjectures \ref{conj:Herm-class} and \ref{conj:KVF-class}. 
\begin{conj} \label{conj:strong-class}  
An ALF gravitational instanton is Hermitian. 
\end{conj}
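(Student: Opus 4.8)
The target statement is an open conjecture, so I will describe a research strategy rather than a complete argument, organised around the principle that Hermiticity can be forced either by symmetry or by one-sided algebraic speciality. The first and more tractable route is to upgrade an arbitrary ALF instanton to a symmetric one. By the remark following Conjecture~\ref{conj:KVF-class}, Yau's theorem \cite{MR448379} shows that a single bounded Killing field on a non-flat ALF instanton already promotes the isometry group to contain $S^1$ or $T^2$; so the plan is to manufacture such a field from the asymptotic geometry. An ALF end carries a distinguished circle fibration over its base, hence an approximate rotational symmetry, and I would try to integrate it to an exact one by solving $\Lie_X g = 0$ to leading order at infinity and correcting the candidate field $X$ inward by an elliptic deformation argument, using the quadratic curvature decay to control the correction. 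Granting the Killing field, one lands in the setting of Conjecture~\ref{conj:KVF-class}, toward which Theorem~\ref{thm:main-intro} and the Biquard--Gauduchon classification \cite{Biquard:Gauduchon} already provide decisive evidence, the only remaining symmetric-case gap being the toric instantons without special geometry.

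The second route bypasses symmetry. Since for these Ricci-flat metrics the algebraic speciality of one chirality of the Weyl tensor already entails an integrable compatible complex structure (the implication used in \cite{MR4809333,MR707181}), the conjecture reduces to showing that, say, $W^+$ is everywhere degenerate as a trace-free symmetric endomorphism of the self-dual bundle. I would study the discriminant of $W^+$, a nonnegative function that vanishes exactly on the locus where two eigenvalues coincide and which is a universal polynomial in the pointwise invariants $|W^+|^2$ and $\det W^+$. For an Einstein four-manifold the second Bianchi identity turns $W^+$ into a solution of an elliptic system, yielding a Weitzenb\"ock identity of the schematic form $\tfrac12\Delta |W^+|^2 = |\nabla W^+|^2 - 36\,\det W^+$, with the scalar-curvature term absent because the metric is Ricci-flat. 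The aim would be to promote this into a differential inequality satisfied by the discriminant and, with the ALF decay serving as a vanishing condition at infinity, to close a maximum-principle or Bochner argument forcing the discriminant to be identically zero.

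The hard part is precisely the step at which each route stops being formal, which is why the statement is still a conjecture. In the symmetry route there is no general mechanism guaranteeing that an asymptotic circle symmetry integrates to a global Killing field: the correction must absorb the full nonlinearity of the Einstein equation and any obstruction concentrated in the interior, and a priori it may simply fail to exist. In the curvature route, the Weitzenb\"ock identity governs $|W^+|^2$ but not transparently the discriminant, and the cubic term $\det W^+$ carries no fixed sign, so there is no evident reason for $W^+$ to degenerate without an extra structural input; making the argument work would require an algebraic refinement of the Weitzenb\"ock formula tailored to the degeneracy locus, together with sharp control of $\nabla W^+$ at the ALF end. Producing either the Killing field or the algebraically special structure out of the ALF hypothesis alone is the crux, and a solution along either line would in particular re-derive and extend the partial rigidity recorded in Theorem~\ref{thm:main-intro}.
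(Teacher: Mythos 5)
The statement you were asked about is labelled as a conjecture in the paper, and the paper offers no proof of it --- only the remark that the problem ``can be expected to be difficult, analogous to the question whether all compact Ricci-flat spaces have special holonomy.'' So there is no argument of the authors' to compare yours against. Your submission is, as you say yourself, a research programme rather than a proof, and judged as such it is sensible and consistent with the paper's own commentary: the symmetry route (produce a bounded Killing field, invoke Yau \cite{MR448379} to get $S^1$ or toric symmetry, then reduce to Conjecture~\ref{conj:KVF-class} and the Biquard--Gauduchon classification \cite{Biquard:Gauduchon}) mirrors the remarks the authors make after Conjectures~\ref{conj:Herm-class} and~\ref{conj:KVF-class}, and the curvature route correctly isolates the Derdzi\'nski mechanism \cite{MR707181} by which one-sided degeneracy of the Weyl tensor yields Hermiticity, which is exactly how point~\ref{point:3} of Theorem~\ref{thm:main-intro} produces a Hermitian structure.

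That said, you have not proved anything, and you are right about where each route fails. For the first route there is no known mechanism forcing an exact Killing field to exist on a general ALF instanton; the asymptotic circle symmetry of the model metric $\gring$ does not integrate inward, and the paper's uniqueness machinery (the divergence identity of Section~\ref{sec:divident} together with the $G$-signature analysis of Section~\ref{sec:fps}) only engages once the $S^1$-action and its fixed point data are given. Even granting the Killing field, the paper's results cover only the three listed topologies, so the reduction is partial. For the second route, the Weitzenb\"ock identity controls $|W^+|^2$ but not the discriminant of $W^+$, and $\det W^+$ has no sign, so no maximum principle closes without additional structural input; note also that Derdzi\'nski's theorem requires care on the zero locus of $W^+$, an issue the paper confronts in its own setting via the rectifiability analysis of the set $\PM{\cZ}$ in Section~\ref{sec:singular}. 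Your proposal is an honest map of the obstacles, but the conjecture remains open.
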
 
\begin{remark}  
The problem raised by Conjecture \ref{conj:strong-class} can be expected to be difficult, analogous to the question whether all compact Ricci-flat spaces have special holonomy, see \cite[p. 19]{MR2371700}. However, the ALF assumption provides structure not present in the compact case.  
\end{remark} 
The Chen--Teo Instanton was found using the Belinski--Zakha\-rov soliton method, dressing a flat seed solution with four solitons and coalescing one pair of those. The computational methods employed by Chen and Teo did not enable them to explore solutions with more than four solitons. Thus, a priori there could exist additional instantons which can be constructed along the same lines, but containing more solitons. This is an interesting direction of research. 

In addition to the families of instantons mentioned above, some additional potential examples have been mentioned in the literature. These include the Euclidean signature of the Kerr--NUT family of metrics, for example the so-called Kerr--Taub-bolt instanton, see \cite[\S III]{gibbons:perry:1980}. An argument showing that the Euclidean Kerr--Taub-bolt  instanton must have line singularities was given in \cite[\S 4.8] {2010NuPhB.838..207C}, see also \cite[Appendix A]{clement2020smarr}.

We shall now indicate some of the background for the technique used in this paper. Recall that the Schwarzschild and Kerr instantons were constructed by Wick rotating known Lorentzian black hole metrics. The problem of classifying gravitational instantons has important parallels to the Black Hole Uniqueness Problem, and indeed, the divergence identity used in this paper arose out of attempts to prove the Black Hole Uniqueness Conjecture by  generalizing Israel's method \cite{1967PhRv..164.1776I,MR398432,1977GReGr...8..695R} for the uniqueness problem from the static Lorentzian case to the stationary Lorentzian case. 

In the case of a hypersurface orthogonal Killing vector field $\xi$ this method applies to spaces of both Riemannian and Lorentzian signature (with non-compact isometry group $\mathbb{R}$ in the Lorentzian case). The key ingredients in the argument are divergence identities on the 3-dimensional orbit space, among which we select
\begin{align} \label{eq:Israel} 
{}^{(3)}\nabla_i 
\left( \frac{(1 + \lambda)^2}{\sqrt{\lambda}} {}^{(3)}\nabla^i \frac{\sqrt{W}}{1 - \lambda^2}\right) = Q,    
\end{align} 
which is closely related for example to \cite[Eqs. (2.12)-(2.13)]{MR398432}. Here ${}^3\nabla_i$ is the Levi-Civita covariant derivative on the orbit space, $\lambda$ is the squared norm of the Killing field, $\lambda  = |g_{ab}\xi^a \xi^b|$,  $W$ is given by $W = ({}^{(3)}\nabla_i  \sqrt{\lambda})({}^{(3)} \nabla^i \sqrt{\lambda})$, and $Q$ is a non-negative expression in curvature such that $Q = 0$ if and only if the orbit space is conformally flat. The integral of the left hand side of \eqref{eq:Israel} can, after dealing with potential singularities resulting from possible  zeros of $W$, be evaluated in terms of contributions from infinity and from the surface gravities computed from the fixed point data of the isometry. The latter, in turn, can be related to the topology of $\cM$ using index theorems. In case the left hand side vanishes, it is possible to conclude that the space(-time) under consideration is Schwarzschild. This perspective is further developed in Appendix \ref{sec:quotient}.

The above scheme can, with considerable effort, be generalized to the generic case when $\xi$ fails to be hypersurface-orthogonal. Let $\PM{\Fcal}_{ab}$ be the (anti)-self dual parts of $(d\xi)_{ab}$, let $\PM{\sigma}_{a}= 2 \PM{\Fcal}_{ab} \xi^{b}$, and define the Ernst potentials $\PM{\Ecal}$ by $\nabla_a \PM{\Ecal} = \PM{\sigma}_a$. In the Riemannian case, this yields a pair of real scalar fields $\PM{\Ecal}$, while in the Lorentzian case, we have the familiar complex Ernst potential and its complex conjugate.  

Restricting ourselves to the  Riemannian case, we define real currents $\PM{\Psi}_a$ in terms of $\PM{\Ecal}$, $\lambda$, and the norms of $\PM{\Fcal}_{ab}$, see Section \ref{sec:divident}, such that  
\begin{align} \label{eq:Simon} 
\nabla_a \PM{\Psi}^a = \PM{\Qcal}. 
\end{align} 
Here $\PM{\Qcal}$ are non-negative expressions defined in terms of $\lambda$, the Weyl tensor, and $\PM{\Fcal}_{ab}$, and such that $\PM{\Qcal} = 0$ if and only if $(\cM, g_{ab})$ is algebraically special. This pair of currents  was first introduced by one of the authors \cite{simon:1995}. As in the case when $\xi$ is hypersurface orthogonal, when \eqref{eq:Simon} reduces to \eqref{eq:Israel}, the integral of the left hand side of \eqref{eq:Simon} can be evaluated in terms of the fixed point data for the $S^1$ action and the length of the $S^1$-orbits at infinity. 

The $G$-signature theorem for an ALF-$A_k$ instanton yields an identity which equates the signature of $\cM$ with a rational expression, with parameters determined by the fixed point data for the $S^1$-action, including the orientation and weights of the nuts, the self-intersection numbers of the bolts, as well as the Euler characteristic of the circle bundle at infinity, see \eqref{eq:G-sign}. The fact that this rational expression is actually constant implies strong restrictions on the fixed point data. This observation was used by Jang, see \cite{2018JGP...133..181J}, following earlier work by Li and Liu \cite{2010arXiv1012.1507L}, and Li \cite{MR3343967} to derive strong conditions on the fixed point set in terms of the topology, in the case of $S^1$-actions with only isolated fixed points. See also \cite{MR1201437} for earlier related work. 

Here we generalize these ideas to give restrictions on the fixed point set for ALF-$A_k$ spaces with an $S^1$-action, without assuming that there are only isolated fixed points.  Making use of the results on the structure of the fixed point set derived from the $G$-signature theorem in the manner mentioned above, it is possible to show that for suitable topologies, equality holds in \eqref{eq:Simon}, which then implies algebraic speciality. 

In the particular cases of instantons with topology $S^4\setminus S^1$ one has equality for both signs in \eqref{eq:Simon}, which implies that the space is Petrov type $D^+D^-$. This, in turn, by \cite{Biquard:Gauduchon}  implies that the space is in the Euclidean Kerr family. The same situation holds in the case of $\CP^2 \setminus \{\text{\rm pt.}\}$, yielding a uniqueness result for the Taub-bolt instanton. The just discussed results are points \ref{point:Kerr}, \ref{point:T-b} of Theorem \ref{thm:main-intro}. Finally, for an instanton of topology $\CP^2 \setminus S^1$, one finds that equality holds only for one sign, yielding the statement that the instanton is Hermitian, point \ref{point:3} of Theorem \ref{thm:main-intro}.  

We end this introduction by pointing out that in the Lorentzian case, the (anti)-self dual parts $\PM{\Fcal}$ of $d\xi$ are complex, which means that the Ernst potential, as well as the corresponding current $\Psi_a$, are complex. Therefore, although there is a complex counterpart to \eqref{eq:Simon} in the Lorentzian case, see \cite{MR747036}, it is not clear whether it is possible to proceed towards a proof of the Black Hole Uniqueness Conjecture along the lines sketched above. We remark, however, that a related wave-type equation was employed in a uniqueness proof of Kerr for small angular momenta via completely different methods, see \cite{MR2672799,MR2461426}. 

\section{ALF instantons} \label{sec:ALF-4mfd}

In this section, we introduce the notions of ALF four-manifold and ALF instanton and prove some results on the asymptotics of ALF spaces that will be needed in the proof of the main results. We shall use the following conventions throughout the paper. We use abstract index notation following \cite{PR:I}. Unless otherwise stated, $(\cM, g_{ab})$ is assumed to be a smooth, complete, orientable Riemannian four-manifold, with Levi-Civita covariant derivative $\nabla_a$. The Riemann and Ricci tensors, and the scalar curvature are given by $R_{abc}{}^d \GenVec_d = (\nabla_a \nabla_b - \nabla_b \nabla_a) \GenVec_d$, $R_{ab} = R^c{}_{acb}$, $R =R^a{}_a$, respectively. We will sometimes use index-free notation, and for example write $g = g_{ab} dx^a dx^b$ for the metric tensor. The norm of a tensor $\varpi_{ab \cdots d}$ is defined by $|\varpi|^2 = \varpi_{ab \cdots d} \varpi^{ab \cdots d}$.  

\begin{definition}[\protect{\cite[Def. 1.1]{Biquard:Gauduchon}}] \label{def:BG} 
We say that a complete Riemannian four-manifold $(\cM, g)$ is \emph{asymptotically locally flat} (ALF) if the following holds. 
\begin{enumerate} 
\item \label{point:Mring} 
$\cM$ has an end diffeomorphic to $\cMring = (A,\infty) \times L$ where $L$ is $S^1 \times S^2$, $S^3$, or a finite quotient thereof.
\item \label{point:triple} 
There is a triple $(\eta, T, \gamma)$ defined on $L$, where $\eta$ is a 1-form, $T$ a vector field,  with 
\begin{align} 
i_T \eta = 1, \quad i_T d\eta = 0
\end{align} 
and $\gamma$ defines a $T$-invariant metric on the distribution $\ker \eta$. 
\item 
The transverse metric $\gamma$ has constant curvature $+1$.
\item 
Let 
\begin{align} \label{eq:gring}
\mathring{g} = dr^2 + r^2 \gamma + \eta^2
\end{align} 
be a metric on $(A,\infty) \times L$, with Levi-Civita covariant derivative $\mathring{\nabla}$. Here $\gamma$ is extended to a tensor on $L$ with $\ker \gamma$ generated by $T$.  We assume that the metric $g$ restricted to the end is of the form  
\begin{align} \label{eq:gringh} 
g = \mathring{g} + h
\end{align} 
where $h$ is a symmetric 2-tensor such that for all non-negative integers $k$, there are constants $C_k$ with   
\begin{align} 
|\mathring{\nabla}^k h|_{\mathring{g}} \leq C_k r^{-1-k} .
\end{align} 
\end{enumerate} 
An ALF manifold with $L = S^1 \times S^2$ is called \emph{asymptotically flat} (AF). If $(\cM, g_{ab})$ is ALF with model $\mathring{\cM} = (A, \infty) \times L$, we shall refer to $L$ as the boundary at infinity of $(\cM, g_{ab})$.  
\end{definition} 

\begin{definition}[ALF instanton]  
Let $(\cM, g_{ab})$ be a complete four-dimensional Ricci-flat Riemannian space. Then, $(\cM, g_{ab})$ is an ALF gravitational instanton if it is ALF in the sense of definition \ref{def:BG}. 
\end{definition} 

\begin{remark} \label{rem:T}
\begin{enumerate} 
\item In view of definition \ref{def:BG},
\begin{align} 
\Lie_T \eta = d(i_T \eta) + i_T d\eta = 0,
\qquad
\Lie_T \gamma = 0,
\end{align} 
and hence $T$ is a Killing field for $\gring$,
$\Lie_T \gring = 0$.
\item 
Definition \ref{def:BG} is compatible with the case when the orbits of $T$ do not close. This holds for generic AF models. 
\end{enumerate} 
\end{remark} 

The following remark highlights some features of the geometry of AF instantons. 
\begin{remark}[AF Instanton, see \protect{\cite[Definition 2.1]{MR4809333}}] \label{def:AF} 
\begin{enumerate} 
\item 
Let $\kappa, \Omega \in \Reals$, $\kappa \ne 0$, $|\Omega/\kappa| < 1$. Consider $\Reals^4$ with coordinates $(\tau, r, \theta, \phi) \in \Reals \times \Reals_+ \times [0,\pi] \times [0,2\pi]$ so that the flat metric takes the form 
\begin{align} 
\d\tau^2 + \d r^2 + r^2(\d\theta^2 + \sin^2\theta \d\phi^2) .
\end{align} 
Let $(\mathring{\cM}, \mathring{g}_{ab})$ be the flat space defined as $\Reals^4/\sim$ where the equivalence relation $\sim$ is given by the identification 
\begin{align}\label{eq:period} 
(\tau, r,\theta,\phi) \sim (\tau+2\pi/\kappa,r,\theta,\phi + 2\pi\Omega/\kappa) .
\end{align}
Introducing new Killing coordinates $\tilde\tau, \tilde\phi$ by 
\begin{align} 
\tau = \frac{1}{\kappa} \tilde \tau, \quad \phi = \frac{\Omega}{\kappa} \tilde \tau + \tilde \phi , 
\end{align} 
we have that the identification 
\begin{align} \label{eq:tperiod}
(\tilde\tau, \tilde\phi)\sim (\tilde \tau + 2\pi, \tilde \phi + 2\pi) 
\end{align} 
corresponds to \eqref{eq:period}. 
A non-flat ALF instanton with model $(\cM, \mathring{g}_{ab})$ is said to be AF, with parameters $\kappa, \Omega$.
\item If  
\begin{align} \label{eq:Omkap} 
\Omega/\kappa = q/p, \quad \text{with $q,p$ mutually prime integers.}
\end{align} 
then the action of $T = \partial_\tau$ has closed orbits, and $(\mathring{\cM}, \mathring{g}_{ab})$ is a model of an AF $S^1$-instanton. The generic orbits of $\partial_\tau$ have period $2\pi p/\kappa$ where $p,\kappa$ are as in \eqref{eq:Omkap}. At the poles $\theta = 0,\pi$, where $\partial_\phi$ vanishes, we have $\partial_\tau = \kappa \partial_{\tilde \tau}$ with the exceptional period $2\pi/\kappa$. 
\end{enumerate}
\end{remark} 

\begin{lemma}[\protect{\cite[Lemma 1.7]{Biquard:Gauduchon}}] \label{lem:H1zero} 
Let $(\cM, g_{ab})$ be an ALF instanton. Then $H^1_{\text{\rm dR}}(\cM) = 0$. 
\end{lemma}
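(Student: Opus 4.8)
The plan is to produce, in each de Rham class, a harmonic representative that decays at the ALF end, and then to use Ricci-flatness together with the Bochner formula to force such a representative to vanish. It suffices to show that every closed $1$-form on $\cM$ is exact, and for this I would work with the Hodge Laplacian $\Delta = \d\, \d^* + \d^*\, \d$ acting on $1$-forms.

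First I would set up Hodge theory in weighted Sobolev spaces adapted to the ALF geometry. By Definition \ref{def:BG} the end is modelled on $(\cMring, \gring)$ with $\gring = \d r^2 + r^2 \gamma + \eta^2$ and $g = \gring + h$, $|\nablaring^k h|_{\gring} = O(r^{-1-k})$, and the volume grows cubically, $\Vol(B_r) = O(r^3)$. On such ends the Hodge Laplacian is Fredholm between weighted spaces away from a discrete set of indicial (critical) weights, and every class in $H^1_{\text{\rm dR}}(\cM)$ is represented by a harmonic $1$-form $\omega$, i.e.\ $\d\omega = 0 = \d^*\omega$. Separating variables along $r$ and expanding in eigenmodes on $L$ on the model $\gring$ shows that the admissible harmonic modes decay, so that one may take $\omega$ to lie in $L^2$ with $\nabla\omega \in L^2$, provided one avoids the critical weight associated with the bounded $S^1$-direction.

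Then I would invoke the Bochner formula. Since $(\cM, g_{ab})$ is Ricci-flat, the Weitzenb\"ock identity on $1$-forms reduces to $\Delta\omega = \nabla^*\nabla\omega$, so a harmonic $\omega$ satisfies $\nabla^*\nabla\omega = 0$. Choosing a cutoff $\chi_r$ with $\chi_r \equiv 1$ on $B_r$, supported in $B_{2r}$, and $|\d\chi_r| \lesssim r^{-1}$, pairing with $\chi_r^2\,\omega$ and integrating by parts gives
\[
\int_{\cM} \chi_r^2\, |\nabla\omega|^2 \, \d V
\lesssim \Big(\int_{B_{2r}\setminus B_r} |\nabla\omega|^2 \, \d V\Big)^{1/2}
\Big(\int_{B_{2r}\setminus B_r} |\omega|^2\, |\d\chi_r|^2 \, \d V\Big)^{1/2}.
\]
With $\omega, \nabla\omega \in L^2$ the right-hand side tends to $0$ as $r \to \infty$, forcing $\nabla\omega \equiv 0$, i.e.\ $\omega$ is parallel. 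A parallel $1$-form has constant pointwise norm, and since it also tends to $0$ at infinity it must vanish identically. Hence every class is trivial and $H^1_{\text{\rm dR}}(\cM) = 0$.

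The main obstacle is the analysis at the end. One must (i) guarantee that each de Rham class actually has a representative in the decaying, finite-energy range rather than only among bounded harmonic forms, and (ii) exclude the borderline mode corresponding to the $S^1$-direction $\eta$ of $L$, which is precisely the harmonic but non-decaying $1$-form responsible for $H^1 \neq 0$ in the flat product $\Reals^3 \times S^1$. Controlling the indicial roots of the Hodge Laplacian on $(\cMring, \gring)$ and showing that the critical weight $0$ is not attained for a genuine (non-flat) instanton is the delicate point; the cubic volume growth and the constant-curvature-$+1$ transverse metric $\gamma$ are exactly the inputs that pin down the relevant indicial spectrum and thereby close the argument.
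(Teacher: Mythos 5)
First, note that the paper does not prove this lemma at all: it is quoted from \cite[Lemma 1.7]{Biquard:Gauduchon}, so there is no internal argument to compare with, and your proposal must stand on its own. As it stands it has a genuine gap, which you have correctly located yourself but then waved away with a mechanism that cannot work. The entire content of the lemma is concentrated in your step (i): producing, in \emph{every} de Rham class, a harmonic representative that decays (equivalently, lies in $L^2$). Once such a representative exists, the cutoff/Bochner computation is routine (cubic volume growth and $|\d\chi_r|\lesssim r^{-1}$ already give $\int |\d\chi_r|^2|\omega|^2\to 0$ for $\omega=O(r^{-1})$), and a decaying parallel form vanishes. But decaying harmonic $1$-forms compute, roughly, the image of $H^1_c(\cM)\to H^1(\cM)$ rather than $H^1_{\text{\rm dR}}(\cM)$, and in the AF case $L=S^1\times S^2$ has $H^1(L)=\Reals$, so a class may restrict nontrivially to the end; its bounded harmonic representative is then asymptotic to $c\,\eta$ with $c\neq 0$ and does not decay. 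Your proposed fix --- that the indicial roots of the Hodge Laplacian on $(\cMring,\gring)$ exclude the critical weight ``for a genuine (non-flat) instanton'' --- cannot succeed: the indicial data are determined entirely by the asymptotic model, which is identical for the flat product $\Reals^3\times S^1$ and for, say, the Schwarzschild instanton. Whether the zero-weight mode $\eta$ is actually attained by a global harmonic form is a global question, invisible to indicial analysis.

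Indeed $\Reals^3\times S^1$ is complete, Ricci-flat and ALF in the sense of Definition \ref{def:BG}, yet $H^1_{\text{\rm dR}}(\Reals^3\times S^1)=\Reals$; so the statement as literally quoted requires non-flatness somewhere, and any correct proof must use it. The way to close your argument is not to force decay but to accept the bounded representative: run the Bochner argument on the bounded harmonic form $\omega$ (the boundary term at infinity can be handled using the next-order asymptotics of $\omega-c\,\eta$ together with weighted elliptic estimates), conclude $\nabla\omega\equiv 0$, and then observe that a nonzero parallel $1$-form forces, via the de Rham decomposition, a local splitting $\Reals\times N^3$ with $N$ Ricci-flat and hence flat, so that the ALF asymptotics identify $(\cM,g)$ with $\Reals^3\times S^1$ up to scaling. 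The correct output of your method is therefore ``$H^1_{\text{\rm dR}}(\cM)=0$ unless $(\cM,g)$ is the flat $\Reals^3\times S^1$,'' which is what the body of the paper actually needs: in the flat case $\PM{\sigma}_a$ and $\omega_a$ vanish identically, so the potentials exist trivially.
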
 

\begin{definition} \label{def:ordo} 
Let $(\cM, g_{ab})$ be an ALF four-manifold and let $r$ be as in Definition \ref{def:BG}. For a tensor field $t$ on $(\cM, g_{ab})$, we say that  $t =  O(r^\alpha)$ if  there is a constant $C$ such that $|t| \leq C r^\alpha$ for $r \geq A$, and write 
\begin{align} 
t = O^*(r^\alpha)
\end{align} 
if $|\nabla^k t| = O(r^{\alpha-k})$, for all non-negative integers $k$. 
\end{definition} 

\begin{remark} \label{rem:cone}   
Let $g$ be an ALF metric as in Definition \ref{def:AF}. As explained \cite[Proof~of~Prop.~1.6]{Biquard:Gauduchon},  $\eta$ has the local form $\eta = dt + \alpha$ where $dt(T) = 1$, and $\alpha$ is a $1$-form on the quotient of $L$ by the action of $T$, satisfying $\alpha = O^*(r^{-1})$. On sufficiently small domains in $L$ of the form $U \times (-t_0, t_0)$ with $U \subset S^2$, we have 
\begin{align} 
r^2 \gamma + \eta^2 = r^2 \sigma + dt^2 + O^*(r^{-1}) , 
\end{align} 
where $\sigma$ is the standard metric on $S^2$, and hence 
\begin{align} 
g = dr^2 +  r^2 \sigma + dt^2 + O^*(r^{-1}) = \delta + dt^2 + O^*(r^{-1})
\end{align} 
on 
\begin{align} 
C \times (-t_0,t_0) \subset \Reals^3 \times (-t_0,t_0). 
\end{align} 
Here $C$ is the cone 
\begin{align} 
C = (A, \infty) \times U \subset \Reals^+ \times S^2 \subset \Reals^3  
\end{align} 
and $\delta$ denotes the flat metric on $\Reals^3$. 
It follows that the curvature of $g$ satisfies $\Riem = O^*(r^{-3})$.

\end{remark} 

\begin{definition}\label{def:ALF-Ak} 
\begin{enumerate}
\item
An ALF instanton with $L$ an $S^1$ bundle over $S^2$ with Euler number $e$ is referred to as ALF-$A_k$, with $k = -e -1$. 
We refer to the particular case with $L = S^2 \times S^1$ as AF, or ALF-$A_{-1}$. 
\item 
An ALF instanton with $L$ a $S^1$ bundle over $\RP^2$ is said to be of type ALF-$D_k$. 
\end{enumerate}
\end{definition} 

\begin{remark} 
An ALF instanton of topology $S^4 \setminus S^1$ or $\CP^2\setminus S^1$ is ALF-$A_{-1}$, or equvalently AF. These cases correspond to the Kerr and Chen--Teo families of instantons, respectively. Similarly, an ALF instanton with topology for example $\CP^2 \setminus \{\text{\rm pt.}\}$, corresponding to the Taub-bolt instanton,
is ALF-$A_0$. 
\end{remark}  

\begin{definition}[ALF $S^1$-instanton]\label{def:S1inst}
Let $(\cM, g_{ab})$ be an ALF instanton. Then $(\cM, g_{ab})$ is an $S^1$-instanton if it admits an effective $S^1$ action by isometries, generated by a Killing field with uniformly bounded norm.
\end{definition}

\begin{remark}
The Euclidean Kerr instanton is AF with parameters $\kappa, \Omega$ corresponding to surface gravity and rotation speed, respectively. The non-rotating Euclidean Schwarzschild instanton is the limit of Kerr with $\Omega = 0$ and is $S^1$-symmetric. On the other hand, the Euclidean Kerr instanton is an $S^1$-instanton only if $\kappa/\Omega = q/p$ for mutually prime integers $q,p$. See \cite{1998PhRvD..59b4009H}.
\end{remark}

\begin{remark} \label{rem:seifert} 
Recall that $L$ in Definition~\ref{def:BG} has topology $S^2 \times S^1$, $S^3$ or a quotient thereof. We can refer to these as lens spaces, see \cite[\S 3]{MR3785783}.

Assuming that the orbits of $T$ are closed, the manifold $L$ in Definition~\ref{def:BG} is a Seifert fibration, that is a circle fibration over a base orbifold $O$.  Recall that by Proposition \ref{prop-asymptotics-killing}, if $(\cM, g_{ab})$ is an $S^1$-instanton, then  $T$ has closed orbits. 

The orbifold Euler characteristic is denoted by $\chi[O]$, see \cite[\ S 2]{MR705527} for background. If $O$ has $\tilde O$ as underlying surface, and has $m$ cone points with angles $2\pi/q_i$, $i=1,\dots, m$, then 
$\chi[O] =\chi[\tilde O] - \sum (1-1/q_i)$, see \cite[p. 427]{MR705527}. 

If $O$ is the base of the Seifert fibration of a lens space, then either  $O$ is orientable in which case it has $S^2$ as underlying surface,  with 2 cone points, $S^2(n,n)$, while if it is non-orientable it has $\RP^2$ as underlying surface, with one cone point, $\RP^2(n)$. The orbifold Euler characteristics are 
\begin{align} 
\chi[S^2(n,n)] ={}& 2 - 2(1-1/n) = 2/n , \\ 
\chi[\RP^2(n)] ={}& 1 - (1-1/n) = 1/n .
\end{align} 
For a manifold of type ALF-$A_k$ we have $O=S^2$ and $\chi[O] = 2$, while for a manifold of type ALF-$D_k$ we have $O=\RP^2$ and $\chi[O] = 1$. For a Seifert fibration of a lens space, we have $O = \RP^2(n)$ for $n \in \mathbb{N}$, with orbifold Euler characteristic $\chi[O] = 1 - (1 - 1/n) = 1/n$, see \cite[\ S 2]{MR705527}. For Seifert fibrations of $S^2 \times S^1$, see \cite[\S 4]{MR705527}. The Gauss--Bonnet theorem for orbifolds \cite[\ S 2]{MR705527} states that 
\begin{align} 
\int_O K d\mu = 2\pi \chi[O], 
\end{align}
where $K$ is the Gauss curvature. 
In the present case, we have $K=1$, which gives the following expressions for the area $A[O]$ of $O$, 
\begin{align} 
A[O] = 2\pi \chi[O] .
\end{align} 
\end{remark}

\section{The fixed point set} \label{sec:S1ALF}

Let $(\cM, g_{ab})$ be an orientable ALF  four-manifold with an effective isometric $S^1$-action generated by the Killing field $\xi^a$, and let $\Fixed$ be the fixed point set of the $S^1$ action.  The components of $\Fixed$ consist of isolated points called nuts, $\nut_1, \dots, \nut_{\nnuts}$ and two-surfaces called  bolts $\bolt_1, \dots \bolt_{\nbolts}$.  The fixed points set is orientable. Unless $\cM$  is flat, the fixed point set $\Fixed$ is non-empty \cite{MR3973911}. Hence, in view of the fact that the Euler characteristic of $\cM$ is even and that bolts are oriented and have even Euler characteristic,  in case $\cM$ is not flat, $\Fixed$ contains at least a bolt or two nuts. 

Let $\nut$ be a nut. Working in a suitable normal coordinate system, the skew-adjoint tensor $\nabla_a \xi^b$, 
evaluated at $\nut$ can be put in the canonical form  
\begin{align} \label{eq:nablaxi(nut)}
\nabla_a \xi^b \bigg{|}_{\nut} 
= 
\begin{pmatrix}
0&-\kappa^1&0&0 \\
\kappa^1&0&0&0 \\
0&0&0&-\kappa^2
\\0&0&\kappa^2&0
\end{pmatrix}
\end{align} 
and thus has non-zero skew eigenvalues, the surface gravities $\kappa^1, \kappa^2$. Since $\xi^a$ generates an $S^1$-action, $\kappa^1/\kappa^2$ must be rational, and hence there are positive, mutally prime integers $w_1, w_2$ called weights of $\nut$, and $\eps(\nut) = \pm 1$ called the orientation of $\nut$ such that    
\begin{align} 
\frac{\kappa^1(\nut)}{\kappa^2(\nut)} = \eps(\nut) \frac{w^1(\nut)}{w^2(\nut)} .
\end{align} 
A nut with $w^1=w^2=1$ is called self dual. Near $\nut$, the $S^1$-action has exceptional orbits with periods $2\pi/|\kappa^i(\nut)|$, $i=1,2$, while the generic orbit has period 
\begin{align} 
\frac{2\pi w^1(\nut)}{|\kappa^1(\nut)|} = \frac{2 \pi w^2(\nut)}{|\kappa^2(\nut)|}
\end{align}  
The fixed point data of $\nut$ is the triple $(\eps(\nut), w^1(\nut), w^2(\nut))$. When it is convenient we shall write simply $\eps=\pm$ instead of $\eps=\pm 1$. 

A bolt $\bolt$ is an orientable, totally geodesic two-surface in $(\cM, g_{ab})$, and thus has even Euler characteristic $\chi(\bolt)$. In addition, the Euler characteristic of the normal bundle of $\bolt$ is equal to the self-intersection number $\bolt\!\cdot\!\bolt$. For $p \in \bolt$, working in a suitable coordinate system, $\nabla_a \xi^b$ can be put in the canonical form 
\begin{align} \label{eq:nablaxi(bolt)}
\nabla_a \xi^b \bigg{|}_{p \in \bolt} 
= 
\begin{pmatrix}
0&0&0&0 \\
0 &0&0&0 \\
0&0&0&-\kappa \\
0&0&\kappa&0
\end{pmatrix}
\end{align} 
and thus has skew eigenvalues $0, \kappa$ for some surface gravity $\kappa \ne 0$. The orbits of the $S^1$-action near $\bolt$ all have period $2\pi/|\kappa|$. 

\begin{lemma}[Generic period] \label{lem:generic}
There is $G \in \Reals$ such that for any component $Z$ of $\Fixed$ we have    
\begin{align} 
G ={}& \frac{w^1(Z)}{|\kappa^1(Z)|} = \frac{w^2(Z)}{|\kappa^2(Z)|}, \quad \text{if $Z$ is a nut} \\
G ={}& \frac{1}{|\kappa(Z)|}, \quad \text{if $Z$ is a bolt}
\end{align} 
In particular, the generic period for the $S^1$-action is $2\pi G$. 
\end{lemma} 
\begin{proof} 
The exponential map based at any nut is surjective. The same statement holds for the normal exponential map at a bolt. It follows from this that all periods are mutually divisible. This proves the statement. 
\end{proof}

\subsection{Topological invariants} \label{sec:topin}

Let $(\cM, g_{ab})$ be an ALF $S^1$ instanton, then the Euler characteristic of $\cM$ can be computed in terms of the fixed point set as
\begin{align} \label{eq:chinutbolt}
\chi[\cM] = \nnuts + \sum_{i=1}^{\nbolts} \chi[\bolt_i], 
\end{align} 
see \cite{HirzebruchBergerJung}.

We also need to compute the signature of $\cM$ in terms of the fixed point set. Since this computation involves a contribution from infinity we will make the simplifying assumption that $(\cM, g_{ab})$ is an ALF$-A_k$ $S^1$ instanton. Then the manifold $L$ in Definition~\ref{def:BG} is the total space of an $S^1$-bundle over $S^2$ with Euler number $e = -k-1$. Let $N$ be the oriented total space of the corresponding disk bundle over $S^2$, so that $\partial N = L$. Denote $N$ with the opposite orientation by $\overline{N}$, then the manifolds $\cM$ and $\overline{N}$ can be glued along $L$ to form an oriented manifold $\cM \cup \overline{N}$. 

The $S^1$ action on $L$ extends to an $S^1$ action on $N$ which rotates the fibres and has as fixed point set the single bolt $B_N$ which is the zero section of the disk bundle and has self-intersection number $B_N \!\cdot\! B_N = e$. With the opposite orientation on $N$ we have $B_{\overline{N}} \! \cdot \! B_{\overline{N}} = -e$. The second homology group of $N$ is generated by $B_N$ and the intersection form is thus given by the $1 \times 1$ matrix $[B_N \! \cdot \! B_N] = [e]$. We conclude that $\sign[N] = \sgn(e)$, where $\sgn(e) = 0$ if $e=0$ and $\sgn(e) = e/|e|$ otherwise.

From Novikov additivity of the signature and the $G$-signature theorem \cite[Theorem~6.12]{AS-III}  and \cite[Proposition~6.18]{AS-III}, 
we get
\begin{align}
\sign[\cM] - \sign[N] 
&=\sign[\cM] + \sign[\overline{N}] \nonumber \\
&= \sign[\cM \cup \overline{N}] \nonumber \\
&= \sign[g,\cM \cup \overline{N}] \nonumber \\
&= - \sum_{i=1}^{\nnuts} \cot\frac{\kappa^1(\nut_i) t}{2}\cot\frac{\kappa^2(\nut_i) t}{2} \nonumber \\
&\qquad
+ \sum_{i=1}^{\nbolts} \csc^2 \frac{\kappa(\bolt_i) t}{2} B_i \! \cdot \! B_i \nonumber \\
&\qquad
+ \csc^2 \frac{\kappa(\bolt_N) t}{2} B_{\overline{N}} \!\cdot\! B_{\overline{N}},
\end{align}
where the notation we use is related to the notation in \cite[Proposition~6.18]{AS-III} by $\alpha_i = \kappa^1(\nut_i) t$, $\beta_i = \kappa^2(\nut_i) t$ for nuts and $\theta_i = \kappa(\bolt_i) t$ for bolts. Here $t$ must be chosen so that none of the angles is a  multiple of $\pi$ so that the action on the normal bundle does not contain a term $N^g(-1)$ in the notation of \cite{AS-III}, so that the proof of \cite[Proposition 6.18]{AS-III} applies. 
We thus get
\begin{align}
\sign[\cM] 
&= - \sum_{i=1}^{\nnuts} \cot\frac{\kappa^1(\nut_i) t}{2}\cot\frac{\kappa^2(\nut_i) t}{2} \nonumber \\
&\qquad
+ \sum_{i=1}^{\nbolts} \csc^2 \frac{\kappa(\bolt_i) t}{2} B_i \!\cdot\! B_i \nonumber \\
&\qquad
- \csc^2 \frac{\kappa(\bolt_N) t}{2} e  +\sgn(e).
\end{align}
Set 
\begin{align}\label{eq:g-def-G} 
g = e^{it/G}, 
\end{align} 
where $G$ is the generic period of Lemma~\ref{lem:generic}. Then
\begin{align}
\cot\frac{\kappa^j(\nut_i) t}{2} 
= i \sgn(\kappa^j(\nut_i)) \frac{g^{w^j(\nut_i)} + 1}{g^{w^j(\nut_i)} - 1},
\end{align}
so 
\begin{align}
\cot\frac{\kappa^1(\nut_i) t}{2} 
\cot\frac{\kappa^2(\nut_i) t}{2} 
= - \epsilon(\nut_i) 
\frac{g^{w^1(\nut_i)} + 1}{g^{w^1(\nut_i)} - 1} \cdot 
\frac{g^{w^2(\nut_i)} + 1}{g^{w^2(\nut_i)} - 1}.
\end{align}
Further, 
\begin{align}
\csc^2 \frac{\kappa(\bolt_i) t}{2} 
= -\frac{4g}{(g-1)^2}.
\end{align}
Note that Lemma~\ref{lem:generic} applies also to the bolt $B_N$. Together we have 
\begin{align} \label{eq:G-sign}
\sign[\cM] 
&= 
\sum_{i=1}^{\nnuts} \epsilon(\nut_i) 
\frac{g^{w^1(\nut_i)} + 1}{g^{w^1(\nut_i)} - 1} \cdot 
\frac{g^{w^2(\nut_i)} + 1}{g^{w^2(\nut_i)} - 1} \nonumber \\
&\qquad
+ \frac{4g}{(g-1)^2} \left( e - \sum_{i=1}^{\nbolts} B_i \!\cdot\! B_i \right) 
+ \sgn(e).
\end{align}
Importantly,  \eqref{eq:G-sign} holds as an algebraic identity with $g$ an independent variable, see \cite[\S 5.8]{HirzebruchBergerJung}. As remarked in the introduction, this fact plays an important role in \cite{2018JGP...133..181J}, see Section \ref{sec:fps} below. In particular we can set $g=0$ and obtain  
\begin{align} \label{eq:G-sign-g0}
\sign[\cM] = \sum_{i=1}^{\nnuts} \epsilon(\nut_i) + \sgn(e) .
\end{align} 

\begin{remark}
This argument to compute the signature also works for a general ALF manifold $\cM$. Then the manifold $N$ bounding the Seifert fibered manifold $L$ can be constructed using equivariant plumbing as in \cite[Chapter~2]{Orlik-seifert-manifolds}. The signature and the fixed point data of $N$ can then be computed from the corresponding plumbing graph. Since we do not need this for the results of the present paper we leave the details for future work.
\end{remark}

\subsection{Structure of the fixed point set} \label{sec:fps}
In \cite{2018JGP...133..181J}, Jang has provided a complete analysis of the fixed point set for $S^1$ actions on compact, oriented four-manifolds. The proofs however generalize to the situation under consideration here, namely an ALF $S^1$-instanton. In particular, near a bolt, the $S^1$ action is free, with the generic period $2\pi G$, see Lemma \ref{lem:generic}. Further, the identity 
\begin{align} 
\frac{4g}{(1-g)^2} = \frac{(1+g)^2}{(1-g)^2} - 1
\end{align} 
allows us to rewrite equation \eqref{eq:G-sign} as the $G$-signature formula for a compact, oriented four-manifold $\tilde{\cM}$ with an $S^1$-action that has nut struture corresponding to the nuts of $\cM$ with 
\begin{align} 
e-\sum_{i=1}^{\nbolts} \bolt_i \!\cdot\! \bolt_i
\end{align} 
additional (fictitious) nuts. The algebraic facts regarding the $G$-signature formula used in \cite{2018JGP...133..181J} therefore apply directly in our situation. 

Taking these facts into account, one may verify that the proofs of the following lemmas generalize to the case of ALF $S^1$-instantons. 

\begin{lemma}[Weight balance \protect{\cite[Lemma 2.6]{2018JGP...133..181J}}] \label{lem:2.6} Let $(\cM, g_{ab})$ be an ALF $S^1$ instanton, and assume the fixed point set $\Fixed$ of $\cM$ contains nuts. Let $w$ be a positive integer that occurs as the weight of a nut in $\Fixed$. Then the number of times $w$ occurs as a weight among all nuts of $\cM$, counted with multiplicity, is even. 
\end{lemma}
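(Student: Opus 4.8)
The plan is to reduce the statement to a purely algebraic assertion about the $G$-signature formula \eqref{eq:G-sign}, regarded as an identity in the indeterminate $g$, and then to read off the weight multiplicities from its pole structure at roots of unity. First I would eliminate the bolts: using $\frac{4g}{(1-g)^2} = \frac{(1+g)^2}{(1-g)^2} - 1$ as in Section~\ref{sec:fps}, the term proportional to $e - \sum_i \bolt_i\!\cdot\!\bolt_i$ is absorbed into that many copies of the self-dual nut contribution $\left(\frac{g+1}{g-1}\right)^2$, up to an additive constant. Thus \eqref{eq:G-sign} becomes
\begin{align}
\sum_{\nut} \eps(\nut)\, \frac{g^{w^1(\nut)}+1}{g^{w^1(\nut)}-1}\cdot\frac{g^{w^2(\nut)}+1}{g^{w^2(\nut)}-1} = S,
\end{align}
a constant $S\in\Zbb$, where the sum ranges over all nuts of $\cM$ together with the fictitious self-dual nuts. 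Since the fictitious nuts carry only the weight $1$ and each contributes it twice, they do not affect the parity of the multiplicity of any weight $w\geq 2$ and contribute an even number of occurrences of the weight $1$; so it suffices to prove the balance statement for the all-nut identity above.

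The core is an analysis of the poles of the left-hand side at roots of unity. Each factor $\frac{g^{m}+1}{g^{m}-1}$ has simple poles exactly at the $m$-th roots of unity, with residue $2\zeta/m$ at such a root $\zeta$, while its numerator does not vanish there. The crucial structural input is that the two weights $w^1(\nut), w^2(\nut)$ of a single nut are \emph{coprime}, so for any $d\geq 2$ at most one of them is divisible by $d$; hence at a primitive $d$-th root of unity each nut contributes at most a simple pole. Because $S$ is constant, the sum of these residues must vanish at every such root. Taking $d=w$ for the \emph{largest} weight $w$ present, the only nuts with a pole at a primitive $w$-th root $\zeta$ are exactly those carrying $w$ as a weight, and the vanishing of the residue reduces to a relation of the form $\sum \eps(\nut)\,\frac{\zeta^{b(\nut)}+1}{\zeta^{b(\nut)}-1}=0$, where $b(\nut)$ is the other weight. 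I would run this over all primitive $w$-th roots and then induct downward on the largest remaining weight once the weight-$w$ nuts have been removed, thereby pinning down the multiplicity of each weight in turn.

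The main obstacle is converting these residue relations, which a priori constrain only the \emph{signed} counts $\eps(\nut)=\pm1$, into the desired \emph{parity} of the \emph{unsigned} multiplicity. Equivalently, writing $M(d)$ for the number of weight-slots divisible by $d$, Möbius inversion shows the lemma is equivalent to $M(d)$ being even for every $d$, i.e.\ to the statement that the number of residues cancelling at a primitive $d$-th root is even. Establishing this requires the arithmetic of cyclotomic fields — a Fourier/Galois argument over $(\Zbb/w)^\times$ applied to the residue identities — and one must treat with care the degenerate even case, where the factor belonging to the coprime weight $b(\nut)$ itself vanishes at $\zeta=-1$, so that the naive residue carries no information and an extra argument is needed. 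This arithmetic extraction is precisely the content of the corresponding step in \cite{2018JGP...133..181J} for compact four-manifolds; the point of the preceding reduction is that, once \eqref{eq:G-sign} has been rewritten as the all-nut identity above, that argument transfers verbatim to the ALF $S^1$-instanton setting.
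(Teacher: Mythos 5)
Your reduction to the all-nut identity via $\frac{4g}{(1-g)^2}=\frac{(1+g)^2}{(1-g)^2}-1$ and the fictitious self-dual nuts is exactly the paper's move in Section~\ref{sec:fps}, and your remark that the fictitious nuts only contribute an even number of weight-$1$ slots is correct. Beyond that, however, the paper does not reprove the lemma: it cites Jang's Lemma~2.6 and asserts the proof carries over, and the ingredient it actually highlights for the carrying-over is not only the algebraic identity \eqref{eq:G-sign} but the geometric fact (Jang's Lemma~2.2, quoted immediately after the statement) that for $w\geq 2$ a positive-dimensional component of $\cM^{\Zbb_w}$ through a nut is a $2$-sphere containing exactly one further nut carrying $w$ as a weight. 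That is where the parity really comes from: since the two weights of a nut are coprime, a nut has at most one weight divisible by $w$, so it lies on exactly one such sphere, the nuts with a weight divisible by $w$ pair up, $M(w)$ is even for every $w\geq 2$, downward induction gives the exact multiplicities, and $w=1$ follows because the total number of weight slots is $2\nnuts$. The only genuinely ALF-specific point is that $\cM^{\Zbb_w}$ is compact for $w\geq 2$ (near infinity $\xi$ is asymptotic to $T$ with the generic period by Proposition~\ref{prop-asymptotics-killing}), so these spheres cannot escape to infinity. Your proposal never touches this.

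The purely algebraic route you sketch instead has a genuine gap exactly where you flag it, and citing Jang does not patch the argument you have actually constructed. First, the residue relations constrain only the \emph{signed} sums $\sum\eps(\nut)\frac{\zeta^{b}+1}{\zeta^{b}-1}$; converting these to parities of unsigned counts requires a $2$-adic/cyclotomic analysis (write $\frac{\zeta^{b}+1}{\zeta^{b}-1}=1+\frac{2}{\zeta^{b}-1}$ and control the valuation of $\zeta^{b}-1$ at primes above $2$) that you do not carry out. Second, and fatally for this route as stated, the case $d=2$ produces \emph{no} residue condition at all: if a nut has even weight $a$ and (necessarily odd) weight $b$, the simple zero of $g^{a}-1$ at $g=-1$ is cancelled by the simple zero of $g^{b}+1$, so every term of \eqref{eq:G-sign} is regular at $g=-1$ and constancy yields only the rational relation $\sum_{2\mid a}\eps(\nut)\,b/a\in\Zbb$, which does not by itself determine the parity of $M(2)$ — and hence neither $m(2)$ nor $m(1)$. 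So the residue calculus alone cannot close the $w=2$ and $w=1$ cases; the pairing along the isotropy spheres in $\cM^{\Zbb_w}$ (or an equivalent geometric input) is the missing ingredient, and it is also the part of the generalization to the ALF setting that actually needs to be checked.
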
 
We shall make use of the following notation. Let $\cM$ be a space with an $S^1$-action. For $w >1$, let $\cM^{\Zbb_w}$ be the fixed point set of the discrete subgroup $\Zbb_w \subset S^1$. By \cite[Lemma 2.2]{2018JGP...133..181J}, the proof of which applies in the current situation,  we have that if a component $Z \subset \cM^{\Zbb_w}$ contains a fixed point $\nut$ for $S^1/\Zbb_w$, then $Z = S^2$, and $Z$ contains exactly one more fixed point $Q \subset \Zbb_w$ which is also a nut. Further, one of the weights of $Q$ is $w$. 
\begin{lemma}[Companion nuts \protect{\cite[Lemma 3.5]{2018JGP...133..181J}}] \label{lem:3.5}
Let $(\cM, g_{ab})$ be an ALF $S^1$-instanton with fixed point set $\Fixed$ and let $\nut \in \Fixed$ be a nut with weights $a,w$, with $w > 1$. Then there is a unique nut $Q \in S^2 \subset \cM^{\Zbb_w}$. Let $b$ denote the other weight of $Q$. Then 
\begin{enumerate} 
\item 
If $\eps(Q) = \eps(\nut)$, then $a \equiv -b \mod w$
\item 
If $\eps(Q) = - \eps(\nut)$, then $a \equiv b \mod w$. 
\end{enumerate} 
\end{lemma}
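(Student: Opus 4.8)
The lemma (Companion nuts) asserts a congruence relation modulo $w$ between the weights of a nut $\nut$ (with weights $a, w$, $w>1$) and its companion nut $Q$, with the sign of the congruence determined by whether the orientations $\eps(Q)$ and $\eps(\nut)$ agree. Let me think about what tools are available: the $G$-signature formula \eqref{eq:G-sign} which holds as an algebraic identity in the independent variable $g$, the weight balance lemma (Lemma \ref{lem:2.6}), and the structural fact cited from \cite[Lemma 2.2]{2018JGP...133..181J} that the $\Zbb_w$-fixed component through $\nut$ is a two-sphere $S^2$ containing exactly one other fixed point $Q$, one of whose weights is $w$.

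Let me think step by step about the strategy.

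The key idea must be to analyze \eqref{eq:G-sign} near a root of unity. The nut $\nut$ has a weight $w > 1$; the corresponding factor $\frac{g^{w} + 1}{g^{w} - 1}$ in the signature formula blows up precisely when $g$ is a primitive $w$-th root of unity, i.e.\ $g^w = 1$ but $g \neq 1$ (so $g \ne 1$ avoids the bolt-type denominators $(g-1)^2$). Since $\sign[\cM]$ is a fixed integer and the right-hand side of \eqref{eq:G-sign} equals it identically in $g$, the poles of the individual summands at a primitive $w$-th root of unity must cancel. Let me localize this: I would fix a primitive $w$-th root of unity $\zeta$ and examine which nuts contribute a pole there. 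A nut $\nut_i$ contributes a pole at $g = \zeta$ precisely when one of its weights $w^j(\nut_i)$ is divisible by $w$ — but since weights are at most... actually the weights can exceed $w$, so I should restrict to weights equal to a multiple of $w$. By the structural lemma, among the nuts sharing the weight $w$ (which come in even number by Lemma \ref{lem:2.6}), the companion $Q$ of $\nut$ is the relevant partner on the same $S^2 \subset \cM^{\Zbb_w}$. The plan is to compute the residue (or, more precisely, the singular part) of the right-hand side at $g = \zeta$ and demand that it vanish.

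**The main computation.** For a simple pole at $g = \zeta$ coming from the factor $\frac{g^a + 1}{g^a - 1}$ times $\frac{g^w + 1}{g^w - 1}$ — here $\nut$ has weights $a$ and $w$, so only the $w$-factor produces the pole at $\zeta$ (assuming $w \nmid a$, which holds since $a, w$ are... not necessarily coprime, but I can reduce), while the $a$-factor evaluates to the finite number $\frac{\zeta^a + 1}{\zeta^a - 1}$. I would compute $\lim_{g \to \zeta}(g - \zeta)\,\frac{g^w+1}{g^w-1} = \frac{2\zeta}{w\zeta^{w-1}} = \frac{2\zeta^2}{w}$ (using $\zeta^w = 1$), so the residue of $\nut$'s term is $\eps(\nut)\,\frac{\zeta^a + 1}{\zeta^a - 1}\cdot \frac{2\zeta^2}{w}$. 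The companion $Q$ has weights $w$ and $b$, contributing residue $\eps(Q)\,\frac{\zeta^b + 1}{\zeta^b - 1}\cdot\frac{2\zeta^2}{w}$. The requirement that residues cancel (after checking via the structural lemma that no other nut contributes a pole at this particular $\zeta$ — this is where \cite[Lemma 2.2]{2018JGP...133..181J} and the even-multiplicity pairing of Lemma \ref{lem:2.6} are essential) forces
\[
\eps(\nut)\,\frac{\zeta^a + 1}{\zeta^a - 1} + \eps(Q)\,\frac{\zeta^b + 1}{\zeta^b - 1} = 0.
\]
Now I would manipulate this identity. If $\eps(Q) = \eps(\nut)$, the relation becomes $\frac{\zeta^a+1}{\zeta^a-1} = -\frac{\zeta^b+1}{\zeta^b-1}$; cross-multiplying and simplifying yields $\zeta^{a+b} = 1$, i.e.\ $a + b \equiv 0 \pmod w$, giving $a \equiv -b \pmod w$. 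If $\eps(Q) = -\eps(\nut)$, the relation becomes $\frac{\zeta^a+1}{\zeta^a-1} = \frac{\zeta^b+1}{\zeta^b-1}$, which forces $\zeta^a = \zeta^b$, i.e.\ $a \equiv b \pmod w$. This reproduces exactly the two cases of the lemma.

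**Anticipated obstacles.** The main obstacle, I expect, is justifying rigorously that \emph{only} the pair $\{\nut, Q\}$ contributes to the pole at the chosen primitive root $\zeta$, so that the cancellation is genuinely a two-term relation rather than involving an uncontrolled sum. This requires care: other nuts may also have weights divisible by $w$, and the bolt terms $\frac{4g}{(g-1)^2}$ are regular at $\zeta \neq 1$ but one must confirm this. The clean way to isolate the pair is to pass to the subgroup $\Zbb_w$ and use the structural result that the $\Zbb_w$-fixed locus decomposes into two-spheres each carrying exactly two $S^1$-fixed points, so that the residue computation can be performed one $S^2$-component at a time — the weight-$w$ poles organize themselves by these components, and the relation must hold within each. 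A secondary technical point is handling weights that are proper multiples of $w$ (rather than equal to $w$): here I would either argue that the companion structure of Lemma \ref{lem:3.5} is stated for the specific weight $w$ appearing on $\nut$ and reduce to a primitive root adapted to that weight, or choose $\zeta$ to be a primitive $w$-th root so that only weights with $w \mid w^j$ resonate, then track carefully how higher multiples behave. Finally, I would need to confirm the algebraic simplifications $\frac{x+1}{x-1} = -\frac{y+1}{y-1} \iff xy = 1$ and $\frac{x+1}{x-1} = \frac{y+1}{y-1} \iff x = y$ (for $x, y \neq 1$), which are elementary but must be stated to close the argument.
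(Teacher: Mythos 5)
The paper does not actually reprove this lemma: it invokes \cite[Lemma 3.5]{2018JGP...133..181J} and argues only that Jang's proof carries over to the ALF setting (via the reformulation of \eqref{eq:G-sign} as a compact $G$-signature formula with fictitious nuts). Jang's proof is a \emph{local} equivariant argument, not a residue computation: since the component $Z\cong S^2$ of $\cM^{\Zbb_w}$ through $\nut$ is connected and $\Zbb_w$ acts trivially on $Z$, the $\Zbb_w$-representation on the fibres of the normal bundle $N(Z)$ is locally constant, hence the same at $\nut$ and at $Q$. At $\nut$ the normal fibre is the weight-$a$ line (the weight-$w$ line being tangent to $Z$), at $Q$ it is the weight-$b$ line, and comparing the two representations --- with the identification of the fibres fixed up to the sign recorded by $\eps(\nut)$ versus $\eps(Q)$ --- gives $a\equiv -b$ or $a\equiv b \bmod w$ in the two cases. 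Your route through the poles of \eqref{eq:G-sign} at primitive $w$-th roots of unity is genuinely different, and your algebra at the end (the equivalences $xy=1$ and $x=y$) is correct.

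However, there is a real gap at precisely the step you flag. The identity \eqref{eq:G-sign} holds globally in the variable $g$, so what you obtain at a primitive $w$-th root $\zeta$ is a \emph{single} linear relation: the sum of the residues over \emph{all} nuts having a weight divisible by $w$ (not only those with a weight equal to $w$, since other nuts may carry weights $kw$) vanishes. When more than one companion pair is present this one equation does not force the two-term cancellations you need, and it cannot distinguish which nut is paired with which; e.g.\ with four nuts of weight $w$ the relation $\sum_i \eps(\nut_i)\frac{\zeta^{a_i}+1}{\zeta^{a_i}-1}=0$ admits solutions that are not organized pairwise according to the $S^2$-components. Your proposed remedy --- ``perform the residue computation one $S^2$-component at a time'' --- is exactly the assertion that requires proof, and it does not follow from the global identity together with Lemma \ref{lem:2.6} and the quoted structure of $\cM^{\Zbb_w}$; establishing it would require an equivariant localization statement (contributions of components of $\cM^{\Zbb_w}$ to $\sign(\zeta,\cM)$ matching limits of the generic fixed-point contributions component by component) that is not among the tools you invoke. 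The local normal-bundle argument sketched above closes the proof without any global input and is the intended one.
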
 

\begin{lemma}[Nuts with highest weights \protect{\cite[Lemma 3.6]{2018JGP...133..181J}}] 
\label{lem:3.6}
Let $(\cM, g_{ab})$ be an ALF $S^1$-instanton with fixed point set $\Fixed$ and let $\nut \in \Fixed$ be a nut with weights $a,w$. Assume that $w$ is the biggest of all weights, and that $w>1$. Then there is another nut $Q$ with weights $b,w$ and the following holds.  
\begin{enumerate} 
\item 
If $\eps(Q) = \eps(\nut)$, then $w=a+b$.
\item 
If $\eps(Q) = - \eps(\nut)$, then $a=b$.
\end{enumerate} 
\end{lemma}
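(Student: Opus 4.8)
The plan is to obtain the lemma as a clean consequence of the companion-nut relations of Lemma \ref{lem:3.5}, so that the heavy lifting via the $G$-signature formula has already been done and only an elementary number-theoretic step remains. First I would apply Lemma \ref{lem:3.5} to the nut $\nut$: since $w > 1$, it produces a companion nut $Q \in S^2 \subset \cM^{\Zbb_w}$ sharing the weight $w$, so $Q$ has weights $b, w$, and it supplies the congruences $a \equiv -b \mod w$ when $\eps(Q) = \eps(\nut)$, and $a \equiv b \mod w$ when $\eps(Q) = -\eps(\nut)$. This already gives the existence of the required nut $Q$ with weights $b, w$; what remains is to upgrade the two congruences to the stated equalities.

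The second step is to pin down the exact ranges of $a$ and $b$. Because the two weights of any single nut are mutually prime, we have $\gcd(a, w) = 1$ and $\gcd(b, w) = 1$; combined with $w > 1$ this forces $a \neq w$ and $b \neq w$. Since by hypothesis $w$ is the biggest weight occurring in $\Fixed$, I then obtain the sharp two-sided bounds $1 \leq a \leq w-1$ and $1 \leq b \leq w-1$.

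Finally I would convert the congruences into equalities using these bounds. In the case $\eps(Q) = \eps(\nut)$, the relation $a + b \equiv 0 \mod w$ together with $2 \leq a + b \leq 2w - 2$ leaves $w$ as the unique admissible multiple of $w$, giving $w = a + b$. In the case $\eps(Q) = -\eps(\nut)$, the relation $a - b \equiv 0 \mod w$ together with $|a - b| \leq w - 2 < w$ forces $a - b = 0$, that is $a = b$. The edge case $w = 2$ (where necessarily $a = b = 1$) is consistent with both conclusions.

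The argument is essentially elementary once Lemma \ref{lem:3.5} is invoked, and the only genuinely delicate point, which I would take care to justify explicitly, is that $a$ and $b$ are \emph{strictly} less than $w$. This relies on two distinct inputs that must both be used: the coprimality of the two weights of a nut, which rules out $a = w$ and $b = w$, and the extremality hypothesis on $w$, which rules out $a > w$ and $b > w$. With these in place the range restriction is tight enough that each congruence has a unique solution, and the lemma follows.
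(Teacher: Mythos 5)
Your proposal is correct, and it is essentially the argument behind the cited result: the paper itself only refers to Jang's Lemma 3.6 (noting that the proof generalizes to the ALF setting), and that proof proceeds exactly as you do, by combining the companion-nut congruences of Lemma \ref{lem:3.5} with the bounds $1 \leq a, b \leq w-1$ coming from coprimality of a nut's weights together with the maximality of $w$. The elementary step converting the congruences $a \equiv \mp b \bmod w$ into $w = a+b$ respectively $a = b$ is carried out correctly, including the justification that $a$ and $b$ are strictly less than $w$.
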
 

\begin{remark}[Examples of fixed point structures]
\label{rem:nutfacts} 
Let $(M, g_{ab})$ be a four-dim\-en\-sional, oriented AF $S^1$ space, with only isolated fixed points. 
\begin{enumerate} 
\item 
In order for the right hand side of \eqref{eq:G-sign} to be constant, it must hold that $\nnuts \geq 2$. See \cite[Remark 3.2]{MR3343967}. 
\item \label{point:2nut} 
If $\nnuts=2$, then \eqref{eq:G-sign} implies $\sign[\cM]=0$, and the nuts have weights 
\begin{align}
\{\pm, a, b\}, \quad  \{\mp,a,b\},
\end{align} 
see \cite[Remark 3.4]{MR3343967}. The Euclidean Kerr family provides an example with $n=2$ and $R_{ab} = 0$. Euclidean Kerr has Petrov type $D^+D^-$. 
\item \label{point:3nut} 
For $n=3$, the weights of the nuts are 
\begin{align} 
\{\pm, a,b\} , \quad \{\mp, a, a+b\}, \quad \{ \mp, b, a+b\}
\end{align} 
for some mutually prime, positive integers $a,b$, and $\sign[\cM] = \pm 1$, agreeing with the orientation of nut 1. See \cite[Theorem 1.8]{MR3343967}, \cite[Theorem 7.1]{2018JGP...133..181J}. The Chen--Teo example \cite{2011PhLB..703..359C} has $n=3$ and $R_{ab} = 0$, and is one-sided type $D$.
\item 
See \cite[Theorem 7.2]{2018JGP...133..181J} for the case of $n=4$.  
\end{enumerate} 
\end{remark}

\section{Geometry with a Killing field} \label{sec:S1inst} 

In this section, we derive some useful facts for Ricci-flat four-manifolds with a non-vanishing Killing field. The presentation here follows \cite{gibbons:hawking:1979} but adds more detail. 
Let $(\cM, g_{ab})$ be a Ricci-flat four-manifold and let 
$\xi^a$ be a Killing vector,
\begin{align}
\nabla_{a}\xi_{b} + \nabla_{b}\xi_{a}={}&0.
\end{align}
The scaling of $\xi^a$ is arbitrary throughout this section; it will be fixed via asymptotic conditions in Section \ref{sec:divident}. 
It is convenient to decompose certain fields into self dual (SD) and anti-self dual (ASD) parts, which we denote by superscripts $+$ and $-$, respectively.

\begin{definition}
Define the following covariant expressions derived from $\xi^{a}$,
\begin{subequations} \label{eq:concomitants} 
\begin{align}
\lambda  ={}& \xi_{a} \xi^{a}, \label{def:lambda} \\
F_{ab}={}&\nabla_{a}\xi_{b}, \label{def:F_ab} \\
\PM{\Fcal}_{ab}={}&F_{ab} \pm \tfrac{1}{2} \epsilon_{ab}{}^{cd} F_{cd}, \label{def:Fcal_ab}\\
\PM{\Fcal}^2={}&\PM{\Fcal}_{ab} \PM{\Fcal}^{ab}, \label{def:Fcal2} \\
\PM{\sigma}_{a}={}&2 \PM{\Fcal}_{ab} \xi^{b}, \label{def:sigmaa} \\
\mu ={}& \frac{1}{2}(\nabla_a\xi_b)(\nabla^a\xi^b), \qquad
\nu = \frac{1}{4}\epsilon_{abcd}(\nabla^a\xi^b)(\nabla^c\xi^d), \label{eq:munudef}
\end{align}
\end{subequations}
and define the SD/ASD Weyl tensor and its square via
\begin{align}
\PM{\Wcal}_{abcd} ={}& W_{abcd} \pm \tfrac{1}{2} \epsilon_{cd}{}^{ef} W_{abef}, \\
 \PM{\Wcal}^2 ={}& \PM{\Wcal}_{abcd}\PM{\Wcal}^{abcd} 
\end{align}
\end{definition} 
\begin{remark} 
The quantities $\mu, \nu$ given in \eqref{eq:munudef} are related to $\PM{\Fcal}^2$ via 
\begin{align}\label{eq:F2munu}
\PM{\Fcal}^2 = 4(\mu \pm \nu).
\end{align}
\end{remark}

The next proposition states a number of identities for the fields that have just been introduced. The Lorentzian signature version  is well-known, see \cite{Mars:2000:MR1779516} and references therein. The proof is deferred to Appendix \ref{sec:xi-id-proof}.

\begin{prop} \label{prop:xi-id} 
The fields defined above have the following algebraic and differential properties
\begin{subequations} \label{eq:xi-id}
\begin{align}
\PM{\Fcal}_{ab} \PM{\sigma}^{b}={}&- \tfrac{1}{2} \PM{\Fcal}^2\xi_{a}, \label{eq:Fsigma} \\
\PM{\sigma}_{a} \PM{\sigma}^{a}={}& \PM{\Fcal}^2 \lambda, \label{eq:sigmasq}\\
\nabla_{c}\PM{\Fcal}_{ab}={}&- \PM{\Wcal}_{abcd} \xi^{d}, \label{eq:nablaFcal} \\
\nabla_{c}\PM{\Fcal}^2={}&-2  \PM{\Wcal}_{cabd} \xi^{a}\PM{\Fcal}^{bd}, \label{eq:nablaFsq}\\
\nabla_{[a}\PM{\sigma}_{b]} ={}& 0, \label{eq:dsigmapm} \\
\nabla_{a}\PM{\sigma}^{a}={}&\PM{\Fcal}^2. \label{eq:divsigma} \\
\nabla^a \nabla_a \PM{\Fcal}^2 ={}& 
- \PM{\Wcal}_{abcd} \PM{\Fcal}^{ab} \PM{\Fcal}^{cd}
+ \frac{1}{2} \lambda  \PM{\Wcal}^2\label{eq:laplaceFsq}
\end{align}
\end{subequations}
\end{prop}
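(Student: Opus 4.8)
The plan is to establish the differential identity \eqref{eq:nablaFcal} first, since every other identity follows from it together with purely algebraic facts about (anti)self-dual two-forms and the Weyl tensor. I would start from the standard second-order identity for a Killing field, $\nabla_a\nabla_b\xi_c = -R_{bca}{}^d\xi_d$, which is a consequence of the Ricci identity (in the paper's convention) and the first Bianchi identity. This gives $\nabla_c F_{ab} = \nabla_c\nabla_a\xi_b = -R_{abcd}\xi^d$, and since $(\cM,g_{ab})$ is Ricci-flat we may replace $R_{abcd}$ by $W_{abcd}$. Taking the (anti)self-dual part in the pair $ab$ and using that a tensor with the algebraic symmetries of the Weyl tensor has equal left and right duals, the $ab$-dualization of $F$ is converted into the $cd$-dualization of $W$, yielding $\nabla_c\PM\Fcal_{ab} = -\PM\Wcal_{abcd}\xi^d$.

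The algebraic identities \eqref{eq:Fsigma}--\eqref{eq:sigmasq} do not use \eqref{eq:nablaFcal}: substituting $\PM\sigma_a = 2\PM\Fcal_{ab}\xi^b$ and using the fundamental four-dimensional identity $\PM\Fcal_{ac}\PM\Fcal_b{}^c = \tfrac14\PM\Fcal^2 g_{ab}$ for an (anti)self-dual two-form gives \eqref{eq:Fsigma} directly, while \eqref{eq:sigmasq} is the contraction of \eqref{eq:Fsigma} with $\xi^a$. For \eqref{eq:nablaFsq} I would differentiate $\PM\Fcal^2 = \PM\Fcal_{ab}\PM\Fcal^{ab}$, insert \eqref{eq:nablaFcal}, and rearrange the indices using the pair symmetry $\PM\Wcal_{abcd} = \PM\Wcal_{cdab}$.

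The remaining first-order identities \eqref{eq:dsigmapm}--\eqref{eq:divsigma} both expand $\PM\sigma_a = 2\PM\Fcal_{ab}\xi^b$ by the Leibniz rule, producing one term with $\nabla\PM\Fcal$, handled by \eqref{eq:nablaFcal}, and one with $\nabla\xi = F$. For the divergence \eqref{eq:divsigma}, tracing \eqref{eq:nablaFcal} shows $\nabla^a\PM\Fcal_{ab} = 0$, since the relevant trace of $\PM\Wcal$ vanishes ($\PM\Wcal$ is trace-free, being $W$ plus its dual), while the second term gives $2\PM\Fcal^{ab}F_{ab} = \PM\Fcal^2$ by orthogonality of the two chiralities. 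For the closedness \eqref{eq:dsigmapm}, the curvature term is symmetric in the antisymmetrized indices thanks to the pair symmetry of $\PM\Wcal$ (which makes $\PM\Wcal_{bcad}\xi^c\xi^d$ symmetric in $a,b$), and the algebraic term, the antisymmetrization of $\PM\Fcal_{bc}F_a{}^c$, vanishes because, writing $F = \tfrac12(\P\Fcal + \M\Fcal)$, the same-chirality contraction $\PM\Fcal_{bc}\PM\Fcal_a{}^c = \tfrac14\PM\Fcal^2 g_{ab}$ is symmetric and the opposite-chirality contraction $\PM\Fcal_{bc}\MP\Fcal_a{}^c$ is symmetric as well (reflecting $\Lambda^2_+\otimes\Lambda^2_-\cong\mathrm{Sym}^2_0\,T^*\cM$).

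The only substantial computation is the Laplacian identity \eqref{eq:laplaceFsq}, which I regard as the main obstacle. I would take a further divergence of \eqref{eq:nablaFsq}, so that $\nabla^a\nabla_a\PM\Fcal^2 = -2\nabla^c(\PM\Wcal_{cabd}\xi^a\PM\Fcal^{bd})$ splits into three pieces. The piece where the derivative hits $\PM\Wcal$ vanishes: expanding $\PM\Wcal$ as $W$ plus a dual, each term is an ordinary divergence $\nabla^c W_{ca\cdots}$ of the Weyl tensor on a first-pair index, which is zero in a Ricci-flat space by the contracted second Bianchi identity, so $\nabla^c\PM\Wcal_{cabd} = 0$. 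The piece where the derivative hits $\xi$ produces $\PM\Wcal_{cabd}F^{ca}\PM\Fcal^{bd}$; since $\PM\Wcal$ is (anti)self-dual also in its first pair, only the matching-chirality part of $F$ survives, giving $\tfrac12\PM\Wcal_{abcd}\PM\Fcal^{ab}\PM\Fcal^{cd}$. The piece where the derivative hits $\PM\Fcal$ is evaluated with \eqref{eq:nablaFcal} once more, turning it into the quadratic contraction $\xi^a\xi^e\PM\Wcal_{cabd}\PM\Wcal^{bdc}{}_e$; after using the pair symmetry to bring the two Weyl factors into the configuration $\PM\Wcal_{bdca}\PM\Wcal^{bdc}{}_e$, the four-dimensional identity $\PM\Wcal_{bcda}\PM\Wcal^{bcd}{}_e = \tfrac14 g_{ae}\PM\Wcal^2$ for the (anti)self-dual Weyl tensor reduces it to $\tfrac14\lambda\PM\Wcal^2$. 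Collecting the three contributions with the overall factor $-2$ then yields \eqref{eq:laplaceFsq}. The crux is this last quadratic Weyl identity, most transparently verified in spinor form, where $\PM\Wcal$ is the totally symmetric $\Psi_{ABCD}$, together with careful bookkeeping of the self-dual projections throughout.
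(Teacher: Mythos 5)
Your proposal is correct and follows essentially the same route as the paper's proof in Appendix A: the algebraic contraction identity $\PM{\Fcal}_{ab}\PM{\Fcal}^{b}{}_{c} = -\tfrac14\PM{\Fcal}^2 g_{ac}$, the Killing/Ricci-flat identity $\nabla_c F_{ab} = -W_{abcd}\xi^d$ combined with the equality of left and right duals of the Weyl tensor, and then Leibniz, chirality orthogonality, and the Bianchi identities for the remaining equations, including the same three-term splitting for \eqref{eq:laplaceFsq}. The only cosmetic differences are that you verify \eqref{eq:dsigmapm} by direct antisymmetrization using the symmetry of mixed-chirality contractions where the paper dualizes the equation first, and that you invoke the spinor form for the quadratic Weyl contraction where the paper uses the analogue of \eqref{eq:FcalFcal1contr}; these are equivalent.
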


Due to \eqref{eq:dsigmapm} and Lemma \ref{lem:H1zero} we can introduce the Ernst potentials $\PM{\Ecal}$ via
\begin{align} \label{eq:Epm}
\nabla_{a}\PM{\Ecal}={}&\PM{\sigma}_{a},
\end{align}
which, by \eqref{eq:divsigma}, \eqref{eq:sigmasq}, solve the Ernst equations
\begin{align}\label{eq:DeltaEcal}
\lambda \nabla^a \nabla_a \PM{\Ecal} = \nabla_a \PM{\Ecal} \nabla^a \PM{\Ecal}.
\end{align}
With the twist 1-form
\begin{align}
\omega_{a}={}&\epsilon_{abcd} F^{cd} \xi^{b},
\end{align}
we have the decomposition
\begin{align} \label{eq:dEcal}
\PM{\sigma}_{a}={}& \nabla_{a}\lambda \pm \omega_{a}.
\end{align}
In view of \eqref{eq:dEcal}  and \eqref{eq:dsigmapm}, we have that $\omega_{a}$ is closed, 
\begin{align} 
\nabla_{[a} \omega_{b]} = 0. 
\end{align} 
and hence by Lemma \ref{lem:H1zero}, there is a  twist potential $\omega$  such that 
\begin{align} 
\nabla_a \omega = \omega_a. 
\end{align} 
The Ernst potentials $\PM{\cE}$ take the form
\begin{align} \label{eq:Ecal-lam-om} 
\PM{\Ecal} = \lambda \pm \omega,
\end{align}
and we also find  
\begin{align}
\lambda F_{ab} ={}&- \tfrac{1}{2} \epsilon_{abcd} \xi^{c} \omega^{d}
-\xi_{[a}\nabla_{b]}\lambda .
\end{align}

\subsection{Charges} 

Following \cite[\S 5]{gibbons:hawking:1979}, let 
\begin{subequations}\label{eq:currents}
\begin{align}
J_{T}{}^{a}={}&\frac{\M{\sigma}^{a} -  \P{\sigma}^{a}}{2 \lambda^2} = - \frac{\omega^a}{\lambda^2}, \label{eq:JT-def} \\
J_{D}{}^{a}={}&\frac{\P{\Ecal} \M{\sigma}^{a} + \M{\Ecal} \P{\sigma}^{a}}{2 \lambda^2} = \frac{-\omega \omega^a + \lambda \nabla^a\lambda}{\lambda^2},\\
J_{E}{}^{a}={}&\frac{\P{\Ecal}^2 \M{\sigma}^{a} -  \M{\Ecal}^2 \P{\sigma}^{a}}{2 \lambda^2} = \frac{-(\lambda^2 + \omega^2)\omega^a + 2\lambda\omega\nabla^a\lambda}{\lambda^2} .
\end{align}
\end{subequations}
With the assumptions and definitions above, the currents $J_{T}{}^{a}, J_{D}{}^{a}, J_{E}{}^{a}$, called Translation, Dilation and Ehlers currents, respectively, are conserved, that is $\nabla_a J_{T}{}^{a} = \nabla_a J_{D}{}^{a} = \nabla_a J_{E}{}^{a} = 0$. Hence, integrating the currents over closed 3-surfaces enclosing but not intersecting nuts and bolts defines quasilocally conserved charges.  

Next, we will compute the charges corresponding to $J_T{}^a$.

\begin{definition}
Let $K$ be a nut or a bolt in $\cM$. Let $U$ be an open set containing $K$ with smooth boundary $S$, so that neither $U$ nor $S$ intersects any other nut or bolt. Let $n^a$ be the inward pointing unit normal to $S$. The charge of $K$ is 
\begin{align} \label{def:N(K)}
N(K) 
= \frac{1}{8\pi} \int_S J^T _a n^a \, d\mu
= -\frac{1}{8\pi} \int_S \lambda^{-2} \omega_a n^a \, d\mu.
\end{align}
\end{definition} 

We now discuss the geometry of bolts in more detail. For a bolt $\bolt$ we denote the volume form of its tangent bundle by $\eps^\perp{}_{ab}$ and the volume form of its normal bundle by $\eps^\parallel{}_{ab}$. They are both two-forms on $T\cM$ defined at points in $\bolt$, and they are related by 
\begin{align} \label{eq:epsperpepspara} 
\eps_{abcd} \eps^\perp{}^{cd} = 2 \eps^\parallel{}_{ab},
\qquad
\eps_{abcd} \eps^\parallel{}^{cd} = 2 \eps^\perp{}_{ab}.
\end{align}
On $\bolt$ we have
\begin{align} \label{eq:nablaxiepsperp} 
F_{ab} = \nabla_a \xi_b = \kappa \eps^\perp{}_{ab}
\end{align}
where $\kappa$ is constant, and 
\begin{align} \label{eq:Fcalepsperpepspara} 
\PM{\Fcal}_{ab} = \kappa (\eps^\perp{}_{ab} \pm \eps^\parallel{}_{ab}).
\end{align}
Define the curvature scalars
\begin{align}
\cR^{\parallel\parallel}
&= \frac{1}{2} R_{abcd} \eps^\parallel{}^{ab}\eps^\parallel{}^{cd}, \\ 
\cR^{\parallel\perp}
&= \frac{1}{2} R_{abcd} \eps^\parallel{}^{ab}\eps^\perp{}^{cd}, \\
\cR^{\perp\perp} 
&= \frac{1}{2} R_{abcd} \eps^\perp{}^{ab}\eps^\perp{}^{cd}.
\end{align}
Since $\bolt$ is totally geodesic  we have 
\begin{align} \label{eq:Rparaparagauss}
\cR^{\parallel\parallel} = R^\bolt = 2\mathcal{K}^\bolt,
\end{align}
and since $\cM$ is Ricci-flat,
\begin{align} \label{eq:Rperpperpgauss}
\cR^{\perp\perp} = R^\bolt = 2\mathcal{K}^\bolt.
\end{align}

\begin{prop} \label{prop:charges}
For a nut $\nut$ with surface gravities $\kappa^1,\kappa^2$  we have 
\begin{align}\label{nutgrav} 
N(\nut) = \frac{\pi}{2\kappa^1\kappa^2}.
\end{align}
For a bolt $\bolt$ with surface gravity $\kappa$ we have
\begin{align}
N(\bolt) 
={}&
- \frac{1}{8 \kappa^2} 
\int_\bolt \cR^{\parallel\perp} d\mu \nonumber \\ 
={}& 
-\frac{\pi}{2\kappa^2} \bolt \!\cdot\! \bolt .
\end{align}
\end{prop}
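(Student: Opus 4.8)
The plan is to exploit that the Translation current $J_{T}{}^{a} = -\omega^a/\lambda^2$ is conserved away from the fixed point set, so that the flux $\int_S J^T_a n^a\, d\mu$ is independent of the enclosing surface $S$ as long as $S$ separates $K$ from the rest of $\Fixed$. I would therefore evaluate $N(K)$ by letting $S$ collapse onto $K$: a small geodesic sphere around a nut, or the boundary of a thin tubular neighbourhood of a bolt. In each case the integrand has a definite homogeneity in the collapse parameter, so the leading term produces the exact charge and all higher-order (curvature and nonlinear) corrections drop out in the limit.

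For the nut, I would work in geodesic normal coordinates $(x^1,\dots,x^4)$ centred at $\nut$, in which $g$ is Euclidean to leading order and $\xi$ is the linear field determined by \eqref{eq:nablaxi(nut)}, i.e. a rotation with angular speed $\kappa^1$ in the $(x^1,x^2)$-plane and $\kappa^2$ in the $(x^3,x^4)$-plane. A short computation of $\omega_a = \epsilon_{abcd}F^{cd}\xi^b$ then gives the remarkably simple purely radial form $\omega_a = 2\kappa^1\kappa^2 x_a$, while $\lambda = (\kappa^1)^2\rho_1^2 + (\kappa^2)^2\rho_2^2$ with $\rho_1^2 = (x^1)^2+(x^2)^2$, $\rho_2^2 = (x^3)^2+(x^4)^2$. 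Writing $\rho_1 = r\cos\al$, $\rho_2 = r\sin\al$ on the sphere of radius $r$ and using the inward normal $n^a = -x^a/r$, the charge reduces to the elementary angular integral
\begin{align}
N(\nut) = \frac{\kappa^1\kappa^2}{4\pi}(2\pi)^2 \int_0^{\pi/2} \frac{\cos\al\sin\al\, d\al}{\big[(\kappa^1)^2\cos^2\al + (\kappa^2)^2\sin^2\al\big]^2},
\end{align}
which evaluates to $\pi/(2\kappa^1\kappa^2)$ after the substitution $u = \sin^2\al$. Since the $r$-dependence cancels, this is the exact value.

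For the bolt, I would use Fermi coordinates adapted to $\bolt$, with $(x^3,x^4)$ spanning the normal plane, so that $\xi$ generates the rotation of this plane with $\lambda = \kappa^2\rho^2 + O(\rho^4)$, $\rho^2 = (x^3)^2+(x^4)^2$. Passing to an orthonormal frame $\{e_1,e_2,e_3,e_4\}$ with $e_1,e_2$ tangent and $e_3,e_4$ normal, the only component of $\omega$ relevant for the inward flux is $\omega_{(3)} = 2\kappa\rho\, F_{(12)}$, where $F_{(12)}$ is the tangent-tangent part of $F_{ab}$. On $\bolt$ this part vanishes, and by \eqref{eq:nablaFcal} (equivalently the Killing identity $\nabla_c F_{ab} = -W_{abcd}\xi^d = -R_{abcd}\xi^d$ in the Ricci-flat case) so does its first normal derivative, because the right-hand side is proportional to $\xi$, which vanishes on $\bolt$. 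The leading contribution is therefore the second normal derivative, and differentiating the Killing identity once more and evaluating on $\bolt$ (where $\nabla_3\xi^d = F_3{}^d$ picks out $F_{34} = \kappa$) yields $F_{(12)} = -\tfrac12\kappa R_{1234}\rho^2 + O(\rho^3)$. Hence $\lambda^{-2}\omega_a n^a = \kappa^{-2}R_{1234}\rho^{-1} + O(1)$.

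The last step is to integrate over the collar. By the $S^1$-symmetry the integrand is independent of the normal angle, so the $\psi$-integral over the circle of circumference $2\pi\rho$ contributes a factor $2\pi\rho$ that cancels the $\rho^{-1}$, leaving
\begin{align}
N(\bolt) = -\frac{1}{4\kappa^2}\int_\bolt R_{1234}\, d\mu = -\frac{1}{8\kappa^2}\int_\bolt \cR^{\parallel\perp}\, d\mu,
\end{align}
using $\cR^{\parallel\perp} = 2R_{1234}$ in the adapted frame. Finally, $R_{1234}\, d\mu$ is exactly the curvature $2$-form of the normal connection of $\bolt$, so Chern--Gauss--Bonnet for the rank-two normal bundle gives $\int_\bolt R_{1234}\, d\mu = 2\pi\, \bolt\!\cdot\!\bolt$ (its Euler number being the self-intersection), whence $N(\bolt) = -\frac{\pi}{2\kappa^2}\bolt\!\cdot\!\bolt$. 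I expect the main obstacle to be the bolt computation: correctly extracting the $O(\rho^2)$ coefficient of the tangential part of $F$ through a double application of the Killing/Weyl identity, keeping track of signs and orientation conventions, and justifying rigorously that the subleading terms and the $O(\rho)$ geometry of the collar do not contribute in the limit $\rho\to 0$.
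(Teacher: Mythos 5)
Your proposal is correct and arrives at exactly the identities in the proposition, but it takes a slightly different computational route from the paper, most visibly for the nut. The paper never evaluates the flux integrand directly: it first converts $\lambda^{-2}\omega_a n^a$ on a level set of $\lambda$ into $-4\nu/|\nabla\lambda^2|$ using $\nabla^a\omega_a=4\nu$ and conservation of $J_T{}^a$, and then applies the coarea formula, so that the nut charge reduces to the Euclidean volume of the sublevel set $\{\lambda\le\delta^{1/2}\}$ (an ellipsoidal ball of volume $\delta\pi^2/2(\kappa^1\kappa^2)^2$) rather than to your explicit angular integral $\int_0^{\pi/2}\cos\al\sin\al\,[(\kappa^1)^2\cos^2\al+(\kappa^2)^2\sin^2\al]^{-2}d\al$; both give $\pi/2\kappa^1\kappa^2$, and your version has the minor advantage of making the surface-independence of the flux do the work of controlling error terms, while the paper's version avoids choosing coordinates on $S^3$. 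For the bolt the two arguments are essentially identical in substance: the paper Taylor-expands $\nabla^a\xi^b$ to second order in the normal directions via $\nabla_b\nabla_a\xi_c=-R_{acbd}\xi^d$ and extracts $4\nu\simeq-2\kappa^2|y|^2\cR^{\parallel\perp}$, which is the same second-order curvature coefficient you extract from $F_{(12)}$, and both conclude with the identification of $\int_\bolt\cR^{\parallel\perp}d\mu$ with $4\pi$ times the Euler number of the normal bundle (the paper cites Bott--Tu for this). The one place to be careful, as you anticipate, is the sign bookkeeping in $R^{ba}{}_{gf}\eps^{\perp fg}$ versus $R_{12gd}\eps^{\perp gd}$ and in the orientation of the inward normal; your final expressions are consistent with the paper's conventions, so this is a matter of care rather than a gap.
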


\begin{proof}
We will compute the integral \eqref{def:N(K)} when $S = S(\rho)$ is a connected component of the level set $\{\lambda=\rho\}$. Then $n^a = - \frac{\nabla^a \lambda}{|\nabla^a \lambda|}$. Since $\lambda^{-2} \omega_a$ is divergence free we have
\begin{align}
0 &= \nabla^a (\lambda^{-2} \omega_a)  \nonumber \\
&= -2 \lambda^{-3} (\nabla^a \lambda) \omega_a + \lambda^{-2} \nabla^a \omega_a \nonumber \\
&= 2 \lambda^{-3} |\nabla \lambda|  n^a \omega_a + \lambda^{-2} \nabla^a \omega_a,  
\end{align}
so
\begin{align}
\lambda^{-2} n^a \omega_a 
= - \frac{1}{2\lambda|\nabla\lambda| } \nabla^a \omega_a
= - \frac{1}{|\nabla\lambda^2| } \nabla^a \omega_a.
\end{align}
Here,   
\begin{align}
\nabla^a \omega_a
&= \nabla^a(\epsilon_{abcd} \xi^b (\nabla^c \xi^d) ) \nonumber \\ 
&= 
\epsilon_{abcd} (\nabla^a\xi^b) (\nabla^c \xi^d)
+ \epsilon_{abcd} \xi^b (\nabla^a \nabla^c \xi^d) \nonumber \\
&= 
4\nu
+ \epsilon_{abcd} \xi^b R^{dca}{}_f \xi^f \nonumber \\
&= 4\nu
\end{align}
where the second term vanishes by the Bianchi identity.

Let $\alpha \in \Reals$, and let $A$, $B$ be two expressions which satisfy $A = O(\lambda^{\alpha})$, $B = O(\lambda^{\alpha})$ near a component of the fixed point set. We write $A \simeq B$ if $A - B = O(\lambda^{\alpha + \delta})$ for some $\delta > 0$. 

Near a nut $\nut$ we have
\begin{align}
\nu \simeq 2 \kappa^1 \kappa^2
\end{align}
and in normal coordinates,
\begin{align}
\lambda \simeq 
(\kappa^1)^2 \left( (y^1)^2 + (y^2)^2 \right)
+ (\kappa^2)^2 \left( (y^3)^2 + (y^4)^2 \right).
\end{align}
Let $D(\delta)$ be the connected set containing $\nut$ where $\lambda \leq \delta$. For $\delta$ small, the coarea formula applied to the sets $S(\rho^{1/2})$ where $\lambda^2 = \rho$ tells us that
\begin{align}
\delta \cdot N(\nut)
&= \int_0^\delta N(\nut) \, d\rho \nonumber \\
&= -\frac{1}{8\pi} \int_0^\delta 
\int_{S(\rho^{1/2})} \lambda^{-2} \omega_a n^a \, d\mu 
\, d\rho \nonumber \\
&= \frac{1}{8\pi} \int_0^\delta 
\int_{S(\rho^{1/2})} \frac{4\nu}{|\nabla\lambda^2| } \, d\mu 
\, d\rho \nonumber \\
&\simeq 
\frac{8 \kappa^1 \kappa^2}{8\pi} \int_0^\delta 
\int_{S(\rho^{1/2})} \frac{1}{|\nabla \lambda^2|} \, d\mu
\, d\rho \nonumber \\
&= 
\frac{\kappa^1 \kappa^2}{\pi} \int_{D(\delta^{1/2})} 1 \, d\mu \nonumber \\
&= 
\frac{\kappa^1 \kappa^2}{\pi} \operatorname{Vol}(D(\delta^{1/2})).
\end{align} 
With a linear change of coordinates $(x^1,x^2,x^3,x^4) = (\kappa^1 y^1,\kappa^1 y^2,\kappa^2 y^3,\kappa^2 y^4)$ with determinant $(\kappa^1 \kappa^2)^2$ we have that $\lambda \leq \delta^{1/2}$ corresponds to $|x| \leq \delta^{1/4}$ and we find 
\begin{align}
\operatorname{Vol}(D(\delta^{1/2})) 
= \frac{1}{(\kappa^1 \kappa^2)^2} \cdot \frac{\pi^2(\delta^{1/4})^4}{2}
= \frac{\delta \pi^2}{2(\kappa^1 \kappa^2)^2}.
\end{align}
Together we have 
\begin{align}
N(\nut) = \frac{\pi}{2\kappa^1 \kappa^2}.
\end{align}
Next, working near a bolt $\bolt$, introduce local coordinates $x^a$ on $\cB$ and coordinates $y^a$ in the normal direction, with $|y(p)| \cong d(p,\cB)$. Using the identity 
\begin{align}
\eps^{\perp}{}_h{}^{[f} y^{g]} y^h = \frac{1}{2} |y|^2 \eps^{\perp fg},
\end{align}
we compute the Taylor expansion of $\nabla^a \xi^b$ to second order in the normal directions to $\bolt$,
\begin{align}
\nabla^a \xi^b 
&\simeq 
(\nabla^a \xi^b)|_\bolt + (\nabla_g \nabla^a \xi^b)|_\bolt y^g
+ \frac{1}{2} (\nabla_h \nabla_g \nabla^a \xi^b)|_\bolt y^g y^h \nonumber \\
&=
\kappa \eps^{\perp ab} + (R^{ba}{}_{gf} \xi^f)|_\bolt y^g
+ \frac{1}{2} (\nabla_h (R^{ba}{}_{gf} \xi^f))|_\bolt y^g y^h \nonumber \\
&= \kappa \eps^{\perp ab} +
\frac{1}{2}R^{ba}{}_{gf} (\nabla_h \xi^f)|_\bolt y^g y^h \nonumber \\
&=
\kappa \eps^{\perp ab} +
\frac{1}{2}\kappa R^{ba}{}_{gf} \eps^{\perp}{}_h{}^f y^g y^h \nonumber \\
&=
\kappa \left(
\eps^{\perp ab} + \frac{1}{4}|y|^2 R^{ba}{}_{gf}\eps^{\perp fg} 
\right).
\end{align}
We find that 
\begin{align}
4\nu 
&\simeq
\kappa^2 \eps_{abcd}
\left(
\eps^{\perp ab} + \frac{1}{4}|y|^2 R^{ba}{}_{gf}\eps^{\perp fg} 
\right)
\left(
\eps^{\perp cd} + \frac{1}{4}|y|^2 R^{dc}{}_{hi}\eps^{\perp ih} 
\right) \nonumber \\
&\simeq
\kappa^2 \eps_{abcd} \eps^{\perp ab} \eps^{\perp cd} 
+ \frac{1}{2}\kappa^2 |y|^2 \eps_{abcd} \eps^{\perp ab}
R^{dc}{}_{hi} \eps^{\perp ih} \nonumber \\
&=
0 + 
\kappa^2 |y|^2 \eps^{\parallel}{}_{cd} R^{dc}{}_{hi} \eps^{\perp ih} \nonumber \\
&=
- 2 \kappa^2 |y|^2 \cR^{\parallel\perp}. \label{eq:Rparper}
\end{align}
Near the bolt $\bolt$ we further have 
\begin{align}
\lambda \simeq \kappa^2 |y|^2, \qquad
\nabla^a \lambda \simeq 2 \kappa^2 y^a, \qquad 
|\nabla^a \lambda| \simeq 2 \kappa^2 |y|,
\end{align}
so
\begin{align}
\lambda^{-2} n^a \omega_a 
&= - \frac{1}{2\lambda|\nabla^a \lambda| } 4 \nu \nonumber \\
&\simeq
\frac{1}{2 \kappa^2 |y|^2 \cdot 2 \kappa^2 |y|} 2 \kappa^2 |y|^2 \cR^{\parallel\perp} \nonumber \\
&=
\frac{1}{2\kappa^2|y|}\cR^{\parallel\perp}. 
\end{align}
On the surface $S$ around $\bolt$ we have $\eps = \lambda \simeq \kappa^2 |y|^2$, so the surface $S$ consists of circles of circumference $\simeq 2\pi |y| \simeq 2\pi |\kappa|^{-1}\eps^{1/2}$ normal to $\bolt$. We thus find that
\begin{align}
N(\bolt) 
&= -\frac{1}{8\pi} \int_S \lambda^{-2} \omega_a n^a \, d\mu \nonumber \\
&\simeq
- \frac{1}{8\pi} 
\int_S \frac{1}{2\kappa^2|y|}\cR^{\parallel\perp} \, d\mu \nonumber \\
&\simeq
- \frac{1}{8\pi} 
\int_\bolt 2\pi |\kappa|^{-1}\eps^{1/2}
\frac{1}{2|\kappa| \eps^{1/2}}\cR^{\parallel\perp} \, d\mu \nonumber \\
&=
- \frac{1}{8 \kappa^2} 
\int_\bolt \cR^{\parallel\perp} \, d\mu.
\end{align}
Next, $\mathcal{R}^{\parallel\perp} d\mu = 4\pi e(NB)$, where $e(NB)$ is the Euler class of the normal bundle $NB$ of $B$ in $\cM$. From \cite[Theorem~11.17]{BottTu} we have
\begin{align}
\int_\bolt\mathcal{R}^{\parallel\perp} \, d\mu
= 4\pi \int_\bolt e(NB)
= 4\pi \bolt\!\cdot\! \bolt ,
\end{align} 
which gives the result. 
\end{proof}

\section{Divergence identities} \label{sec:divident}

In this section we derive a family of divergence identities, see  \eqref{eq:DivPsialpha} below, depending on a parameter $\beta$ taking real values. An alternative derivation on the orbit space, following \cite{MR747036,simon:1995}, will be given in Appendix~\ref{sec:quotient}. Equation \eqref{eq:DivPsialpha} with $\beta =1$ is the key ingredient of the uniqueness arguments presented in Section~\ref{sec:bound}.

After a suitable normalization of $\xi$, we may without loss of generality assume that the supremum of $\lambda$ is $1$. 
\begin{lemma}
Assume that the supremum of $\lambda$ is $1$. The following holds. 
\label{lem:Eprop}  
\begin{enumerate} 
\item \label{point:lmnO*} 
\begin{subequations}\label{eq:lomO*}
\begin{align} 
\lambda ={}& 1 + O^*(r^{-1}) \label{eq:lambdaO*}  \\
\intertext{The twist potential $\omega$ can be normalized so that $\omega \rightarrow 0$ at $\infty$ and in this case,} 
\omega ={}& O^*(r^{-1}) \label{eq:omegaO*} 
\end{align} 
\end{subequations} 
\item \label{point:cE-expand}
There are numbers $\PM{b} \leq 0$ and  $\delta > 0$ so that
\begin{align} \label{eq:cE-expand} 
\PM{\cE} = 1 + \PM{b} r^{-1} + O^*(r^{-1- \delta})
\end{align} 
If $\PM{\cE}$ is non-constant, then $\PM{b} < 0$. 
\item \label{point:cE} 
$\PM{\cE} \equiv 1$ or $-1 < \PM{\cE} < 1$ 
\end{enumerate} 
\end{lemma}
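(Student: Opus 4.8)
The plan is to treat the three points in order, extracting everything from the Ernst equations of Proposition \ref{prop:xi-id} together with the subharmonicity of $\lambda$ and of $\PM{\cE}$.

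\emph{Point (1).} First I would fix the scaling and recall from Proposition \ref{prop-asymptotics-killing} that on the end the bounded Killing field $\xi$ is asymptotic to a constant multiple $cT$ of the fibration generator, with decaying difference. Writing $g = \gring + h$ as in Definition \ref{def:BG} and using $\gring(T,T) = \eta(T)^2 = 1$, this gives $\lambda = \xi_a \xi^a = c^2 + O^*(r^{-1})$. To pin down $c^2 = 1$ I would use that $\lambda$ is subharmonic: from $2\lambda = \P{\cE} + \M{\cE}$ together with \eqref{eq:divsigma} and \eqref{eq:F2munu} one gets $\nabla^a \nabla_a \lambda = \tfrac12(\P{\Fcal}^2 + \M{\Fcal}^2) = 4\mu \geq 0$, so by the maximum principle $\lambda$ has no interior maximum and its supremum $1$ is the asymptotic value; hence $\lambda = 1 + O^*(r^{-1})$. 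For $\omega$ I would first estimate $F$: since $\nabla_c \PM{\Fcal}_{ab} = -\PM{\Wcal}_{abcd}\xi^d$ by \eqref{eq:nablaFcal} and $\Riem = O^*(r^{-3})$ by Remark \ref{rem:cone}, radial integration (with $F \to 0$, consistent with the model value $F = \tfrac12 d\eta = O^*(r^{-2})$) gives $F = O^*(r^{-2})$. Then $\omega_a = \epsilon_{abcd} F^{cd} \xi^b = O^*(r^{-2})$, so $\omega$ has a single well-defined limit at infinity, which I normalize to $0$, yielding $\omega = O^*(r^{-1})$.

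\emph{Point (2).} The key observation is that $\nabla^a \nabla_a \PM{\cE} = \nabla^a \PM{\sigma}_a = \PM{\Fcal}^2$ by \eqref{eq:Epm} and \eqref{eq:divsigma}, and that $\PM{\Fcal}^2 = \PM{\Fcal}_{ab}\PM{\Fcal}^{ab} = |\PM{\Fcal}|^2 \geq 0$ since $\PM{\Fcal}$ is (anti-)self-dual in Riemannian signature; thus $\PM{\cE}$ is subharmonic. From Point (1) and \eqref{eq:Ecal-lam-om}, $\PM{\cE} = 1 + O^*(r^{-1})$, and since $\PM{\cE}$ is $\xi$-invariant (its gradient $\PM{\sigma}_a = 2\PM{\Fcal}_{ab}\xi^b$ is built $\xi$-equivariantly) it descends asymptotically to a function on the $\mathbb{R}^3$ base of the fibration, giving $\PM{\cE} = 1 + \PM{b}\, r^{-1} + O^*(r^{-1-\delta})$ with $\PM{b}$ the monopole coefficient of the limiting harmonic function. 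To control the sign I would integrate $\nabla^a\nabla_a \PM{\cE} = \PM{\Fcal}^2$ over $\{r \leq \rho\}$ minus shrinking neighbourhoods of the components of $\Fixed$: the boundary terms at $\Fixed$ vanish in the limit since $\PM{\cE}$ is smooth and the boundary areas shrink, while the flux through $\{r = \rho\}$ tends to $-4\pi \ell\, \PM{b}$, with $\ell > 0$ the asymptotic $S^1$-fibre length. Hence $\int_\cM \PM{\Fcal}^2\, d\mu = -4\pi \ell\, \PM{b}$, forcing $\PM{b} \leq 0$; and $\PM{b} = 0$ forces $\PM{\Fcal}^2 \equiv 0$, i.e. $\PM{\sigma}_a = 0$, so $\PM{\cE} \equiv 1$. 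Equivalently, non-constant $\PM{\cE}$ gives $\PM{b} < 0$. This asymptotic-expansion step is the technically heaviest part, as it requires the ALF elliptic/asymptotic analysis reducing the decay of $\PM{\cE}-1$ to that of decaying harmonic functions on $\mathbb{R}^3$ so that the flux integral converges as claimed.

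\emph{Point (3).} The upper bound $\PM{\cE} \leq 1$ is the maximum principle for the subharmonic $\PM{\cE}$ with asymptotic value $1$, and the strong maximum principle upgrades this to the dichotomy $\PM{\cE} \equiv 1$ or $\PM{\cE} < 1$. For the lower bound I would use the algebraic identity $\P{\cE} + \M{\cE} = 2\lambda \geq 0$ from \eqref{eq:Ecal-lam-om}: combined with $\MP{\cE} \leq 1$ it gives $\PM{\cE} = 2\lambda - \MP{\cE} \geq -1$. Equality $\PM{\cE} = -1$ would force simultaneously $\lambda = 0$ (a fixed point) and $\MP{\cE} = 1$; since $\MP{\cE}$ is either $\equiv 1$ or strictly below $1$ at interior points by the strong maximum principle, strict inequality $\PM{\cE} > -1$ holds on $\cM \setminus \Fixed$. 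The one delicacy here is that in the (anti-)self-dual case $\MP{\Fcal} \equiv 0$ (so $\MP{\cE} \equiv 1$ and $\PM{\cE} = 2\lambda - 1$, as for multi-Taub--NUT), the value $-1$ is attained at the self-dual nuts, so the strict lower bound is to be read on the complement of $\Fixed$; establishing exactly this boundary behaviour is the remaining point requiring care.
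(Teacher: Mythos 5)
Your proof follows essentially the same route as the paper's: subharmonicity of $\lambda$ together with Proposition \ref{prop-asymptotics-killing} and Lemma \ref{lemma-piotr-limit} for point (1), the Poisson equation $\Delta\PM{\cE} = \PM{\Fcal}^2 \geq 0$ combined with the asymptotic expansion of Proposition \ref{prop:poisson-g} and Gauss's theorem for point (2), and the maximum principle plus the identity $\PM{\cE} = 2\lambda - \MP{\cE}$ for point (3). Your caveat about the half-flat case in point (3) is well taken: the paper's own chain $1 > \PM{\cE} = 2\lambda - \MP{\cE} \geq -\MP{\cE} > -1$ silently assumes $\MP{\cE} < 1$, which fails when $\MP{\cE} \equiv 1$, in which case $\PM{\cE} = 2\lambda - 1$ does attain the value $-1$ on $\Fixed$ (e.g.\ at a self-dual nut of Taub--NUT); in the paper's applications this is harmless because the half-flat case is excluded or treated separately, but your reading of the strict lower bound as holding on $\cM \setminus \Fixed$ is the more careful one.
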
 
\begin{proof} 

Since  $g_{ab}$ is Ricci-flat we have $\Delta \xi = 0$, and hence
\begin{align} 
\Delta |\xi|^2 = 2 \la \nabla\xi, \nabla\xi\ra . 
\end{align} 
By the maximum principle, we have $|\xi| < 1$. By Proposition \ref{prop-asymptotics-killing} it follows that $|\xi| \to 1$ and hence also $\lambda \to 1$ at infinity. 
We also have by Proposition \ref{prop-asymptotics-killing} that \begin{align} \label{eq:nablaxiO*2}
\nabla_a \xi_b = O^*(r^{-2})
\end{align} 
By Lemma \ref{lemma-piotr-limit} we may normalize $\omega$ to have $\omega \rightarrow 0$ at $\infty$. 
Equations \eqref{eq:lomO*} now follow from the definitions of $\lambda$ and $\omega$, Equation \eqref{eq:nablaxiO*2}, and Lemma \ref{lemma-piotr-limit}. 
From the definition of $\PM{\cE}$ and \eqref{eq:nablaxiO*2}, we have that $\PM{\cE} = 1 + O^*(r^{-1})$, and hence, by \eqref{eq:Epm}, \eqref{eq:divsigma}, and \eqref{eq:lambdaO*}, we have 
\begin{align} \label{eq:Duf}
\Delta \PM{\cE} = \PM{\Fcal}^2, \quad \text{where $\PM{\Fcal}^2 = O^*(r^{-4})$.}
\end{align}
By Proposition \ref{prop:poisson-g}, it follows that $\PM{\cE}$ has an expansion of the form \eqref{eq:cE-expand}. For $R$ large, define the set $U_R = \{r \leq R \}$ with boundary 
$\partial U_r = S_R = \{r = R \}$. From Gauss's theorem we get 
\begin{align}
\int_{\cM} \PM{\Fcal}^2 d\mu 
&= \int_{\cM} \Delta \PM{\cE} d\mu \nonumber \\
&= \lim_{R \to \infty} \int_{U_R} \Delta \PM{\cE} d\mu \nonumber \\
&= \lim_{R \to \infty} \int_{S_R} n^a \nabla_{a} \PM{\cE} d\mu \nonumber \\
&= \lim_{R \to \infty} \int_{S_R} \left( - \PM{b} r^{-2} + O^*(r^{-2- \delta})\right) d\mu \nonumber \\
&= - C \PM{b},
\end{align}
where $n^a$ is the outward pointing normal to $S_r$. Further, $C > 0$ is a constant related to the volume of $L$ in Definition~\ref{def:BG}, see similar computations in the proof of Lemma~ \ref{lem:bdrinfty}. It follows that $\PM{b} < 0$ unless 
$\PM{\Fcal}^2 \equiv 0$ in which case $\PM{\cE} \equiv 1$. This proves point \eqref{point:cE-expand}. If $\PM{b} < 0$, the maximum principle implies that $\PM{\cE} < 1$. We thus get $1 > \PM{\cE} = - \MP{\cE} + 2\lambda \geq - \MP{\cE} > -1$. This proves point \eqref{point:cE}.
\end{proof} 

With the normalization used in Lemma \ref{lem:Eprop}, we have 
\begin{subequations} \label{Elamom-infty} 
\begin{align} 
\lambda \to{}& 1 \quad \text{at $\infty$}, \\
\omega \to{}& 0 \quad \text{at $\infty$}, \\
\PM{\cE} \to{}& 1 \quad \text{at $\infty$}. 
\label{eq:Epm-infty} 
\end{align} 
\end{subequations} 
These condition will be imposed throughout the rest of the paper.

We also note the following equivalent conditions for $(\cM, g_{ab})$ to be half-flat, i.e. $\P{\Wcal}_{abcd} \equiv 0$ or $\M{\Wcal}_{abcd} \equiv 0$.
\begin{lemma} \label{prop:SD-characterizations}
For any sign $+$ or $-$, the following are equivalent. 
\begin{enumerate} 
\item \label{point:SD-1} 
$(\cM, g_{ab})$ is half-flat, that is $\PM{\Wcal}_{abcd} \equiv 0$,
\item \label{point:SD-2} 
$\PM{\Fcal} \equiv 0$, 
\item \label{point:SD-3} 
$\PM{\cE} \equiv 1$.
\end{enumerate}
\end{lemma}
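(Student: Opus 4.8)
The plan is to prove the two equivalences $\eqref{point:SD-2} \Leftrightarrow \eqref{point:SD-3}$ and $\eqref{point:SD-1} \Leftrightarrow \eqref{point:SD-2}$, which together give the stated characterization; the whole argument is carried out sign by sign, since each of the identities in Proposition~\ref{prop:xi-id} holds for a fixed choice of $\pm$. Throughout I will lean on two structural facts. First, in Riemannian signature the scalars $\PM{\Fcal}^2 = \PM{\Fcal}_{ab}\PM{\Fcal}^{ab}$ and $\PM{\Wcal}^2 = \PM{\Wcal}_{abcd}\PM{\Wcal}^{abcd}$ are the \emph{positive-definite} pointwise norms of $\PM{\Fcal}$ and of the (anti-)self-dual Weyl tensor, so each vanishes at a point if and only if the corresponding tensor does. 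Second, the locus $\{\lambda > 0\} = \cM \setminus \Fixed$ is open and dense, since the fixed point set $\Fixed$ consists of isolated nuts and totally geodesic bolts and hence has empty interior.

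For $\eqref{point:SD-3} \Rightarrow \eqref{point:SD-2}$ I would start from $\PM{\cE} \equiv 1$, so that $\PM{\sigma}_a = \nabla_a \PM{\cE} = 0$ by \eqref{eq:Epm}. The algebraic identity \eqref{eq:sigmasq} then gives $\PM{\Fcal}^2 \lambda = \PM{\sigma}_a \PM{\sigma}^a = 0$, whence $\PM{\Fcal}^2 = 0$ on the dense set $\{\lambda > 0\}$; positive definiteness yields $\PM{\Fcal} = 0$ there, and continuity extends this to all of $\cM$. Conversely, for $\eqref{point:SD-2} \Rightarrow \eqref{point:SD-3}$, if $\PM{\Fcal} \equiv 0$ then $\PM{\sigma}_a = 2 \PM{\Fcal}_{ab} \xi^b = 0$ by \eqref{def:sigmaa}, so $\PM{\cE}$ is constant; the normalization \eqref{eq:Epm-infty} forces this constant to equal $1$.

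For $\eqref{point:SD-2} \Rightarrow \eqref{point:SD-1}$ I would feed $\PM{\Fcal} \equiv 0$ into the refined identity \eqref{eq:laplaceFsq}: its left-hand side and the first term on the right vanish identically, leaving $\tfrac12 \lambda \PM{\Wcal}^2 = 0$. Restricting once more to $\{\lambda > 0\}$ and using positive definiteness of $\PM{\Wcal}^2$ gives $\PM{\Wcal} = 0$ on a dense set, hence everywhere. For the reverse $\eqref{point:SD-1} \Rightarrow \eqref{point:SD-2}$, setting $\PM{\Wcal} \equiv 0$ in \eqref{eq:nablaFcal} shows $\nabla_c \PM{\Fcal}_{ab} = 0$, so $\PM{\Fcal}$ is parallel and therefore of constant norm; but $F_{ab} = \nabla_a \xi_b = O^*(r^{-2})$ by \eqref{eq:nablaxiO*2} and $\PM{\Fcal}$ is an algebraic projection of $F$, so $|\PM{\Fcal}| \to 0$ at infinity and this constant norm must be zero.

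I expect the only genuine obstacle to be the repeated passage from a product equation such as $\PM{\Fcal}^2 \lambda = 0$ or $\lambda \PM{\Wcal}^2 = 0$ to the vanishing of the tensor itself, since $\lambda$ really does vanish on $\Fixed$ and cannot be divided out globally. This is handled exactly as indicated above, by first concluding on the open dense set $\{\lambda > 0\}$ and then extending by continuity of the smooth fields, so that no delicate estimate is required. The other point to check carefully, though routine, is that under the paper's convention $|\varpi|^2 = \varpi_{ab\cdots d}\varpi^{ab\cdots d}$ the quantities $\PM{\Fcal}^2$ and $\PM{\Wcal}^2$ are indeed nonnegative and definite, which is precisely what converts these scalar equations into honest vanishing statements.
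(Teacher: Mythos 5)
Your proof is correct and follows essentially the same route as the paper: the equivalence \eqref{point:SD-1}$\Leftrightarrow$\eqref{point:SD-2} via \eqref{eq:nablaFcal}/\eqref{eq:nablaFsq} together with decay at infinity in one direction and \eqref{eq:laplaceFsq} in the other, and \eqref{point:SD-2}$\Leftrightarrow$\eqref{point:SD-3} via \eqref{eq:sigmasq}, \eqref{eq:Epm} and the normalization \eqref{eq:Epm-infty}. The only cosmetic difference is that you deduce that $\PM{\Fcal}_{ab}$ is parallel from \eqref{eq:nablaFcal} where the paper deduces that $\PM{\Fcal}^2$ is constant from \eqref{eq:nablaFsq}, and you make explicit the density argument for dividing out $\lambda$ that the paper leaves implicit.
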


\begin{proof}
If $\PM{\Wcal}_{abcd} \equiv 0$ it follows from \eqref{eq:nablaFsq} that $\nabla_{c}\PM{\Fcal}^2 \equiv 0$, that is $\PM{\Fcal}^2$ is constant. By Proposition \ref{prop-asymptotics-killing} we have that $\PM{\Fcal}^2 \to 0$ at infinity, and hence $\PM{\Fcal}^2 \equiv 0$.
On the other hand, if $\PM{\Fcal}^2 \equiv 0$ we have $\PM{\Fcal}_{ab} \equiv 0$ and from \eqref{eq:laplaceFsq} it follows that $\PM{\Wcal}_{abcd} \equiv 0$. This shows \eqref{point:SD-1} $\Leftrightarrow$ \eqref{point:SD-2}. 
From \eqref{eq:sigmasq} it follows that $\PM{\Fcal} \equiv 0$ if and only if $\PM{\sigma}_{a} \equiv 0$, which by \eqref{eq:Epm} is equivalent to  $\nabla_{a}\PM{\Ecal} \equiv 0$. Since $\PM{\Ecal} \to 1$ at $\infty$ by \eqref{eq:Epm-infty} , this means that $\PM{\Ecal} \equiv 1$. Hence \eqref{point:SD-2} $\Leftrightarrow$ \eqref{point:SD-3}.  
\end{proof}

\begin{definition}\label{def:mst}
On domains where $\PM{\Ecal} \neq 1$ we introduce the objects
\begin{subequations}
\begin{align}
\PM{\Scal}_{abcd}={}&\PM{\Wcal}_{abcd} - \frac{6 (\PM{\Fcal}_{ab} \PM{\Fcal}_{cd} -  \tfrac{1}{3} \PM{\Fcal}^2 \PM{\Ical}_{abcd})}{1 -  \PM{\Ecal}},  \label{mst}\\
\PM{\Scal}^2={}&\PM{\Scal}_{abcd} \PM{\Scal}^{abcd}, \\
\PM{\sbold}^2={}&\frac{\PM{\Fcal}^2}{(1 -  \PM{\Ecal})^4}, \label{eq:MSscalar}
\end{align}
\end{subequations}
respectively. Here $\PM{\Ical}_{abcd}= \tfrac{1}{4} ( g_{ac} g_{bd} -  g_{ad} g_{bc} \pm \epsilon_{abcd} )$ is the metric on SD and ASD 2-forms.
\end{definition}

\subsection{Characterizations of special geometry} 
The following result holds independently for the SD and ASD sides. Recall the Petrov classification of Weyl tensors on Riemannian 4-manifolds \cite{karlhede:1986}, see also \cite{PR:II,MR4809333}.
\begin{lemma} \label{lem:typeDchar}
$\PM{\Wcal}_{abcd} $ is of Petrov type $\PM{D}$ if one of the following equivalent conditions hold.
\begin{enumerate}
\item $\PM{\Scal}_{abcd} = 0$ and $\PM{\Wcal}_{abcd} \neq 0$.
\item $\PM{\sbold}^2 = \text{const.} \neq 0$.
\end{enumerate}
\end{lemma}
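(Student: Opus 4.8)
The plan is to reduce the statement to linear algebra on the rank-three bundle $\Lambda^\pm$ of (anti-)self-dual two-forms, on which $\PM{\Wcal}$ and $\PM{\Scal}$ act as $g$-symmetric, trace-free endomorphisms, and then to supply the one analytic ingredient through the second-order identity \eqref{eq:laplaceFsq}. I work throughout on the open set where $\PM{\Ecal}\neq1$, equivalently (by Lemma~\ref{prop:SD-characterizations}) where $\PM{\Fcal}\neq0$; since the norm on $\Lambda^\pm$ is positive definite in Riemannian signature, $\PM{\Fcal}^2>0$ there. In this signature type $\PM D$ means exactly that the symmetric operator $\PM{\Wcal}$ has a repeated eigenvalue and $\PM{\Wcal}\neq0$, i.e.\ $\PM{\Wcal}_{abcd}=\beta\,(\hat F_{ab}\hat F_{cd}-\tfrac13\PM{\Ical}_{abcd})$ for some unit $\hat F\in\Lambda^\pm$ and $\beta\neq0$. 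I read the lemma as the two assertions $(1)\Leftrightarrow(2)$ and $(1)\Rightarrow$ type $\PM D$; the latter is immediate, since writing $\hat F=\PM{\Fcal}/|\PM{\Fcal}|$ the trace-free part of $\PM{\Fcal}\otimes\PM{\Fcal}$ equals $\PM{\Fcal}^2(\hat F\otimes\hat F-\tfrac13 I)$, so $\PM{\Scal}=0$ reads $\PM{\Wcal}_{abcd}=\tfrac{6\PM{\Fcal}^2}{1-\PM{\Ecal}}(\hat F_{ab}\hat F_{cd}-\tfrac13\PM{\Ical}_{abcd})$, a nonzero operator with eigenvalues proportional to $(\tfrac23,-\tfrac13,-\tfrac13)$, manifestly type $\PM D$.

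For the equivalence $(1)\Leftrightarrow(2)$ the key computation is a gradient identity for $\PM{\sbold}^2=\PM{\Fcal}^2(1-\PM{\Ecal})^{-4}$. Differentiating and substituting $\nabla_c\PM{\Fcal}^2$ from \eqref{eq:nablaFsq}, $\nabla_c\PM{\Ecal}=\PM{\sigma}_c$ from \eqref{eq:Epm}, $\PM{\sigma}_c=2\PM{\Fcal}_{cb}\xi^b$, and the definition \eqref{mst} of $\PM{\Scal}$, the terms proportional to $\PM{\sigma}_c$ cancel and one is left with
\[
\nabla_c\PM{\sbold}^2=-2(1-\PM{\Ecal})^{-4}\,\PM{\Scal}_{cabd}\,\xi^a\PM{\Fcal}^{bd}.
\]
Hence $\PM{\Scal}=0$ forces $\PM{\sbold}^2$ constant, and it is nonzero because $\PM{\Fcal}^2>0$; this is $(1)\Rightarrow(2)$.

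The hard direction, $(2)\Rightarrow(1)$, is the main obstacle, since the gradient identity only yields the partial vanishing $\PM{\Scal}_{cabd}\xi^a\PM{\Fcal}^{bd}=0$, not $\PM{\Scal}=0$. I would upgrade this in two moves. First, $T_{ca}:=\PM{\Scal}_{cabd}\PM{\Fcal}^{bd}$ is an element of $\Lambda^\pm$, and in Riemannian signature every nonzero (anti-)self-dual two-form is non-degenerate as a skew form; therefore $T_{ca}\xi^a=0$ with $\xi\neq0$ forces $T_{ca}=0$. Thus $\PM{\Fcal}$ lies in the kernel of $\PM{\Scal}$, equivalently $\PM{\Wcal}_{abcd}\PM{\Fcal}^{cd}=\tfrac{4\PM{\Fcal}^2}{1-\PM{\Ecal}}\PM{\Fcal}_{ab}$, so $\PM{\Fcal}$ is an eigenform of $\PM{\Wcal}$. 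Note this last step needs $\xi\neq0$, so it is carried out on the dense open set $\{\lambda\neq0\}=\{\xi\neq0\}$, the conclusion being extended by continuity.

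Second, to kill the remaining components of $\PM{\Scal}$ on the orthogonal complement of $\PM{\Fcal}$, I would compute $\nabla^a\nabla_a\PM{\Fcal}^2$ in two ways. Directly from $\PM{\Fcal}^2=\mathrm{const}\cdot(1-\PM{\Ecal})^4$, using the Ernst relations $\nabla^a\PM{\sigma}_a=\PM{\Fcal}^2$ \eqref{eq:divsigma} and $\PM{\sigma}_a\PM{\sigma}^a=\PM{\Fcal}^2\lambda$ \eqref{eq:sigmasq}; and from the identity \eqref{eq:laplaceFsq}, evaluating $\PM{\Wcal}_{abcd}\PM{\Fcal}^{ab}\PM{\Fcal}^{cd}=\tfrac{4(\PM{\Fcal}^2)^2}{1-\PM{\Ecal}}$ via the eigenform property. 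Comparing and cancelling the common factor $\lambda$ (again valid on $\{\lambda\neq0\}$) yields $\PM{\Wcal}^2=24\,(\PM{\Fcal}^2)^2(1-\PM{\Ecal})^{-2}$. Finally, expanding the scalar $\PM{\Scal}^2$ of Definition~\ref{def:mst} with the eigenform identity, the tracelessness of $\PM{\Wcal}$, and the normalization $\PM{\Ical}_{abcd}\PM{\Ical}^{abcd}=3$ gives $\PM{\Scal}^2=\PM{\Wcal}^2-24\,(\PM{\Fcal}^2)^2(1-\PM{\Ecal})^{-2}=0$. Since $\PM{\Scal}^2$ is the squared norm of a real tensor, $\PM{\Scal}=0$ on $\{\lambda\neq0\}$, hence everywhere on $\{\PM{\Ecal}\neq1\}$ by continuity, completing $(2)\Rightarrow(1)$. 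The delicate points to watch are precisely the passage to the dense set $\{\lambda\neq0\}$ (used for both the non-degeneracy argument and the cancellation of $\lambda$) and the bookkeeping of the algebraic normalizations, which are what make the final cancellation $\PM{\Scal}^2=0$ exact rather than merely an inequality.
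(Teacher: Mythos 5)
Your proof is correct, but for the direction $(2)\Rightarrow(1)$ it takes a genuinely different route from the paper. The paper disposes of the whole equivalence in two lines by citing the identities \eqref{eq:nablaMSscalar} and \eqref{eq:LaplaceMSscalarprel}, which are established inside the proof of Proposition~\ref{prop:MS}: the gradient identity \eqref{eq:nablaMSscalar} gives $(1)\Rightarrow(2)$ exactly as in your argument, and the second-order identity $(\nabla^a-\PM{\Gamma}^a)\nabla_a\PM{\sbold}^2=\tfrac{\PM{\Scal}^2\lambda}{2\PM{\Fcal}^2}\PM{\sbold}^2$ gives $(2)\Rightarrow\PM{\Scal}^2=0$ immediately, since constancy of $\PM{\sbold}^2$ kills the left-hand side and $\lambda$, $\PM{\sbold}^2$, $\PM{\Fcal}^2$ are nonzero on a dense set. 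You instead avoid the second-order identity for $\PM{\sbold}^2$ entirely: you first upgrade the partial vanishing $\PM{\Scal}_{cabd}\xi^a\PM{\Fcal}^{bd}=0$ to the eigenform property via the non-degeneracy of nonzero (anti-)self-dual two-forms in Riemannian signature, then compare the two evaluations of $\nabla^a\nabla_a\PM{\Fcal}^2$ (from \eqref{eq:divsigma}, \eqref{eq:sigmasq} on one side and \eqref{eq:laplaceFsq} on the other) to pin down $\PM{\Wcal}^2=24(\PM{\Fcal}^2)^2(1-\PM{\Ecal})^{-2}$, and finally verify $\PM{\Scal}^2=0$ by direct expansion; I have checked the normalizations ($\PM{\Ical}_{abcd}\PM{\Fcal}^{cd}=\PM{\Fcal}_{ab}$, $\PM{\Ical}^2=3$, $\PM{\Wcal}^{abcd}\PM{\Ical}_{abcd}=0$) and they come out exactly as you claim. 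What your route buys is self-containedness: the lemma as printed forward-references equations displayed only in the proof of the later Proposition~\ref{prop:MS}, whereas your argument rests solely on Proposition~\ref{prop:xi-id}; the cost is length, and the extra geometric input (non-degeneracy of SD forms, density of $\{\lambda\neq0\}$) that the paper's factor-of-$\lambda$ argument absorbs silently. Both arguments ultimately draw on the same analytic content, namely the Maxwell-type equation \eqref{eq:nablaFcal} and its consequences.
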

\begin{proof}
We first show the equivalence of the conditions. From \eqref{eq:nablaMSscalar} and $\PM{\Fcal}^2 \neq0$ we have (1) $\Rightarrow$ (2). From \eqref{eq:LaplaceMSscalarprel} we have (2) $\Rightarrow \PM{\Scal}^2 = 0$ which implies (1).

Suppose condition (1) holds. Then we have
\begin{align}
\PM{\Wcal}_{abcd} = \frac{6 (\PM{\Fcal}_{ab} \PM{\Fcal}_{cd} -  \tfrac{1}{3} \PM{\Fcal}^2 \PM{\Ical}_{abcd})}{1 -  \PM{\Ecal}} \neq 0, 
\end{align}
which implies that $\PM{\Wcal}_{abcd}$ has degenerate eigenvalues and hence is of Petrov type $\PM{D}$.
\end{proof}

\begin{prop} \label{prop:MS} 
For  $\beta \in \Reals$, the scalar $\PM{\sbold}$ satisfies the modified Laplace equation
\begin{align} \label{eq:LaplaceMSscalar}
(\nabla^{a} - \PM{\Gamma}^{a}) \nabla_{a} |\PM{\sbold}|^\beta ={}& \PM{V}(\beta) |\PM{\sbold}|^\beta,
\end{align}
with  
\begin{align}
\PM{\Gamma}_{a}={}&\frac{(1 + \MP{\Ecal}) }{(1 -  \PM{\Ecal}) \lambda} \PM{\sigma}_{a}, \label{eq:Gammapm}\\
\PM{V}(\beta) ={}& \frac{\beta \lambda \bigl(\PM{\Fcal}^2 \PM{\Scal}^2 + (\beta - 2) (\PM{\Fcal}\PM{\Scal})^2 \bigr) }{4 (\PM{\Fcal}^2)^2}, \label{eq:Vpm}
\end{align}
and where $(\PM{\Fcal}\PM{\Scal})^2 = \PM{\Fcal}^{ab} \PM{\Scal}^{ef}{}_{ab} \PM{\Fcal}^{cd} \PM{\Scal}_{efcd} $.
\end{prop}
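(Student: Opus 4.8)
The plan is to reduce the whole family of equations indexed by $\beta$ to a single base computation plus the chain rule. Writing $P := \PM{\sbold}^2 = \PM{\Fcal}^2(1-\PM{\Ecal})^{-4}$, note that on the domain $\PM{\Ecal}\neq 1$ one has $\PM{\Fcal}\neq 0$ by Lemma~\ref{prop:SD-characterizations}, so $\PM{\Fcal}^2>0$ (the metric is positive definite) and hence $P>0$ and $|\PM{\sbold}|^\beta = P^{\beta/2}$ is a genuine real power. Since $(\nabla^a-\PM{\Gamma}^a)\nabla_a$ acts on powers of $P$ through the Leibniz rule, it suffices to establish two auxiliary facts: a gradient identity for $\nabla_a\log P$ together with its squared norm, and the base case $\beta=2$, namely $(\nabla^a-\PM{\Gamma}^a)\nabla_a P = \tfrac{\lambda\PM{\Scal}^2}{2\PM{\Fcal}^2}\,P$, which is $\PM{V}(2)\,P$. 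The general statement then drops out of the second-order Leibniz expansion.

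First I would compute the gradient. From $\nabla_a\PM{\Ecal}=\PM{\sigma}_a$, see \eqref{eq:Epm}, one gets $\nabla_a(1-\PM{\Ecal})^{-4}=4(1-\PM{\Ecal})^{-5}\PM{\sigma}_a$, while \eqref{eq:nablaFsq} gives $\nabla_a\PM{\Fcal}^2=-2\PM{\Wcal}_{aebd}\xi^e\PM{\Fcal}^{bd}$. Substituting the definition \eqref{mst} of $\PM{\Scal}$ to replace $\PM{\Wcal}$, and using the (anti-)self-duality relations $\PM{\Ical}_{abcd}\PM{\Fcal}^{cd}=\PM{\Fcal}_{ab}$ and $\PM{\Fcal}_{ab}\xi^b=\tfrac12\PM{\sigma}_a$, the contribution of the $6(\PM{\Fcal}\PM{\Fcal}-\tfrac13\PM{\Fcal}^2\PM{\Ical})/(1-\PM{\Ecal})$ term of $\PM{\Wcal}$ produces exactly the multiple of $\PM{\sigma}_a$ needed to cancel the derivative of $(1-\PM{\Ecal})^{-4}$, leaving the clean formula
\[
\nabla_a\log P = -\frac{2}{\PM{\Fcal}^2}\,\PM{\Scal}_{aebd}\,\xi^e\PM{\Fcal}^{bd}.
\]
I would then square this. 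Since $\PM{\Scal}$ is assembled from tensors that are (anti-)self-dual in each index pair, the $2$-form $(\PM{\Scal}\PM{\Fcal})_{ae}:=\PM{\Scal}_{aebd}\PM{\Fcal}^{bd}$ is again (anti-)self-dual, so the elementary identity $\Phi_{ac}\Phi_b{}^c=\tfrac14|\Phi|^2 g_{ab}$ for (anti-)self-dual $2$-forms gives $(\PM{\Scal}\PM{\Fcal})_{ae}\xi^e(\PM{\Scal}\PM{\Fcal})^a{}_c\xi^c=\tfrac14\lambda(\PM{\Fcal}\PM{\Scal})^2$, whence $|\nabla\log P|^2=\lambda(\PM{\Fcal}\PM{\Scal})^2/(\PM{\Fcal}^2)^2$.

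The hard part is the base identity at $\beta=2$. Here I would compute $\nabla^a\nabla_a P$ directly from $P=\PM{\Fcal}^2(1-\PM{\Ecal})^{-4}$ by the Leibniz rule, inserting $\nabla^a\nabla_a\PM{\Fcal}^2$ from \eqref{eq:laplaceFsq}, $\nabla_a\PM{\sigma}^a=\PM{\Fcal}^2$ from \eqref{eq:divsigma}, and $\PM{\sigma}_a\PM{\sigma}^a=\PM{\Fcal}^2\lambda$ from \eqref{eq:sigmasq}. After re-expressing $\PM{\Wcal}$ through $\PM{\Scal}$ via \eqref{mst}, the result splits into a genuinely second-order curvature part and a collection of first-order terms contracted with $\PM{\sigma}^a$; the connection $\PM{\Gamma}_a$ of \eqref{eq:Gammapm} is engineered precisely so that $-\PM{\Gamma}^a\nabla_a P$ removes those first-order terms, the relevant contraction being evaluated through the polarized self-dual identity $\Phi_{ac}\Psi_b{}^c+\Phi_{bc}\Psi_a{}^c=\tfrac12\langle\Phi,\Psi\rangle g_{ab}$ applied with $\Phi=\PM{\Fcal}$, $\Psi=\PM{\Scal}\PM{\Fcal}$. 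I expect the bookkeeping of these cancellations, and confirming that the surviving curvature part collapses to $\tfrac12\lambda\PM{\Scal}^2/\PM{\Fcal}^2$, to be the principal obstacle; this is exactly the step where the numerical coefficients $6$ and $\tfrac13$ in \eqref{mst} and the precise form of \eqref{eq:Gammapm} are forced.

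Finally I would assemble the general $\beta$. Applying $(\nabla^a-\PM{\Gamma}^a)\nabla_a$ to $P^{\beta/2}$ and using that $\PM{\Gamma}^a$ enters only the first-order term gives
\[
(\nabla^a-\PM{\Gamma}^a)\nabla_a P^{\beta/2}=\tfrac{\beta}{2}P^{\beta/2-1}(\nabla^a-\PM{\Gamma}^a)\nabla_a P+\tfrac{\beta}{2}\bigl(\tfrac{\beta}{2}-1\bigr)P^{\beta/2-2}|\nabla P|^2 .
\]
Inserting the base case into the first term and $|\nabla P|^2=P^2|\nabla\log P|^2=P^2\lambda(\PM{\Fcal}\PM{\Scal})^2/(\PM{\Fcal}^2)^2$ into the second, both contributions carry the common factor $P^{\beta/2}$, and collecting them yields $\tfrac{\beta\lambda}{4(\PM{\Fcal}^2)^2}\bigl(\PM{\Fcal}^2\PM{\Scal}^2+(\beta-2)(\PM{\Fcal}\PM{\Scal})^2\bigr)P^{\beta/2}$, which is exactly $\PM{V}(\beta)|\PM{\sbold}|^\beta$ with $\PM{V}(\beta)$ as in \eqref{eq:Vpm}. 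The $(\beta-2)$ factor visibly originates from the second-order term of the chain rule, and the cases $\beta=0,2$ reproduce the trivial and base identities.
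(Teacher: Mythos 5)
Your proposal follows essentially the same route as the paper's proof: the gradient identity $\nabla_a\PM{\sbold}^2=-2\PM{\Fcal}^{cd}\PM{\Scal}_{abcd}\xi^b(1-\PM{\Ecal})^{-4}$ (the paper's \eqref{eq:nablaMSscalar}), the base identity $(\nabla^a-\PM{\Gamma}^a)\nabla_a\PM{\sbold}^2=\tfrac{\lambda\PM{\Scal}^2}{2\PM{\Fcal}^2}\PM{\sbold}^2$ obtained by absorbing the first-order $\PM{\Fcal}\PM{\Fcal}\PM{\Scal}$ term into $\PM{\Gamma}^a$ (the paper's \eqref{eq:MSid}--\eqref{eq:LaplaceMSscalarprel}), and the chain-rule expansion in $\beta$ using the SD/ASD two-form contraction identity, which is exactly the paper's final display. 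The only caveat is that you leave the crux computation \eqref{eq:MSid} as anticipated ``bookkeeping'' rather than carrying it out, but the identities you cite for it are the correct ones.
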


\begin{proof}
The gradient of \eqref{eq:MSscalar}, using \eqref{eq:nablaFsq} and \eqref{eq:Epm}, is equal to
\begin{align} \label{eq:nablaMSscalar}
\nabla_{a}\PM{\sbold}^2 ={}& - \frac{2 \PM{\Fcal}^{cd} \PM{\Scal}_{abcd} \xi^{b}}{(1 - \PM{\Ecal})^4}.
\end{align}
 By the Maxwell-type equations \eqref{eq:nablaFcal} and the Bianchi identity
applied to \eqref{mst}, the contracted second derivative is given by 
\begin{align}\label{eq:MSid}
\nabla_{a}\nabla^{a}\PM{\sbold}^2={}& \frac{ \PM{\Scal}^2 \lambda}{2 (1 - \PM{\Ecal})^4} - \frac{ (1 +  \MP{\Ecal})}{ (1 - \PM{\Ecal})^5} \PM{\Fcal}^{ab} \PM{\Fcal}^{cd} \PM{\Scal}_{abcd}.
\end{align}
We notice that the last term can be absorbed by a lower order term involving \eqref{eq:Gammapm}, such that
\begin{align} \label{eq:LaplaceMSscalarprel}
(\nabla^{a} - \PM{\Gamma}^{a}) \nabla_{a}\PM{\sbold}^2 ={}&\frac{\PM{\Scal}^2 \lambda}{2 \PM{\Fcal}^2} \PM{\sbold}^2.
\end{align}
Now \eqref{eq:LaplaceMSscalar} follows by expanding in $\beta$ as follows,
\begin{align} 
(\nabla^{a} - \PM{\Gamma}^{a}) \nabla_{a}|\PM{\sbold}|^\beta 
={}& 
\tfrac{\beta}{2} (\PM{\sbold}^2)^{\beta/2-1} \left((\nabla^{a} - \PM{\Gamma}^{a}) \nabla_{a}\PM{\sbold}^2 + (\tfrac{\beta}{2} - 1)   (\nabla\PM{\sbold}^2 )^2 / \PM{\sbold}^{2}\right) \nonumber \\
={}& 
\frac{\beta \PM{\Scal}^2 \lambda}{4 \PM{\Fcal}^2} |\PM{\sbold}|^\beta + \tfrac{\beta(\beta-2)}{4} |\PM{\sbold}|^{\beta-4} \frac{4 \PM{\Fcal}^{cd} \PM{\Scal}_{abcd} \xi^{b}\PM{\Fcal}^{ef} \PM{\Scal}^a{}_{hef} \xi^{h}}{(1 - \PM{\Ecal})^8} \nonumber \\
={}& 
\frac{\beta \PM{\Scal}^2 \lambda}{4 \PM{\Fcal}^2} |\PM{\sbold}|^\beta + 
\frac{\beta(\beta-2)}{4} |\PM{\sbold}|^{\beta} \frac{ \PM{\Fcal}^{cd} \PM{\Scal}_{abcd} \PM{\Fcal}^{ef} \PM{\Scal}^{ab}{}_{ef} \lambda}{(\PM{\Fcal}^2)^2} .
\end{align}
Here we used \eqref{eq:LaplaceMSscalarprel} and \eqref{eq:nablaMSscalar} for the second equality. The third equality uses \eqref{eq:MSscalar} and
$ (\PM{\Scal}_{ibcd}\PM{\Fcal}^{cd}) (\PM{\Scal}^i{}_{hef}\PM{\Fcal}^{ef}) = \frac{1}{4} (\PM{\Scal}_{ijcd}\PM{\Fcal}^{cd} )(  \PM{\Scal}^{ij}{}_{ef}\PM{\Fcal}^{ef}) g_{bh}$ which is analogous to \eqref{eq:FcalFcal1contr} for two SD/ASD two-forms with one index contraction.
\end{proof}

\begin{remark} 
The quantities defined in Definition~\ref{def:mst} are derived from the Weyl tensor in a manner analogous to the complex tensors used in characterizations of the Lorentzian Kerr geometry by Mars \cite{MR1701088}. Identities analogous to \eqref{eq:MSid} as well as wave equations for $\PM{\Scal}_{abcd}$ were applied to the black hole uniqueness problem in \cite{MR2672799, MR2461426}. 
\end{remark} 

\begin{prop}\label{prop:V(beta)} 
For $\beta \geq 1/2$, the potentials $\PM{V}(\beta)$ given in \eqref{eq:Vpm} satisfy
\begin{align}
\PM{V}(\beta) \geq 0.
\end{align}
\end{prop}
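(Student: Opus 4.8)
The plan is to reduce the claimed positivity of $\PM{V}(\beta)$ to a sharp algebraic inequality on the three-dimensional space $\Lambda^{\pm}$ of self-dual (resp.\ anti-self-dual) two-forms. First I would record the sign information that is automatic in Riemannian signature: $\lambda = \xi_a\xi^a \geq 0$, and $\PM{\Fcal}^2 = \PM{\Fcal}_{ab}\PM{\Fcal}^{ab} \geq 0$ as the squared norm of a real two-form, with $\PM{\Fcal}^2 > 0$ on the domain where the objects of Definition~\ref{def:mst} are defined (there $\PM{\Ecal}\neq 1$, hence $\PM{\Fcal}\neq 0$ by Lemma~\ref{prop:SD-characterizations}). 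Since $\beta \geq 1/2 > 0$, the prefactor $\beta\lambda/(4(\PM{\Fcal}^2)^2)$ in \eqref{eq:Vpm} is non-negative, so it suffices to prove that the bracket $\PM{\Fcal}^2\PM{\Scal}^2 + (\beta-2)(\PM{\Fcal}\PM{\Scal})^2$ is non-negative.

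The key step is to reinterpret the ingredients as linear algebra on $\Lambda^{\pm}$. Since $(\cM,g_{ab})$ is Ricci-flat, $\PM{\Wcal}_{abcd}$ satisfies the pair symmetry $\PM{\Wcal}_{abcd}=\PM{\Wcal}_{cdab}$ and maps $\Lambda^{\pm}$ to itself, hence is a symmetric endomorphism of $\Lambda^{\pm}$. The correction term in \eqref{mst} is assembled from $\PM{\Fcal}_{ab}\PM{\Fcal}_{cd}$ and $\PM{\Ical}_{abcd}$, which are likewise symmetric endomorphisms of $\Lambda^{\pm}$, so $\PM{\Scal}_{abcd}$ is a symmetric endomorphism of $\Lambda^{\pm}$; contracting the two pairs and using $\PM{\Ical}^{ab}{}_{ab}=3$ shows it is moreover trace-free. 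Choosing an orthonormal basis of $\Lambda^{\pm}$ for the two-form inner product $\langle\phi,\psi\rangle = \phi_{ab}\psi^{ab}$, I would represent $\PM{\Fcal}$ by a vector $v$ and $\PM{\Scal}$ by a symmetric trace-free $3\times 3$ matrix $S$, so that
\begin{align}
\PM{\Fcal}^2 = |v|^2, \qquad \PM{\Scal}^2 = \operatorname{tr}(S^2), \qquad (\PM{\Fcal}\PM{\Scal})^2 = |Sv|^2,
\end{align}
all manifestly non-negative.

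For $\beta \geq 2$ the bracket is then a sum of non-negative terms and there is nothing to prove. The substantive range is $1/2 \leq \beta < 2$, where $\beta-2<0$. Here I would establish and apply the inequality
\begin{align}
|Sv|^2 \leq \tfrac{2}{3}\,|v|^2\operatorname{tr}(S^2),
\end{align}
valid for every symmetric trace-free $S$ on a three-dimensional inner product space. Diagonalizing $S = \operatorname{diag}(s_1,s_2,s_3)$ with $s_1+s_2+s_3=0$ and maximizing over unit $v$ reduces this to $\max_i s_i^2 \leq \tfrac{2}{3}\sum_i s_i^2$; writing $s_1 = -(s_2+s_3)$ for the extremal index, this is precisely $(s_2-s_3)^2 \geq 0$, so the constant $2/3$ is sharp. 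Since $2-\beta \leq 3/2$ on this range, it follows that
\begin{align}
\PM{\Fcal}^2\PM{\Scal}^2 + (\beta-2)(\PM{\Fcal}\PM{\Scal})^2 \geq \PM{\Fcal}^2\PM{\Scal}^2 - \tfrac{3}{2}\cdot\tfrac{2}{3}\,\PM{\Fcal}^2\PM{\Scal}^2 = 0,
\end{align}
which proves $\PM{V}(\beta)\geq 0$.

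I expect the only non-routine point to be the identification of $\PM{\Scal}_{abcd}$ as a symmetric \emph{trace-free} endomorphism of the three-dimensional space $\Lambda^{\pm}$; this is what makes the dimension count and the sharp constant $2/3$ available, after which the positivity of $\lambda$ and $\PM{\Fcal}^2$ is free in Riemannian signature and the remainder is elementary. As a sanity check on the borderline exponent, equality in the displayed vector inequality forces $S$ to have a repeated eigenvalue, i.e.\ $\PM{\Scal}$ (equivalently $\PM{\Wcal}$) to be of Petrov type $\PM{D}$, so the critical value $\beta = 1/2$ is exactly aligned with the type-$\PM{D}$ locus of Lemma~\ref{lem:typeDchar}.
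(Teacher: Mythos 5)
Your proof is correct, but it takes a genuinely different route from the paper's. Both arguments come down to the sharp inequality $(\PM{\Fcal}\PM{\Scal})^2 \leq \tfrac{2}{3}\,\PM{\Fcal}^2\PM{\Scal}^2$: writing the bracket in \eqref{eq:Vpm} as $\bigl(\PM{\Fcal}^2\PM{\Scal}^2 - \tfrac{3}{2}(\PM{\Fcal}\PM{\Scal})^2\bigr) + (\beta-\tfrac{1}{2})(\PM{\Fcal}\PM{\Scal})^2$ shows that $\beta \geq 1/2$ is exactly what this inequality buys. The paper obtains it by exhibiting the deficit as an explicit square: it introduces the Riemannian analogue $\PM{\mathcal{P}}_{abc}$ of the Mars--Simon tensor in \eqref{stst} and uses the identity $2\PM{\mathcal{P}}^2\lambda^{-3} = \PM{\Fcal}^2\PM{\Scal}^2 - \tfrac{3}{2}(\PM{\Fcal}\PM{\Scal})^2$, so that $\PM{V}(\beta)$ is manifestly a sum of squares for $\beta \geq 1/2$. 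You instead prove the inequality as a general algebraic fact about a symmetric trace-free endomorphism $S$ of the three-dimensional space $\Lambda^{\pm}$ acting on a vector $v$, namely $|Sv|^2 \leq \tfrac{2}{3}|v|^2\operatorname{tr}(S^2)$, by diagonalization; your identification of $\PM{\Scal}_{abcd}$ as such an endomorphism (block-diagonality of the Weyl operator on $\Lambda^+\oplus\Lambda^-$, plus $\PM{\Ical}^{ab}{}_{ab}=3$ to check trace-freeness) is sound, as is the dictionary $\PM{\Fcal}^2=|v|^2$, $\PM{\Scal}^2=\operatorname{tr}(S^2)$, $(\PM{\Fcal}\PM{\Scal})^2=|Sv|^2$. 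Your route is more elementary and self-contained, makes the sharpness of the constant $2/3$ and the role of $\dim\Lambda^{\pm}=3$ transparent, and your closing observation that equality forces a repeated eigenvalue of $S$ correctly anticipates the type-$\PM{D}$ locus of Lemma~\ref{lem:typeDchar}. What the paper's route buys is the explicit tensorial identity for the non-negative remainder, which ties the borderline case $\beta=1/2$ to the vanishing of the Simon tensor and hence to the characterization machinery invoked elsewhere in the uniqueness argument.
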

\begin{proof}
For $\beta \geq 2$, \eqref{eq:Vpm} is a sum of squares and hence non-negative. For  $\beta \in \mathbb{R}$ we introduce the
quantities 
\begin{align}
\label{stst}
\PM{\mathcal{P}}_{abc} ={}&  \gamma_{a[b}\PM{\Wcal}_{c]efk} \xi^{e} \PM{\Fcal}^{fk}  + 4 \xi^{e} \xi^{f} \PM{\Wcal}_{eaf[b}  \PM{\sigma}_{c]},
\end{align}
where $ \gamma_{ab}= \lambda g_{ab} -  \xi_{a} \xi_{b}$. 
The tensors $\PM{\mathcal{P}}_{abc}$ introduced in \eqref{stst} are  the  Riemannian counterparts of the (complex)  ``spacetime Simon tensor'', see \cite[Definition 1]{MR1701088}. 
Their squares decompose into 
\begin{align}
2 \PM{\mathcal{P}}^2 \lambda^{-3} = \PM{\Fcal}^2 \PM{\Scal}^2 -\frac{3}{2} (\PM{\Fcal}\PM{\Scal})^2, 
\end{align}
which shows that \eqref{eq:Vpm}, is a sum of squares for $\beta \geq 1/2$.
\end{proof}

\begin{lemma}[Divergence Identity]
Let $J_{T}{}^{a}, J_{D}{}^{a}, J_{E}{}^{a}$ be as in \eqref{eq:currents}, and let  
\begin{align} \label{eq:Jcurrdef}
\PM{J}^{a}={}&\pm J_{T}{}^{a}  - 2 J_{D}{}^{a} \pm J_{E}{}^{a}
 = \frac{\pm (1 \mp \P{\Ecal})^2 \M{\sigma}^{a} \mp  (1 \pm \M{\Ecal})^2 \P{\sigma}^{a}}{2 \lambda^2}.
\end{align} 
For $\beta \in \Reals$,  the vector fields
\begin{align}
\PM{\Psi}^{(\beta)a}={}& \frac{1}{2} \PM{J}^{a} |\PM{\sbold}|^\beta
+ \frac{(1+\MP{\Ecal})(1-\PM{\Ecal})}{2\lambda}\nabla^{a}|\PM{\sbold}|^\beta \label{eq:PMPsia}
\end{align}
satisfy
\begin{align} \label{eq:DivPsialpha}
\nabla_a \PM{\Psi}^{(\beta)a} = \frac{(1 + \MP{\Ecal}) (1 -  \PM{\Ecal})}{2\lambda} \PM{V}(\beta) |\PM{\sbold}|^\beta.
\end{align}
\end{lemma}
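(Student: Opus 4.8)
The plan is to compute $\nabla_a\PM{\Psi}^{(\beta)a}$ directly from the definition \eqref{eq:PMPsia}, using the conservation laws for the three currents together with the modified Laplace equation from Proposition~\ref{prop:MS}. The key observation is that $\PM{\Psi}^{(\beta)a}$ splits into two pieces: a ``current part'' $\tfrac12\PM{J}^a|\PM{\sbold}|^\beta$ built from the conserved currents, and a ``gradient part'' $\tfrac{(1+\MP{\Ecal})(1-\PM{\Ecal})}{2\lambda}\nabla^a|\PM{\sbold}|^\beta$ involving the scalar $|\PM{\sbold}|^\beta$. I expect the right-hand side to arise entirely from the gradient part after the current part and the cross terms cancel.

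First I would treat the current part. Since $\PM{J}^a=\pm J_T{}^a-2J_D{}^a\pm J_E{}^a$ is a fixed linear combination of currents that are each divergence-free by the discussion following \eqref{eq:currents}, I have $\nabla_a(\tfrac12\PM{J}^a|\PM{\sbold}|^\beta)=\tfrac12\PM{J}^a\nabla_a|\PM{\sbold}|^\beta$. The task is then to show that this term combines with the derivative falling on the coefficient $\tfrac{(1+\MP{\Ecal})(1-\PM{\Ecal})}{2\lambda}$ in the gradient part to reproduce the connection term $-\PM{\Gamma}^a\nabla_a|\PM{\sbold}|^\beta$ appearing in \eqref{eq:LaplaceMSscalar}. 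Concretely, from the closed-form expression for $\PM{J}^a$ in \eqref{eq:Jcurrdef} and the definition \eqref{eq:Gammapm} of $\PM{\Gamma}_a$, I would verify the pointwise identity
\begin{align}
\frac{1}{2}\PM{J}^{a}
+\frac{1}{2}\nabla^{a}\!\left(\frac{(1+\MP{\Ecal})(1-\PM{\Ecal})}{\lambda}\right)
={}&-\frac{(1+\MP{\Ecal})(1-\PM{\Ecal})}{2\lambda}\,\PM{\Gamma}^{a},
\end{align}
which is the crucial algebraic step and is where I expect the real work to lie. Establishing it requires differentiating $\lambda$, $\PM{\Ecal}$ and $\MP{\Ecal}$ using $\nabla_a\PM{\Ecal}=\PM{\sigma}_a$ from \eqref{eq:Epm}, the relation $\PM{\Ecal}=\lambda\pm\omega$ from \eqref{eq:Ecal-lam-om}, and the explicit forms of the currents in \eqref{eq:currents}; the factors of $\lambda^{-2}$ and the signs must be tracked carefully so that the $\P{\sigma}^a$ and $\M{\sigma}^a$ contributions match on both sides.

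Granting that identity, the two first-derivative terms assemble into $\tfrac{(1+\MP{\Ecal})(1-\PM{\Ecal})}{2\lambda}(\nabla^a-\PM{\Gamma}^a)\nabla_a|\PM{\sbold}|^\beta$, and Proposition~\ref{prop:MS} immediately rewrites this as $\tfrac{(1+\MP{\Ecal})(1-\PM{\Ecal})}{2\lambda}\PM{V}(\beta)|\PM{\sbold}|^\beta$, which is exactly \eqref{eq:DivPsialpha}. The main obstacle is thus not conceptual but the bookkeeping in the coefficient-matching identity above: one must confirm that the coefficient $\tfrac{(1+\MP{\Ecal})(1-\PM{\Ecal})}{2\lambda}$ is precisely the integrating factor that converts the raw current $\PM{J}^a$ into the connection $\PM{\Gamma}^a$ of the modified Laplacian. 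I would carry out this verification on the domain where $\PM{\Ecal}\neq1$, where all quantities in Definition~\ref{def:mst} are defined, and note that the identity extends by continuity where needed.
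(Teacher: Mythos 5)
Your proposal is correct and follows essentially the same route as the paper: the paper's proof likewise uses conservation of $\PM{J}^a$, the key integrating-factor identity $(\nabla_a+\PM{\Gamma}_a)\bigl(\tfrac{(1+\MP{\Ecal})(1-\PM{\Ecal})}{\lambda}\bigr)=-\PM{J}_a$ (which is your coefficient-matching identity rearranged), and then Proposition~\ref{prop:MS} to conclude. The identity you single out as the crucial algebraic step does check out by direct computation using $\nabla_a\PM{\Ecal}=\PM{\sigma}_a$ and $2\lambda=\P{\Ecal}+\M{\Ecal}$, exactly as you anticipate.
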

\begin{proof}
To obtain  \eqref{eq:DivPsialpha}, recall that the
current $\PM{J}_a$ is conserved (cf. the remark after \eqref{eq:currents}) and that the gradient
of $\PM{\sbold}$ is given by \red{\eqref{eq:nablaMSscalar}}. The divergence of the second term on the right hand side of 
\eqref{eq:PMPsia} follows from 
\begin{align}
\left( \nabla_a + \PM{\Gamma}_a \right) \left( \frac{(1+\MP{\Ecal})(1-\PM{\Ecal})}{\lambda}\right) = - \PM{J}_{a}  \label{eq:nabla+Gamma}
\end{align}
and \eqref{eq:LaplaceMSscalar}.
\end{proof}
\begin{remark}
An alternative proof of \eqref{eq:DivPsialpha} is given by Lemma \ref{lem:divid}. See also point \eqref{point:divrem:1} of Remark \ref{divrem} .
\end{remark}

Let $\PM{\Psi}^{(\beta)}_a$ be as in \eqref{eq:DivPsialpha}. We now restrict to the case $\beta=1$, see Remark~\ref{rem:beta}. Set  
\begin{align} 
\PM{\Psi}_a  = \PM{\Psi}^{(1)}_a  , \quad \PM{V} = \PM{V}(1) ,
\end{align}
so that
\begin{align} \label{eq:Psi-def} 
\PM{\Psi}_a
= \frac{1}{2}\PM{J}_a \PM{\sbold}
+ \frac{(1+\MP{\Ecal})(1-\PM{\Ecal})}{2\lambda}\nabla_{a} \PM{\sbold} .
\end{align}

\begin{lemma} \label{lem:magic}  
Let $(\cM, g_{ab})$ be an ALF $S^1$-instanton. Then $\PM{\Psi}_a$ satisfies
\begin{align} \label{eq:divmagic} 
\nabla^a \PM{\Psi}_a = \frac{(1 + \MP{\Ecal}) (1 -  \PM{\Ecal})}{2\lambda} \PM{V} |\PM{\sbold}| ,
\end{align}
where $\PM{\sbold}$ is given by \eqref{eq:MSscalar}, and 
\begin{align} \label{eq:Vpm1}
\PM{V} ={}& \frac{\lambda \bigl(\PM{\Fcal}^2 \PM{\Scal}^2 - (\PM{\Fcal}\PM{\Scal})^2 \bigr) }{4 (\PM{\Fcal}^2)^2}, 
\end{align} 
In particular, 
\begin{align} 
\nabla^a \PM{\Psi}_a \geq 0
\end{align} 
with equality only if $\PM{\Wcal}_{abcd}$ is algebraically special.  
\end{lemma}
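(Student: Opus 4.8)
The plan is to read off both the identity and its sign by specializing the preceding Divergence Identity to $\beta = 1$. Since $\PM{\Psi}_a = \PM{\Psi}^{(1)}_a$ and $\PM{V} = \PM{V}(1)$ by definition, the divergence formula \eqref{eq:divmagic} is exactly \eqref{eq:DivPsialpha} at $\beta = 1$, and \eqref{eq:Vpm1} is \eqref{eq:Vpm} at $\beta = 1$, so no new computation is needed for these. It then remains to analyze the sign of the right-hand side on $\cM \setminus \Fixed$, where $\lambda = \xi_a \xi^a > 0$ and where $\PM{\sbold}$ is defined, i.e. where $\PM{\Ecal} \neq 1$.

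First I would check that the scalar prefactor is non-negative. By point~\eqref{point:cE} of Lemma~\ref{lem:Eprop}, each Ernst potential satisfies either $\PM{\Ecal} \equiv 1$ or $-1 < \PM{\Ecal} < 1$, and likewise for $\MP{\Ecal}$; hence $1 + \MP{\Ecal} > 0$ always, while $1 - \PM{\Ecal} > 0$ precisely on the domain where $\PM{\sbold}$ is defined. Together with $\lambda > 0$ this makes $\frac{(1+\MP{\Ecal})(1-\PM{\Ecal})}{2\lambda}$ strictly positive there. Proposition~\ref{prop:V(beta)} with $\beta = 1 \geq 1/2$ gives $\PM{V} \geq 0$, and $|\PM{\sbold}| \geq 0$ trivially, so $\nabla^a \PM{\Psi}_a \geq 0$.

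For the equality case I would argue that $\nabla^a \PM{\Psi}_a \equiv 0$ forces algebraic speciality. On the open set where $\PM{\Fcal} \neq 0$ the prefactor and $|\PM{\sbold}|$ are positive, so $\PM{V} = 0$ there. The decomposition $2 \PM{\mathcal{P}}^2 \lambda^{-3} = \PM{\Fcal}^2 \PM{\Scal}^2 - \tfrac{3}{2}(\PM{\Fcal}\PM{\Scal})^2$ from the proof of Proposition~\ref{prop:V(beta)}, combined with \eqref{eq:Vpm1}, exhibits $\PM{V}$ (up to the positive factor $\tfrac{\lambda}{4(\PM{\Fcal}^2)^2}$) as $2\lambda^{-3}\PM{\mathcal{P}}^2 + \tfrac{1}{2}(\PM{\Fcal}\PM{\Scal})^2$, a sum of two squared norms in Riemannian signature. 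Hence $\PM{V} = 0$ forces both $\PM{\mathcal{P}}_{abc} = 0$ and $(\PM{\Fcal}\PM{\Scal})^2 = 0$; feeding the latter back into $2\lambda^{-3}\PM{\mathcal{P}}^2 = \PM{\Fcal}^2\PM{\Scal}^2 - \tfrac{3}{2}(\PM{\Fcal}\PM{\Scal})^2$ yields $\PM{\Fcal}^2\PM{\Scal}^2 = 0$, and since $\PM{\Fcal}^2 \neq 0$ we get $\PM{\Scal}^2 = 0$, i.e. $\PM{\Scal}_{abcd} = 0$. By Lemma~\ref{lem:typeDchar} this means $\PM{\Wcal}_{abcd}$ is of Petrov type $\PM{D}$ (here $\PM{\Wcal} \neq 0$ automatically, since $\PM{\Wcal} \equiv 0$ would force $\PM{\Fcal} \equiv 0$ by Lemma~\ref{prop:SD-characterizations}), in particular algebraically special. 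Where instead $\PM{\Fcal} \equiv 0$ one has $\PM{\Wcal} \equiv 0$ directly from Lemma~\ref{prop:SD-characterizations}. Because the metric is Ricci-flat and hence real-analytic, algebraic speciality on the open dense set $\{\PM{\Fcal} \neq 0\}$ propagates to all of $\cM$.

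The lemma is thus essentially a corollary of the already-established Propositions, and I expect no serious obstacle. The only steps requiring genuine input are the positivity of the prefactor, which rests on the maximum-principle bounds for the Ernst potentials in Lemma~\ref{lem:Eprop}, and the clean cancellation that upgrades $\PM{V} = 0$ to $\PM{\Scal}_{abcd} = 0$ via the two sum-of-squares identities; the only mild care needed is the analytic-continuation argument used to pass from the generic locus $\{\PM{\Fcal} \neq 0\}$ to the whole manifold.
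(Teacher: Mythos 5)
Your proposal is correct and follows essentially the same route as the paper: the identity is read off as the $\beta=1$ case of \eqref{eq:DivPsialpha}, the sign of the prefactor comes from point \eqref{point:cE} of Lemma~\ref{lem:Eprop}, and the non-negativity and rigidity of $\PM{V}$ come from Proposition~\ref{prop:V(beta)} together with Lemma~\ref{lem:typeDchar}. Your explicit unwinding of the equality case via the sum-of-squares decomposition $2\PM{\mathcal{P}}^2\lambda^{-3}+\tfrac{1}{2}(\PM{\Fcal}\PM{\Scal})^2$ and the analytic-continuation step merely fills in details the paper leaves implicit.
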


\begin{proof} 
Equation \eqref{eq:divmagic} is the case $\beta=1$ of \eqref{eq:DivPsialpha}. By Lemma \ref{lem:Eprop}, we have that $-1 < \PM{\cE} < 1$. Further, by Lemma \ref{lem:typeDchar} and Proposition \ref{prop:V(beta)} with $\beta=1$, $\PM{V} \geq 0$, with equality only if $\PM{\Wcal}_{abcd}$ is algebraically special. 
\end{proof}

\section{Integrating the divergence identity} \label{sec:int}

In this section we shall investigate the consequences of the divergence identity \eqref{eq:divmagic} presented in Lemma \ref{lem:magic}. By Lemma \ref{lem:Eprop}, $-1 < \PM{\cE} < 1$ and hence 
$\PM{\Psi}_a$ is smooth for $\lambda > 0$ and $\PM{\Fcal} > 0$, while it is singular on the set
\begin{align} \label{eq:cZdef}
\PM{\cZ} = \{\PM{\Fcal} = 0\} = \{\PM{\sbold} = 0\}
\end{align}
and on the fixed point set $\Fixed = \{\lambda = 0\}$ of the $S^1$-action. Integrating \eqref{eq:divmagic} over a bounded domain  $\Omega \subset \cM \setminus (\PM{\cZ} \cup \Fixed)$ 
yields 
\begin{align} \label{eq:magic-boundary} 
\int_\Omega \nabla^a \PM{\Psi}_a \, d\mu
= \int_{\partial \Omega} n^a \PM{\Psi}_a \, d\mu
\end{align} 
where $n^a$ is the outward pointing normal to $\partial \Omega$. We then consider a family $\Omega_\eps$ exhausting $\cM (\PM{\cZ} \cup \Fixed)$ and evaluate the limiting boundary contributions in terms of fixed point data. Using the $G$-signature formula, this is then used to give conditions under which  $(\cM, g_{ab})$ must be algebraically special, which in turn yields uniqueness results. 

\begin{remark} \label{rem:beta}
In this paper we shall consider applications of the identity \eqref{eq:DivPsialpha} for $\beta=1$. It is only in this case that the boundary terms at the fixed point set are local, in the sense that they do not depend on the pointwise values of globally defined quantities such as $\PM{\cE}$, see Lemma \ref{lem:rewrite}. 
\end{remark} 

\begin{lemma} \label{lem:Fcalprop} 
Near a nut $\nut$ with surface gravities $\kappa^1, \kappa^2$, we have
\begin{align}
\PM{\Fcal} 
= \PM{\Fcal}|_\nut + O(\lambda)
= 2|\kappa^1 \pm \kappa^2| + O(\lambda).
\end{align}
Near a bolt $\bolt$ with surface gravity $\kappa$ we have 
\begin{align}
\PM{\Fcal} 
= \PM{\Fcal}|_\bolt + O(\lambda)
= 2|\kappa| + O(\lambda).
\end{align}
\end{lemma}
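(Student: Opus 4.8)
The goal is to compute the leading-order behavior of the SD/ASD field strengths $\PM{\Fcal}$ at a nut and at a bolt. The key input is the canonical form of $F_{ab} = \nabla_a \xi_b$ at each type of fixed point, together with the definitions $\PM{\Fcal}_{ab} = F_{ab} \pm \tfrac{1}{2} \epsilon_{ab}{}^{cd} F_{cd}$ and $\PM{\Fcal}^2 = \PM{\Fcal}_{ab}\PM{\Fcal}^{ab}$. Since $\nabla_c \PM{\Fcal}_{ab} = -\PM{\Wcal}_{abcd}\xi^d$ by \eqref{eq:nablaFcal}, and $\xi^a$ vanishes on the fixed point set while $\Riem$ is bounded, the derivatives of $\PM{\Fcal}_{ab}$ are controlled by $|\xi| = O(\lambda^{1/2})$; this gives the $O(\lambda)$ correction once squared and shows that the leading value is just $\PM{\Fcal}|_{\text{fixed point}}$, evaluated directly from the matrix data.

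The plan is to evaluate $\PM{\Fcal}^2$ at the fixed point using the given canonical matrices. First I would compute $F^2 = F_{ab}F^{ab} = 2\mu$ and the pseudoscalar $\tfrac{1}{2}\epsilon_{abcd}F^{ab}F^{cd} = 2\nu$, so that $\PM{\Fcal}^2 = 4(\mu \pm \nu)$ by \eqref{eq:F2munu}. At a nut, with $F_{ab}$ given by \eqref{eq:nablaxi(nut)}, one reads off $\mu = (\kappa^1)^2 + (\kappa^2)^2$ and, with the orientation convention fixing $\epsilon_{1234}=1$, $\nu = 2\kappa^1\kappa^2$, whence $\PM{\Fcal}^2|_\nut = 4((\kappa^1)^2 + (\kappa^2)^2 \pm 2\kappa^1\kappa^2) = 4(\kappa^1 \pm \kappa^2)^2$, so $\PM{\Fcal}|_\nut = 2|\kappa^1 \pm \kappa^2|$. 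At a bolt, with $F_{ab}$ given by \eqref{eq:nablaxi(bolt)}, one finds $\mu = \kappa^2$ and $\nu = 0$ (the self-dual and anti-self-dual contractions coincide since only one $2$-plane block is present), giving $\PM{\Fcal}^2|_\bolt = 4\kappa^2$ and hence $\PM{\Fcal}|_\bolt = 2|\kappa|$.

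To justify the $O(\lambda)$ error term, I would Taylor expand $\PM{\Fcal}_{ab}$ in normal coordinates centered at the fixed point. Since $\xi^a$ vanishes there and $\lambda = \xi_a\xi^a = O(|y|^2)$, we have $|\xi| = O(\lambda^{1/2})$, so $\nabla_c \PM{\Fcal}_{ab} = -\PM{\Wcal}_{abcd}\xi^d = O(\lambda^{1/2})$, and the first-order term in the expansion of $\PM{\Fcal}_{ab}$ is $O(\lambda^{1/2})$. Consequently $\PM{\Fcal}^2 = \PM{\Fcal}^2|_{\text{fixed point}} + O(\lambda^{1/2})$, and taking the square root gives $\PM{\Fcal} = \PM{\Fcal}|_{\text{fixed point}} + O(\lambda^{1/2})$. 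A slightly sharper accounting, noting that the cross term between the constant leading part and the $O(\lambda^{1/2})$ linear part vanishes upon contraction (as in the analogous computation in the proof of Proposition \ref{prop:charges}, where the linear-in-$y$ term drops out of $4\nu$), upgrades the error to $O(\lambda)$.

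The main obstacle is bookkeeping the orientation and sign conventions so that $\nu$ comes out with the correct sign at the nut, since the decomposition into self-dual versus anti-self-dual parts is exactly what distinguishes $|\kappa^1 + \kappa^2|$ from $|\kappa^1 - \kappa^2|$; one must verify that the volume form $\epsilon_{abcd}$ in the canonical coordinate system of \eqref{eq:nablaxi(nut)} is oriented so that \eqref{eq:F2munu} reproduces the stated values. The error-term sharpening from $O(\lambda^{1/2})$ to $O(\lambda)$ is the only other point requiring care, and it follows from the vanishing of the relevant cross contraction exactly as exploited in \eqref{eq:Rparper}.
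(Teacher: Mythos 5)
Your proposal is correct and follows essentially the same route as the paper: the paper likewise deduces the $O(\lambda)$ error from the fact that $\nabla_c\PM{\Fcal}{}^2$ (equivalently $\nabla_c\PM{\Fcal}_{ab}$) is proportional to $\xi^d$ and hence vanishes on the fixed point set, and then evaluates $\PM{\Fcal}|_\nut=2|\kappa^1\pm\kappa^2|$ and $\PM{\Fcal}|_\bolt=2|\kappa|$ by exactly the $\PM{\Fcal}{}^2=4(\mu\pm\nu)$ computation from the canonical forms \eqref{eq:nablaxi(nut)} and \eqref{eq:nablaxi(bolt)}. The only cosmetic remark is that since the first-order Taylor coefficient $(\nabla_c\PM{\Fcal}_{ab})|_{\text{fp}}$ is exactly zero (not merely $O(\lambda^{1/2})$), the correction to $\PM{\Fcal}_{ab}$ is already $O(|y|^2)=O(\lambda)$ and the cross-term detour is unnecessary.
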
 

\begin{proof} 
From \eqref{eq:nablaFsq} we have $\nabla_{c}\PM{\Fcal}^2 = 0$ at points where $\xi=0$. So $\PM{\Fcal} = \PM{\Fcal}_\nut + O(\lambda)$ resp. $\PM{\Fcal} = \PM{\Fcal}|_\bolt + O(\lambda)$. Further, $\PM{\Fcal}|_\bolt$ is constant along a bolt. A computation using \eqref{eq:nablaxi(nut)} and \eqref{eq:nablaxi(bolt)} gives us $\PM{\Fcal}|_\nut = 2|\kappa^1 \pm \kappa^2|$ and $\PM{\Fcal}|_\bolt = 2|\kappa|$.
\end{proof}

\begin{prop} \label{prop:cZ-properties}
Let $\PM{\cZ}$ be the singular set given in \eqref{eq:cZdef} and let $\Fixed = \{ \lambda = 0 \}$ be the fixed point set of the $S^1$ action.  Assume that $(\cM, g_{ab})$ is not half-flat. Then
\begin{enumerate}
\item $\PM{\cZ}$ is compact,
\item $\PM{\cZ}$ and $\Fixed$ are disjoint.
\end{enumerate}
\end{prop}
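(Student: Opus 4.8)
The plan is to treat the two assertions separately, using that $\PM{\cZ}$ is the zero set of the continuous function $\PM{\Fcal}$ and hence closed in $\cM$; compactness then reduces to boundedness, and disjointness to a positive lower bound for $\PM{\Fcal}$ near $\Fixed$. As a preliminary I would note that, since $(\cM, g_{ab})$ is not half-flat, Lemma~\ref{prop:SD-characterizations} gives $\PM{\cE} \not\equiv 1$, so by Lemma~\ref{lem:Eprop} we have $-1 < \PM{\cE} < 1$ throughout $\cM$ and thus $0 < 1 - \PM{\cE} < 2$. Hence $(1-\PM{\cE})^4$ is positive and bounded, which justifies the identification $\PM{\cZ} = \{\PM{\Fcal} = 0\} = \{\PM{\sbold} = 0\}$ and lets me work interchangeably with $\PM{\Fcal}^2$.

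For compactness I would show $\PM{\Fcal}^2 > 0$ outside a compact set. By Lemma~\ref{lem:Eprop}\eqref{point:cE-expand} the non-half-flat hypothesis forces $\PM{b} < 0$ in the expansion $\PM{\cE} = 1 + \PM{b} r^{-1} + O^*(r^{-1-\delta})$. Differentiating and using $\nabla_a \PM{\cE} = \PM{\sigma}_a$ yields $\PM{\sigma}_a = -\PM{b}\, r^{-2} \nabla_a r + O^*(r^{-2-\delta})$; since $|\nabla r| \to 1$ and $\lambda \to 1$ at infinity, the algebraic relation \eqref{eq:sigmasq} gives $\PM{\Fcal}^2 = \PM{\sigma}_a \PM{\sigma}^a / \lambda = \PM{b}^2 r^{-4}(1 + o(1))$. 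Thus $\PM{\Fcal}^2 > 0$ for $r \geq R$ with $R$ sufficiently large, so $\PM{\cZ} \subset \{r \leq R\}$. Since $\cM$ has an end diffeomorphic to $(A,\infty) \times L$ with $L$ compact, the sublevel set $\{r \leq R\}$ is compact, and $\PM{\cZ}$, being closed and contained in it, is compact.

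For disjointness I would evaluate $\PM{\Fcal}$ on $\Fixed$ via Lemma~\ref{lem:Fcalprop}. Along a bolt $\bolt$ one has $\PM{\Fcal} \to 2|\kappa| > 0$ because $\kappa \neq 0$, so a neighborhood of each bolt avoids $\PM{\cZ}$; at a nut $\nut$ one has $\PM{\Fcal} \to 2|\kappa^1 \pm \kappa^2|$. If $\kappa^1 \neq \mp \kappa^2$ at every nut, then $\PM{\Fcal} > 0$ on all of $\Fixed$, and by continuity together with compactness of $\Fixed$ there is a uniform neighborhood of $\Fixed$ on which $\PM{\Fcal} > 0$, giving $\PM{\cZ} \cap \Fixed = \emptyset$. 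The main obstacle is exactly the borderline possibility $\kappa^1 = \mp \kappa^2$, i.e.\ an (anti-)self-dual nut of weights $(1,1)$, at which $\PM{\Fcal}|_\nut = 0$ for the corresponding sign and the nut would lie in $\PM{\cZ}$; note that on a side where $\PM{\Wcal}$ is of type $\PM{D}$ the issue is vacuous, since then $\PM{\sbold}^2$ is a nonzero constant by Lemma~\ref{lem:typeDchar} and $\PM{\cZ} = \emptyset$. To exclude it on the general (non-type-$D$) side for the relevant sign I would invoke the constraints on the admissible weight data coming from the $G$-signature theorem, namely the weight-balance and companion-nut Lemmas~\ref{lem:2.6}--\ref{lem:3.6} of Section~\ref{sec:fps}, in combination with the non-half-flatness hypothesis. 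This last point is where the real work lies; the compactness and the bolt estimate are immediate from the asymptotics and Lemma~\ref{lem:Fcalprop}.
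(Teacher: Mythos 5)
Your proof of part (1) is correct and is essentially the paper's argument: closedness is automatic, and the expansion of $\PM{\cE}$ from Lemma~\ref{lem:Eprop} together with \eqref{eq:sigmasq} gives $r^4\PM{\Fcal}^2 \to \PM{b}^2 \neq 0$, so $\PM{\cZ}$ lies in a compact sublevel set of $r$.

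For part (2) there is a genuine gap, and the route you sketch cannot be completed. You correctly reduce the problem to excluding a nut with $\kappa^1 = \mp\kappa^2$, i.e.\ a nut with weights $\{1,1\}$ and the orientation for which $\PM{\Fcal}|_\nut = 2|\kappa^1\pm\kappa^2|$ vanishes. But the $G$-signature machinery of Section~\ref{sec:fps} cannot rule this out: Lemmas~\ref{lem:2.6}, \ref{lem:3.5} and \ref{lem:3.6} constrain only the multiset of orientations and weights, and configurations containing such a nut are perfectly admissible --- for instance the two-nut configuration $\{+,1,1\},\{-,1,1\}$ of Remark~\ref{rem:nutfacts}\eqref{point:2nut} satisfies all of them, as does $\{+,1,1\},\{-,1,2\},\{-,1,2\}$ in the three-nut case. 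Indeed the Taub--NUT instanton has exactly such a nut, so whatever excludes it in the non-half-flat case must be geometric rather than combinatorial. (Your side remark about the type-$\PM{D}$ case is also not usable here, since Lemma~\ref{lem:typeDchar} presupposes rather than establishes that $\PM{\sbold}^2$ is a nonzero constant.)

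The paper's argument is local and analytic, and does not distinguish nuts from bolts at all. Suppose $\lambda = \PM{\Fcal} = 0$ at a point $p$. By \eqref{eq:nablaFsq}, $\nabla_c\PM{\Fcal}^2 = -2\PM{\Wcal}_{cabd}\xi^a\PM{\Fcal}^{bd}$, and differentiating repeatedly using \eqref{def:F_ab}, \eqref{def:Fcal_ab} and \eqref{eq:nablaFcal} one checks inductively that every term of every covariant derivative of $\PM{\Fcal}^2$ carries a factor of either $\xi_a$ or $\PM{\Fcal}_{bd}$ (differentiating $\xi$ produces $F_{ab}$, which still contains $\PM{\Fcal}_{ab}$ up to the opposite-duality piece multiplying something that already has a $\PM{\Fcal}$ factor, and differentiating $\PM{\Fcal}$ produces a factor of $\xi$). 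Hence all derivatives of $\PM{\Fcal}^2$ vanish at $p$; since Ricci-flat metrics are analytic in suitable coordinates, $\PM{\Fcal}^2 \equiv 0$, which by Lemma~\ref{prop:SD-characterizations} contradicts the assumption that $(\cM,g_{ab})$ is not half-flat. You should replace your appeal to the fixed-point combinatorics with this unique-continuation step.
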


\begin{proof}
Since $\PM{\cZ}$ is the zero set of a continuous function, it is closed. By \eqref{eq:sigmasq} and Lemma~\ref{lem:Eprop} we have
\begin{align} \label{Fcalasymp}
\PM{\Fcal}^2
= \lambda^{-1} \PM{\sigma}_{a} \PM{\sigma}^{a}
= \lambda^{-1}(\nabla_a\PM{\cE})(\nabla^a\PM{\cE})
=\PM{b}^2 r^{-4} + O^*(r^{-4-\delta})
\end{align}
and thus $r^4 \PM{\Fcal}^2 \to \PM{b}^2 \neq 0$ at infinity. Hence $\PM{\cZ}$ is bounded and therefore compact.

Suppose $\lambda = \PM{\Fcal} = 0$ at a point $p$. From \eqref{eq:nablaFsq} it follows that the derivative of $\PM{\Fcal}^2$ vanishes at $p$. Taking further derivatives of \eqref{eq:nablaFsq} and using \eqref{def:F_ab}, \eqref{def:Fcal_ab} and \eqref{eq:nablaFcal} we see that every term in every derivative of $\PM{\Fcal}^2$ contains either a factor $\xi_a$ or a factor $\PM\cF_{bd}$. It follows that all derivatives of $\PM{\Fcal}^2$ vanish at $p$. Since $(\cM, g_{ab})$ is Ricci-flat we may work in coordinates where all geometric quantities are analytic. It follows that $\PM{\Fcal}^2 = 0$ everywhere, which by Proposition~\ref{prop:SD-characterizations} contradicts the assumption that $(\cM, g_{ab})$ is not half-flat.
\end{proof}

\begin{prop} \label{prop:baer}
The set $\PM{\cZ}$ is countably 2-rectifiable, and therefore has Hausdorff dimension at most $2$.
\end{prop}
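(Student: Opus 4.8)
The plan is to identify $\PM{\cZ}$ with the critical set of a single scalar solution of a second order elliptic equation and then invoke the regularity theory for such critical sets. As in Proposition~\ref{prop:cZ-properties} I assume $(\cM,g_{ab})$ is not half-flat, since otherwise $\PM{\Fcal}\equiv 0$, $\PM{\cZ}=\cM$, and the statement is vacuous. First I would localise away from the fixed point set: by Proposition~\ref{prop:cZ-properties}, $\PM{\cZ}$ is compact and disjoint from $\Fixed=\{\lambda=0\}$, so there is an open set $\Domain$ with $\PM{\cZ}\subset\Domain$ and $\lambda\geq c>0$ on $\Domain$. On $\Domain$ the Ernst potential $\PM{\cE}$ is smooth, and I claim $\PM{\cZ}=\{\nabla\PM{\cE}=0\}\cap\Domain$. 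Indeed, \eqref{eq:sigmasq} gives $\PM{\sigma}_a\PM{\sigma}^a=\lambda\,\PM{\Fcal}^2$, while \eqref{eq:Epm} gives $\PM{\sigma}_a=\nabla_a\PM{\cE}$, so where $\lambda>0$,
\[
\PM{\Fcal}^2=0 \iff \PM{\sigma}_a=0 \iff \nabla_a\PM{\cE}=0 .
\]

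Next I would record the elliptic equation satisfied by $\PM{\cE}$. By \eqref{eq:DeltaEcal} we have $\lambda\,\nabla^a\nabla_a\PM{\cE}=\nabla_a\PM{\cE}\,\nabla^a\PM{\cE}$, which on $\Domain$ may be written as the linear, uniformly elliptic equation
\[
\nabla^a\nabla_a\PM{\cE}-b^a\nabla_a\PM{\cE}=0,\qquad b^a:=\lambda^{-1}\nabla^a\PM{\cE},
\]
where the coefficients $b^a$ are smooth (indeed real analytic, since a Ricci-flat metric and its Killing fields are analytic in suitable coordinates) on $\Domain$. Moreover, because $(\cM,g_{ab})$ is not half-flat, Lemma~\ref{prop:SD-characterizations} shows $\PM{\cE}\not\equiv 1$, and since $\PM{\cE}\to 1$ at infinity by Lemma~\ref{lem:Eprop} this means $\PM{\cE}$ is non-constant.

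The conclusion would then follow from the structure theory for critical sets of solutions of second order elliptic equations: for a non-constant solution of such an equation with Lipschitz coefficients on a domain in $\Reals^n$, the critical set $\{\nabla\PM{\cE}=0\}$ is countably $(n-2)$-rectifiable (this is the content of the results of Hardt, Hoffmann-Ostenhof, Hoffmann-Ostenhof and Nadirashvili, building on Han, Hardt and Lin). Taking $n=4$, the set $\PM{\cZ}=\{\nabla\PM{\cE}=0\}\cap\Domain$ is countably $2$-rectifiable and hence has Hausdorff dimension at most $2$.

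The essential point, and the part I expect to be the real obstacle, is the dimension bound $\dim\PM{\cZ}\le n-2$, which genuinely uses ellipticity rather than mere analyticity. If one wanted a self-contained argument in place of the cited theorem, the crux is the following local picture: at $p\in\PM{\cZ}$ the function $u=\PM{\cE}-\PM{\cE}(p)$ vanishes to finite order $N\geq 2$ by analytic unique continuation, and since $b^a(p)=0$ the leading homogeneous part $P_N$ of $u$ at $p$ is harmonic; a non-zero harmonic polynomial of degree $\geq 2$ on $\Reals^4$ cannot have vanishing gradient on a hypersurface, since $\nabla P_N=0$ on $\{x^1=0\}$ would force $P_N$ divisible by arbitrarily high powers of $x^1$. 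This bounds the tangent cone $\{\nabla P_N=0\}$ by dimension $2$, and the cited regularity theory is precisely what is needed to transfer this tangent-cone bound to $\PM{\cZ}$ itself.
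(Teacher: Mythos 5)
Your argument is correct, but it reaches the conclusion by a genuinely different route from the paper. The paper's proof is a short reduction to a Dirac-type nodal set theorem: anti-symmetrizing and tracing \eqref{eq:nablaFcal}, using the first Bianchi identity and the tracelessness of $\PM{\Wcal}_{abcd}$, shows that the two-form $\PM{\Fcal}_{ab}$ is closed and coclosed, i.e.\ harmonic, and B\"ar's Main Theorem on nodal sets of solutions of generalized Dirac equations then gives countable $2$-rectifiability of its zero set directly, with no need to localize away from $\Fixed$ or to pass to a scalar potential. You instead use \eqref{eq:sigmasq} and \eqref{eq:Epm} to identify $\PM{\cZ}$, away from $\{\lambda=0\}$ (legitimate by Proposition \ref{prop:cZ-properties}), with the critical set of the Ernst potential, rewrite the Ernst equation \eqref{eq:DeltaEcal} as a linear second-order elliptic equation with smooth frozen coefficients, and invoke the critical-set regularity theory of Hardt--Hoffmann-Ostenhof--Hoffmann-Ostenhof--Nadirashvili and Han--Hardt--Lin. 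That theory does apply here: the operator is uniformly elliptic with smooth coefficients on a neighbourhood of the compact set $\PM{\cZ}$, and $\PM{\cE}$ is non-constant by Lemma \ref{prop:SD-characterizations}; it even yields the slightly stronger conclusion of locally finite $\mathcal{H}^2$-measure, while in either approach the only fact used downstream, in Lemma \ref{lem:bdrsing}, is the Hausdorff dimension bound. Your closing discussion correctly isolates where ellipticity (as opposed to mere analyticity) enters, namely the tangent-cone dimension bound via harmonicity of the leading Taylor polynomial; note only that the induction showing a homogeneous harmonic polynomial cannot have a critical hyperplane must reuse harmonicity at every step, not just divisibility by $x^1$, and that the passage from the tangent-cone bound to the set itself is exactly the content of the cited theorem, so your proof stands or falls with that citation just as the paper's does with B\"ar's.
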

\begin{remark} 
The fact that $\PM{\cZ}$ is 2-rectifiable means that it can be written as a countable union of sets of the form $\Phi(\Xi)$, where $\Xi \subset \Reals^2$ is bounded and $\Phi: \Xi \to \cM$ is a Lipschitz map. In particular, the Hausdorff dimension of $\PM{\cZ}$ is at most 2.  
\end{remark} 

\begin{proof}
Taking the total anti-symmetrization respectively the trace of \eqref{eq:nablaFcal} and using the first Bianchi identity respectively the fact that $\PM\Wcal_{abcd}$ is trace-free, we find that the 2-form $\PM\cF_{ab}$ is closed and coclosed. From \cite[Main~Theorem]{Bar-nodal-sets} we thus know that $\PM\cZ$, which is the zero set of $\PM\cF_{ab}$, is countably 2-rectifiable. 
\end{proof}

Fix small $\delta, \eps > 0$. Since the Hausdorff dimension of $\PM{\cZ}$ is at most 2 there is a finite cover of $\PM{\cZ}$ by balls $B_{p_i}(r_i)$ of radius $r_i$ such that $\sum_i r_i^{2 + \delta} < \eps$. Define 
\begin{align} \label{Hausdorff-balls}
U_{\PM{\cZ},\eps} = \cup_i B_{p_i}(r_i). 
\end{align}
For $\eps > 0$ define 
\begin{align}
U_{\infty,\eps} = \{ r > \eps^{-1} \}, \quad
U_{\Fixed,\eps} = \{ \lambda < \eps \}.
\end{align} 
If $\eps$ is sufficiently small, the sets $U_{\PM{\cZ},\eps}$, $U_{\Fixed,\eps}$, $U_{\PM{\cZ},\eps}$ are all disjoint. Further, define 
\begin{align} 
\PM{\Omega}_\eps = 
\cM \setminus 
( U_{\infty,\eps} \cup U_{\Fixed,\eps} \cup U_{\PM{\cZ},\eps} ).
\end{align} 
The sets $\cM \setminus U_{\infty,\eps}$ give an exhaustion of $\cM$ as $\eps \to 0$ with the domains $U_{\infty,\eps}$ receding to infinity along the end. The domains $U_{\Fixed,\eps}$ surround the fixed point set $\Fixed$, and for $\eps \to 0$ converge to the nuts and bolts. Finally, the domains $U_{\PM{\cZ},\eps}$ surround the set $\PM{\cZ}$. It follows from the just mentioned facts that the family of domains $\PM{\Omega}_\eps$ forms an exhaustion of $\cM \setminus (\Fixed \cup \PM{\cZ})$. We have 
\begin{align} \label{boundary-decomposition}
\partial \PM\Omega_\eps = 
\partial U_{\infty,\eps} 
\cup \partial U_{\Fixed,\eps} 
\cup \partial U_{\PM\cZ,\eps}.
\end{align} 
By construction, $\PM\Psi_a$ is smooth in $\PM\Omega_\eps$. Therefore, in order to evaluate the right-hand side of \eqref{eq:magic-boundary}, we may consider fluxes of $\PM\Psi_a$ through each of these boundary components in \eqref{boundary-decomposition}, and take the limit $\eps \to 0$. The boundaries $\partial U_{\infty,\eps}$ and $\partial U_{\Fixed,\eps}$ are smooth hypersurfaces for small $\eps$, while the boundary $\partial U_{\PM\cZ,\eps}$ is smooth almost everywhere. The flux through $\partial U_{\infty,\eps}$ gives the contribution from infinity, while that through $\partial U_{\Fixed,\eps}$ gives the contribution from the nuts and bolts. Finally, the flux through $\partial U_{\PM\cZ,\eps}$ yields the contribution from the set where $\PM{\Fcal} = 0$. 

\begin{definition}[Length at infinity]\label{def:linfty}
For $p \in M$, let $\ell(p)$ denote the length of the $S^1$ orbit through $p$. The \emph{length at infinity} of $(\cM, g_{ab})$ is defined as  
\begin{align} \label{eq:linfty-def}
\ell_\infty = \liminf_{p \to \infty} \ell(p) .
\end{align} 
\end{definition} 

The remainder of this section is devoted to the proof of the following result.

\begin{prop}[Boundary terms] \label{prop:magbound} 
Let $(\cM, g_{ab})$ be an ALF $S^1$-instanton that is not half-flat. Let $O$ be the base of the boundary at infinity of $(\cM, g_{ab})$ and let $\chi[O]$ be its orbifold Euler characteristic, see Remark \ref{rem:seifert}.  Then
\begin{align}  \label{eq:magbound}
-2\pi \ell_\infty \chi[O]
+4\pi^2 \left(
\sum_{i=1}^{\nbolts} \frac{\chi[\bolt_i]}{|\kappa(\bolt_i)|}
\pm\sum_{i=1}^{\nnuts} \eps(\nut_i)
\frac{|\kappa^1(\nut_i) \pm \kappa^2(\nut_i)|}{|\kappa^1(\nut_i)\kappa^2(\nut_i)|}
\right)
\geq 0
\end{align}
with equality if and only if $\cM$ is algebraically special.
\end{prop}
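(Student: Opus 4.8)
The strategy is to integrate the divergence identity \eqref{eq:divmagic} over the exhaustion $\PM{\Omega}_\eps$ and evaluate each of the three limiting boundary fluxes in \eqref{boundary-decomposition} as $\eps \to 0$. Since the interior integrand $\nabla^a \PM{\Psi}_a$ is non-negative by Lemma \ref{lem:magic}, and vanishes identically precisely when $\PM{\Wcal}_{abcd}$ is algebraically special, the identity \eqref{eq:magic-boundary} gives
\begin{align}
0 \leq \int_{\PM{\Omega}_\eps} \nabla^a \PM{\Psi}_a \, d\mu
= \int_{\partial \PM{\Omega}_\eps} n^a \PM{\Psi}_a \, d\mu
= \left(\int_{\partial U_{\infty,\eps}} + \int_{\partial U_{\Fixed,\eps}} + \int_{\partial U_{\PM{\cZ},\eps}}\right) n^a \PM{\Psi}_a \, d\mu .
\end{align}
The claimed inequality \eqref{eq:magbound} will follow by showing that the right-hand side converges to the stated expression, with the equality case inherited from the interior integrand.

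First I would dispose of the contribution from $\PM{\cZ}$. By Proposition \ref{prop:baer} the set $\PM{\cZ}$ has Hausdorff dimension at most $2$, and the covering balls were chosen in \eqref{Hausdorff-balls} so that $\sum_i r_i^{2+\delta} < \eps$. The field $\PM{\Psi}_a$ has at worst a controlled blow-up near $\PM{\cZ}$ (where $\PM{\sbold} \to 0$, so the singular behavior comes from the factor $(1-\PM{\Ecal})^{-4}$ inside $\PM{\sbold}$ and from $\nabla_a |\PM{\sbold}|$); I would estimate the pointwise size of $n^a \PM{\Psi}_a$ against a negative power of the distance to $\PM{\cZ}$ and combine this with the area bound $\sim \sum_i r_i^2$ coming from the dimension-$2$ rectifiability, choosing $\delta$ so that the flux through $\partial U_{\PM{\cZ},\eps}$ tends to $0$. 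The main subtlety here is verifying that the singularity of $\PM{\Psi}_a$ is mild enough (no worse than the codimension permits), which relies on the explicit form \eqref{eq:Psi-def} together with the asymptotics of $\PM{\Fcal}^2$ near its zero set.

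Next I would compute the contribution from the fixed point set $\partial U_{\Fixed,\eps}$, splitting into nuts and bolts. This is the step where the precise numerical factors in \eqref{eq:magbound} are produced. For a nut, using Lemma \ref{lem:Fcalprop} ($\PM{\Fcal}|_\nut = 2|\kappa^1 \pm \kappa^2|$) together with the nut charge $N(\nut) = \pi/(2\kappa^1\kappa^2)$ from Proposition \ref{prop:charges}, and the leading behavior $\PM{\Ecal} \to 1$, the $\frac{1}{2}\PM{J}_a \PM{\sbold}$ term of \eqref{eq:Psi-def} should reproduce the nut sum $\pm \eps(\nut_i)|\kappa^1 \pm \kappa^2|/|\kappa^1\kappa^2|$; the orientation $\eps(\nut_i)$ enters through the sign of $\kappa^1\kappa^2$ in the charge. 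For a bolt, the factor $\chi[\bolt_i]/|\kappa(\bolt_i)|$ should emerge by combining $\PM{\Fcal}|_\bolt = 2|\kappa|$ with the integral of $\cR^{\parallel\perp}$ and the Gauss--Bonnet relation $\int_\bolt \cR^{\perp\perp}d\mu = \int_\bolt 2\mathcal{K}^\bolt d\mu = 4\pi\chi[\bolt_i]$ from \eqref{eq:Rperpperpgauss}. I expect to invoke a local rewriting of the boundary integrand analogous to Lemma \ref{lem:rewrite} (cited in Remark \ref{rem:beta}) that makes the boundary terms depend only on local fixed-point data for $\beta = 1$; this locality is exactly why the case $\beta = 1$ is singled out.

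Finally, for $\partial U_{\infty,\eps}$ I would use the ALF asymptotics: from \eqref{eq:cE-expand}, \eqref{Fcalasymp} one has $\PM{\Fcal}^2 \sim \PM{b}^2 r^{-4}$ and $1 - \PM{\Ecal} \sim -\PM{b}\, r^{-1}$, so $\PM{\sbold} = \PM{\Fcal}/(1-\PM{\Ecal})^2$ approaches a nonzero constant, and the Ernst-potential factors combine to a finite limit. The surface $\partial U_{\infty,\eps} = S_{1/\eps}$ is asymptotically a circle bundle whose base has area $A[O] = 2\pi\chi[O]$ (Remark \ref{rem:seifert}) and whose fibres have length approaching $\ell_\infty$; integrating the leading flux density over this geometry should yield the term $-2\pi \ell_\infty \chi[O]$, with the sign and the factor $\ell_\infty$ arising from the orbit length at infinity and the outward (toward infinity) normal. \textbf{The main obstacle} I anticipate is the bolt computation combined with the infinity term: both require carefully tracking how the globally defined Ernst potentials $\PM{\Ecal}$ degenerate in the limit so that the apparently non-local integrand collapses to a purely topological/geometric quantity, and ensuring the bolt contribution correctly assembles $\chi[\bolt_i]$ from the normal-bundle Euler class rather than the self-intersection number that appeared in $N(\bolt)$. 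Assembling the three limits and reading off that equality forces $\nabla^a\PM{\Psi}_a \equiv 0$, hence algebraic speciality by Lemma \ref{lem:magic}, completes the argument.
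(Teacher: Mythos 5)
Your plan follows the paper's proof exactly: integrate \eqref{eq:divmagic} over the exhaustion $\PM{\Omega}_\eps$, kill the flux through $\partial U_{\PM{\cZ},\eps}$ using the $2$-rectifiability of $\PM{\cZ}$ (the paper in fact shows $n^a\PM{\Psi}_a$ is \emph{bounded} there, since $\lambda$ and $1-\PM{\cE}$ stay away from $0$ near $\PM{\cZ}$), and evaluate the nut, bolt and infinity fluxes via the rewriting of Lemma \ref{lem:rewrite}, the charges of Proposition \ref{prop:charges}, and the asymptotics of $\PM{\cE}$ --- precisely Lemmas \ref{lem:bdrnut}, \ref{lem:bdrybolt}, \ref{lem:bdrinfty} and \ref{lem:bdrsing}. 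The bolt subtlety you flag is resolved just as you suspect: the term $\nabla_a\PM{\Fcal}/(2\lambda)$ from Lemma \ref{lem:rewrite}, which is negligible at a nut, contributes at a bolt a bulk integral of $\nabla^a\nabla_a\PM{\Fcal}^2$ over the normal tube, which via \eqref{eq:laplaceFsq} yields $\tfrac{\pi}{|\kappa|}\int_\bolt\bigl(R^\bolt\pm\cR^{\perp\parallel}\bigr)\,d\mu$; the $\pm\cR^{\perp\parallel}$ piece exactly cancels the self-intersection contribution coming from the $J^T_a$ term, and Gauss--Bonnet converts $\int_\bolt R^\bolt\,d\mu$ into $4\pi\chi[\bolt]$.
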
 

\begin{lemma} \label{lem:rewrite} 
Let $(\cM, g_{ab})$ be an ALF $S^1$-instanton that is not half-flat. Let $\PM{\Psi}_a$ be given by \eqref{eq:Psi-def}. Then there is a locally bounded one-form $\Xi_a$ such that  
\begin{align}
\PM{\Psi}_a 
= \pm \frac{\PM{\Fcal}}{2} J^T_a 
+ \frac{\nabla_a\PM{\Fcal}}{2\lambda} + \Xi_a .
\end{align} 
\end{lemma}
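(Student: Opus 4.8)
The plan is to prove the identity by substituting the definitions \eqref{eq:MSscalar}, \eqref{eq:Jcurrdef} directly into \eqref{eq:Psi-def}, simplifying the result with the algebraic relations between the two Ernst potentials until the two prescribed singular terms $\pm\frac{\PM{\Fcal}}{2}J^T_a$ and $\frac{\nabla_a\PM{\Fcal}}{2\lambda}$ split off, and then checking that the remainder $\Xi_a$ is bounded near $\Fixed$. Throughout I use that, since $(\cM,g_{ab})$ is not half-flat, Lemma \ref{lem:Eprop}\eqref{point:cE} gives $-1 < \PM{\Ecal} < 1$, so $1 - \PM{\Ecal} > 0$ everywhere and $\PM{\sbold}$ is the nonnegative root $\PM{\sbold} = \PM{\Fcal}(1-\PM{\Ecal})^{-2}$, with $\PM{\Fcal} = \sqrt{\PM{\Fcal}^2} \ge 0$.

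First I would differentiate $\PM{\sbold}$, using $\nabla_a\PM{\Ecal} = \PM{\sigma}_a$ from \eqref{eq:Epm}, to get $\nabla_a\PM{\sbold} = (1-\PM{\Ecal})^{-2}\nabla_a\PM{\Fcal} + 2\PM{\Fcal}(1-\PM{\Ecal})^{-3}\PM{\sigma}_a$, and insert this together with the explicit form \eqref{eq:Jcurrdef} of $\PM{J}_a$ into \eqref{eq:Psi-def}. This produces a sum of terms whose most singular pieces carry factors $\lambda^{-2}$ (from $\PM{J}_a$) and $\lambda^{-1}$ (from the gradient term).

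The heart of the computation is to peel off the two stated singular terms. For the $\lambda^{-2}$ part I would write $J^T_a = (\M{\sigma}_a - \P{\sigma}_a)/(2\lambda^2)$ from \eqref{eq:JT-def} and use $\P{\Ecal} + \M{\Ecal} = 2\lambda$ and $\P{\Ecal} - \M{\Ecal} = 2\omega$, both of which follow from \eqref{eq:Ecal-lam-om}; the key point is that the combination $(1-\PM{\Ecal})^2 - (1+\MP{\Ecal})^2$ factors as $-4\lambda(1\mp\omega)$, so one power of $\lambda$ cancels and $\frac12\PM{J}_a\PM{\sbold} \mp \frac{\PM{\Fcal}}{2}J^T_a$ collapses to a single term proportional to $\PM{\sigma}_a/\lambda$. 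Adding the $\PM{\sigma}_a$-contribution from $\nabla_a\PM{\sbold}$ and using $\omega+\M{\Ecal} = \lambda$ (respectively $\P{\Ecal}-\omega = \lambda$) cancels the last power of $\lambda$ and leaves $\PM{\Fcal}\,\PM{\sigma}_a(1-\PM{\Ecal})^{-2}$. What remains of $\PM{\Psi}_a$ is then the single gradient term $\frac{(1+\MP{\Ecal})}{2\lambda(1-\PM{\Ecal})}\nabla_a\PM{\Fcal}$, which splits, again by $\P{\Ecal}+\M{\Ecal} = 2\lambda$, as $\frac{\nabla_a\PM{\Fcal}}{2\lambda} + \frac{\nabla_a\PM{\Fcal}}{1-\PM{\Ecal}}$. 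Collecting the bounded pieces gives $\Xi_a = \frac{\nabla_a\PM{\Fcal}}{1-\PM{\Ecal}} + \frac{\PM{\Fcal}\,\PM{\sigma}_a}{(1-\PM{\Ecal})^2}$, valid for both signs.

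Finally I would verify that this $\Xi_a$ is locally bounded on $\cM\setminus\PM{\cZ}$, and in particular near $\Fixed$. The denominators are harmless, since $1-\PM{\Ecal}$ is bounded away from zero on compact sets by Lemma \ref{lem:Eprop}, and $\PM{\sigma}_a = 2\PM{\Fcal}_{ab}\xi^b$ is smooth. The only delicate factor is $\nabla_a\PM{\Fcal} = \nabla_a\PM{\Fcal}^2/(2\PM{\Fcal})$, which could blow up where $\PM{\Fcal} = 0$; but by Proposition \ref{prop:cZ-properties} the set $\PM{\cZ} = \{\PM{\Fcal}=0\}$ is disjoint from $\Fixed$, and by Lemma \ref{lem:Fcalprop} $\PM{\Fcal}$ equals a nonzero constant at each nut and bolt, so $\PM{\Fcal}$ is bounded away from zero in a neighborhood of $\Fixed$ and $\PM{\Fcal}$ is smooth there. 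Hence $\Xi_a$ is bounded near the fixed point set. I expect the sign-bookkeeping in the algebraic cancellations — arranging for the two successive $\lambda$-cancellations to go through simultaneously for both the self-dual and anti-self-dual currents — to be the main source of friction, whereas the boundedness claim is an immediate consequence of the already-established disjointness of $\PM{\cZ}$ and $\Fixed$.
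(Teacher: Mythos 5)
Your proposal is correct and follows essentially the same route as the paper: a direct substitution of \eqref{eq:MSscalar} and \eqref{eq:Jcurrdef} into \eqref{eq:Psi-def}, with the cancellations driven by $\PM{\Ecal}=\lambda\pm\omega$ (equivalently $(1-\PM{\Ecal})^2-(1+\MP{\Ecal})^2=-4\lambda(1\mp\omega)$ and $\MP{\Ecal}\pm\omega=\lambda$), arriving at exactly the paper's remainder $\Xi_a = \PM{\Fcal}\,\PM{\sigma}_a(1-\PM{\Ecal})^{-2} + (1-\PM{\Ecal})^{-1}\nabla_a\PM{\Fcal}$. Your explicit verification that $\Xi_a$ is bounded near $\Fixed$ (via Proposition \ref{prop:cZ-properties} and Lemma \ref{lem:Fcalprop}) is a welcome addition that the paper leaves implicit.
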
 
\begin{proof}
We note 
\begin{align}
\frac{1 + \MP{\Ecal}}{1 - \PM{\Ecal}}
= 1 + \frac{\PM{\Ecal} + \MP{\Ecal}}{1 - \PM{\Ecal}} 
= 1 + \frac{2\lambda}{1 - \PM{\Ecal}}, 
\end{align}
and with $\PM{J}^a$ given by \eqref{eq:Jcurrdef} we have
\begin{align}
\PM{J}_{a} \cdot \frac{1}{(1 - \PM{\Ecal})^2}
&= 
\frac{\pm(1 \mp \P{\Ecal})^2 \M{\sigma}_{a} \mp (1 \pm \M{\Ecal})^2 \P{\sigma}_{a}}{2 \lambda^2} 
\cdot \frac{1}{(1 - \PM{\Ecal})^2}  \nonumber \\
&= 
\frac{1}{2 \lambda^2} \left(
\MP{\sigma}_{a} - \frac{(1 + \MP{\Ecal})^2}{(1 - \PM{\Ecal})^2} \PM{\sigma}_{a} 
\right)  \nonumber \\
&=
\frac{1}{2 \lambda^2} \left(
\mp 2\omega_a - \frac{4\lambda}{1 - \PM{\Ecal}} 
\left( 1 + \frac{\lambda}{1 - \PM{\Ecal}} \right)  \PM{\sigma}_{a}
\right).
\end{align}
Using \eqref{eq:MSscalar}, \eqref{eq:Psi-def}, and \eqref{eq:Jcurrdef} we then have
\begin{align}
\PM{\Psi}_a
&= 
\frac{1}{2} \PM{J}_a \PM{\sbold}
+ \frac{(1 + \MP{\Ecal}) (1 -  \PM{\Ecal})}{2\lambda} \nabla_a \PM{\sbold} \nonumber \\
&=
\frac{\PM{\Fcal}}{4 \lambda^2} \left(
\mp 2\omega_a 
- \frac{4\lambda}{1 - \PM{\Ecal}} \left( 1 + \frac{\lambda}{1 - \PM{\Ecal}} \right) \PM{\sigma}_{a}
\right) \nonumber \\
&\qquad 
+ \frac{(1+\MP{\Ecal}) (1-\PM{\Ecal})}{2\lambda} \cdot \frac{2\PM{\Fcal}}{(1-\PM{\Ecal})^3} \nabla_a \PM{\Ecal} \nonumber \\
&\qquad 
+ \frac{(1+\MP{\Ecal}) (1-\PM{\Ecal})}{2\lambda} \cdot \frac{\nabla_a \PM{\Fcal}}{(1 -  \PM{\Ecal})^2} \nonumber \\
&= 
\mp\frac{\PM{\Fcal}}{2} \cdot \frac{\omega_a}{\lambda^2} \nonumber \\
&\qquad
+\frac{\PM{\Fcal}}{\lambda(1-\PM{\Ecal})} 
\left(
- \left( 1 + \frac{\lambda}{1 - \PM{\Ecal}} \right) 
+ \frac{1 + \MP{\Ecal}}{1-\PM{\Ecal}} 
\right) \PM{\sigma}_{a} \nonumber \\
&\qquad 
+ \frac{1 + \MP{\Ecal}}{1 -  \PM{\Ecal}} \cdot \frac{\nabla_a\PM{\Fcal}}{2\lambda} \nonumber \\
&= 
\pm\frac{\PM{\Fcal}}{2} J^T_a 
+ \frac{\PM{\Fcal}}{(1-\PM{\Ecal})^2} \PM{\sigma}_{a} 
+ \left( 1 + \frac{2\lambda}{1 - \PM{\Ecal}}\right) \frac{\nabla_a\PM{\Fcal}}{2\lambda}. 
\end{align}
The result follows. 
\end{proof} 

\subsection{Contribution from the fixed point set}
\begin{lemma}[Boundary term at the nuts]\label{lem:bdrnut}
Let $\nut$ be a nut with surface gravities $\kappa^1, \kappa^2$. Let $U_{\nut,\epsilon}$ be the connected neighbourhood of $\nut$ where $\lambda < \epsilon$ and let $S_{\nut,\epsilon} = \partial U_{\nut,\epsilon}$ be its boundary with inward pointing unit normal $n^a$. Then
\begin{align}
\lim_{\epsilon \to 0} 
\int_{S_{\nut,\epsilon}} n^a \PM{\Psi}_a \, d\mu
= \pm 4\pi^2 \frac{|\kappa^1 \pm \kappa^2|}{\kappa^1\kappa^2}. 
\end{align} 
\end{lemma}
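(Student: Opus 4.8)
The plan is to compute the flux of $\PM{\Psi}_a$ through the small sphere $S_{\nut,\eps}$ surrounding the nut by using the decomposition from Lemma~\ref{lem:rewrite}, which rewrites $\PM{\Psi}_a$ as a sum of a term proportional to the translation current $J^T_a$, a gradient term $\nabla_a \PM{\Fcal}/(2\lambda)$, and a locally bounded remainder $\Xi_a$. Since $\Xi_a$ is locally bounded and the boundary area $|S_{\nut,\eps}|$ shrinks to zero as $\eps \to 0$, the contribution of $\Xi_a$ vanishes in the limit. Thus I expect the limit to come entirely from the first two terms. For the first term, I would use Lemma~\ref{lem:Fcalprop}, which gives $\PM{\Fcal} = 2|\kappa^1 \pm \kappa^2| + O(\lambda)$ near the nut, so that $\pm\tfrac{\PM{\Fcal}}{2} J^T_a \to \pm |\kappa^1 \pm \kappa^2|\, J^T_a$, and then recognize the resulting flux integral in terms of the nut charge $N(\nut)$ computed in Proposition~\ref{prop:charges}.

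More precisely, recall from \eqref{def:N(K)} that $N(\nut) = \tfrac{1}{8\pi}\int_{S} J^T_a n^a\,d\mu$ with $n^a$ inward pointing, and from Proposition~\ref{prop:charges} that $N(\nut) = \tfrac{\pi}{2\kappa^1\kappa^2}$. Hence the flux of $\pm\tfrac{\PM{\Fcal}}{2}J^T_a$ limits to
\begin{align}
\pm |\kappa^1 \pm \kappa^2| \cdot 8\pi \, N(\nut)
= \pm |\kappa^1 \pm \kappa^2| \cdot 8\pi \cdot \frac{\pi}{2\kappa^1\kappa^2}
= \pm 4\pi^2 \frac{|\kappa^1 \pm \kappa^2|}{\kappa^1\kappa^2},
\end{align}
which is exactly the claimed answer. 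Therefore the main step is to verify that the remaining contribution, namely the flux of the gradient term $\nabla_a \PM{\Fcal}/(2\lambda)$ through $S_{\nut,\eps}$, vanishes as $\eps \to 0$. From Lemma~\ref{lem:Fcalprop} we have $\PM{\Fcal} = \PM{\Fcal}|_\nut + O(\lambda)$, so $\nabla_a \PM{\Fcal} = O(\nabla_a \lambda)$. On $S_{\nut,\eps} = \{\lambda = \eps\}$ the normal is $n^a = -\nabla^a\lambda/|\nabla\lambda|$, and using the normal-coordinate behavior $\lambda \simeq (\kappa^1)^2(y^2_1 + y^2_2) + (\kappa^2)^2(y^2_3 + y^2_4)$ near $\nut$ from the proof of Proposition~\ref{prop:charges}, both $|\nabla\lambda|$ and the area element can be controlled; one finds $n^a \nabla_a \PM{\Fcal}/(2\lambda) = O(\eps^{-1/2})$ pointwise while $|S_{\nut,\eps}| = O(\eps)$, so this flux tends to zero.

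The main obstacle I anticipate is the careful bookkeeping of the gradient term: one must confirm that $\nabla_a \PM{\Fcal}$ genuinely vanishes to sufficient order at the nut so that the $O(\eps^{-1/2})$ pointwise bound combines with the $O(\eps)$ area to give a vanishing limit, rather than a finite residual. This requires using $\nabla_c \PM{\Fcal}^2 = 0$ at $\xi = 0$ from \eqref{eq:nablaFsq} (as in the proof of Lemma~\ref{lem:Fcalprop}) together with the second-order Taylor behavior of $\lambda$ to pin down that $\PM{\Fcal} - \PM{\Fcal}|_\nut = O(\lambda)$ with controlled derivatives. A secondary point is to verify rigorously that $\Xi_a$ in Lemma~\ref{lem:rewrite} is indeed locally bounded up to and including the nut, so that its flux is $O(|S_{\nut,\eps}|) = O(\eps) \to 0$; this follows from the explicit form of $\Xi_a$ implicit in the computation there, once the potentially singular factors $(1-\PM{\Ecal})^{-1}$ are seen to remain bounded near $\nut$ where $\PM{\Fcal}$ is bounded and $\PM{\cE} \to 1$ is controlled by \eqref{eq:cE-expand}.
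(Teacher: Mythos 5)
Your proposal matches the paper's proof essentially step for step: both use the decomposition of $\PM{\Psi}_a$ from Lemma~\ref{lem:rewrite}, evaluate the $J^T_a$ term via Lemma~\ref{lem:Fcalprop} and the nut charge $N(\nut)=\pi/(2\kappa^1\kappa^2)$ from Proposition~\ref{prop:charges}, and show the gradient term and the locally bounded remainder $\Xi_a$ contribute $O(\eps^{1/2})$ because the area of $S_{\nut,\eps}$ scales like $\eps^{3/2}$. The only cosmetic difference is that the paper lumps $\nabla_a\PM{\Fcal}/(2\lambda)+\Xi_a$ into a single $O(\lambda^{-1})$ estimate, whereas you treat the two pieces separately (and your citation of \eqref{eq:cE-expand} for the boundedness of $(1-\PM{\cE})^{-1}$ near the nut should instead be the strict bound $\PM{\cE}<1$ from Lemma~\ref{lem:Eprop}); neither affects correctness.
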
 

\begin{proof} 
By Lemma~\ref{lem:Fcalprop}, Proposition~\ref{prop:charges}, and Lemma \ref{lem:rewrite} we have
\begin{align}
\int_{S_{\nut,\epsilon}} n^a \PM{\Psi}_a \, d\mu
&=
\int_{S_{\nut,\epsilon}} n^a  
\left( \pm\frac{\PM{\Fcal}}{2} J^T_a 
+ \frac{\nabla_a\PM{\Fcal}}{2\lambda}
+ \Xi_a \right) \, d\mu \nonumber \\
&=
\int_{S_{\nut,\epsilon}} n^a  
\left(
\pm \frac{2|\kappa^1 \pm \kappa^2| + O(\lambda)}{2} J^T_a 
+ O(\lambda^{-1})
\right) \, d\mu \nonumber \\
&=
\pm |\kappa^1 \pm \kappa^2|  \cdot 8\pi N(\nut) + O(\epsilon) 
+ \int_{S_{\nut,\epsilon}} O(\lambda^{-1}) \, d\mu \nonumber \\
&=
\pm 4\pi^2 \frac{|\kappa^1 \pm \kappa^2|}{\kappa^1\kappa^2}  
+ O(\epsilon^{1/2}), 
\end{align} 
and the result follows.
\end{proof} 

\begin{lemma}[Boundary term at the bolts] \label{lem:bdrybolt}
Let $\bolt$ be a bolt with surface gravity $\kappa$. Let $U_{\bolt,\epsilon}$ be the connected neighbourhood of $\bolt$ where $\lambda < \epsilon$ and let $S_{\bolt,\epsilon} = \partial U_{\bolt,\epsilon}$ be its boundary with inward pointing unit normal $n^a$. Then 
\begin{align}
\lim_{\eps \to 0} 
\int_{S_{\bolt,\epsilon}} n^a \PM{\Psi}_a \, d\mu
= \frac{4\pi^2}{|\kappa|} \chi[\bolt] .
\end{align}
\end{lemma}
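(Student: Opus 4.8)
The plan is to follow the same strategy as in the proof of Lemma~\ref{lem:bdrnut}, but with the crucial difference that near a bolt the term $\nabla_a\PM{\Fcal}/(2\lambda)$ now contributes a finite, nonzero flux rather than vanishing. First I would substitute the decomposition of Lemma~\ref{lem:rewrite},
\begin{align}
\PM{\Psi}_a = \pm\frac{\PM{\Fcal}}{2}J^T_a + \frac{\nabla_a\PM{\Fcal}}{2\lambda} + \Xi_a,
\end{align}
and dispose of the $\Xi_a$-contribution first: since $\Xi_a$ is locally bounded while $S_{\bolt,\epsilon}$ is, to leading order, the total space of a circle bundle over $\bolt$ whose fibres are the circles $\{\lambda=\epsilon\}$ of circumference $\simeq 2\pi|\kappa|^{-1}\epsilon^{1/2}$, the area of $S_{\bolt,\epsilon}$ is $O(\epsilon^{1/2})$ and hence $\int_{S_{\bolt,\epsilon}} n^a\Xi_a\, d\mu\to 0$.

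For the first term I would use $\PM{\Fcal} = 2|\kappa| + O(\lambda)$ from Lemma~\ref{lem:Fcalprop} together with the charge computation of Proposition~\ref{prop:charges}. Since $\int_{S_{\bolt,\epsilon}}n^a J^T_a\, d\mu \to 8\pi N(\bolt) = -4\pi^2\kappa^{-2}\,\bolt\!\cdot\!\bolt$, and the $O(\lambda)$ correction to $\PM{\Fcal}$ is annihilated by the vanishing area, this term tends to $\pm|\kappa|\cdot 8\pi N(\bolt) = \mp\tfrac{4\pi^2}{|\kappa|}\bolt\!\cdot\!\bolt$.

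The heart of the argument, and the main obstacle, is the second term. Here I would compute the Taylor expansion of $\PM{\Fcal}$ to second order in the normal directions $y$ to $\bolt$, extending the expansion of $\nabla^a\xi^b$ already carried out in the proof of Proposition~\ref{prop:charges}. Writing $\PM{\Fcal}^2 = 4(\mu\pm\nu)$, the $\nu$-part reproduces the normal-bundle curvature $\cR^{\parallel\perp}$ exactly as before, while the $\mu$-part is new and produces the \emph{intrinsic} curvature $\cR^{\perp\perp} = 2\mathcal{K}^\bolt$ of $\bolt$ via the analogous contraction. The upshot is an expansion of the form $\PM{\Fcal} = 2|\kappa| - |\kappa|\,|y|^2(\mathcal{K}^\bolt \pm \tfrac12\cR^{\parallel\perp}) + \cdots$, whose radial derivative (recall that the inward normal satisfies $n^a\simeq -y^a/|y|$ and $\lambda\simeq\kappa^2|y|^2$) gives
\begin{align}
n^a\frac{\nabla_a\PM{\Fcal}}{2\lambda} \simeq \frac{\mathcal{K}^\bolt \pm \tfrac12\cR^{\parallel\perp}}{|\kappa|\,|y|}.
\end{align}
Integrating over $S_{\bolt,\epsilon}$ by fibering over $\bolt$ cancels the factor $|y|$ against the fibre circumference $2\pi|y|$, and in the limit I would apply the Gauss--Bonnet theorem $\int_\bolt\mathcal{K}^\bolt\, d\mu = 2\pi\chi[\bolt]$ (see Remark~\ref{rem:seifert}) together with $\int_\bolt\cR^{\parallel\perp}\, d\mu = 4\pi\,\bolt\!\cdot\!\bolt$ from Proposition~\ref{prop:charges}, obtaining $\tfrac{4\pi^2}{|\kappa|}(\chi[\bolt]\pm\bolt\!\cdot\!\bolt)$.

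Finally I would add the two contributions: the self-intersection terms cancel,
\begin{align}
\mp\frac{4\pi^2}{|\kappa|}\bolt\!\cdot\!\bolt + \frac{4\pi^2}{|\kappa|}\bigl(\chi[\bolt]\pm\bolt\!\cdot\!\bolt\bigr) = \frac{4\pi^2}{|\kappa|}\chi[\bolt],
\end{align}
which is exactly the claimed limit. Note that this cancellation is structurally robust: the $\bolt\!\cdot\!\bolt$ contribution from the first term enters through $N(\bolt)$ with the opposite sign of the $\pm$ carried by the $\nu$-part of the second term, so the precise normalization of $\cR^{\parallel\perp}$ drops out and only the Gauss curvature survives. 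The delicate points to verify rigorously are the error estimates in the normal Taylor expansion of $\PM{\Fcal}$ and the approximation of the induced geometry of $S_{\bolt,\epsilon}$ by the model circle bundle over $\bolt$, both of which are quantitative refinements of the estimates already established in the proof of Proposition~\ref{prop:charges}.
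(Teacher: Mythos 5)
Your proposal is correct and reaches the paper's answer through the same skeleton: the decomposition of Lemma~\ref{lem:rewrite}, the same disposal of the $\Xi_a$ term by the $O(\epsilon^{1/2})$ area of $S_{\bolt,\epsilon}$, the same evaluation of the $J^T_a$ term via $N(\bolt)$, and the same final cancellation of the self-intersection contributions. The one step where you genuinely diverge from the paper is the flux of $\nabla_a\PM{\Fcal}/(2\lambda)$. You compute it by a direct second-order Taylor expansion of $\PM{\Fcal}$ in the normal directions, splitting $\PM{\Fcal}^2=4(\mu\pm\nu)$ and extracting $\cR^{\perp\perp}=R^\bolt=2\mathcal{K}^\bolt$ from the $\mu$-part by the contraction analogous to the one that produced $\cR^{\parallel\perp}$ from $\nu$; your resulting expansion $\PM{\Fcal}=2|\kappa|-|\kappa|\,|y|^2\bigl(\mathcal{K}^\bolt\pm\tfrac12\cR^{\parallel\perp}\bigr)+\cdots$ is indeed the correct one, as one can cross-check against the paper's route. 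The paper instead avoids the pointwise expansion entirely: it converts $\int_{S_{\bolt,\epsilon}}n^a\nabla_a\PM{\Fcal}^2\,d\mu$ into a volume integral over the solid tube $U_{\bolt,\epsilon}$ by the divergence theorem, and evaluates $\nabla^a\nabla_a\PM{\Fcal}^2$ there from the already-established identity \eqref{eq:laplaceFsq} together with the algebraic computation $\PM{\Wcal}_{abcd}\PM{\Fcal}^{ab}\PM{\Fcal}^{cd}=8\kappa^2(R^\bolt\pm\cR^{\perp\parallel})$ on the bolt. The two routes encode the same information --- the normal Hessian of $\PM{\Fcal}^2$ at the bolt is determined by its Laplacian, since the $S^1$ action rotates the normal plane --- but the paper's version obtains the second-order coefficient for free from \eqref{eq:laplaceFsq}, whereas yours requires a careful covariant expansion of $\mu=\tfrac12 F_{ab}F^{ab}$ through the cross term between the zeroth- and second-order terms of $\nabla^a\xi^b$, where signs are easy to lose. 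Your observation that the $\bolt\!\cdot\!\bolt$ contributions must cancel independently of the normalization of $\cR^{\parallel\perp}$ is a good safeguard against exactly that hazard, and the remaining error estimates you flag are of the same nature as those already handled in the proof of Proposition~\ref{prop:charges}.
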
 

\begin{proof} 
By Lemma~\ref{lem:Fcalprop}, Proposition~\ref{prop:charges}, and Lemma \ref{lem:rewrite} we have 
\begin{align}
\int_{S_{\bolt,\epsilon}} n^a \PM{\Psi}_a \, d\mu
&=
\int_{S_{\bolt,\epsilon}} n^a  
\left(
\pm\frac{\PM{\Fcal}}{2} J^T_a
+ \frac{\nabla_a \PM{\Fcal}}{2\lambda}
 + \Xi_a \right) \, d\mu \nonumber \\
&=
\pm\frac{2|\kappa| + O(\epsilon)}{2} 8\pi N(\bolt)
+ O(\epsilon^{1/2})
\nonumber \\
&\qquad
+\frac{1 + O(\epsilon)}{2\epsilon} \int_{S_{\bolt,\epsilon}}
\frac{1}{2\PM{\Fcal}} n^a \nabla_a \PM{\Fcal}^2 \, d\mu
\nonumber \\
&=
\mp \left( |\kappa| + O(\epsilon) \right)
\cdot \frac{8\pi}{8\kappa^2} \int_\bolt  \cR^{\parallel\perp} d\mu
+ O(\epsilon^{1/2})
\nonumber \\
&\qquad
+\frac{1 + O(\epsilon)}{4\epsilon (2|\kappa| + O(\epsilon) )} 
\int_{S_{\bolt,\epsilon}} n^a \nabla_a \PM{\Fcal}^2 \, d\mu.
\end{align}
We use \eqref{eq:Fcalepsperpepspara} and \eqref{eq:epsperpepspara} to compute on $\bolt$,
\begin{align}
\PM{\Wcal}_{abcd} \PM{\Fcal}^{cd}
&=
(W_{abcd} \pm \frac{1}{2} \eps_{cd}{}^{ef} W_{abef})
\kappa (\eps^\perp{}^{cd} \pm \eps^\parallel{}^{cd}) \nonumber \\
&=
\kappa(R_{abcd} \pm \frac{1}{2} R_{abef} \eps_{cd}{}^{ef} )
(\eps^\perp{}^{cd} \pm \eps^\parallel{}^{cd}) \nonumber \\
&=
\kappa(R_{abcd}\eps^\perp{}^{cd} \pm \frac{1}{2}R_{abef}\eps_{cd}{}^{ef}\eps^\perp{}^{cd}) \nonumber \\
&\qquad
\pm
\kappa(R_{abcd}\eps^\parallel{}^{cd} \pm \frac{1}{2}R_{abef} \eps_{cd}{}^{ef}\eps^\parallel{}^{cd} ) \nonumber \\
&=
\kappa(R_{abcd}\eps^\perp{}^{cd} \pm R_{abef}\eps^\parallel{}^{ef}) \nonumber \\
&\qquad
\pm
\kappa(R_{abcd}\eps^\parallel{}^{cd} \pm R_{abef} \eps^\perp{}^{ef} ) \nonumber \\
&=
2\kappa 
( R_{abcd}\eps^\perp{}^{cd} \pm R_{abcd}\eps^\parallel{}^{cd} ) ,
\end{align}
and with \eqref{eq:Rparaparagauss} and \eqref{eq:Rperpperpgauss} we get
\begin{align}
\PM{\Wcal}_{abcd} \PM{\Fcal}^{ab} \PM{\Fcal}^{cd}
&=
2\kappa^2 
( R_{abcd}\eps^\perp{}^{cd} \pm R_{abcd}\eps^\parallel{}^{cd} )
(\eps^\perp{}^{ab} \pm \eps^\parallel{}^{ab}) \nonumber \\
&= 
2\kappa^2 
( R_{abcd}\eps^\perp{}^{ab} \eps^\perp{}^{cd} 
\pm R_{abcd}\eps^\perp{}^{ab} \eps^\parallel{}^{cd} ) \nonumber \\
&\qquad 
\pm 2\kappa^2 
( R_{abcd}\eps^\parallel{}^{ab} \eps^\perp{}^{cd} 
\pm R_{abcd}\eps^\parallel{}^{ab} \eps^\parallel{}^{cd} ) \nonumber \\
&=
2\kappa^2 
( 2\cR^{\perp\perp} \pm 2\cR^{\perp\parallel} ) 
\pm 2\kappa^2 
( 2\cR^{\perp\parallel} \pm 2\cR^{\parallel\parallel} ) \nonumber \\
&=
4\kappa^2 (\cR^{\perp\perp} + \cR^{\parallel\parallel} )
\pm 8\kappa^2 \cR^{\perp\parallel} \nonumber \\
&=
8\kappa^2 ( R^\bolt \pm \cR^{\perp\parallel} ). 
\end{align}
Taking into account the fact that $n^a$ is the inward pointing unit normal to $S_{\bolt,\eps}$ with respect to $U_{\bolt,\eps}$, we have using 
\eqref{eq:laplaceFsq}, 
\begin{align}
\int_{S_{\bolt,\epsilon}} n^a \nabla_a \PM{\Fcal}^2 \, d\mu
&=
-\int_{U_{\bolt,\epsilon}} \nabla^a \nabla_a \PM{\Fcal}^2 \, d\mu \nonumber \\
&=
-\int_{U_{\bolt,\epsilon}}
\left(
- \PM{\Wcal}_{abcd} \PM{\Fcal}^{ab} \PM{\Fcal}^{cd}
+ \frac{1}{2} \lambda  \PM{\Wcal}^2 
\right)
\, d\mu \nonumber \\
&=
\int_{U_{\bolt,\epsilon}}
\PM{\Wcal}_{abcd} \PM{\Fcal}^{ab} \PM{\Fcal}^{cd} \, d\mu
- \frac{\epsilon}{2}
\int_{U_{\bolt,\epsilon}}  \PM{\Wcal}^2  \, d\mu \nonumber \\
&=
\frac{\pi \eps}{ \kappa^2}
\int_\bolt 
8\kappa^2 ( R^\bolt \pm \cR^{\perp\parallel} ) \, d\mu
+ \frac{\epsilon}{2} O(\epsilon^{1/2}).
\end{align}
In the last step here, the integral over the tube $U_{\bolt,\epsilon}$ is approximated with the integral over $\bolt$ multiplied with the area $\frac{\pi \eps}{ \kappa^2}$ of the normal disk. Together we find 
\begin{align}
\int_{S_{\bolt,\epsilon}} n^a \PM\Psi_a \, d\mu
&=
\mp \pi\frac{|\kappa| + O(\epsilon)}{\kappa^2} 
\int_\bolt \cR^{\parallel\perp} \, d\mu
+ O(\epsilon^{1/2})
\nonumber \\
&\qquad
+\frac{1 + O(\epsilon)}{4\epsilon (2|\kappa| + O(\epsilon) )} 
\left(
\frac{\pi \eps}{ \kappa^2}
\int_\bolt
8\kappa^2 ( R^\bolt \pm \cR^{\perp\parallel} ) \, d\mu
+ \frac{\epsilon}{2} O(\epsilon^{1/2})
\right) \nonumber \\
&=
\mp \pi\frac{|\kappa| + O(\epsilon)}{\kappa^2} \int_\bolt \cR^{\parallel\perp} \, d\mu
+ O(\epsilon^{1/2})
\nonumber \\
&\qquad
+ \pi\frac{1 + O(\epsilon)}{|\kappa| + O(\epsilon)}
\left(
\int_\bolt
( R^\bolt \pm \cR^{\perp\parallel} ) \, d\mu
+ \frac{\epsilon}{2} O(\epsilon^{1/2})
\right).
\end{align}
Finally, the Gauss--Bonnet theorem tells us that
\begin{align}
\lim_{\eps \to 0} 
\int_{S_{\bolt,\epsilon}} n^a \PM\Psi_a \, d\mu
&=
\mp \frac{\pi}{|\kappa|} \int_\bolt \cR^{\parallel\perp} \, d\mu
+ \frac{\pi}{|\kappa|} 
\int_\bolt (R^\bolt \pm \cR^{\parallel\perp}) \, d\mu \nonumber \\
&= \frac{\pi}{|\kappa|} \int_\bolt R^\bolt \, d\mu \nonumber \\
&= \frac{2\pi}{|\kappa|} \int_\bolt \mathcal{K}^\bolt \, d\mu \nonumber \\
&= \frac{4\pi^2}{|\kappa|} \chi[\bolt]. 
\end{align}
\end{proof}

\subsection{Contribution from infinity} \label{sec:infterm}

\begin{lemma}[Length at infinity] \label{lem:ellinfty} 
Let $(M,g)$ be an ALF-$A_k$ $S^1$-instanton, with $S^1$ action generated by $\xi$, with $\lambda \to 1$ at $\infty$. 
Let  $\Pi$ be the minimum period of the linearized $S^1$ action at the fixed point set, 
\begin{align} \label{eq:Pi-ineq}
\Pi = \min \left \{ \frac{2\pi}{|\kappa(\nut)|}, \frac{2\pi}{|\kappa(\bolt)|} \right \},  
\end{align} 
where the minimum is taken over all nuts and bolts in the fixed point set $F$, and all surface gravities. 
The length at infinity $\ell_\infty$, see Definition \ref{def:linfty}, satisfies 
\begin{align} \label{eq:ellinfty}
\ell_\infty \geq \Pi. 
\end{align} 
\end{lemma}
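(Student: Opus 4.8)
The plan is to reduce the statement to a lower bound on the minimal period of the nearby $S^1$-orbits. First I would record the length of an orbit. Since $\xi^a$ is Killing, $\lambda = \xi_a \xi^a$ is constant along its orbits, because $\xi^a \nabla_a \lambda = 2 \xi^a \xi^b \nabla_a \xi_b = 0$ by the antisymmetry of $F_{ab} = \nabla_a \xi_b$. Hence the orbit through a point $p$ with $\xi(p) \neq 0$ is traversed at the constant speed $|\xi| = \sqrt{\lambda(p)}$, so that
\begin{align}
\ell(p) = T(p)\,\sqrt{\lambda(p)},
\end{align}
where $T(p)$ denotes the minimal period of the orbit through $p$. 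By Lemma~\ref{lem:generic} the generic period is $2\pi G$, and every orbit has period $T(p) = 2\pi G/m(p)$, where $m(p) \geq 1$ is the order of the cyclic isotropy group $\Zbb_{m(p)} \subset S^1$ of $p$.

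Next I would identify $\Pi$ with the global minimal period. From the relations $|\kappa^j(\nut)| = w^j(\nut)/G$ and $|\kappa(\bolt)| = 1/G$ of Lemma~\ref{lem:generic}, the exceptional periods at a nut are $2\pi/|\kappa^j(\nut)| = 2\pi G/w^j(\nut)$ and the period at a bolt is $2\pi G$, whence $\Pi = 2\pi G/\wmax$, where $\wmax \geq 1$ is the largest weight occurring among the nuts (and $\wmax = 1$ if $\Fixed$ contains no nut). Consequently the inequality $\ell_\infty \geq \Pi$ is equivalent to the claim that $m(p) \leq \wmax$ for all $p$ sufficiently far out along the end. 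Granting this, such $p$ satisfy $\ell(p) = T(p)\sqrt{\lambda(p)} \geq (2\pi G/\wmax)\sqrt{\lambda(p)} = \Pi\sqrt{\lambda(p)}$, and since $\lambda \to 1$ at infinity by Lemma~\ref{lem:Eprop}, passing to the liminf as $p \to \infty$ gives $\ell_\infty \geq \Pi\,\liminf_{p\to\infty}\sqrt{\lambda(p)} = \Pi$.

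The main obstacle is therefore the isotropy bound $m(p) \leq \wmax$ for orbits running out to infinity, i.e.\ that no orbit near infinity is shorter than the shortest exceptional orbit at a nut. If $p$ has isotropy $\Zbb_m$ with $m > 1$, it lies on a component $Z$ of the totally geodesic set $\cM^{\Zbb_m}$, on which $S^1/\Zbb_m$ acts. Since the symmetry action is free in the directions normal to a bolt (there $\nabla_a\xi_b = \kappa\,\eps^\perp{}_{ab}$ has generic period $2\pi/|\kappa| = 2\pi G$ by Lemma~\ref{lem:generic}), any $S^1/\Zbb_m$-fixed point on $Z$ must be a nut; the structure result of \cite{2018JGP...133..181J} recalled before Lemma~\ref{lem:3.5} then forces $Z \cong S^2$ to carry two nuts having $m$ among their weights, so $m \leq \wmax$ and we are done for such components. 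It thus remains to exclude a component $Z$ that reaches infinity while carrying a free $S^1/\Zbb_m$-action and meeting no nut; such a $Z$, being fibred by free circle orbits and non-compact, would be a properly embedded totally geodesic cylinder $S^1\times\Reals$ both of whose ends are asymptotic to the single ALF end of $\cM$. Here I would use the asymptotic structure of Definition~\ref{def:BG} (with $\lambda \to 1$) together with the surjectivity of the exponential map at the nuts, as in the proof of Lemma~\ref{lem:generic}, to show that the asymptotic isotropy of the symmetry action is controlled by the interior fixed-point data, ruling out this configuration. Making this asymptotic isotropy control rigorous is the technical heart of the argument; note that when the symmetry orbits at infinity are the fibres of the circle bundle $L$, the action is free near infinity, $m(p) = 1$ there, and the bound is immediate.
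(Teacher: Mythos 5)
Your reduction of the lemma to the isotropy bound $m(p)\leq \wmax$ near infinity is correct (the identities $\ell(p)=T(p)\sqrt{\lambda(p)}$, $T(p)=2\pi G/m(p)$ and $\Pi=2\pi G/\wmax$ all check out, as does the observation that the component of $\cM^{\Zbb_m}$ through a non-fixed point is a $2$-dimensional totally geodesic surface disjoint from the bolts). But the proof is not complete: the case you isolate at the end --- a non-compact, fixed-point-free component $Z$ of $\cM^{\Zbb_m}$ running out to infinity --- is precisely where the content of the lemma sits, and you only state a plan for handling it (``I would use the asymptotic structure \dots to show that the asymptotic isotropy \dots is controlled by the interior fixed-point data''). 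This case cannot be waved away: for an AF $S^1$-instanton with $\Omega/\kappa=q/p$, $p>1$, the poles $\theta=0,\pi$ of the sphere at infinity carry isotropy $\Zbb_p$ (Remark \ref{def:AF}), so components of $\cM^{\Zbb_p}$ genuinely do reach infinity, and what must be proved is exactly that the isotropy order occurring at infinity is realized among the weights of the interior fixed points. Since the structure results of Jang that you invoke only apply to components \emph{containing} a fixed point, your argument as written establishes nothing in this case, and the lemma is exactly the assertion that this asymptotic isotropy is bounded by $\wmax$. So there is a genuine gap, acknowledged but not filled.

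For comparison, the paper closes this loop with a short global argument that avoids the stratification by isotropy altogether: since $\lambda\to 1$ at infinity, $\Fixed$ is compact, so any point $p$ with $\ell(p)$ small can be joined to $\Fixed$ by a minimizing geodesic $\gamma$; the period of the $S^1$ action along $\gamma$ is the period of the linearized action at $\gamma(0)$ on $\dot\gamma(0)$, hence is one of the exceptional or generic periods at that nut or bolt and in particular is at least $\Pi$; combined with $\ell(p)=\lambda^{1/2}(p)\Pi_\gamma$ and $\lambda(p)\to 1$ this gives the contradiction directly. The exponential map from the fixed point set is the mechanism that transports the fixed-point data out to infinity; some argument of this kind (or an equivariant-plumbing/Seifert-invariant computation at the end) is what your sketch is missing.
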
 

\begin{proof} 
Assume for a contradiction there is an $\eps > 0$ such that for any $r_0$ sufficiently large there is $p \in M$, $r(p) > r_0$ with with $\ell(p) < (1-\eps)\Pi$. We may without loss of generality assume $\lambda(p) > 1-\eps$ for $r(p) > r_0$, by choosing $r_0$ large enough. 

Since $\lambda \to 1$ at $\infty$, $\Fixed$ is compact. Hence, there is a minimizing geodesic $\gamma$ connecting $p$ to  $\Fixed$. Let $\gamma(0) \in \Fixed$ be the starting point of $\gamma$. The period of the $S^1$ action on $\gamma$ is determined by the period of the linearized action at $x$ on the initial velocity vector $\dot \gamma(0)$. Let the period of the $S^1$ action on $\gamma$ be $\Pi_\gamma$. In view of \eqref{eq:Pi-ineq}, $\Pi_\gamma \geq \Pi$. We have that 
\begin{align} 
(1-\eps) \Pi > \ell(p) =\lambda^{1/2}(p) \Pi_\gamma > (1-\eps/2)\Pi
\end{align} 
which is a contradiction. This proves the lemma.
\end{proof} 

\begin{lemma}[The boundary term at infinity] \label{lem:bdrinfty} 
Let $S_{\infty,\epsilon} = \partial U_{\infty,\epsilon} = \{ r = \eps^{-1}\}$ with outward outward-pointing unit normal $n^a$. Then
\begin{align}
\lim_{\eps \to 0}
\int_{S_{\infty,\epsilon}} n^a\PM{\Psi}_a \, d\mu
\leq -2\pi \ell_\infty \chi[O]. 
\end{align}
\end{lemma}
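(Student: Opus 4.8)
The plan is to read off the leading asymptotics of $n^a \PM{\Psi}_a$ on the spheres $S_{\infty,\eps} = \{r = \eps^{-1}\}$ directly from the defining formula \eqref{eq:Psi-def}, rather than from Lemma~\ref{lem:rewrite}. The representation in Lemma~\ref{lem:rewrite} is tailored to the fixed point set, where $\Xi_a$ is subdominant; at infinity it is precisely $\Xi_a$ that carries the leading term, so \eqref{eq:Psi-def} is the convenient form. First I would collect, from Lemma~\ref{lem:Eprop} and Proposition~\ref{prop-asymptotics-killing}, the expansions $\lambda = 1 + O^*(r^{-1})$ and $\PM{\cE} = 1 + \PM{b}\, r^{-1} + O^*(r^{-1-\delta})$ with $\PM{b} < 0$ (strict, because a non-half-flat instanton has both $\PM{\cE}$ non-constant by Lemma~\ref{prop:SD-characterizations}). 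Since $\PM{\sigma}_a = \nabla_a \PM{\cE}$, this gives the normal derivative $n^a \PM{\sigma}_a = |\PM{b}|\, r^{-2} + O^*(r^{-2-\delta})$.

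Next I would compute the asymptotics of the two building blocks of $\PM{\Psi}_a$. From \eqref{Fcalasymp} together with the expansion of $\PM{\cE}$ one gets $\PM{\sbold}^2 = \PM{\Fcal}^2/(1-\PM{\cE})^4 = \PM{b}^{-2} + O^*(r^{-\delta})$, so $\PM{\sbold} = |\PM{b}|^{-1} + O^*(r^{-\delta})$; and a short computation from \eqref{eq:Jcurrdef}, using that the term carrying the opposite-signed potential is suppressed by a factor $(1-\PM{\cE})^2 = O(r^{-2})$ while the surviving coefficient tends to $-2$ and $\lambda \to 1$, yields $\PM{J}_a = -2\PM{\sigma}_a + O(r^{-3})$. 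The second term of \eqref{eq:Psi-def} is $O(r^{-2-\delta})$, as $(1-\PM{\cE}) = O^*(r^{-1})$ and $\nabla_a \PM{\sbold} = O^*(r^{-1-\delta})$. Multiplying out,
\begin{align}
\PM{\Psi}_a = \tfrac{1}{2} \PM{J}_a \PM{\sbold} + O(r^{-2-\delta}) = -\frac{\nabla_a \PM{\cE}}{|\PM{b}|} + O(r^{-2-\delta}),
\end{align}
and hence the clean, sign-independent leading behaviour
\begin{align}
n^a \PM{\Psi}_a = -\frac{n^a \PM{\sigma}_a}{|\PM{b}|} + O(r^{-2-\delta}) = -r^{-2} + O(r^{-2-\delta}).
\end{align}

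With this the computation becomes geometric. On $S_{\infty,\eps}$ the induced metric is $R^2 \gamma + \eta^2 + O^*(r^{-1})$ with $R = \eps^{-1}$, and the $S^1$-orbits coincide asymptotically with the Seifert fibres of $L$, since $\xi \to T$ at infinity (both have unit norm in the model and generate the circle fibration). Integrating the volume form $R^2\, d\mu_O \wedge \eta$ over the base $(O,\gamma)$, where $\ell_R(x)$ denotes the length of the orbit over $x \in O$, gives
\begin{align}
\int_{S_{\infty,\eps}} n^a \PM{\Psi}_a \, d\mu
&= -R^{-2}\int_O \ell_R(x)\, R^2\, d\mu_O(x)\,(1+o(1)) + O(R^{-\delta}) \nonumber \\
&= -\int_O \ell_R(x)\, d\mu_O(x) + o(1),
\end{align}
where the $O(R^{-\delta})$ term is the integral of $O(r^{-2-\delta})$ against an area of order $R^2$, and $\int_O d\mu_O = A[O] = 2\pi\chi[O]$ by the orbifold Gauss--Bonnet theorem (Remark~\ref{rem:seifert}). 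By the definition of $\ell_\infty$ as a $\liminf$, for every $\eps' > 0$ there is $r_0$ with $\ell_R(x) > \ell_\infty - \eps'$ for all $x$ once $R > r_0$; since the integrand carries an overall minus sign this forces $-\int_O \ell_R\, d\mu_O \leq -(\ell_\infty - \eps') A[O]$, and letting $\eps \to 0$ and then $\eps' \to 0$ gives $\limsup_{\eps \to 0}\int_{S_{\infty,\eps}} n^a \PM{\Psi}_a\, d\mu \leq -2\pi\ell_\infty\chi[O]$, as claimed.

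I expect the main obstacle to be the geometric step: justifying the fibre/base splitting of the area and the identification of the $S^1$-orbit with the Seifert fibre at infinity, so that the $\ell_R(x)$ appearing in the area decomposition is genuinely the orbit length entering $\ell_\infty$, and controlling the metric corrections uniformly in the fibre direction. That uniformity is available because the orbit lengths are bounded (one has $\ell(p) = \lambda^{1/2}(p)\cdot(\text{period})$ with $\lambda \leq 1$), so the $(1+o(1))$ and $O(r^{-1})$ corrections multiply a bounded quantity and vanish in the limit. Finally, the $\liminf$ in the definition of $\ell_\infty$ is exactly what produces the inequality $\leq$ rather than an equality.
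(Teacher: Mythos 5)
Your proof is correct and follows essentially the same route as the paper: both extract the leading asymptotics $n^a\PM{\Psi}_a = -r^{-2} + O(r^{-2-\delta})$ directly from \eqref{eq:Psi-def} using the expansions of $\PM{\cE}$, $\PM{\sigma}_a$ and $\PM{\Fcal}$, and then integrate over $S_{\infty,\eps}$ via the circle fibration and the orbifold Gauss--Bonnet theorem, with the $\liminf$ in Definition~\ref{def:linfty} producing the inequality. The only (harmless) difference is organizational: where the paper expands $\nabla_a\PM{\sbold}$ by the product rule into two $O(r^{-2})$ pieces that cancel, you observe directly that $\PM{\sbold} \to |\PM{b}|^{-1}$ at rate $r^{-\delta}$ so that the whole gradient term is subleading, and you are slightly more explicit than the paper about the $\eps'$-argument for the $\liminf$ and the identification of $\xi$-orbits with Seifert fibres at infinity.
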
 

\begin{proof} 
By \eqref{eq:cE-expand} we have
\begin{align}
\PM{\cE} &= 1 + \PM{b} r^{-1} + O^*(r^{-1- \delta}), \nonumber \\
\PM{\sigma}_{a} = \nabla_a \PM{\cE}
&= -\PM{b} r^{-2} \nabla_a r + O^*(r^{-2- \delta}),
\end{align}
as $r \to \infty$. By \eqref{Fcalasymp} we further have 
\begin{align}
\PM{\Fcal}
&=|\PM{b}| r^{-2} + O^*(r^{-2-\delta})
=-\PM{b} r^{-2} + O^*(r^{-2-\delta}), \nonumber \\
\nabla_a \PM{\Fcal}
&= 2\PM{b} r^{-3} \nabla_a r + O^*(r^{-3-\delta}).
\end{align}
We find that
\begin{align}
\PM{\Psi}_a
&=
\frac{1}{2} \PM{J}_a \frac{\PM{\Fcal}}{(1-\PM{\Ecal})^2}
+ \frac{(1 + \MP{\Ecal}) (1 -  \PM{\Ecal})}{2\lambda}
\nabla_a \left(\frac{\PM{\Fcal}}{(1-\PM{\Ecal})^2} \right)\nonumber \\
&=
\frac{1}{4 \lambda^2} \left(
\MP{\sigma}_{a} - \frac{(1 + \MP{\Ecal})^2}{(1 - \PM{\Ecal})^2} \PM{\sigma}_{a}
\right) \PM{\Fcal} \nonumber \\
&\qquad
+ \frac{1 + \MP{\Ecal}}{2\lambda(1-\PM{\Ecal})}
\nabla_a \PM{\Fcal}
+ \frac{1 + \MP{\Ecal}}{\lambda(1-\PM{\Ecal})^2}
\PM{\Fcal} \nabla_a \PM{\cE} \nonumber \\
&=
- r^{-2} \nabla_a r + 2r^{-2} \nabla_a r - 2r^{-2} \nabla_a r + O^*(r^{-2-\delta})\nonumber \\
&=
- r^{-2} \nabla_a r + O^*(r^{-2-\delta}) \label{eq:Psiexpansion}.
\end{align}
We have 
$n^a=\frac{\nabla^a r}{|\nabla r|}$, where $|\nabla r| \to 1$ as follows from \eqref{eq:gring} and \eqref{eq:gringh}. From \eqref{eq:Psiexpansion} we find that 
\begin{align}
\int_{S_{\infty,\epsilon}} n^a\PM{\Psi}_a \, d\mu
&= \int_{S_{\infty,\epsilon}}(-\eps^2 + O(\eps^{2 + \delta})) \, d\mu \nonumber \\
&= - (1 + O(\eps^{\delta})) \eps^2 \mathrm{Vol}(S_{\infty,\epsilon},g) \nonumber \\
&= - (1 + O(\eps^{\delta})) \eps^2 \mathrm{Vol}(L, \sigma_{\eps^{-1}}) %
\end{align}

The manifold $L$ is a circle fibration over the orbifold $O$. From Definition~\ref{def:BG} we have that the transverse metric $\gamma$ gives a metric on $O$ with constant curvature $+1$. With respect to this fibration, the metric $\sigma_r = r^2 \gamma + \eta^2$, and its volume form is $d\mu^{\sigma_r} = r^2 d\mu^{\gamma} \wedge\eta$. By Fubini's theorem and the orbifold Gauss--Bonnet theorem \cite[\ S 2]{MR705527}, the volume of $L$ is 
\begin{align}
\mathrm{Vol}(L, \sigma_{\eps^{-1}}) 
&= \int_O \left( \eps^{-2} \int_{\text{fiber}} \eta \right) d\mu^{\gamma} \nonumber \\
&\geq 
\eps^{-2} \ell_\infty \int_O 1 \, d\mu^{\gamma} \nonumber \\
&=
\eps^{-2} \ell_\infty \int_O \mathcal{K}^O \, d\mu^{\gamma} \nonumber \\
&= 2\pi \eps^{-2} \ell_\infty \chi[O].
\end{align}
We conclude that
\begin{align}
\lim_{\eps \to 0}
\int_{S_{\infty,\epsilon}} n^a\PM{\Psi}_a \, d\mu
\leq -2\pi \ell_\infty \chi[O].
\end{align}
\end{proof}

\subsection{The singular term} \label{sec:singular} 

\begin{lemma}[The singular term]\label{lem:bdrsing} 
\begin{align} 
\lim_{\eps \to 0}
\int_{\partial U_{\PM\cZ,\eps}} n^a \PM{\Psi}_a \, d\mu  
= 0.
\end{align} 
\end{lemma}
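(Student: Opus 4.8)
The plan is to show that, although $\PM{\Psi}_a$ was defined with denominators that vanish on $\PM{\cZ}$, it is in fact \emph{bounded} in a fixed neighborhood of $\PM{\cZ}$; the flux through the shrinking boundary $\partial U_{\PM\cZ,\eps}$ is then dominated by its area, which tends to zero by the Hausdorff dimension estimate of Proposition~\ref{prop:baer}.

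To begin, I would fix a compact neighborhood $U$ of $\PM{\cZ}$. By Proposition~\ref{prop:cZ-properties} the set $\PM{\cZ}$ is compact and disjoint from the fixed point set $\Fixed=\{\lambda=0\}$, so $U$ can be chosen with $\lambda\geq c>0$ on $U$ and $U$ disjoint from $\Fixed$ and from the end. On $U$ the functions $\lambda$, $\omega$, $\PM{\cE}$ and hence $J^T_a=-\omega^a/\lambda^2$ are smooth and bounded, and $1-\PM{\cE}\geq c'>0$ by Lemma~\ref{lem:Eprop}. Using the rewriting of $\PM{\Psi}_a$ from Lemma~\ref{lem:rewrite},
\begin{align*}
\PM{\Psi}_a = \pm\frac{\PM{\Fcal}}{2}J^T_a + \frac{\nabla_a\PM{\Fcal}}{2\lambda} + \Xi_a,
\end{align*}
with $\Xi_a$ locally bounded, the only term that could be singular near $\PM{\cZ}$ is $\nabla_a\PM{\Fcal}/(2\lambda)$.

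The crux is to bound $\nabla_a\PM{\Fcal}$. Here $\PM{\Fcal}=(\PM{\Fcal}_{bc}\PM{\Fcal}^{bc})^{1/2}$ is the pointwise norm of the smooth $2$-form $\PM{\Fcal}_{bc}$, whose covariant derivative $\nabla_c\PM{\Fcal}_{ab}=-\PM{\Wcal}_{abcd}\xi^d$ by \eqref{eq:nablaFcal} is continuous, hence bounded on the compact set $U$. By the Kato inequality, $\PM{\Fcal}$ is Lipschitz on $U$ with $|\nabla_a\PM{\Fcal}|\leq|\nabla\PM{\Fcal}_{bc}|$ almost everywhere, so $\nabla_a\PM{\Fcal}$ is bounded. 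Consequently there is a constant $M$ with $|\PM{\Psi}_a|\leq M$ almost everywhere on $U$, and in particular on each boundary $\partial U_{\PM\cZ,\eps}$ for $\eps$ small enough that $U_{\PM\cZ,\eps}\subset U$.

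It then remains to bound the area of $\partial U_{\PM\cZ,\eps}$. Since $\partial U_{\PM\cZ,\eps}\subseteq\bigcup_i\partial B_{p_i}(r_i)$, and each geodesic sphere of small radius in the fixed compact region satisfies $\mathrm{Vol}(\partial B_{p_i}(r_i))\leq C' r_i^3$, we obtain $\mathrm{Vol}(\partial U_{\PM\cZ,\eps})\leq C'\sum_i r_i^3$. Fixing $\delta\in(0,1)$ once and for all and taking $\eps<1$, we have $r_i<1$ for all $i$, whence $r_i^3\leq r_i^{2+\delta}$ and thus $\sum_i r_i^3\leq\sum_i r_i^{2+\delta}<\eps$. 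Therefore
\begin{align*}
\left|\int_{\partial U_{\PM\cZ,\eps}} n^a\PM{\Psi}_a\,d\mu\right|\leq M\,\mathrm{Vol}(\partial U_{\PM\cZ,\eps})\leq M C'\eps\longrightarrow 0
\end{align*}
as $\eps\to0$, which is the claim. I expect the main obstacle to be the bound on $\nabla_a\PM{\Fcal}$: naively $\nabla_a\PM{\Fcal}=\nabla_a\PM{\Fcal}^2/(2\PM{\Fcal})$ blows up like $1/\PM{\Fcal}$ on approach to $\PM{\cZ}$, and it is only the Kato inequality---reflecting that $\PM{\Fcal}$ is the norm of a smooth tensor rather than an arbitrary function vanishing on $\PM{\cZ}$---that removes this apparent singularity. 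Once boundedness of $\PM{\Psi}_a$ is in hand, the area estimate is routine.
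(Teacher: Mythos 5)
Your proposal is correct and follows essentially the same route as the paper: a uniform bound $|\PM{\Psi}_a|\leq C$ on a fixed neighbourhood of $\PM{\cZ}$ (using that $\lambda$ is bounded away from zero there by Proposition~\ref{prop:cZ-properties}), combined with the area estimate $\operatorname{Vol}(\partial U_{\PM\cZ,\eps})\leq C'\sum_i r_i^3\leq C'\sum_i r_i^{2+\delta}<C'\eps$ coming from the $2$-rectifiability covering. The only difference is cosmetic: you obtain the boundedness of the a priori singular term $\nabla_a\PM{\Fcal}/(2\lambda)$ via Lemma~\ref{lem:rewrite} and the Kato inequality applied to the smooth closed and coclosed form $\PM{\Fcal}_{ab}$, whereas the paper reads the same bound off \eqref{eq:Psi-def} and \eqref{eq:MSscalar} (implicitly via \eqref{eq:nablaMSscalar}); your version makes explicit the point the paper leaves terse.
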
 

\begin{proof}
Proposition~\ref{prop:cZ-properties} tells us that $\lambda$ is bounded away from zero near $\PM\cZ$. From \eqref{eq:Psi-def} and \eqref{eq:MSscalar} we therefore see that $|n^a \PM{\Psi}_a| \leq C$ independently of $\eps > 0$ near $\PM\cZ$. Thus 
\begin{align} 
\left| \int_{\partial U_{\PM\cZ,\eps}} n^a \PM{\Psi}_a \, d\mu \right| 
\leq
C \operatorname{Vol} \left( \partial U_{\PM\cZ,\eps} \right).
\end{align}
By \eqref{Hausdorff-balls} we have for sufficiently small $\delta > 0$ 
\begin{align}
\operatorname{Vol} \left( \partial U_{\PM\cZ,\eps} \right)
\leq D \sum_i r_i^3 
\leq D \sum_i r_i^{2 + \delta} 
< D \eps,
\end{align}
and the claim follows.
\end{proof} 

\begin{remark}
The singular set $\PM{\cZ}$ was also an issue in the black hole uniqueness proofs carried out on the orbit space in the hypersurface-orthogonal case, as sketched in Section \ref{sec:intro}. While in his proof Israel \cite{1967PhRv..164.1776I} assumed the absence of such a set, the latter was admitted and subject of a technically subtle discussion in M\"uller zum Hagen et. al. \cite{MR398432}.
On the other hand, we are not aware of suitable generalizations of Robinson's identities \cite{1977GReGr...8..695R}, which avoid the singular set, to the non-hypersurface orthogonal case.  
\end{remark}

\subsection{Proof of Proposition \ref{prop:magbound}}
Proposition \ref{prop:magbound} follows from Lemmas \ref{lem:magic}, \ref{lem:bdrnut}, \ref{lem:bdrybolt}, \ref{lem:bdrinfty}, \ref{lem:bdrsing}.

\section{Proof of the main theorem} \label{sec:bound}

In this section, we will analyze the terms in the divergence identity derived in Proposition~\ref{prop:magbound} and prove our main theorem. The first step is the following corollary to Proposition~\ref{prop:magbound}.

\begin{definition}
For a nut $\nut$ with fixed point data $\{\eps, a,b\}$ with $a \leq b$, let
\begin{align} 
\PM{Z}(\nut) = \pm \eps \frac{1}{a} + \frac{1}{b}. 
\end{align} 
\end{definition}

\begin{cor} 
\label{cor:magdim}
Let $(\cM, g_{ab})$ be an ALF-$A_k$ $S^1$-instanton which is not half-flat. Let $w \geq 1$ be the maximal weight of all nuts in $\cM$. If $\cM$ has no nuts, we set $w= 1$. It holds that 
\begin{align} \label{eq:magdim}
-\frac{2}{w} + \chi[\cM] - \nnuts + \sum_{i=1}^{\nnuts} \PM{Z}(\nut_i) 
\geq 0
\end{align}
with equality if and only if $\PM{\Wcal}_{abcd}$ is algebraically special.  
\end{cor}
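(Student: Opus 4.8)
The plan is to feed the boundary inequality \eqref{eq:magbound} of Proposition~\ref{prop:magbound} into the relations of Lemma~\ref{lem:generic}, trading all surface gravities for the integer weights of the nuts, the Euler characteristics of the bolts, and the common generic period $G$. For a bolt $\bolt_i$ one has $|\kappa(\bolt_i)| = 1/G$, hence $\chi[\bolt_i]/|\kappa(\bolt_i)| = G\,\chi[\bolt_i]$. For a nut with data $\{\eps, a, b\}$, $a \leq b$, write $s_j = \sgn(\kappa^j(\nut))$, so that $\{|\kappa^1|,|\kappa^2|\} = \{a/G, b/G\}$ and $\eps = s_1 s_2$; a direct case check on the four sign patterns of $(s_1, s_2)$ collapses the modulus to give
\begin{align}
\pm\,\eps(\nut)\,\frac{|\kappa^1(\nut) \pm \kappa^2(\nut)|}{|\kappa^1(\nut)\kappa^2(\nut)|}
= G\left(\pm\eps\,\tfrac{1}{a} + \tfrac{1}{b}\right)
= G\,\PM{Z}(\nut),
\end{align}
where the two $\pm$ signs are the common SD/ASD sign. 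Thus the bracket in \eqref{eq:magbound} equals $G\bigl(\sum_i \chi[\bolt_i] + \sum_i \PM{Z}(\nut_i)\bigr)$.

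Next I would treat the term at infinity. For an ALF-$A_k$ instanton $\chi[O] = 2 > 0$ (Remark~\ref{rem:seifert}), so the length bound $\ell_\infty \geq \Pi = 2\pi G/w$ of Lemma~\ref{lem:ellinfty} gives $2\pi\ell_\infty\chi[O] \geq 4\pi^2 G\cdot(2/w)$. Dividing \eqref{eq:magbound} by the positive constant $4\pi^2 G$ and substituting these pieces yields
\begin{align}
\sum_{i=1}^{\nbolts} \chi[\bolt_i] + \sum_{i=1}^{\nnuts} \PM{Z}(\nut_i) \geq \frac{2}{w}.
\end{align}
Finally I replace $\sum_i \chi[\bolt_i]$ by $\chi[\cM] - \nnuts$ using \eqref{eq:chinutbolt}, which turns the last display into \eqref{eq:magdim}.

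For the equality claim I would trace back the two inequalities just used. Equality in \eqref{eq:magbound} holds exactly when $\cM$ is algebraically special (Proposition~\ref{prop:magbound}), and the length step is an equality exactly when $\ell_\infty = \Pi$. Hence equality in \eqref{eq:magdim} propagates back through the chain and forces algebraic speciality, giving the ``only if'' direction at once; for the converse one also needs the length bound to be saturated, $\ell_\infty = \Pi$, which is the geometric statement that the shortest $S^1$-orbits at infinity already realise the minimal period $\Pi$ of the linearised action at the fixed-point set of an ALF-$A_k$ $S^1$-instanton.

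The main obstacle is the sign bookkeeping in the displayed nut identity: one must verify, across all four combinations of $(s_1, s_2)$ together with the ordering $a \leq b$, that the outer SD/ASD sign, the orientation $\eps = s_1 s_2$, and the sign inside $|\kappa^1 \pm \kappa^2|$ conspire so that the absolute value always simplifies to $\pm\eps/a + 1/b$. Everything else is substitution of Lemma~\ref{lem:generic} and \eqref{eq:chinutbolt} together with the elementary estimate from Lemma~\ref{lem:ellinfty}.
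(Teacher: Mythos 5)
Your argument is correct and is essentially the paper's proof, which likewise divides \eqref{eq:magbound} by $4\pi^2 G$, converts surface gravities into weights and the generic period via Lemma~\ref{lem:generic}, and then invokes \eqref{eq:chinutbolt} and Lemma~\ref{lem:ellinfty}; your sign bookkeeping showing $\pm\eps(\nut)\,|\kappa^1\pm\kappa^2|/|\kappa^1\kappa^2| = G\,\PM{Z}(\nut)$ checks out in all cases. The subtlety you flag in the ``if'' direction of the equality claim --- that saturating \eqref{eq:magdim} additionally requires $\ell_\infty = \Pi$, beyond the one-sided bound of Lemma~\ref{lem:ellinfty} --- is equally implicit in the paper's one-line proof, and only the ``only if'' direction is actually used in the applications (Lemmas~\ref{lem:Kerr-eq}--\ref{lem:CT-eq}).
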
 

\begin{proof} 
Since $(\cM, g_{ab})$ is ALF-$A_k$ we have $\chi[O] = 2$. Let $G$ be as in Lemma \ref{lem:generic}. Divide \eqref{eq:magbound} by $4\pi^2 G$. The corollary then follows from \eqref{eq:chinutbolt} and Lemma~\ref{lem:ellinfty}.
\end{proof} 

\begin{lemma}\label{lem:Zpm-est} 
Let $\nut$ be a nut. If $\epsilon(\nut)=1$, then $\P{Z}(\nut)\leq2$ and $\M{Z}(\nut)\leq 0$. If $\epsilon(\nut)=-1$, then $\M{Z}(\nut)\leq 2$ and $\P{Z}(\nut)\leq 0$. For any of the inequalities, equality holds if and only if the weights of $\nut$ are $\{1,1\}$.
\end{lemma}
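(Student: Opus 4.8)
The plan is to reduce the four asserted inequalities to two elementary ones by tracking the single sign $\pm\eps$ appearing in
\begin{align}
\PM{Z}(\nut) = \pm\eps\,\tfrac{1}{a} + \tfrac{1}{b},
\end{align}
since $\pm\eps \in \{+1,-1\}$. First I would record the two standing facts about the weights: they are positive integers ordered $1 \le a \le b$, and they are mutually prime, so in particular $a = b$ forces $a = b = 1$. Everything then follows from evaluating the product of the superscript sign with $\eps$ in each of the four cases.

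Concretely, I would split into the two possible values of $\pm\eps$. When $\pm\eps = +1$ — which occurs for $\P{Z}(\nut)$ precisely when $\eps(\nut)=1$, and for $\M{Z}(\nut)$ precisely when $\eps(\nut)=-1$ — one has $\PM{Z}(\nut) = \tfrac{1}{a} + \tfrac{1}{b}$. Since $a,b \ge 1$ each reciprocal is at most $1$, so $\PM{Z}(\nut) \le 2$, with equality if and only if $\tfrac{1}{a} = \tfrac{1}{b} = 1$, i.e.\ $a = b = 1$. This settles the two ``$\le 2$'' statements of the lemma.

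For the remaining two statements I would take the complementary case $\pm\eps = -1$ — occurring for $\M{Z}(\nut)$ when $\eps(\nut)=1$ and for $\P{Z}(\nut)$ when $\eps(\nut)=-1$ — where $\PM{Z}(\nut) = \tfrac{1}{b} - \tfrac{1}{a}$. The ordering $a \le b$ gives $\tfrac{1}{a} \ge \tfrac{1}{b}$, hence $\PM{Z}(\nut) \le 0$, with equality if and only if $a = b$, which by coprimality means $a = b = 1$. Matching the four cases against the claimed signs completes the argument.

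I expect no genuine obstacle here: the statement is a pair of elementary reciprocal inequalities for the weights. The only points that require care are bookkeeping the sign $\pm\eps$ correctly so that each of the four sub-statements is paired with the right inequality, and invoking mutual primality (rather than merely $a = b$) to pin the equality case down to $\{1,1\}$.
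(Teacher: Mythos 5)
Your proposal is correct and follows essentially the same route as the paper's proof: evaluate $\PM{Z}(\nut)$ as $\tfrac1a+\tfrac1b$ or $\tfrac1b-\tfrac1a$ according to the sign $\pm\eps$, bound each using $1\le a\le b$, and use coprimality to reduce the equality case $a=b$ to $a=b=1$. The only (immaterial) difference is that you organize the four cases by the value of $\pm\eps$ while the paper organizes them by $\eps(\nut)$.
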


\begin{proof}
Let $\nut$ have weights $a,b$, where $a\leq b$. If $\epsilon(\nut)=1$, then 
\begin{align}
\P{Z}(\nut)=\frac{1}{a}+\frac{1}{b},
\end{align} 
and since $a$ and $b$ are positive integers, we have $\P{Z}(\nut)\leq 2$ with equality if and only if $a=b=1$. Since $a\leq b$ we see that 
\begin{align}
\M{Z}(\nut)=-\frac{1}{a}+\frac{1}{b}\leq 0
\end{align} 
where equality occurs if and only if $a=b$, which happens only when  $a=b=1$ since $a$ and $b$ are coprime. The reasoning in the case $\epsilon(\nut)=-1$ is similar.
\end{proof}

We are now ready to apply inequality~\eqref{eq:magdim} to known spaces with ALF-$A_k$ $S^1$-instanton metrics. We first consider the case with Kerr topology. 

\begin{lemma} \label{lem:Kerr-eq}
Let $(\cM,g_{ab})$ be an AF $S^1$-instanton on $S^4 \setminus S^1$. Then equality holds in \eqref{eq:magdim}, for both signs. 
\end{lemma}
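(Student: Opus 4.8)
The plan is to derive equality by adding the two sign-dependent inequalities furnished by Corollary~\ref{cor:magdim}, after which the orientation-dependent fixed-point data cancels and only a clean weight inequality remains. Since $S^4 \setminus S^1 \cong S^2 \times \Reals^2$ we have $\chi[\cM] = 2$. To invoke Corollary~\ref{cor:magdim} I would first observe that $(\cM, g_{ab})$ cannot be half-flat: by Lemma~\ref{prop:SD-characterizations} a half-flat instanton has self-dual or anti-self-dual curvature and is therefore hyperkähler for one orientation, but the only AF hyperkähler instanton is the flat product $\Reals^3 \times S^1$, which is excluded since AF instantons are non-flat and in any case have a different topology. Applying Corollary~\ref{cor:magdim} for both signs then gives
\begin{align}
\text{LHS}^\pm := -\frac{2}{w} + 2 - \nnuts + \sum_{i=1}^{\nnuts} \PM{Z}(\nut_i) \geq 0 .
\end{align}

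Next I would add these two inequalities. Writing the weights of $\nut_i$ as $a_i \leq b_i$, the definition of $\PM{Z}$ yields $\P{Z}(\nut_i) + \M{Z}(\nut_i) = 2/b_i$, which does not depend on the orientation $\eps(\nut_i)$. Hence
\begin{align}
\text{LHS}^+ + \text{LHS}^- = 2 \left( -\frac{2}{w} + 2 - \nnuts + \sum_{i=1}^{\nnuts} \frac{1}{b_i} \right),
\end{align}
so, since both summands are non-negative, it suffices to prove $\text{LHS}^+ + \text{LHS}^- \leq 0$, i.e.\
\begin{align} \label{eq:plan-weight}
\sum_{i=1}^{\nnuts} \left( 1 - \frac{1}{b_i} \right) \geq 2 \left( 1 - \frac{1}{w} \right);
\end{align}
this, combined with $\text{LHS}^\pm \geq 0$, forces $\text{LHS}^+ = \text{LHS}^- = 0$, which is the assertion of the lemma.

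It remains to establish \eqref{eq:plan-weight}, and this is the point at which the structure theory enters. If $w = 1$ — in particular if $\cM$ has no nuts, as in the Schwarzschild-type configuration — then every nut has weights $\{1,1\}$ and both sides of \eqref{eq:plan-weight} vanish. If $w > 1$, the maximal weight $w$ occurs as a weight of some nut, and Lemma~\ref{lem:3.6} then produces a second, distinct nut that also carries $w$ as a weight. Since $w$ is maximal and the two weights of any nut are coprime, $w$ must be the strictly larger weight $b_i$ of each of these two nuts; they therefore contribute exactly $2(1 - 1/w)$ to the left-hand side of \eqref{eq:plan-weight}, while the remaining terms are non-negative. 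This proves \eqref{eq:plan-weight}.

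I expect the only real obstacle to be the exclusion of the half-flat case, since Corollary~\ref{cor:magdim} is stated only for non-half-flat instantons and one must rule out a hyperkähler metric on this topology. The remainder of the argument is deliberately insensitive to the precise fixed-point configuration: the bolt contributions enter solely through $\chi[\cM] = \nnuts + \sum_i \chi[\bolt_i]$ (cf.\ \eqref{eq:chinutbolt}), already absorbed into Corollary~\ref{cor:magdim}, while the orientation-dependent nut contributions disappear upon summing the two signs. In this way the same computation covers both the Kerr-type configuration of two nuts and the Schwarzschild-type configuration with a single bolt, both of which are realized on $S^4 \setminus S^1$, without ever needing to classify the fixed-point set.
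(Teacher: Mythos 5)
Your proof is correct, but it takes a genuinely different route from the paper's. The paper first invokes the $G$-signature identity \eqref{eq:G-sign-g0} (with $e=0$, $\sign[\cM]=0$) to conclude that the positively and negatively oriented nuts occur in equal numbers, and then runs a case analysis over $w=1$ versus $w>1$, and over the orientation of the companion nut supplied by Lemma~\ref{lem:3.6}, using the upper bounds of Lemma~\ref{lem:Zpm-est} to close each case. Your observation that $\P{Z}(\nut_i)+\M{Z}(\nut_i)=2/b_i$ is orientation-independent lets you bypass all of this: adding the two inequalities reduces everything to $\sum_i(1-1/b_i)\geq 2(1-1/w)$, which follows from Lemma~\ref{lem:3.6} alone (two distinct nuts carry $w$ as their strictly larger weight, by coprimality and maximality) together with $1-1/b_i\geq 0$. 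As a bonus your argument \emph{derives} $\sum_i\eps(\nut_i)=0$ rather than importing it from the $G$-signature theorem, and it covers the bolt-only and two-nut configurations uniformly. Two remarks on the trade-offs. First, your symmetrization is intrinsically tied to seeking equality for \emph{both} signs, so it cannot be adapted to the one-sided statement of Lemma~\ref{lem:CT-eq}, which is presumably why the paper sets up the heavier orientation-sensitive machinery from the start. Second, your exclusion of the half-flat case (needed to invoke Corollary~\ref{cor:magdim}) is a legitimate point that the paper's own proof leaves implicit; your route through the classification of hyperk\"ahler ALF spaces is sound, since a half-flat Ricci-flat metric on the simply connected $S^4\setminus S^1$ would be hyperk\"ahler and the only AF such space is the flat $\Reals^3\times S^1$, which has the wrong topology, though this does lean on a substantial external classification result.
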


\begin{proof}
In the present case we have $e=0$, $\chi[\cM]=2$ and $\sign[\cM]=0$. Equation \eqref{eq:G-sign-g0} with $e=0$ tells us that the number of positively oriented nuts is equal to the number of negatively oriented nuts. We denote this number by $k$. Corollary \ref{cor:magdim} yields 
\begin{align} \label{eq:lowbdZ}
-\frac{2}{w} + 2 - 2k + \sum_{i=1}^{\nnuts}\PM{Z}(\nut_i) \geq 0.
\end{align} 
In case there are no nuts, $w=1$ and $k=0$, and we have equality in \eqref{eq:lowbdZ} and hence in 
\eqref{eq:magdim}. 

We now assume $k > 0$. 
In the case $w=1$, all weights are $1$, so we get $\PM{Z}(\nut_i)=\pm\epsilon(\nut_i)+1$ for all $i$, and thus the left hand side of \eqref{eq:lowbdZ} is equal to 
\begin{align} 
-2 +2-2k + \sum_{i=1}^{\nnuts}(\pm\epsilon(\nut_i)+1)
=
\pm\sum_{i=1}^{\nnuts}\epsilon(\nut_i) 
= 0 .
\end{align} 
This means that equality holds in \eqref{eq:lowbdZ} and \eqref{eq:magdim},  for both signs. 

Now consider the case $w>1$, and assume without loss of generality that $\nut_1$ has nut data $\{\epsilon(\nut_1),a,w\}$ for some positive integer $a$. By Lemma~\ref{lem:3.6}, there exists another nut, with nut data either $\{-\epsilon(\nut_1),a,w\}$ or $\{\epsilon(\nut_1),b,w\}$, where $a+b=w$; without loss of generality, assume that this nut is $\nut_2$.
In the case where $\nut_2$ has nut data $\{-\epsilon(\nut_1),a,w\}$, we have 
\begin{align}
\PM{Z}(\nut_1) + \PM{Z}(\nut_2)
=
\pm\frac{\epsilon(\nut_1)}{a} + \frac{1}{w}\mp\frac{\epsilon(\nut_1)}{a} + \frac{1}{w}
=
\frac{2}{w},
\end{align} 
and then \eqref{eq:lowbdZ} is equivalent to 
\begin{align}
2 - 2k + \sum_{i=3}^{\nnuts}\PM{Z}(\nut_i) \geq 0.
\end{align} 
On the other hand, since the set $\{\nut_3,\dots,\nut_{\nnuts}\}$ contains $k-1$ nuts of each orientation, Lemma~\ref{lem:Zpm-est} implies that 
\begin{align} \label{eq:upbdZ}
2 - 2k + \sum_{i=3}^{\nnuts}\PM{Z}(\nut_i) \leq 0
\end{align} 
which means that equality must hold in view of Lemma~\ref{lem:Zpm-est}. Consider now the case where $\nut_2$ has nut data $\{\epsilon(\nut_1),b,w\}$, with $a+b=w$. Then 
\begin{align}
\PM{Z}(\nut_1) + \PM{Z}(\nut_2)=\frac{2}{w}\pm\epsilon(\nut_1)\left (\frac{1}{a}+\frac{1}{b} \right),
\end{align} 
so that \eqref{eq:lowbdZ} is equivalent to 
\begin{align} \label{eq:newbd}
\pm\epsilon(\nut_1)\left(\frac{1}{a}+\frac{1}{b}\right)
+2 - 2k + \sum_{i=3}^{\nnuts} \PM{Z}(\nut_i) \geq 0.
\end{align} 
From \eqref{eq:G-sign-g0} we find that the set $\{\nut_3,\dots,\nut_{\nnuts}\}$ must contain $k-2$ ($k \ge 2$) nuts with the same orientation as $\nut_1$, and $k$ nuts with the opposite orientation. Assuming that $\epsilon(\nut_3)=\epsilon(\nut_4)=-\epsilon(\nut_1)$, Lemma~\ref{lem:Zpm-est} tells us that
\begin{align}
\sum_{i=5}^{\nnuts} \PM{Z}(\nut_i) \leq 2k-4.
\end{align}
In case $\epsilon(\nut_1)=1$, thus, the $+$ version of \eqref{eq:newbd} implies that 
\begin{align} 
0\leq\frac{1}{a}+\frac{1}{b} + \P{Z}(\nut_3) + \P{Z}(\nut_4)-2\leq\frac{1}{a}+\frac{1}{b}-2\leq 0,
\end{align}
where the middle inequality uses again Lemma \ref{lem:Zpm-est}. Thus, $a=b=1$ and $\P{Z}(\nut_3) = \P{Z}(\nut_4)=0$, which, once again by Lemma \ref{lem:Zpm-est}, means that $\nut_3$ and $\nut_4$ have weights $\{1,1\}$. In the case $\epsilon(\nut_1)=-1$, similarly, the $-$ version of \eqref{eq:newbd} implies that
\begin{align} 
0 \leq 
\frac{1}{a}+\frac{1}{b}+\M{Z}(\nut_3) + \M{Z}(\nut_4)-2
\leq
\frac{1}{a}+\frac{1}{b}-2\leq 0. 
\end{align} 
In either case, thus, $a=b=1$, and $\nut_3$ and $\nut_4$ have weights $\{1,1\}$. Hence the left hand side of \eqref{eq:lowbdZ} is equal to  
\begin{align}
&\pm 2\epsilon(\nut_1) + 2(\mp\epsilon(\nut_1)+1) 
+2 -2k +\sum_{i=5}^{\nnuts}\PM{Z}(\nut_i)  \nonumber \\ 
&\qquad 
\leq 2 + 2 - 2k + (2k-4) = 0, 
\end{align}
which finishes the proof.
\end{proof}

We next consider the case of Taub-bolt topology, which turns out to be rather similar to the Kerr case. 

\begin{lemma} \label{lem:Tb-eq}
Let $(\cM,g_{ab})$ be an ALF-$A_0$ $S^1$-instanton on $\mathbb{C}P^2\setminus\{\text{\rm pt.}\}$. Then equality holds in \eqref{eq:magdim}, for both signs.
\end{lemma}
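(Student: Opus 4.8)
The plan is to match this to the Kerr computation of Lemma~\ref{lem:Kerr-eq}: once the topological invariants of $\CP^2\setminus\{\text{pt.}\}$ are in hand, the hypotheses feeding Corollary~\ref{cor:magdim} turn out to coincide with those of the Kerr case, so the remainder can be quoted verbatim. The first task is therefore to record $\chi[\cM]$, $e$, and $\sign[\cM]$. As an ALF-$A_0$ space, $\cM\cong\CP^2\setminus\{\text{pt.}\}$ is, with its instanton orientation, the total space of a complex line bundle over $S^2$ of Euler number $e=-1$; hence $\chi[\cM]=\chi[S^2]=2$, and $H_2(\cM)\cong\Zbb$ carries the intersection form $[e]=[-1]$, so that $\sign[\cM]=-1$. (Equivalently, $\chi[\CP^2]-1=2$, and the ALF-$A_0$ orientation is the reverse of the complex one, under which $\sign$ flips from $+1$ to $-1$ and $e$ from $+1$ to $-1$.)

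The decisive step is then the $g=0$ specialization \eqref{eq:G-sign-g0} of the $G$-signature formula, which gives
\begin{align}
\sum_{i=1}^{\nnuts}\eps(\nut_i)=\sign[\cM]-\sgn(e)=-1-(-1)=0.
\end{align}
Thus the positively and negatively oriented nuts occur in equal numbers $k$, so $\nnuts=2k$, exactly as in the Kerr case. Substituting $\chi[\cM]=2$ and $\nnuts=2k$ into Corollary~\ref{cor:magdim} reproduces, for each choice of sign, precisely the inequality
\begin{align}
-\frac{2}{w}+2-2k+\sum_{i=1}^{\nnuts}\PM{Z}(\nut_i)\geq0
\end{align}
that opens the proof of Lemma~\ref{lem:Kerr-eq} (this is \eqref{eq:lowbdZ}).

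From this point I would invoke the Kerr argument without change, since every ingredient it uses is available here: the balance $\sum_i\eps(\nut_i)=0$, the value $\chi[\cM]=2$, the companion-nut structure of Lemma~\ref{lem:3.6}, the estimates of Lemma~\ref{lem:Zpm-est}, and the orientation bookkeeping coming again from \eqref{eq:G-sign-g0}. Concretely, one establishes the reverse inequality by the case split on the maximal weight $w$: the case $w=1$ makes all nut weights $\{1,1\}$ and the sum collapse via $\sum_i\eps(\nut_i)=0$, while the case $w>1$ pairs a highest-weight nut $\nut_1$ with its companion $\nut_2$ so that $\PM{Z}(\nut_1)+\PM{Z}(\nut_2)$ equals either $2/w$ or $2/w\pm\eps(\nut_1)(1/a+1/b)$, the remaining terms then being controlled by Lemma~\ref{lem:Zpm-est}. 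This forces equality for both signs.

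The only genuinely new point, and the one I expect to require the most care, is the signature bookkeeping in the first step. Unlike Kerr, where $e=0$ and $\sign[\cM]=0$, here both $\sign[\cM]=-1$ and $\sgn(e)=-1$ are nonzero, and the whole reduction hinges on their cancellation in \eqref{eq:G-sign-g0} to yield the balanced configuration $\sum_i\eps(\nut_i)=0$. Pinning down the orientation conventions so that $e=-1$ and $\sign[\cM]=-1$ are simultaneously correct is the crux; once this is settled there is no further obstacle, the rest being a direct appeal to Lemma~\ref{lem:Kerr-eq}.
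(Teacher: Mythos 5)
Your proof is correct and follows the paper's own argument for this lemma: record $\chi[\cM]=2$, use \eqref{eq:G-sign-g0} to conclude that the positively and negatively oriented nuts occur in equal numbers, and then quote the proof of Lemma~\ref{lem:Kerr-eq} without modification. The only discrepancy is in the orientation bookkeeping: the paper takes $\sign[\cM]=+1$ and $e=+1$ (consistent with the standard complex orientation of $\CP^2$), whereas you take $\sign[\cM]=-1$ and $e=-1$ (the choice consistent with $k=-e-1$ in Definition~\ref{def:ALF-Ak} for $k=0$); since only the combination $\sign[\cM]-\sgn(e)=0$ enters \eqref{eq:G-sign-g0}, either convention gives $\sum_{i}\eps(\nut_i)=0$ and the remainder of the argument is unaffected.
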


\begin{proof}
From the assumption on the topology, we have $\chi[\cM]=2$ and $\sign[\cM]=1$, and since $\cM$ is ALF-$A_0$, the circle fibration at infinity has Euler number $e=1$. By \eqref{eq:G-sign-g0} we see that, as in the proof of the previous lemma, there is an equal number of positively and negatively oriented nuts. The rest of that proof goes through without any modification.
\end{proof}

Finally, we consider the case of an $S^1$-instanton with the same topology as the Chen--Teo instanton. 

\begin{lemma} \label{lem:CT-eq}
Let $(\cM,g_{ab})$ be an AF $S^1$-instanton on $\mathbb{C}P^2\setminus S^1$. Then equality holds in \eqref{eq:magdim}, for $-$.  
\end{lemma}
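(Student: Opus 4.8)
The plan is to mimic the proof of Lemma~\ref{lem:Kerr-eq}, the essential new feature being that the two self-duality signs are no longer interchangeable because $\sum_i \eps(\nut_i)\neq 0$ here. First I would fix the topological data. As $\cM\cong\CP^2\setminus S^1$ is AF we have $e=0$, and removing a circle (of codimension three) changes neither the Euler characteristic nor the intersection form, so $\chi[\cM]=3$ and $\sign[\cM]=1$. Substituting $e=0$ into \eqref{eq:G-sign-g0} gives $\sum_{i=1}^{\nnuts}\eps(\nut_i)=1$; writing $n_\pm$ for the number of nuts of each orientation this reads $n_+=n_-+1$, so $\nnuts=2n_-+1$ is odd (and $\geq 3$ by Remark~\ref{rem:nutfacts}). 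The $w=1$ computation below shows that it is the minus sign that degenerates, consistent with $\sign[\cM]=+1$.

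Next I would specialize Corollary~\ref{cor:magdim} to the minus sign, substitute $\chi[\cM]=3$ and $\nnuts=2n_-+1$, and set $S:=2n_- - \sum_{i}\M{Z}(\nut_i)$, so that \eqref{eq:magdim} becomes
\begin{equation*}
0 \leq -\tfrac{2}{w}+3-\nnuts+\sum_i\M{Z}(\nut_i) = 2-\tfrac{2}{w}-S .
\end{equation*}
By Lemma~\ref{lem:Zpm-est} (with $a\leq b$ the weights of a nut),
\begin{equation*}
S=\sum_{\eps(\nut)=+1}\Bigl(\tfrac1a-\tfrac1b\Bigr)+\sum_{\eps(\nut)=-1}\Bigl(2-\tfrac1a-\tfrac1b\Bigr)\geq 0,
\end{equation*}
each term being non-negative. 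Hence the already-established inequality is exactly $S\leq 2-2/w$, and the lemma is equivalent to the reverse bound $S\geq 2-2/w$. The base case $w=1$ is immediate: then every $\M{Z}(\nut)=1-\eps(\nut)$, so $S=2n_--(\nnuts-1)=0=2-2/w$.

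For $w>1$ I would apply Lemma~\ref{lem:3.6} to a nut $\nut_*$ of maximal weight $w$, obtaining a companion $Q$ also of weight $w$. When $\eps(Q)=-\eps(\nut_*)$, or when $\eps(Q)=\eps(\nut_*)=-1$, the pair $\{\nut_*,Q\}$ contributes exactly $2-2/w$ to $S$ (in the same-orientation case one uses $w=a_*+b_*$ together with $1/a_*+1/b_*\leq 2$); since the remaining summands of $S$ are non-negative, $S\geq 2-2/w$ and equality follows. These are the analogues of the easy branches of Lemma~\ref{lem:Kerr-eq}.

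The genuinely different case is when every nut carrying the maximal weight is positively oriented, so Lemma~\ref{lem:3.6} produces only positive pairs with $w=a_*+b_*$, contributing merely $1/a_*+1/b_*-2/w<2-2/w$; here crude non-negativity of the other terms no longer closes the estimate, and I expect this to be the main obstacle. The point is that the surplus positive nut forced by $\sum\eps=1$ must be balanced by negatively oriented nuts carrying the sub-weights $a_*,b_*$: by the weight-balance Lemma~\ref{lem:2.6} each value $a_*,b_*<w$ occurs an even number of times, and Lemma~\ref{lem:3.5} locates the nuts realizing them, which one must show supply the missing $2-1/a_*-1/b_*$ in $S$. I would organize this as an induction on the maximal weight $w$, peeling off the $\Zbb_w$-fixed two-spheres (each carrying exactly two nuts) to pass to a configuration of strictly smaller maximal weight and invoking the inductive hypothesis; the delicate step is checking that $S$ and the orientation balance transform correctly under this reduction, which is precisely where Jang's combinatorial analysis of the $G$-signature identity enters. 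As a consistency check, when $\nnuts=3$ weight balance forces the unique negative nut to have weights exactly $\{a_*,b_*\}$, contributing $2-1/a_*-1/b_*$ and giving $S=2-2/w$ on the nose.
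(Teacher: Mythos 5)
Your reduction of \eqref{eq:magdim} to the statement $S \geq 2 - 2/w$, with $S = 2n_- - \sum_i \M{Z}(\nut_i)$ a sum of non-negative terms, is a correct reformulation of the paper's bookkeeping, and your base case $w=1$ and the two easy branches of Lemma~\ref{lem:3.6} (companion of opposite orientation, or both companions negative) agree with the corresponding parts of the paper's argument. However, the case you yourself flag as the main obstacle --- both maximal-weight nuts positively oriented with $w=a+b$ --- is precisely where the paper's proof does all of its work, and your proposed induction on the maximal weight is not carried out; as sketched it would not close. The paper instead assumes $b>a$ and applies Lemma~\ref{lem:3.5} a \emph{second} time, now to the sub-maximal weight $b$ of $\nut_1$, to produce a third nut whose data is pinned down to six possibilities by the congruences modulo $b$ together with maximality of $a+b$; each possibility is then either resolved or eliminated.

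The concrete gap is that several of these configurations (for instance $\{+,b,a+b\},\{+,a,a+b\},\{+,b-a,b\}$ for $b>3$, and $\{+,b,a+b\},\{+,a,a+b\},\{+,b,2b-a\}$ for $b>2$) are ruled out in the paper by showing that the \emph{plus} version of \eqref{eq:magdim} is violated, i.e.\ $\P{\Phi}<0$, contradicting Corollary~\ref{cor:magdim}; the residual small cases $(a,b)\in\{(1,2),(1,3),(2,3)\}$ further require Lemma~\ref{lem:2.6} to force a fourth nut before a contradiction or the desired equality is reached. Your strategy invokes only the minus inequality together with combinatorics of the weights, and for these configurations the non-negative remainder of $S$ cannot be bounded below by the missing amount $2-1/a-1/b$ unless the configurations are first excluded --- an exclusion that genuinely needs the $+$ inequality and is invisible to a purely combinatorial peeling argument. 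Until the reduction step of your induction is made precise and these analytically excluded configurations are handled, the proof is incomplete.
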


\begin{proof}
Let $\PM{\Phi}$ be the left hand side of \eqref{eq:magdim}, so $\PM{\Phi}\geq 0$ by Corollary~\ref{cor:magdim}. We wish to show that $\M{\Phi}=0$. By the assumption on the topology, we have $\chi[\cM]=3$ and $\sign[\cM]=1$. From \eqref{eq:G-sign-g0} we see that 
\begin{align}
\sum_{i=1}^{\nnuts}\epsilon(\nut_i) = 1,
\end{align} 
and thus the numbers of positively oriented and negatively oriented nuts are $k+1$ and $k$, respectively, for some integer $k\geq 0$. If all nuts  have weights $\{1,1\}$, then $\M{\Phi}=0$ and we are done, so assume that $w>1$, where $w$ is the largest weight, and assume without loss of generality that $\nut_1$ has this weight; then $\nut_1$ has nut data $\{\epsilon(\nut_1),b,w\}$ for some $b$. If there exists another nut with nut data $\{-\epsilon(\nut_1),b,w\}$, then the same argument as in the proof of \eqref{eq:upbdZ} works to show that $\M{\Phi}\leq0$, since there are $k-1$ remaining negatively oriented nuts in this case.

Thus, assume that there exists no nut with nut data $\{-\epsilon(\nut_1),b,w\}$. By Lemma~\ref{lem:3.5} there exists another nut with nut data $\{\epsilon(\nut_1),a,w\}$, where $a+b=w$. Assume without loss of generality that this nut is $P_2$, and that $b\geq a$. If $a=b=1$ and $\epsilon(\nut_1)=1$, then using Lemma \ref{lem:Zpm-est} we see that 
\begin{align}
\M{\Phi}
= 
-2 +2 -2k +\sum_{i=3}^{\nnuts} \M{Z}(\nut_i)
\leq -2 +2 - 2k +2k 
= 0.
\end{align} 
If $a=b=1$ and $\epsilon(\nut_1)=-1$, we similarly find 
\begin{align} 
\M{\Phi}
=
2 + 2 -2k + \sum_{i=3}^{\nnuts} \M{Z}(\nut_i)
\leq 
2 +2 -2k + 2(k-2)
= 0.
\end{align} 

Henceforth, we can thus assume that $b>a$. By applying Lemma~\ref{lem:3.5} to the nut $\nut_1$, there is a nut with data either $\{\epsilon(\nut_1),b,c\}$, where $a+b\equiv -c\mod{b}$, or data $\{-\epsilon(\nut_1),b,c\}$, where $a+b\equiv c\mod{b}$. Since $b$ is equal to neither $a$ nor $a+b$, this nut cannot be $\nut_2$, so we can assume without loss of generality that it is $\nut_3$. If $\epsilon(\nut_3)=\epsilon(\nut_1)$, then since $a+c\equiv a+b+c\equiv 0\mod{b}$ and since $0<a+c<b+(a+b)<3b$, we have $c\in\{b-a,2b-a\}$. On the other hand, if $\epsilon(\nut_3)=-\epsilon(\nut_1)$, then since $a-c\equiv a+b-c\equiv 0\mod{b}$ and since $a + b$ is by assumption the highest weight in the configuration, we have $c\in\{a+b,a\}$. However, the case $c=a+b$ in which $\nut_3$ would have nut data $\{-\epsilon(\nut_1),b,w\}$  has also been excluded by assumption above. This implies $c = a$.

Together with the possibilities of the choice of $\epsilon(\nut_1)$, we are left with six cases each of which will be considered separately. In the list below, the ordering of the nuts is $P_1$, $P_2$ and $P_3$, and the weights of each nut are displayed in increasing order.

\subsection*{Case: $\{+,b,a+b\},\{+,a,a+b\} \{+,b-a ,b\}$}
By Lemma~\ref{lem:Zpm-est} we have
\begin{align} 
\P{\Phi}
&=   
\frac{1}{a}+\frac{2}{b}+\frac{1}{b-a} +2-2k +\sum_{i=4}^{\nnuts}\P{Z}(\nut_i)
\leq 
\frac{1}{a}+\frac{2}{b}+\frac{1}{b-a}-2, \label{case1+}\\
\M{\Phi}
&= 
-\frac{1}{a} -\frac{1}{b-a} +2-2k +\sum_{i=4}^{\nnuts}\M{Z}(\nut_i)
\leq 
-\frac{1}{a} -\frac{1}{b-a} + 2. \label{case1-} 
\end{align}
The right hand side of \eqref{case1+} is negative for $b>3$ which is thus impossible. The remaining cases $(a,b) =(1,2)$, $(a,b)=(1,3)$ and $(a,b)=(2,3)$ will be handled separately. When $(a,b) = (1,2)$, \eqref{case1-} yields $\M{\Phi} = 0$ and we are done.
 
When $(a,b)=(1,3)$, we have the nut configuration 
\begin{align} 
\{+,3,4\}, \quad \{+,1,4\}, \quad \{+,2,3\}, 
\end{align} 
and since the weight $2$ occurs only once in this configuration, by Lemma~\ref{lem:2.6} there must exist another nut, say $\nut_4$, with nut data $\{\epsilon(\nut_4),2,c\}$ for some $c$. If $\epsilon(\nut_4)=1$, then
\begin{align}
\P{\Phi}
=\frac{8}{3}+\frac{1}{c} +2-2k +\sum_{i=5}^{\nnuts}\P{Z}(\nut_i)
\leq -\frac{4}{3}+\frac{1}{c}
<0
\end{align} 
which is a contradiction, while if $\epsilon(\nut_4)=-1$, then 
\begin{align}
\M{\Phi}
=-1+\frac{1}{c} +2-2k +\sum_{i=5}^{\nnuts}\M{Z}(\nut_i)
\leq-1+\frac{1}{c}
\leq 0
\end{align}
and we conclude that $\M{\Phi} = 0$.

Finally, when $(a,b)=(2,3)$, we have the nut configuration 
\begin{align} 
\{+,3,5\},\quad \{+,2,5\}, \quad \{+,1,3\}, 
\end{align} 
and as in the previous case, there is a nut $\nut_4$ with data $\{\epsilon(\nut_4),2,c\}$. Reasoning as in the previous case, we find that $\P{\Phi} < 0$ or $\M{\Phi} < 0$ holds, depending on whether $\epsilon(\nut_4)=1$ or $\epsilon(\nut_4)=-1$.

\subsection*{Case: $\{+,b,a+b\}, \{+,a,a+b\},\{+,b,2b-a\}$}
In this case, 
\begin{align}
\P{\Phi}
=\frac{1}{a}+\frac{2}{b}+\frac{1}{2b-a} +2-2k +\sum_{i=4}^{\nnuts}\P{Z}(\nut_i)
\leq\frac{1}{a}+\frac{2}{b}+\frac{1}{2b-a}-2,
\end{align} 
and the right hand side is negative for $b>2$, which rules out this case.

For the case $(a,b)=(1,2)$, we have the nut configuration 
\begin{align} 
\{+,2,3\}, \quad \{+,1,3\}, \quad \{+,2,3\}, 
\end{align} 
and since the weight $3$ occurs an odd number of times, Lemma~\ref{lem:2.6} tells us that there exists another nut, say $\nut_4$, with nut data $\{\epsilon(\nut_4),3,c\}$, for some $c$. If $\epsilon(\nut_4)=1$, then
\begin{align}
\P{\Phi}
=
\frac{8}{3}+\frac{1}{c} +2-2k +\sum_{i=5}^{\nnuts}\P{Z}(\nut_i)
\leq -\frac{4}{3}+\frac{1}{c}
< 0,
\end{align} 
while if $\epsilon(\nut_4)=-1$, then 
\begin{align}
\M{\Phi}
=
-\frac{4}{3}+\frac{1}{c} +2-2k +\sum_{i=5}^{\nnuts}\M{Z}(\nut_i)
\leq -\frac{4}{3}+\frac{1}{c}
< 0,
\end{align}
which leads to a contradiction.

\subsection*{Case: $\{+,b,a+b\},\{+,a,a+b\},\{-,a,b\}$}
In this case, 
\begin{align}
\M{\Phi}
=\sum_{i=4}^{\nnuts}\M{Z}(\nut_i)+2-2k
\leq 0,
\end{align}
so $\M{\Phi} = 0$.

\subsection*{Case: $\{-,b,a+b\},\{-,a,a+b\},\{-,b-a,b\}$}
In this case, 
\begin{align}
\M{\Phi}
=
\frac{1}{a}+\frac{2}{b}+\frac{1}{b-a} +2-2k +\sum_{i=4}^{\nnuts}\M{Z}(\nut_i)
\leq \frac{1}{a}+\frac{2}{b}+\frac{1}{b-a}-4
< 0,
\end{align}
which is impossible.

\subsection*{Case: $\{-,b,a+b\},\{-,a,a+b\},\{-,b,2b-a\}$}
In this case, 
\begin{align}
\M{\Phi}
= \frac{1}{a}+\frac{2}{b}+\frac{1}{2b-a}
+2-2k+\sum_{i=4}^{\nnuts}\M{Z}(\nut_i)
\leq \frac{1}{a}+\frac{2}{b}+\frac{1}{2b-a}-4
<0,
\end{align}
which is a contradiction.

\subsection*{Case: $\{-,b,a+b\},\{-,a,a+b\},\{+,a,b\}$}
In this case, 
\begin{align}
\M{\Phi}
= \frac{2}{b} +2-2k +\sum_{i=4}^{\nnuts}\M{Z}(\nut_i)
\leq \frac{2}{b}-2
<0,
\end{align}
which again is a contradiction.
\end{proof}

\subsection{Proof of Theorem \ref{thm:main-intro}.} 
First assume that $(\cM, g_{ab})$ is an ALF $S^1$-instanton on $S^4 \setminus S^1$. Then, by Lemma \ref{lem:Kerr-eq}, we have that equality holds in \eqref{eq:magdim} for both signs. By Corollary \ref{cor:magdim}, it follows that $(\cM, g_{ab})$ is Petrov type D and hence toric \cite{2013arXiv1302.6975A}. The classification of \cite{Biquard:Gauduchon} now applies to show that $(\cM, g_{ab})$ is in the Kerr family. Similarly, if $(\cM, g_{ab})$ is an ALF $S^1$-instanton on $\CP^2 \setminus \{\text{\rm pt.}\}$, we have by Lemma \ref{lem:Tb-eq} that it is Petrov type D and by a similar argument toric and hence by \cite{Biquard:Gauduchon}, $(\cM, g_{ab})$ is in the Taub-bolt family. Finally, Lemma \ref{lem:CT-eq} gives point \ref{point:3} of Theorem \ref{thm:main-intro}, in particular an AF $S^1$-instanton on $\CP^2\setminus S^1$ is one-sided type D and therefore Hermitian. 

\section{Concluding remarks} 

In this paper we have applied the divergence identity derived in Section \ref{sec:divident} to the case of ALF $S^1$-instantons, restricting to those topologies where there are known Hermitian examples. However, it is worth emphasizing that the method can be applied more broadly. The $G$-signature  can, together with the approach of Jang, be used to classify $S^1$-actions on gravitational instantons. This allows one to apply the divergence identity to general $S^1$-instantons. These applications will be considered in forthcoming work. 

\subsection*{Acknowledgements}
W.S. acknowledges funding by the Austrian Science Fund (FWF) [Grant DOI 10.55776/P35078]. We are grateful to Claude LeBrun for helpful remarks, and to Olivier Biquard for several helpful discussions and for sharing an early version of \cite{Biquard:Gauduchon}. We thank the anonymous referees for valuable suggestions to improve the article.

\appendix 
\section{Proof of Proposition \ref{prop:xi-id}} \label{sec:xi-id-proof} 

First we recall the form of partly contracted volume forms,
\begin{align}
\epsilon_{abcd}\epsilon^{d}{}_{efh}
={}&g_{ah} g_{bf} g_{ce}
 -  g_{af} g_{bh} g_{ce}  -  g_{ah} g_{be} g_{cf}\nonumber \\
 & + g_{ae} g_{bh} g_{cf}
 + g_{af} g_{be} g_{ch}
 -  g_{ae} g_{bf} g_{ch}, \label{epseps1}\\
\epsilon_{abcd}\epsilon^{cd}{}_{fh}={}&-2 g_{ah} g_{bf}
 + 2 g_{af} g_{bh}.\label{epseps2}
\end{align}
By \eqref{epseps1}, the SD/ASD field $\tfrac{1}{2} \epsilon_{ab}{}^{cd}  \PM{\Fcal}_{cd} = \pm \PM{\Fcal}_{ab} $ satisfies
\begin{align} \label{eq:FcalFcal1contr}
\PM{\Fcal}_{ab} \PM{\Fcal}^{b}{}_{c} 
={}& \tfrac{1}{4} \epsilon_{ab}{}^{ef}\epsilon^{b}{}_{c}{}^{hi} \PM{\Fcal}_{ef} \PM{\Fcal}^{hi} \nonumber \\
={}& -\tfrac{1}{4}  \PM{\Fcal}^2 g_{ac}.
\end{align}
By \eqref{eq:FcalFcal1contr},
\begin{align}
\PM{\Fcal}_{ab} \PM{\sigma}^{b}
={}& 2 \PM{\Fcal}_{ab} \PM{\Fcal}^{b}{}_{c} \xi^{c} \nonumber \\
={}& - \tfrac{1}{2} \PM{\Fcal}^2\xi_{a}, 
\end{align}
which is \eqref{eq:Fsigma} and similarly, \eqref{eq:sigmasq} follows via
\begin{align}
\PM{\sigma}_{a} \PM{\sigma}^{a}
={}& 4 \PM{\Fcal}_{ab} \PM{\Fcal}^{a}{}_{c} \xi^{b} \xi^{c} \nonumber \\
={}& \PM{\Fcal}^2 \lambda.
\end{align}
By Ricci flatness, we have $\nabla_{b} F_{ac} = \nabla_{b}\nabla_{a}\xi_{c} = - W_{acbd} \xi^{d}$, see \cite[Proposition~8.1.3]{PetersenRiemannianGeometry3rd}, and hence \eqref{eq:nablaFcal} follows. This also leads to \eqref{eq:nablaFsq}\ via
\begin{align}
\nabla_{c}\PM{\Fcal}^2
={}&2 \PM{\Fcal}^{ab}\nabla_{c}\PM{\Fcal}_{ab} \nonumber \\
={}&- 2 \PM{\Fcal}^{ab}\PM{\Wcal}_{abcd} \xi^{d}.
\end{align}
To prove \eqref{eq:dsigmapm}, it is convenient to start with its dual,
\begin{align}
\epsilon_{fh}{}^{ab} \nabla_{a}\PM{\sigma}_{b} 
={}& 2\epsilon_{fh}{}^{ab} \nabla_{a}\left( \PM{\Fcal}_{bc}\xi^c \right) \nonumber \\
={}& \underbrace{ -2 \epsilon_{fh}{}^{ab}\PM{\Wcal}_{bcad}\xi^c \xi^d}_{=0} + 2\epsilon_{fh}{}^{ab}\PM{\Fcal}_{bc} F_a{}^c \nonumber \\
={}&  \underbrace{ 2 \epsilon_{fh}{}^{ab}  F_{bc} F_a{}^c}_{=0}  \pm \epsilon_{fh}{}^{ab}  \epsilon_{bc}{}^{ij} F_{ij}  F_a{}^c \nonumber \\
={}& 0.
\end{align}
The last step used \eqref{epseps1} and dualizing the equation shows \eqref{eq:dsigmapm}. The divergence, \eqref{eq:divsigma}, follows from
\begin{align}
\nabla_{a}\PM{\sigma}^{a}
={}& 2 \nabla_{a} \left( \PM{\Fcal}^{ab}\xi_{b}\right)\nonumber \\
={}& \underbrace{-2\PM{\Wcal}^{ab}{}_{ad}\xi_{b}\xi^d}_{=0} +  2\PM{\Fcal}^{ab} F_{ab} 
\nonumber \\
={}& \PM{\Fcal}^{ab} \left( \PM{\Fcal}_{ab} + \MP{\Fcal}_{ab} \right) \nonumber \\
={}&\PM{\Fcal}^2.
\end{align}
Finally, to prove \eqref{eq:laplaceFsq}, apply a derivative to \eqref{eq:nablaFsq},
\begin{align}
\nabla^{c}\nabla_{c}\PM{\Fcal}^2
={}&-2 \nabla^{c} \left( \PM{\Wcal}_{abcd} \xi^{d}\PM{\Fcal}^{ab} \right) \nonumber \\
={}& -2 \big( \underbrace{(\nabla^{c} \PM{\Wcal}_{abcd})}_{=0} \xi^{d}\PM{\Fcal}^{ab} +\PM{\Wcal}_{abcd} \PM{\Fcal}^{ab} F^{cd}  - \underbrace{\PM{\Wcal}_{abcd}  \PM{\Wcal}^{abc}{}_{e}}_{= \tfrac{1}{4} g_{de}\PM{\Wcal}^2} \xi^{d} \xi^e \big) \nonumber \\
={}& - \PM{\Wcal}_{abcd} \PM{\Fcal}^{ab}  \PM{\Fcal}^{cd} - \underbrace{\PM{\Wcal}_{abcd} \PM{\Fcal}^{ab}  \MP{\Fcal}^{cd}}_{=0} +\tfrac{1}{2} \lambda \PM{\Wcal}^2.
\end{align}
In the second line, we used the Bianchi identity for the first term and a decomposition of contracted Weyl tensors, analogous to \eqref{eq:FcalFcal1contr}, for the last term. The second term in the last line vanishes because it is SD and ASD in the index pair $cd$.

\section{Asymptotics at infinity} \label{sec:asympinf}

In this appendix, we prove a couple of technical results on the asymptotic geometry of ALF $S^1$-spaces.  

\subsection{Asymptotics of Killing vector fields} \label{sec:asympKill}
We will prove that a bounded Killing field with closed orbits for a general ALF metric $g$ must be asymptotic to a multiple of the Killing field $T$ of the background metric $\gring$. For a similar result, see \cite[Proposition~2.1]{Beig-Chrusciel-96}. We first formulate a lemma which is \cite[Lemma,~Appendix~A]{Chrusciel-88}, adapted to our setting.

\begin{lemma} \label{lemma-piotr-limit}
Let $f$ be a function on $ (A,\infty) \times L$ with $|df| = O(r^{-1-\alpha})$, $\alpha >0$. Then there is a constant $f_\infty$ so that $f = f_\infty + O(r^{-\alpha})$. 
\end{lemma}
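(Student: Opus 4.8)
The plan is to combine integration of $df$ along radial rays, which yields pointwise convergence with the correct rate, with integration along curves in $L$ to show that the limit is independent of the point of $L$; the strict positivity $\alpha>0$ enters crucially through the metric scaling of the angular directions. First I would fix $x\in L$ and integrate along the ray $s\mapsto(s,x)$. Since $\partial_r$ is a unit vector for $\mathring{g}$, the Cauchy--Schwarz estimate $|df(\partial_r)|\le|df|\,|\partial_r|_{\mathring{g}}\le C r^{-1-\alpha}$ shows that the radial derivative is integrable on $(A,\infty)$. Hence the limit $f_\infty(x):=\lim_{r\to\infty}f(r,x)$ exists, and for all $r\ge A$,
\begin{align}
|f(r,x)-f_\infty(x)| \le \int_r^\infty |df(\partial_s)|\,ds \le C\int_r^\infty s^{-1-\alpha}\,ds = \frac{C}{\alpha}\,r^{-\alpha},
\end{align}
with a constant uniform in $x$ because $L$ is compact. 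This already gives $f=f_\infty+O(r^{-\alpha})$ once $f_\infty$ is shown to be constant.

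The heart of the argument is to prove that $f_\infty$ does not depend on $x$. Here I would use the splitting $TL=\langle T\rangle\oplus\ker\eta$, valid since $\eta(T)=1$. Extending vector fields on $L$ to be $r$-independent, one reads off from $\mathring{g}=dr^2+r^2\gamma+\eta^2$ that $|T|_{\mathring{g}}=1$ (because $\gamma(T,T)=0$ and $\eta(T)=1$), whereas a section $W$ of $\ker\eta$ satisfies $|W|_{\mathring{g}}=r|W|_\gamma$. Consequently $|df(T)|\le Cr^{-1-\alpha}$ and $|df(W)|\le|df|\,|W|_{\mathring{g}}\le Cr^{-\alpha}|W|_\gamma$, so \emph{both} tend to $0$ as $r\to\infty$. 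Given two points $x,y\in L$, I would join them by a smooth path $\sigma\colon[0,1]\to L$ in the connected manifold $L$, decompose $\dot\sigma=aT+W$ with $W\in\ker\eta$, and estimate
\begin{align}
|f(r,y)-f(r,x)| \le \int_0^1 |df(\dot\sigma(t))|\,dt \le \int_0^1\bigl(|a|\,|df(T)|+|df(W)|\bigr)\,dt,
\end{align}
which is $O(r^{-\alpha})$ since $|a|$ and $|W|_\gamma$ are bounded along the fixed compact curve $\sigma$. Letting $r\to\infty$ yields $f_\infty(y)=f_\infty(x)$, so $f_\infty$ is the asserted constant and the rate from the first display completes the proof.

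The only genuine subtlety---and precisely the place where $\alpha>0$ is indispensable---is this angular estimate: a $\mathring{g}$-unit vector tangent to the $\gamma$-factor has $\gamma$-length $r^{-1}$, so the hypothesis $|df|=O(r^{-1-\alpha})$ controls angular coordinate derivatives only to order $O(r^{-\alpha})$. For $\alpha=0$ the angular derivatives would be merely $O(1)$ and one could not conclude constancy of $f_\infty$, while for $\alpha>0$ they decay and close the argument. I would also note in passing that, up to multiplicative constants, $|df|_g$ and $|df|_{\mathring{g}}$ are comparable in the asymptotic region by \eqref{eq:gringh}, so the statement is insensitive to which of the two metrics is used to measure $df$.
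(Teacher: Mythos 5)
The paper gives no proof of this lemma, citing instead \cite[Appendix~A]{Chrusciel-88}; your argument is correct and is essentially the standard one from that reference, adapted cleanly to the fibered end. The two steps — radial integrability of $|df(\partial_r)|$ giving pointwise limits with the rate $O(r^{-\alpha})$, and the metric scaling $|W|_{\mathring g}=r|W|_\gamma$ on $\ker\eta$ (together with $|T|_{\mathring g}=1$) forcing the limit to be constant on the connected manifold $L$ — are exactly what is needed, and your observation that $\alpha>0$ is what makes the angular derivatives decay is the right point to emphasize.
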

\begin{prop} \label{prop-asymptotics-killing}
Let $(\cM, g_{ab})$ be an ALF space. Assume $\xi^a$ is a Killing field with uniformly bounded norm, all of whose orbits are closed. 
Then  
\begin{align} \label{eq:xiO*}
\xi^a = cT^a + O^*(r^{-1})
\end{align} 
for some constant $c$ and $T$ has closed orbits in $L$. In particular, 
\begin{align} \label{eq:nablaxiO*}
\nabla\xi = O^*(r^{-2}).
\end{align}  
\end{prop}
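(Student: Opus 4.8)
The plan is to first establish decay of the covariant derivative $F_{ab} = \nabla_a\xi_b$ from the curvature decay and the bounded norm, then to integrate to obtain the leading asymptotics of $\xi$, and finally to identify the limit with a multiple of $T$. Since $\xi$ is Killing, $F_{ab}=\nabla_a\xi_b$ is antisymmetric and satisfies the Kostant identity $\nabla_c F_{ab} = -R_{abc}{}^d\xi_d$. By Remark~\ref{rem:cone} the curvature obeys $\Riem = O^*(r^{-3})$, and by hypothesis $|\xi|$ is bounded; interior elliptic estimates applied on unit balls in the end (which has bounded geometry, the circle fibres having bounded length bounded away from zero) show that $\xi$ and all its covariant derivatives are bounded there. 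Hence $|\nabla F| \leq |\Riem|\,|\xi| = O(r^{-3})$. Writing out $F$ in the asymptotically Cartesian coordinates of Remark~\ref{rem:cone}, where $g = \delta + \d t^2 + O^*(r^{-1})$ and the model Christoffel symbols vanish, the coordinate gradient of each component of $F$ is $O(r^{-3})$, so Lemma~\ref{lemma-piotr-limit} (with $\alpha = 2$) gives a limit $F_\infty$ with $F = F_\infty + O(r^{-2})$.

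\emph{Vanishing of $F_\infty$ and the bound $\nabla\xi = O^*(r^{-2})$.} I would then argue $F_\infty = 0$: if any entry of the constant antisymmetric matrix $F_\infty$ had an index in one of the non-compact $\Reals^3$ directions, then integrating $\nabla\xi \approx F_\infty$ along a ray to infinity would force $|\xi|$ to grow linearly, contradicting boundedness; since the remaining circle direction is one-dimensional it cannot support a nonzero antisymmetric form, so $F_\infty = 0$. Thus $F = O(r^{-2})$, and feeding this back into $\nabla^k(\nabla F) = \nabla^k(\Riem * \xi)$ and using $\Riem = O^*(r^{-3})$ together with the now-improving bounds $\nabla^j\xi = \nabla^{j-1}F = O(r^{-1-j})$ for $j\geq 1$ yields, by induction on $k$, that $\nabla^{k}F = O(r^{-2-k})$ for all $k$, i.e. $\nabla\xi = O^*(r^{-2})$. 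This already proves \eqref{eq:nablaxiO*}.

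\emph{Leading asymptotics and identification with $cT$.} Applying Lemma~\ref{lemma-piotr-limit} once more, now to the components of $\xi$ (whose coordinate gradients are $O(r^{-2})$ by the previous step), gives $\xi = \xi_\infty + O(r^{-1})$. Passing to the limit in $\Lie_\xi g = 0$ using $g \to \gring$ shows that $\xi_\infty$ is a Killing field of the model metric $\gring = \d r^2 + r^2\gamma + \eta^2$ with bounded norm. Since $\gamma$ has constant curvature $+1$, the Killing fields of $\gring$ induced by isometries of the base have norm growing like $r$, so boundedness forces $\xi_\infty = cT$ for a constant $c$. Using $\nabla T = O^*(r^{-2})$ (as $T^\flat = \eta + O^*(r^{-1})$ and $\d\eta = \d\alpha = O^*(r^{-2})$), the difference $\psi = \xi - cT$ satisfies $\psi = O(r^{-1})$ and $\nabla^k\psi = \nabla^{k-1}F - c\,\nabla^k T = O(r^{-1-k})$ for $k\geq 1$, i.e. $\xi - cT = O^*(r^{-1})$, which is \eqref{eq:xiO*}. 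Finally, since the flow of $\xi$ is periodic and $\xi = cT + O^*(r^{-1})$, passing to the limit shows the corresponding flow of $cT$, hence of $T$, is periodic, so $T$ has closed orbits in $L$.

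\emph{Main obstacle.} The routine parts are the two applications of Lemma~\ref{lemma-piotr-limit} and the inductive bootstrap to $O^*$-decay. The delicate points are the vanishing of $F_\infty$ and, above all, the identification of the asymptotic limit $\xi_\infty$ with a multiple of $T$: this requires controlling the passage to the limit in the Killing equation globally over the link $L$ rather than merely in a local Cartesian chart, classifying the bounded Killing fields of the constant-curvature model so as to exclude the base-rotation directions, and transferring the closed-orbit property from $\xi$ to $T$.
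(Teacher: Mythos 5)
Your overall scheme -- the Kostant identity $\nabla^2\xi=\Riem\,\xi$, curvature decay, repeated use of Lemma~\ref{lemma-piotr-limit}, killing the constant part of $\nabla\xi$ by integrating along rays in the cone, and bootstrapping to $O^*$-decay -- is the same as the paper's, and those parts are essentially sound. One minor imprecision: on the first pass the coordinate gradient of the components of $F$ is only $O(r^{-2})$, not $O(r^{-3})$, since $\partial F=\nabla F-\Gamma * F$ with $\Gamma=O(r^{-2})$ and $F$ merely bounded; so the first application of Lemma~\ref{lemma-piotr-limit} yields $F=F_\infty+O(r^{-1})$, and the rate $O(r^{-2})$ is only recovered after $F_\infty=0$ is established, exactly as in the paper's two-pass bootstrap. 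This is self-correcting and not a real problem.

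The genuine gap is in the identification of $\xi_\infty$ with $cT$. You argue that a bounded Killing field of the model $\gring=dr^2+r^2\gamma+\eta^2$ must be a multiple of $T$ because the Killing fields induced by isometries of the base have norm growing like $r$. That classification is incomplete: in the AF case $L=S^1\times S^2$ the model is flat $(\Reals^3\setminus B)\times S^1$, and the translations $\partial_{x^i}$ of the $\Reals^3$ factor are bounded Killing fields of $\gring$ that are neither multiples of $T$ nor $O^*(r^{-1})$. Your argument makes no use of the hypothesis that the orbits of $\xi$ are closed at this stage, and without that hypothesis the conclusion is simply false (on flat $\Reals^3\times S^1$ the field $\partial_{x^1}$ is a bounded Killing field not asymptotic to $cT$; it is excluded only because its orbits are lines, not circles). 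The paper handles exactly this point by writing $\xi=cT+\eta+O(r^{-1})$ with a residual $T$-orthogonal, $r$-independent part $\eta=Z\partial_r+\zeta$, and then invoking closedness of orbits: the closed orbits of $\xi$ have uniformly bounded length since $\xi$ is bounded, whereas a nonzero $\eta$ would produce arbitrarily long orbits through points at large $r$. You need to insert an argument of this kind -- i.e.\ use the closed-orbit hypothesis to exclude the translational/radial component of the asymptotic limit -- after which the remainder of your proof, including the transfer of the closed-orbit property to $T$ at the end, goes through.
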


\begin{proof}
Since $\Lie_\xi g = 0$ and $\xi$ is bounded we have 
\begin{equation} \label{nabla-2-xi=riem-xi}
\nabla^2 \xi = \Riem \xi = O(r^{-3}),
\end{equation}
see \cite[Proposition~8.1.3]{PetersenRiemannianGeometry3rd}. Using the Kato inequality we find that 
\begin{align} 
|d |\nabla \xi| | \leq |\nabla^2 \xi| = |\Riem \xi| = O(r^{-3}).
\end{align} 
By Lemma~\ref{lemma-piotr-limit} there is a constant $w_\infty$ so that
\begin{align} 
|\nabla \xi| = w_\infty + O(r^{-2}).
\end{align} 
and in particular $|\nabla \xi|$ is bounded.

We next work in local coordinates as discussed in Remark \ref{rem:cone} on 
\begin{align}
C \times (-t_0,t_0) = (A, \infty) \times U \times (-t_0,t_0)
\subset \Reals^3 \times (-t_0,t_0) 
\end{align} 
for $U$ an open subset of $S^2$. We denote the standard coordinates on $\Reals^3$ by $(x_1,x_2,x_3)$ and the coordinate on $(-t_0,t_0)$ by $x_0=t$. In these coordinates we have $g = \delta + dt^2 + O(r^{-1})$. Then 
\begin{equation} \label{nabla-xi-local}
\nabla \xi = d \xi + \Gamma \xi = W + O(r^{-2})
\end{equation}
where $\Gamma = O(r^{-2})$ represents the Christoffel symbols for $g$, and $W$ is a $4\times 4$-matrix of functions representing $d\xi$. From \eqref{nabla-2-xi=riem-xi} we find that
\begin{equation} \label{dW-equation}
dW = \nabla^2 \xi - \Gamma \nabla \xi = \Riem \xi - \Gamma \nabla \xi.
\end{equation}
Since $\nabla \xi$ is bounded we find that $|dW| = O(r^{-2})$ and from Lemma~\ref{lemma-piotr-limit} we conclude that $W = W^\infty + O(r^{-1})$ for a constant $4\times 4$-matrix $W^\infty$. We thus get 
\begin{align}
d\xi = W^\infty + O(r^{-1}). 
\end{align}
Since $\xi$ is a Killing vector field for $g$ we have
\begin{align}
0 = g(\nabla_{\partial_i} \xi, \partial_j) + g(\partial_i, \nabla_{\partial_j} \xi) 
= W^\infty_{ij} + W^\infty_{ji} + O(r^{-1})
\end{align} 
for $0\leq i,j \leq 3$. We conclude that $W^\infty$ is skew-symmetric.

Let the point $p_0 \in C$ be such that the points $p_0 + e_i$, $i=1,2,3$ are also in $C$. Then since $C$ is a cone the line segments $R(p_0 + s e_i)$, $0 \leq s \leq 1$, are also contained in $C$ for $R \geq 1$. Let $c_i^R$ be the curve 
\begin{align}
c_i^R(s) = (R(p_0 + s e_i),0 ) \subset C \times (-t_0,t_0), \qquad 0\leq s \leq 1.
\end{align} 
Then 
\begin{align} 
\xi(c_i^R(1)) - \xi(c_i^R(0)) 
={}& \int_0^1 d\xi(R e_i) \, ds \nonumber\\
={}& R\int_0^1 d\xi(e_i) \, ds \nonumber \\
={}& R\int_0^1 \left( W^\infty + O(r^{-1}) \right)(e_i) \, ds \nonumber \\
={}& R (W^\infty_{ij} e_j + O(R^{-1})) \nonumber \\
={}& R W^\infty_{ij} e_j + O(1).
\end{align} 
Since $\xi$ is bounded as $R \to \infty$ we find that $W^\infty_{ij} = 0$ for $i=1,2,3$. Since $W^\infty$ is skew-symmetric we conclude that $W^\infty = 0$. 

Inserting $W = W^\infty + O(r^{-1}) = O(r^{-1})$ into \eqref{nabla-xi-local} we get that $\nabla \xi = O(r^{-1})$, which we in turn insert into \eqref{dW-equation} to see that $|dW| = O(r^{-3})$. From Lemma~\ref{lemma-piotr-limit} we conclude that 
\begin{align}
W = W^\infty + O(r^{-2}) = O(r^{-2})
\end{align}
and 
\begin{align}
\nabla \xi = O(r^{-2}).
\end{align} 

We next write $\xi = V$ in coordinates, where $V$ is a vector of functions. From 
\begin{align} 
\nabla \xi = dV + \Gamma V  
\end{align} 
we see that $|dV| = O(r^{-2})$, so from Lemma~\ref{lemma-piotr-limit} we get $V = V^\infty + O(r^{-1})$ for a constant vector $V^\infty$. 

In the local coordinate system we have found that
\begin{align}\label{eq:nablaxiO} 
\xi ={}&  \sum_{i=0}^3 V^\infty_i e_i + O(r^{-1}) \nonumber \\
={}&  V^\infty_0 \partial_t + \sum_{i=1}^3 V^\infty_i e_i + O(r^{-1}) \nonumber \\
={}&  V^\infty_0 T + \sum_{i=1}^3 V^\infty_i e_i + O(r^{-1}).
\end{align} 
By continuity the coefficents $V^\infty_0 = c$ must be independent of the choice of local coordinates. Patching together with a partition of unity we find
\begin{align} 
\xi = cT + \eta + O(r^{-1})
\end{align} 
where $\eta = Z \partial_r + \zeta$ is a vector field on $\Mring = (A,\infty)\times L$ with $Z$ a constant and $\zeta$ a ($r$-independent) vector field on $L$, and $\gring(T,\eta) = 0$. 

By assumption, the orbits of $\xi$ are all closed. Since $\xi$ is bounded the orbits have uniformly bounded lengths. If now $\eta\neq 0$ it is easy to see that $\xi$ can have arbitrarily long orbits by choosing a starting point with sufficiently large $r$. We conclude that $\eta=0$ and $\xi = cT + O(r^{-1})$. 
Using the above estimates together with $\Riem = O^*(r^{-3})$ and \eqref{nabla-2-xi=riem-xi} we can now conclude that in fact \eqref{eq:xiO*} and \eqref{eq:nablaxiO*} hold. 

It remains to show that $T$ has closed orbits on $L$. To see this, let $p \in L$ be a point whose $T$-orbit is not closed, and consider a sequence of points $p_i = (r_i, p) \in \mathring{\cM}$, for $r_i \to \infty$. The $\xi$-orbits of $p_i$ are closed with uniformly bounded lengths $\ell_i$. Passing to a subsequence, we may assume that $\ell_i$ tends to a limit $\ell_\infty$. Flowing $p$ along $T$ a time $\ell_\infty$ yields a point $q$ with $d(p, q) > 0$. This contradicts \eqref{eq:xiO*}. 
\end{proof}

\subsection{Asymptotics of solutions to Poisson equations.} \label{sec:asympPoi}  
Let $(\cM, g_{ab})$ be an ALF $S^1$ instanton. In this section, we shall analyze the asymptotics at infinity for solutions of the Poisson equation of the form 
\begin{align} 
\Delta u = O^*(r^{-4}), \quad \text{with $u = O^*(r^{-1})$}.
\end{align} 
First we prove a lemma for almost invariant functions on $(\Mring,\gring)$.  

\begin{lemma} \label{lemma-poisson-gring} 
Let $(\mathring{\cM}, \mathring{g}_{ab})$, $T$ be as in Definition \ref{def:BG}. Suppose $u \in C^2_{loc}(\cM)$, $u = O^*(r^{-1})$ satisfies $Tu = O^*(r^{-3})$ and 
\begin{align}\label{eq:Pois*} 
\Delta^{\gring}u = O^*(r^{-4}).
\end{align}
Then there are constants $b, \delta \in \Reals$ so that 
\begin{align} 
u = br^{-1} + O(r^{-1-\delta}).
\end{align} 
\end{lemma}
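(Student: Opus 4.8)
The plan is to reduce the four-dimensional Poisson problem to a radial ODE analysis on a three-dimensional cone, exploiting the near-product structure of $\gring = dr^2 + r^2\gamma + \eta^2$. Assuming first that the $T$-orbits are closed (the case relevant to the applications, by Proposition~\ref{prop-asymptotics-killing}), I would Fourier-decompose $u$ along the fibre $S^1$ generated by $T$, writing $u = \sum_n u_n$ with $T u_n = i n\, u_n$ after normalising $T$ to generate the $2\pi$-periodic action. The hypothesis $Tu = O^*(r^{-3})$ then forces $u_n = O^*(r^{-3})$ for every $n \neq 0$, so all nonzero modes are already within the target error term, and it suffices to analyse the $T$-invariant zero mode $\bar u = u_0$. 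Since $T$ is a Killing field for $\gring$ (Remark~\ref{rem:T}), averaging commutes with the Laplacian, so $\bar u$ satisfies $\Delta^{\gring}\bar u = O^*(r^{-4})$ and $\bar u = O^*(r^{-1})$. In the general (non-closing) case the same reduction goes through after averaging over the closure of the orbits.

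Next I would record a short computation showing that, acting on $T$-invariant functions, $\Delta^{\gring}$ reduces \emph{exactly} to the Laplacian $\Delta_3$ of the cone metric $dr^2 + r^2\gamma$ over the base orbifold $(O,\gamma)$. Writing $\eta = dt + \alpha$ as in Remark~\ref{rem:cone} and using that $\sqrt{\det \gring} = r^2 \sqrt{\det\gamma}$ while the horizontal block of the inverse metric is $r^{-2}\gamma^{ij}$, the connection terms $\alpha$ cancel and one gets $\Delta^{\gring}\bar u = r^{-2}\partial_r(r^2 \partial_r \bar u) + r^{-2}\Delta_\gamma \bar u$. Because $\gamma$ has constant curvature $+1$, this cone is flat away from its tip, and $r^{-1}$ is genuinely harmonic. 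Expanding $\bar u$ in eigenfunctions of $-\Delta_\gamma$ on $(O,\gamma)$ with eigenvalues $0 = \mu_0 < \mu_1 \le \mu_2 \le \cdots$, the mode-$k$ radial operator $a'' + 2r^{-1}a' - \mu_k r^{-2} a$ has indicial roots $\alpha_k^{\pm} = \tfrac12(-1 \pm \sqrt{1+4\mu_k})$.

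For the zero mode $a_0(r)$, the average of $\bar u$ over the sphere $\{r\}\times O$, the equation is $(r^2 a_0')' = r^2 f_0 = O(r^{-2})$, where $f_0$ is the $O$-average of $\Delta^{\gring}\bar u$. Integrating from $r$ to $\infty$ — the relevant integrals converging thanks to the $O^*(r^{-1})$ a priori decay — gives $r^2 a_0'(r) = b' + O(r^{-1})$ for a constant $b'$, and a second integration using $a_0 \to 0$ yields $a_0(r) = b\, r^{-1} + O(r^{-2})$. This is the claimed leading term, with $b$ the single real constant. It remains to show that the orthogonal part $w = \bar u - a_0$, which has vanishing spherical average and solves $\Delta_3 w = O^*(r^{-4})$, decays strictly faster than $r^{-1}$. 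Here the spectral gap $\mu_1 > 0$ of $(O,\gamma)$ is the decisive input: for the round sphere and its orbifold quotients $\mu_1 = 2$, so $\alpha_1^- = -2$, and every nonconstant mode, whether homogeneous or forced by the $O(r^{-4})$ source, contributes at most $O(r^{-2}\log r)$.

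I would establish $w = O(r^{-1-\delta})$ rigorously by a weighted ($L^2$-Agmon) energy estimate on dyadic annuli of the cone: testing the equation against $r^{2\sigma}w$, controlling the horizontal energy through the Poincar\'e inequality $\int_O |\nabla_\gamma w|^2\,d\mu^\gamma \ge \mu_1 \int_O w^2\,d\mu^\gamma$, and choosing the weight $\sigma$ strictly between the relevant indicial roots; the a priori $O^*(r^{-1})$ bounds then upgrade the resulting $L^2(O)$-decay to a pointwise bound by interpolation and elliptic estimates. Combining the three contributions gives $u = b\,r^{-1} + O(r^{-1-\delta})$ for any $\delta < 1$. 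I expect this final step — obtaining the \emph{uniform} faster decay of the infinitely many nonradial modes directly from the spectral gap, rather than summing mode by mode — to be the principal technical obstacle, with the fibre averaging of the first step being the one structural point requiring care.
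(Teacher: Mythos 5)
Your overall strategy---separating variables between $r$ and the link, solving the radial ODE for the low modes, and proving a uniform faster decay for the infinitely many remaining modes---is the same as the paper's, but you execute the reduction differently, and the difference is substantive. The paper never decomposes along the fibre: it writes $r^2\Delta^{\sigma_r}u = \Delta^{\sigma_1}u + (r^2-1)T^2u$, absorbs the fibre term into the source using $T^2u = O^*(r^{-4})$, and then runs the mode analysis for the operator $D = \sqrt{1-4\Delta^{\sigma_1}}$ on the full three-manifold $L$, controlling the high modes by operator-norm bounds on spectral projections of $D$ rather than by your Agmon-type weighted energy estimate; both uniformity devices are sound. Your route---average over the fibre first, observe that $\Delta^{\gring}$ acting on $T$-invariant functions is exactly the cone Laplacian over $(O,\gamma)$, and invoke the spectral gap $\mu_1 \ge 2$ of the constant-curvature base---is cleaner where it applies and in fact yields a sharper exponent ($\delta$ close to $1$), whereas the paper's argument only gives an unspecified $\delta>0$, since $-\Delta^{\sigma_1}$ on $L$ can a priori have eigenvalues in $(0,2)$ coming from non-invariant modes.

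The gap is in your first step. For closed $T$-orbits the estimate $|u-\bar u|\le \ell\,\sup|Tu| = O(r^{-3})$ is fine. But Definition \ref{def:BG} does not require the orbits of $T$ to close (Remark \ref{rem:T}), and your claim that "the same reduction goes through after averaging over the closure of the orbits" does not hold as stated: if the orbit closures are $2$-tori, the Fourier coefficients of $u$ restricted to such a torus satisfy only $|(ma+nb)\,u_{mn}| \le \sup|Tu|$, and for Liouville rotation numbers $a/b$ the divisors $ma+nb$ are too small for this to yield $|u-\bar u| = O(r^{-3})$, or indeed anything beyond the a priori $O(r^{-1})$. So your proof covers the closed-orbit case---which is the one actually used downstream via Propositions \ref{prop-asymptotics-killing} and \ref{prop:poisson-g}---but not the lemma in its stated generality. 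The paper's formulation sidesteps this precisely because it never compares $u$ with a fibre average; only the pointwise bound $T^2u = O^*(r^{-4})$ enters. To repair your argument, either restrict the hypotheses to closed orbits, or adopt the paper's device of moving $(r^2-1)T^2u$ to the right-hand side and working with the link Laplacian $\Delta^{\sigma_1}$ directly.
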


\begin{proof}
We have $\gring = dr^2 + \sigma_r$ where $\sigma_r = r^2\gamma + \eta^2$. Let $\sigma_1 = \gamma + \eta^2$.
Then
\begin{align}
\Delta^{\gring} u 
= r^{-2} \partial_r (r^2 \partial_r u) + \Delta^{\sigma_r} u. 
\end{align}
Since $T$ is a Killing vector field for $\gring$ we get 
\begin{align}
r^2 \Delta^{\sigma_r} u = \Delta^{\sigma_1}u + (r^2-1) T^2 u.
\end{align}
We thus have
\begin{align}
r^{-2} \partial_r (r^2 \partial_r u) + r^{-2} \Delta^{\sigma_1} u
+r^{-2} (r^2-1) T^2 u
= O(r^{-4})
\end{align}
or
\begin{align}
r^2 \partial_r^2 u + 2r \partial_r u + \Delta^{\sigma_1} u + (r^2-1) T^2 u
= O(r^{-2}).
\end{align}
By assumption $Tu = O^*(r^{-3})$, and hence $T^2u = O^*(r^{-4})$ and $(r^2-1) T^2 u = O^*(r^{-2})$. This leads to an equation of the form 
\begin{align} \label{eq:ueq}
r^2 \partial_r^2 u + 2r \partial_r u + \Delta^{\sigma_1} u = f 
\end{align}
for some $f$ satisfying
\begin{align}
f = O^*(r^{-2}).
\end{align}

Let $D = \sqrt{1 - 4\Delta^{\sigma_1}}$ as a first order pseudodifferential operator. Then the equation factorizes as
\begin{align}
r^{\frac{1}{2}(1+D)} \partial_r \left( 
r^{1-D} \partial_r \left( r^{\frac{1}{2}(1+D)} u \right)
\right) = f,
\end{align}
or
\begin{align}
\partial_r \left( 
r^{1-D} \partial_r \left( r^{\frac{1}{2}(1+D)} u \right)
\right) = r^{-\frac{1}{2}(1+D)} f.
\end{align}

Since $D \geq 1$ we have
\begin{align}
\left\| r^{-\frac{1}{2} (D-1) } \right\|_{\operatorname{Op}(H_s \to H_s)}
\leq 1
\end{align}
for $r \geq 1$. Since $u = O^*(r^{-1})$ we get
\begin{align}
\left\| 
r^{-\frac{1}{2} (D-1) } \partial_r u 
\right \|_{L^\infty}
&\leq 
C \left\| 
r^{-\frac{1}{2} (D-1) } \partial_r u 
\right \|_{H_s} \nonumber \\ 
&\leq 
C \left\| 
\partial_r u 
\right \|_{H_s} \nonumber \\
&= O(r^{-2}) 
\end{align}
and
\begin{align}
\left\| 
r^{-\frac{1}{2} (D-1) } \left( \frac{1}{2}(1+D) u \right)
\right \|_{L^\infty}
&\leq 
C \left\| 
r^{-\frac{1}{2} (D-1) } \left( \frac{1}{2}(1+D) u \right)
\right \|_{H_s} \nonumber \\ 
&\leq 
C \left\| 
\left( \frac{1}{2}(1+D) u \right)
\right \|_{H_s} \nonumber \\ 
&\leq 
C \left\| u \right \|_{H_{s+1}} \nonumber \\
&= O(r^{-1}),
\end{align}
so we find that  
\begin{align} 
r^{1-D} \partial_r \left( r^{\frac{1}{2}(1+D)} u \right) 
&= r^{1 -\frac{1}{2} (D-1) } \partial_r u
+ r^{-\frac{1}{2} (D-1) } \left( \frac{1}{2}(1+D) u \right) \nonumber \\
&= r O(r^{-2}) + O(r^{-1}) \nonumber \\
&= O(r^{-1}).
\end{align} 
We may thus integrate from $r$ to $\infty$ to conclude that
\begin{align}
0 - r^{1-D} \partial_r \left( r^{\frac{1}{2}(1+D)} u \right)
= \int_r^\infty s^{-\frac{1}{2}(1+D)} f(s) \, ds
\end{align}
or
\begin{align}
\partial_r \left( r^{\frac{1}{2}(1+D)} u \right)
= - r^{D-1} \int_r^\infty s^{-\frac{1}{2}(1+D)} f(s) \, ds .
\end{align}

Let
\begin{align}
1 = \delta_0 < \delta_1 \leq \dots \leq \delta_k \leq 3
\end{align}
be the eigenvalues of $D$ with value less than or equal to $3$ and let $\phi_0, \dots, \phi_k$ be the corresponding orthonormal eigenfunctions. Set
\begin{align}
u_i(r) = \int_L u \phi_i \, d\mu^{\sigma_1},
\qquad
f_i(r) = \int_L f \phi_i \, d\mu^{\sigma_1}.
\end{align}
Then
\begin{align}
\partial_r \left( r^{\frac{1}{2}(1+\delta_i)} u_i(r) \right)
= - r^{\delta_i-1} \int_r^\infty s^{-\frac{1}{2}(1+\delta_i)} f_i(s) \, ds .
\end{align}

If $\delta_i < 3$ we integrate from $r$ to $q$, $r < q$, and get
\begin{align}
q^{\frac{1}{2}(1+\delta_i)} u_i (q)
-r^{\frac{1}{2}(1+\delta_i)} u_i (r)
= -
\int_r^q
t^{\delta_i-1} \int_t^\infty s^{-\frac{1}{2}(1+\delta_i)} f_i(s) \, ds \, dt
\end{align}
so
\begin{align}
\left| q^{\frac{1}{2}(1+\delta_i)} u_i (q)
-r^{\frac{1}{2}(1+\delta_i)} u_i (r) \right|
&\leq
\int_r^q
t^{\delta_i-1} \int_t^\infty s^{-\frac{1}{2}(1+\delta_i)} |f_i(s)|
\, ds \, dt \nonumber \\
&\leq
C\int_r^q
t^{\delta_i-1} \int_t^\infty s^{-\frac{1}{2}(1+\delta_i)} s^{-2}
\, ds \, dt \nonumber \\
&\leq
C\int_r^q
t^{\delta_i-1} t^{-\frac{1}{2}(3+\delta_i)} \, dt \nonumber \\
&=
C\int_r^q
t^{-\frac{1}{2}(5-\delta_i)} \, dt \nonumber \\
&\leq
C \left( r^{-\frac{1}{2}(3-\delta_i)}  - q^{-\frac{1}{2}(3-\delta_i)}\right).
\end{align}
and we conclude that the function
$q \mapsto q^{\frac{1}{2}(1+\beta)} u_i(q)$ is bounded and the limit
\begin{align}
\lim_{q \to \infty} q^{\frac{1}{2}(1+\delta_i)} u_i (q)
= A_i
\end{align}
exists. Taking the limit $q \to \infty$ of the above we get
\begin{align}
\left| A_i - r^{\frac{1}{2}(1+\delta_i)} u_i (r) \right|
\leq C r^{-\frac{1}{2}(3-\delta_i)}
\end{align}
or
\begin{align}
u_i (r) = r^{-\frac{1}{2}(1+\delta_i)} A_i + O(r^{-2}).
\end{align}

If $\delta_i = 3$ we integrate from $1$ to $r$ to get 
\begin{align}
r^2 u_i (r)
- u_i (1)
= -
\int_1^r t^{2} \int_t^\infty s^{-2} f_i(s) \, ds \, dt
\end{align}
so
\begin{align}
\left | r^2 u_i (r)
- u_i (1) \right|
&\leq
\int_1^r t^{2} \int_t^\infty s^{-2} | f_i(s) | \, ds \, dt \nonumber \\
&\leq
C\int_1^r t^{2} \int_t^\infty s^{-2} s^{-2} \, ds \, dt \nonumber \\
&\leq 
C\int_1^r t^{2} t^{-3} \, dt \nonumber \\
&= 
C \ln r
\end{align}
and
\begin{align}
u_i (r)
= r^{-2} u_i(1) + O(r^{-2} \ln r) 
= O(r^{-2} \ln r). 
\end{align}

Let $\delta$ be the smallest eigenvalue of $D$ which is $> 3$ and let $P$ be the spectral projection for $D$ corresponding to eigenvalues $\geq \delta$. Let $U = P u$ and $F = P f$. Then
\begin{align}
\partial_r \left( r^{\frac{1}{2}(1+D)} U \right)
= - r^{D-1} \int_r^\infty s^{-\frac{1}{2}(1+D)} F(s) \, ds 
\end{align}
which we integrate from $1$ to $r$ to get
\begin{align}
r^{\frac{1}{2}(1+D)} U(r) - U(1)
= - 
\int_1^r
t^{D-1} \int_t^\infty s^{-\frac{1}{2}(1+D)} F(s) \, ds 
\, dt
\end{align}
or
\begin{align}
U(r) 
&= r^{- \frac{1}{2}(1+D)}  U(1) 
-  r^{-\frac{1}{2}(1+D)} \int_1^r
t^{D-1} \int_t^\infty s^{-\frac{1}{2}(1+D)} F(s) \, ds 
\, dt \nonumber \\
&= K(r) + I(r).
\end{align}
Since $D - \delta \geq 0$ on the image of $P$ we have
\begin{align}
\left\| r^{-\frac{1}{2} (D - \delta) }
\right\|_{\operatorname{Op}(H_s \cap \operatorname{Im}P
\to H_s \cap \operatorname{Im}P)}
\leq 1
\end{align}
for $r \geq 1$. Here $\operatorname{Op}$ is the operator norm. For $K(r)$ we have 
\begin{align}
K(r) 
= 
r^{- \frac{1}{2}(1+D)}  U(1)  
=
r^{- \frac{1}{2}(\delta + 1)} r^{- \frac{1}{2}(D - \delta)}  U(1)
\end{align}
so
\begin{align}
\left\| K(r) \right\|_{L^\infty} 
&\leq 
C r^{- \frac{1}{2}(\delta + 1)}  
\left\| r^{- \frac{1}{2}(D - \delta)}  U(1) \right\|_{H_s} \nonumber \\
&\leq 
C r^{- \frac{1}{2}(\delta + 1)}
\left\| U(1) \right\|_{H_s} \nonumber \\
&\leq
C r^{- \frac{1}{2}(\delta + 1)}.
\end{align}
For $I(r)$ we have
\begin{align}
I(r)
&=
-r^{-\frac{1}{2}(1+D)} \int_1^r
t^{D-1} \int_t^\infty s^{-\frac{1}{2}(1+D)} F(s) \, ds \, dt \nonumber \\
&= 
- \int_1^r \int_t^\infty 
r^{-\frac{1}{2}(1+D)}
t^{D-1}
s^{-\frac{1}{2}(1+D)} F(s) \, ds \, dt \nonumber \\
&=
- \int_1^r \int_t^\infty 
r^{-\frac{1}{2}(D -\delta + \delta +1)}
t^{D-\delta + \delta-1}
s^{-\frac{1}{2}(D-\delta + \delta+1)} F(s) 
\, ds \, dt \nonumber \\
&=
- \int_1^r \int_t^\infty 
r^{-\frac{1}{2}(\delta +1)}
t^{\delta-1}
s^{-\frac{1}{2}(\delta+1)} F(s) 
r^{-\frac{1}{2}(D -\delta)}
t^{D-\delta}
s^{-\frac{1}{2}(D-\delta)} F(s) 
\, ds \, dt \nonumber \\
&=
- r^{-\frac{1}{2}(\delta +1)} \int_1^r 
t^{\delta-1} \int_t^\infty 
s^{-\frac{1}{2}(\delta+1)} 
\left( \frac{r}{t} \right)^{-\frac{1}{2}(D -\delta)}
\left( \frac{s}{t} \right)^{-\frac{1}{2}(D-\delta)} F(s) 
\, ds \, dt .
\end{align}
Since $s \geq t$ and $t \leq r$ we have
\begin{align}
\left\| \left(\frac{s}{t} \right)^{-\frac{1}{2} (D - \delta) }
\right\|_{\operatorname{Op}(H_s \cap \operatorname{Im}P
\to H_s \cap \operatorname{Im}P)}
\leq{}& 1, \\ 
\left\| \left(\frac{r}{t} \right)^{-\frac{1}{2} (D - \delta) }
\right\|_{\operatorname{Op}(H_s \cap \operatorname{Im}P
\to H_s \cap \operatorname{Im}P)}
\leq{}& 1,
\end{align}
and since $F(r) = O(r^{-2})$,
\begin{align}
\left\| I(r) \right\|_{L^\infty}
&\leq
r^{-\frac{1}{2}(\delta +1)} \int_1^r 
t^{\delta-1} \int_t^\infty 
s^{-\frac{1}{2}(\delta+1)} 
\left\|
\left( \frac{r}{t} \right)^{-\frac{1}{2}(D -\delta)}
\left( \frac{s}{t} \right)^{-\frac{1}{2}(D-\delta)} F(s) 
\right\|_{L^\infty}
\, ds \, dt \nonumber \\
&\leq
Cr^{-\frac{1}{2}(\delta +1)} \int_1^r 
t^{\delta-1} \int_t^\infty 
s^{-\frac{1}{2}(\delta+1)} 
\left\|
\left( \frac{r}{t} \right)^{-\frac{1}{2}(D -\delta)}
\left( \frac{s}{t} \right)^{-\frac{1}{2}(D-\delta)} F(s) 
\right\|_{H_s}
\, ds \, dt \nonumber \\
&\leq
Cr^{-\frac{1}{2}(\delta +1)} \int_1^r 
t^{\delta-1} \int_t^\infty 
s^{-\frac{1}{2}(\delta+1)} 
\left\| F(s) \right\|_{H_s}
\, ds \, dt \nonumber \\
&\leq
Cr^{-\frac{1}{2}(\delta +1)} \int_1^r 
t^{\delta-1} \int_t^\infty 
s^{-\frac{1}{2}(\delta+1)} s^{-2}
\, ds \, dt \nonumber \\
&=
Cr^{-\frac{1}{2}(\delta +1)} \int_1^r 
t^{\delta-1} \int_t^\infty 
s^{-\frac{1}{2}(\delta+5)}
\, ds \, dt \nonumber \\
&=
Cr^{-\frac{1}{2}(\delta +1)} \int_1^r 
t^{\delta-1} t^{-\frac{1}{2}(\delta+3)}
\, dt \nonumber \\
&=
Cr^{-\frac{1}{2}(\delta +1)} \int_1^r 
t^{\frac{1}{2}(\delta-5)}
\, dt \nonumber \\
&=
Cr^{-\frac{1}{2}(\delta +1)} 
\left( r^{\frac{1}{2}(\delta-3)} - 1 \right) \nonumber \\
&\leq 
Cr^{-2}. 
\end{align}
Together we have
\begin{align}
U(r) = O(r^{-2}).
\end{align}

Finally,
\begin{align}
u(r) &= 
u_0(r)\phi_0 
+ \sum_{0 < \delta_i < 3} u_i(r) \phi_i 
+ \sum_{\delta_i = 3} u_i(r) \phi_i 
+ U(r) \nonumber \\
&=
r^{-1} A_0 \phi_0
+ \sum_{0 < \delta_i < 3} 
\left( r^{-\frac{1}{2}(1+\delta_i)} A_i + O(r^{-2}) \right)\phi_i \nonumber \\
&\qquad
+ \sum_{\delta_i = 3} O(r^{-2} \ln r)  \phi_i
+ O(r^{-2})\nonumber \\
&= b r^{-1} + O(r^{-1-\delta})
\end{align}
for some $\delta > 0$.
\end{proof}

Next we consider the general case of $S^1$-invariant functions on an $S^1$-ALF manifold. 

\begin{prop} \label{prop:poisson-g}
Let $(M,g)$ be an $S^1$-ALF manifold where the circle action is generated by the bounded Killing field $\xi$. Suppose $u = O^*(r^{-1})$ is an $S^1$-invariant solution to
\begin{align}
\Delta^{g}u = O^*(r^{-4}).
\end{align}
Then there are constants $\delta >0$ and $b \in \Reals$ so that 
\begin{align} 
u = br^{-1} + O(r^{-1-\delta}).
\end{align} 
\end{prop}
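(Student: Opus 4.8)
The plan is to reduce the statement to Lemma~\ref{lemma-poisson-gring}, which proves exactly the asserted expansion but for the \emph{model} Laplacian $\Delta^{\gring}$ and under the extra hypothesis $Tu = O^*(r^{-3})$. Thus, working on the end $\cMring = (A,\infty)\times L$, two things must be checked: first, that $S^1$-invariance of $u$ forces $Tu = O^*(r^{-3})$, and second, that replacing $\Delta^g$ by $\Delta^{\gring}$ costs only an error of order $O^*(r^{-4})$, so that $\Delta^{\gring}u = O^*(r^{-4})$ survives. Throughout I use that the $O^*$-orders with respect to $g$ and to $\gring$ agree: by \eqref{eq:gringh} one has $g = \gring + h$ with $h = O^*(r^{-1})$, so the two Levi-Civita connections differ by a difference tensor of order $O^*(r^{-2})$, which does not affect any $O^*(r^\alpha)$ statement.

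For the first point I would invoke Proposition~\ref{prop-asymptotics-killing}, which gives $\xi = cT + \zeta$ with $\zeta = O^*(r^{-1})$ and $c$ a constant. Since $u$ is $S^1$-invariant we have $\xi u = 0$ exactly, hence $c\,Tu = -\zeta^a\nabla_a u$. From $u = O^*(r^{-1})$ we get $\nabla u = O^*(r^{-2})$, so the right-hand side is a product $O^*(r^{-1})\cdot O^*(r^{-2}) = O^*(r^{-3})$. Provided $c\neq 0$ this gives $Tu = O^*(r^{-3})$. The constant $c$ is nonzero: in $\gring = dr^2 + r^2\gamma + \eta^2$ one has $\gring(T,T) = (\eta(T))^2 = 1$, so $|\xi| = |c| + O(r^{-1})$, and the $\xi$-orbits are asymptotic to the $T$-fibres of the ALF end, whose lengths do not collapse; hence $|c| > 0$.

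For the second point I would express both Laplacians through the single connection $\nablaring$,
\begin{align}
\Delta^g u - \Delta^{\gring} u
= (g^{ab} - \gring^{ab})\, \nablaring_a \nablaring_b u
- g^{ab} C^c{}_{ab}\, \nablaring_c u ,
\end{align}
where $C^c{}_{ab} = \tfrac{1}{2} g^{cd}\bigl( \nablaring_a h_{bd} + \nablaring_b h_{ad} - \nablaring_d h_{ab} \bigr)$ is the difference of Christoffel symbols. From $h = O^*(r^{-1})$ one has $g^{ab} - \gring^{ab} = O^*(r^{-1})$ and $C^c{}_{ab} = O^*(r^{-2})$, while $u = O^*(r^{-1})$ gives $\nablaring u = O^*(r^{-2})$ and $\nablaring^2 u = O^*(r^{-3})$. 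Both terms on the right are therefore $O^*(r^{-4})$, so that $\Delta^{\gring}u = \Delta^g u + O^*(r^{-4}) = O^*(r^{-4})$.

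Having established $u = O^*(r^{-1})$, $Tu = O^*(r^{-3})$ and $\Delta^{\gring}u = O^*(r^{-4})$ on the end, Lemma~\ref{lemma-poisson-gring} applies directly and yields constants $b,\delta$ with $u = br^{-1} + O(r^{-1-\delta})$, as claimed. The genuinely analytic content—the spectral decomposition of the transverse operator and the mode-by-mode ODE estimates—is entirely absorbed into Lemma~\ref{lemma-poisson-gring}; in the present reduction the only delicate points are the careful bookkeeping of $O^*$-orders in the difference of Laplacians and the verification that $c\neq 0$, which is where I would expect to spend the most care.
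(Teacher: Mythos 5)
Your proof is correct and follows essentially the same route as the paper's: both reduce to Lemma \ref{lemma-poisson-gring} by deriving $Tu = O^*(r^{-3})$ from $S^1$-invariance via Proposition \ref{prop-asymptotics-killing} and by checking that $\Delta^g$ and $\Delta^{\gring}$ agree up to $O^*(r^{-4})$ on functions that are $O^*(r^{-1})$, the paper merely stating these two reductions without the bookkeeping you supply. The one point where you go beyond the paper, verifying $c\neq 0$, is a legitimate concern that the paper leaves implicit (in its applications the normalization $\lambda\to 1$ forces $c=\pm 1$), though your justification that the orbit lengths ``do not collapse'' is itself asserted rather than proved.
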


\begin{proof}
Since $u = O(r^{-1})$ and $\xi u = 0$ it follows from Proposition~\ref{prop-asymptotics-killing} that $Tu = O(r^{-3})$. From $g = \gring + O(r^{-1})$ and the standard formula for the Laplacian
\begin{align}
\Delta^g u 
= \frac{1}{\sqrt{\det g}} \partial_i \left( \sqrt{\det g} g^{ij}  \partial_j u \right)
\end{align}
we see that $\Delta^{\gring}u = O^*(r^{-4})$. The result thus follows from Lemma~\ref{lemma-poisson-gring}.
\end{proof}

\section{The quotient} \label{sec:quotient}

\subsection{Generalities}
In this section we consider the orbit space $\cN = \cM/ S^1$, on which the family  of divergence identities was found originally 
\cite{MR747036,simon:1995}. We recall its algebraic derivation  from these papers; as to its  relation to the lifted family \eqref{eq:DivPsialpha} we make use of \cite{MR1701088}. 
However, before doing this we give a mathematically sound introduction of the orbit space.
The key is the following result

\begin{thm}[\cite{MR889050}] \label{thm:quot}
For a compact Lie group $G$ acting smoothly on $(\cM, g_{ab})$ the following holds.  
\begin{enumerate}
\item 
There exists a unique maximal orbit type.
\item 
The union $\cM_0$ of maximal orbits is open and dense in $\cM$.
\item 
The $G$- action on $\cM$ restricts to $\cM_0$, and  $\cM_0 \rightarrow \cM_0 /G $
is a Riemannian submersion. It is also a fibre bundle with fibre G/H where $H$ is a principal isotropy group.
\item 
The quotient $\cN_0= \cM_0 /G$ of the principal part is open, dense and connected in $\cM /G $.
\end{enumerate} 
\end{thm}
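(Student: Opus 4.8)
The plan is to deduce all four assertions from the differentiable slice theorem for compact group actions, which I would take as the central tool. Since $G$ is compact the metric $g_{ab}$ may be assumed $G$-invariant, and then for each $p \in \cM$ the normal exponential map identifies a $G$-invariant tubular neighbourhood of the orbit $G\cdot p$ with the associated bundle $G \times_{G_p} S_p$, where $G_p$ is the isotropy group at $p$ and $S_p$ is a slice, i.e. a $G_p$-invariant disc in the normal space to $G\cdot p$ at $p$ carrying the linear slice representation. The first thing I would extract is that the isotropy group of any point in the tube is conjugate to a subgroup of $G_p$; ordering conjugacy classes of isotropy groups by $(K) \le (G_p)$ if $K$ is subconjugate to $G_p$, this says the orbit type can only increase (isotropy can only decrease) under a small perturbation.

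For the existence and uniqueness of a maximal orbit type I would argue by induction on $\dim \cM$, carrying out the inductive step inside a single slice. The slice representation of $G_p$ on $S_p$ has orbits of strictly smaller dimension, so the induction hypothesis supplies a principal isotropy type for the $G_p$-action on $S_p$; transporting this through the associated-bundle description produces a locally well-defined principal type $(H)$ for the $G$-action in the tube and shows that the set $\cM_{(H)}$ of points of type $(H)$ is open and dense in the tube. These local principal types agree on overlaps, so $\{\cM_{(H)}\}$ is an open cover by sets on which the type is locally constant; connectedness of $\cM$ then forces a single global type. This gives assertion (1) together with the openness and density in (2), density holding because the union is dense in each tube and the tubes cover $\cM$.

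With the principal type $(H)$ fixed I would prove (3) by observing that on $\cM_0 = \cM_{(H)}$ every isotropy group is conjugate to $H$, so all orbits are embedded closed submanifolds of the common dimension $\dim G - \dim H$; hence the projection $\cM_0 \to \cM_0/G$ is a smooth submersion, and declaring the horizontal space at each point to be the $g$-orthogonal complement of the orbit tangent space makes it a Riemannian submersion onto the quotient metric. Local triviality with fibre $G/H$ follows once more from the slice theorem, since on $\cM_0$ the slice carries only the principal type and the tube is $G$-equivariantly a product of $G/H$ with the slice quotient, which furnishes the bundle charts. Finally, for (4), openness and density of $\cN_0 = \cM_0/G$ in $\cM/G$ follow from those of $\cM_0$ together with the fact that the quotient map $\pi \colon \cM \to \cM/G$ is open and continuous; for connectedness I would show that the singular set $\cM \setminus \cM_0$ has real codimension at least $2$, which can be read off from the slice representation because the non-principal points in a slice are a union of lower strata of codimension $\ge 2$, so that removing it from the connected manifold $\cM$ leaves $\cM_0$ connected, whence $\cN_0 = \pi(\cM_0)$ is connected as a continuous image.

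The step I expect to be the main obstacle is the inductive argument inside the slice: verifying that the locally defined principal types glue to a single global type and that the induced stratification has the stated codimension is where the real content of the slice theorem and careful orbit-type bookkeeping are required, whereas the submersion and quotient-topology statements are comparatively formal once the equivariant normal form is in hand.
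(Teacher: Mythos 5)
The paper gives no proof of this statement; it is imported verbatim from tom Dieck's book \cite{MR889050}, and your slice-theorem-plus-induction argument is essentially the standard proof found there (and in Bredon), so for parts (1)--(3) and for the openness and density in (4) your route coincides with the intended one. One minor imprecision: the induction should be run on the unit sphere of the slice representation, which has dimension strictly less than $\dim\cM$, rather than on the claim that the slice orbits have smaller dimension (they need not, and the slice itself can have the same dimension as $\cM$ when the orbit $G\cdot p$ is zero-dimensional); the orbit types on the punctured slice are then recovered from the sphere by the cone structure. This is a standard and easily repaired point.

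There is, however, a genuine gap in your argument for the connectedness assertion in (4). You claim that the non-principal set $\cM\setminus\cM_0$ has codimension at least $2$, deduce that $\cM_0$ is connected, and conclude that $\cN_0=\pi(\cM_0)$ is connected. Both the premise and the intermediate conclusion are false for a general compact $G$: the exceptional stratum can have codimension $1$, and $\cM_0$ need not be connected. The simplest counterexample is $G=\mathbb{Z}/2$ acting on $S^1$ by complex conjugation: the non-principal set is the pair of fixed points, of codimension $1$, and $\cM_0=S^1\setminus\{\pm 1\}$ is disconnected, yet $\cM_0/G$ is a connected open interval. (Codimension at least $2$ does hold for the \emph{singular} stratum of lower-dimensional orbits, but not for exceptional orbits of full dimension.) The correct argument establishes connectedness of the quotient $\cM_0/G$ directly, as part of the same induction: by the inductive hypothesis the image of the principal part of each slice in the local orbit space is connected, these images are open and dense in the images of the tubes, any two coming from overlapping tubes intersect by density, and a chain argument in the connected space $\cM/G$ shows their union $\cN_0$ is connected. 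In the paper's application ($G=S^1$ on an oriented four-manifold, where the fixed set consists of nuts and bolts and the exceptional set near a non-self-dual nut is two-dimensional) the codimension-$2$ claim happens to hold, but the theorem is stated for an arbitrary compact Lie group and your argument does not prove it at that level of generality.
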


In the present case we  have $G = H = S^1$.  
Maximal orbits in the above sense  all have generic period in the sense of Lemma \ref{lem:generic}.  
Orbits which are not maximal are called exceptional.
While all orbits in a neighborhood of a bolt have generic period, there are exceptional orbits in the neighborhood of a non-self dual nut.

We now recall \cite{1971JMP....12..918G} that, for an $S^1$-isometry with Killing field $\xi^a$, there is a local diffeomorphism between 
\begin{itemize} 
\item
tensor fields and differential forms on $\cN_0$ and 
\item 
tensor fields and forms on $\cM_0$ which are orthogonal to $\xi^a$ and  commute with  $\xi^a$.
\end{itemize}
With a slight abuse of notation, we shall say that latter objects are also ``on  $\cN_{0}$'' (that is we drop the term ``push-forward''). For objects which are defined directly  on $\cN_0$ we use greek indices $(\al, \be...= 1,2,3)$. 

In particular, we obtain a non-degenerate metric $\gamma_{\mu\nu}$ ($\mu$, $\nu$ = 1,2,3) on $\cN_0$ via
\begin{equation}
\gamma_{ab} = \lambda g_{ab} - \xi_{a}\xi_{b}.
\label{2Gamma}
\end{equation}

We denote by $D_{\mu}$, $\cR_{\mu\nu}$ and $\cR$ the covariant derivative, the Ricci tensor and Ricci scalar with respect to $\gamma_{\mu\nu}$.

In local  coordinates $x^{a}=(t,x^{\mu})$ adapted to the isometry this reduction corresponds to a decomposition of $g_{ab}$ as
\begin{equation}
ds^{2} = \lambda(dt + \sigma_{\mu}dx^{\mu})^{2} + 
\lambda^{-1} \gamma_{\mu\nu} dx^{\mu} dx^{\nu} \label{2ds}
\end{equation}
where $\sigma_{\mu}$ is related to the pullbacks of $\omega_{a}$ and
$\epsilon_{abcd} \xi^{d}$ to $\cN_0$ by
\begin{equation}
\omega_{\mu} = - \lambda^{2} \epsilon_{\mu\nu\tau} D^{\nu} \sigma^{\tau}.
\label{omdes}
\end{equation}

In the coordinates introduced above, 1-forms $V_{a}$  on $\cM_0$ take the form $(0, V_{\mu})$ and the dual vector fields split as $V^{a} = (0, \lambda V^{\mu})$. The volume forms are related via $\sqrt{ \det g} = \lambda^{-1}  \sqrt{\det \gamma}$. For the divergence operators $\nabla_a$ and $D_{\mu}$ acting on vectors  $V^a$ on $\cN_0$ we obtain 
\begin{equation}
\nabla_{a} V^{a} = \lambda D_{\mu}  V^{\mu}.
\end{equation} 
From \eqref{omdes} it follows that
\begin{equation} \label{divom}
D_{\mu} (\lambda^{-2} \omega^{\mu}) = 0.
\end{equation}
which corresponds to the earlier observation that the current \eqref{eq:JT-def} is conserved.

Excluding the half-flat case we recall from \eqref{point:cE} of Lemma \ref{lem:Eprop} that  $|\PM{\cE}| < 1$ on $\cN_0$, and  we introduce $w$, $\Th$, $A_{\mu}$ via
\begin{align} \label{w}
\PM{w} ={}& (1+ \PM{\cE})^{-1}(1 -  \PM{\cE})  \\
\Theta ={}& 1 - \P{w}\M{w} =  4 \lambda (1 + \P{\cE})^{-1} (1 + \M{\cE})^{-1} \label{Th} \\
  A_{\mu} ={}& \frac{1}{2}(\P{w}D_{\mu}\M{w}-\M{w}D_{\mu}\P{w}) = \frac{(1 - \M{\cE})^2 \P{\sigma}_{\mu} - (1 - \cE_+^2) \M{\sigma}_{\mu}}
{(1 + \P{\cE})^2 (1 + \M{\cE})^2}
\label{A}
\end{align}

Rewriting \cite[Eq. (5.18)]{gibbons:hawking:1979}  we recall that the  field equations on $\cN_0$ can be derived by varying the Lagrangian
\begin{equation} \label{Lag}
L = \sqrt{\det \gamma}[\cR-2\Theta^{-2}\gamma^{\mu\nu}(D_{\mu}\P{w})(D_{\nu}\M{w})]
\end{equation}
with respect to $\PM{w}$ and $\gamma_{\mu\nu}$. This results in
\begin{align}
\label{lapw}
D_{\mu}\left( \Theta^{-1} \PM{\wh D}{}^{\mu} \PM{w} \right) ={}&  0 \\
\label{Ric}
\cR_{\mu\nu} ={}& 2\Theta^{-2}(D_{(\mu}\M{w})(D_{\nu)}\P{w}).
\end{align}
where we have introduced
\begin{equation}
\label{dhat}
\PM{\wh D}_{\mu} = D_{\mu} \mp 2\Theta^{-1}A_{\mu}.
\end{equation}
The $\PM{\wh D}{}^{\mu}$ is related to the operator $\mathcal{D}^{\mu}_{\pm}$ of \cite{simon:1995}
via $\mathcal {D}^{\mu}_{\pm}  = \Theta^{-1} \PM{\wh D}{}^{\mu}$.
  
As a consequence of \eqref{lapw} we also obtain 
\begin{equation}
D_{\mu}( \Theta^{-2} A^{\mu}) = 0. 
\end{equation}

\begin{remark}~
\begin{enumerate}
\item
As discussed in \cite[Sect. 5]{gibbons:hawking:1979}, equations \eqref{lapw} and \eqref{Ric} are invariant under the (Geroch-) $SL(2,\mathbb{R})$-group \cite{1971JMP....12..918G} which can be represented by the shift $\omega \rightarrow \omega + a$ with fixed $\lambda$, the rescalings $\PM{\cE} \rightarrow b \PM{\cE}$ and the Ehlers transform $\PM{w} \rightarrow c^{\pm 1} \PM{w}$ with constants $a$, $b$ and $c$, and with fixed $\gamma_{\mu\nu}$. The Noether currents corresponding to these symmetries on $\cM$ are given by \eqref{eq:currents}. 
\item
A special class of solutions is characterized by $\P{d} \P{w} = \M{d} \M{w}$ for some constants $\PM{d} \in {\mathbb{R}}$. This includes the cases in which one of $\PM{w}$ (and hence $\MP{d}$) vanishes identically, which have been called ``Multi-NUT'', see \cite[(3.19)]{gibbons:hawking:1979}. From \eqref{Ric}, $\gamma_{\mu\nu}$ is flat for these solutions and using \eqref{Th} and \eqref{A} we see that \eqref{lapw} reduces to an ordinary Laplace equation for the non-vanishing $\PM{w}$.  The generic solutions where $\P{d} \P{w} = \M{d} \M{w}$ but $\P{w} \not\equiv 0$ and $\M{w} \not\equiv 0$ can be transformed by an Ehlers transformation to the case $ \P{w} = \M{w}$ which means $\omega \equiv 0$ and hence corresponds to a hypersurface-orthogonal Killing field $\xi^{a}$ on $\cM_0$.
\end{enumerate}
\end{remark}

\subsection{The divergence identity on the quotient} \label{sec:divid}
We now recall the derivation of the family of  divergence identities on the quotient $\cN_0$. We first introduce  the pairs of quantities
\begin{definition}
\begin{align}   
\PM{k}^{4} ={}& D^{\mu} \PM{w} D_{\mu}\PM{w} \\
\PM{B}_{\mu\nu} ={}& 4\Theta^{-2}\mathcal{C}
\left(D_{\mu}D_{\nu}\PM{w} - (3(\PM{w})^{-1} + \Theta^{-1} \MP{w})D_{\mu}\PM{w} D_{\nu}\PM{w}\right) \\
\PM{C}_{\mu\nu\si} ={}& 4\Theta^{-2}
\left(D_{\mu}D_{[\nu}\PM{w}D_{\sigma]}\PM{w} - \gamma_{\mu[\nu} \PM{u}_{\sigma]}\right)
\end{align}
where 
\begin{equation} 
\PM{u}_{\si} = \gamma^{\mu\nu} D_{\mu} D_{[\nu} \PM{w} D_{\si]} \PM{w}
\end{equation}
and  $\mathcal{ C}$ denotes the trace-free part of a symmetric tensor.
\end{definition}
\begin{remark}
The quantities $\PM{k}$ and the push-forwards $C_{abc}$ of $C_{\mu\nu\nu}$ to ${\cM}_0$ are related to the ones defined in  \eqref{eq:munudef}, \eqref{eq:MSscalar} and \eqref{stst} via
\begin{align}
\PM{k}^4  ={}& \frac{16 (\mu \pm \nu)}{(1 + \PM{\cE})^4} = 4 \PM{w}^4 \PM{\sbold}^2 \\
\PM{C}_{abc}  ={}& -  \frac{(1 + \MP{\cE})^2}{8 \lambda^2 (1 + \PM{\cE})^2} \PM{\mathcal{P}}_{abc}
\end{align}
Furthermore, $\PM{B}_{\mu\nu}$ and $\PM{C}_{\mu\nu\si}$ are related via 
\begin{align}
\PM{C}_{\mu\nu\si} = \PM{B}_{\mu[\nu} D_{\si]}\PM{w} + \frac{1}{2} \gamma_{\mu[\nu} \PM{B}_{\si]\tau} D^{\tau} \PM{w}.
\end{align}
\end{remark}
We obtain the following identities \cite{simon:1995}
\begin{lemma}
\label{lem:divid}
On sets where $\PM{k}^{4} \ne 0$ \eqref{lapw} and \eqref{Ric} imply,
for each $\al \in \mathbb{R}$,
\begin{align}
\label{ddkw}
D_{\mu}\left(\Th^{-1}  \PM{\wh D}{}^{\mu} \frac{\PM{k}^{\al + 1}}{\PM{w}^\al} \right)
={}& \al (\al + 1) \frac{\PM{k}^{\al - 1}}
{\Theta \PM{w}^{\al}}
(D_{\mu} \PM{k} - \frac{\PM{k}}{\PM{w}}D_{\mu}\PM{w})
(D^{\mu} \PM{k} - \frac{\PM{k}}{\PM{w}}D^{\mu}\PM{w})
\nonumber \\
& +\frac{\al + 1}{16} \frac{\PM{k}^{\al - 7}}{\PM{w}^{\al}}
\Theta^{3} \PM{C}_{\mu\nu\si} \PM{C}^{\mu\nu\si}.
\end{align}
\end{lemma}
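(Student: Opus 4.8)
The plan is to prove \eqref{ddkw} as a purely local, algebraic consequence of the two field equations \eqref{lapw} and \eqref{Ric} on the orbit space $(\cN_0,\gamma_{\mu\nu})$, working throughout in the region where $\PM{k}^4 = D^{\mu}\PM{w}\,D_{\mu}\PM{w} \neq 0$, so that all quantities are smooth and $\PM{k}$ is a bona fide nonvanishing scalar. Since the derivation is entirely computational, I would follow the same two-stage strategy used on $\cM$ in Proposition \ref{prop:MS}: first establish a second-order ``master'' identity for a distinguished combination of $\PM{w}$ and $\PM{k}$, and then recover the general exponent $\al$ by a chain-rule expansion, exactly as \eqref{eq:LaplaceMSscalar} was obtained from \eqref{eq:LaplaceMSscalarprel}. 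Conceptually, \eqref{lapw} and \eqref{Ric} say that $(\PM{w})$ is a harmonic map into the hyperbolic plane $SL(2,\mathbb{R})/SO(2)$, with \eqref{Ric} encoding the target curvature, and \eqref{ddkw} is a refined Bochner formula for this map.

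The computational core is a Bochner/Weitzenb\"ock identity for $\PM{k}^4 = D^{\mu}\PM{w}\,D_{\mu}\PM{w}$. Taking the contracted second covariant derivative and commuting the $D_{\mu}$'s produces three types of term: one quadratic in the Hessian $D_{\mu}D_{\nu}\PM{w}$, one of the form $D^{\mu}\PM{w}\,D_{\mu}(\text{Laplacian of }\PM{w})$, and the curvature term $\cR_{\mu\nu}D^{\mu}\PM{w}\,D^{\nu}\PM{w}$. I would then use \eqref{lapw}, which asserts that $\PM{w}$ is annihilated by the weighted operator $D_{\mu}(\Th^{-1}\PM{\wh D}{}^{\mu}\,\cdot\,)$, to rewrite the Laplacian term in terms of $\Th$, $A_{\mu}$, and first derivatives of $\PM{w}$, and use \eqref{Ric} to replace $\cR_{\mu\nu}$ by $2\Th^{-2}D_{(\mu}\M{w}\,D_{\nu)}\P{w}$.

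Next I would reorganize the Hessian contributions by separating $D_{\mu}D_{\nu}\PM{w}$ into its trace, its trace-free symmetric part, and its antisymmetrized ``gradient'' pieces, so as to match precisely the definitions of $\PM{B}_{\mu\nu}$, $\PM{C}_{\mu\nu\si}$, and the auxiliary vector $\PM{u}_{\si}$. The key bridge is the relation $\PM{C}_{\mu\nu\si} = \PM{B}_{\mu[\nu}D_{\si]}\PM{w} + \tfrac{1}{2}\gamma_{\mu[\nu}\PM{B}_{\si]\tau}D^{\tau}\PM{w}$, which lets one recognize that the trace-free curvature combination assembles into the scalar $\PM{C}_{\mu\nu\si}\PM{C}^{\mu\nu\si}$, while the remaining terms collapse into the square $\bigl(D_{\mu}\PM{k} - \tfrac{\PM{k}}{\PM{w}}D_{\mu}\PM{w}\bigr)\bigl(D^{\mu}\PM{k} - \tfrac{\PM{k}}{\PM{w}}D^{\mu}\PM{w}\bigr)$. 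Writing $\rho = \PM{k}/\PM{w}$, note that $D_{\mu}\PM{k} - \tfrac{\PM{k}}{\PM{w}}D_{\mu}\PM{w} = \PM{w}\,D_{\mu}\rho$, so that this first square is the hallmark of differentiating a power $\rho^{\al+1}$ twice. This both explains and produces the coefficient $\al(\al+1)$: once the master identity is in hand, I would expand $D_{\mu}\bigl(\Th^{-1}\PM{\wh D}{}^{\mu}(\PM{k}^{\al+1}\PM{w}^{-\al})\bigr)$ by Leibniz, the $\al(\al+1)$ appearing as the coefficient of $|D\rho|^2$ and the linear factor $(\al+1)$ in front of $\PM{C}_{\mu\nu\si}\PM{C}^{\mu\nu\si}$ arising from the single-derivative term.

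The main obstacle I anticipate is the bookkeeping of the connection terms carried by $\PM{\wh D}_{\mu} = D_{\mu}\mp 2\Th^{-1}A_{\mu}$, compounded by the fact that $\Th = 1 - \P{w}\M{w}$ and $A_{\mu}$ are themselves built from both $\P{w},\M{w}$ and their gradients. As a result the naive Bochner formula acquires a proliferation of cross terms mixing $\P{w}$ and $\M{w}$, and the crux of the argument is to verify that these conspire---using the explicit form of $A_{\mu}$, the identity for $\Th$, and the off-diagonal content of \eqref{Ric}---so that the final right-hand side is genuinely a sum of the two quadratic forms displayed in \eqref{ddkw} rather than an indefinite expression. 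Following \cite{simon:1995}, this cancellation is precisely where the $SL(2,\mathbb{R})$-covariance of the $\Th$-weighted operator $\Th^{-1}\PM{\wh D}{}^{\mu}$ is essential, and I would organize the calculation so that every intermediate quantity is manifestly covariant under the Ehlers shift and rescaling, which controls the combinatorial explosion.
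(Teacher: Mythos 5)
Your proposal follows essentially the same route as the paper: a Bochner-type computation of $D_{\mu}\bigl(\Th^{-1}\PM{\wh D}{}^{\mu}\PM{k}\bigr)$ (the $\al=0$ case), in which commuting derivatives produces the Ricci term that is eliminated via \eqref{Ric}, the Laplacian term is handled via \eqref{lapw}, and the remainder assembles into $\tfrac{1}{16}\Theta^{3}\PM{k}^{-7}\PM{C}_{\mu\nu\si}\PM{C}^{\mu\nu\si}$; the general $\al$ is then obtained exactly as you describe, by writing $\PM{k}^{\al+1}/\PM{w}^{\al}=\PM{k}(\PM{k}/\PM{w})^{\al}$ and applying the chain rule, with $\al(\al+1)$ multiplying the square of $D_{\mu}(\PM{k}/\PM{w})$. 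This matches the paper's proof of Lemma \ref{lem:divid}.
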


\begin{proof}
We  first sketch the straightforward calculation in the case $\al = 0$.
\begin{align} \label{b1} 
D_{\mu} \left( \Th^{-1}  \PM{\wh D}{}^{\mu} \PM{k} \right) 
={}& 2 \Th^{-2} \MP{w}D_{\mu} \PM{w} D^{\mu} \PM{k} + \Th^{-1} \Delta \PM{k}  \nonumber \\
={}& 2 \Th^{-2} \MP{w} D_{\mu} \PM{w} D^{\mu} \PM{k} -
\frac{3}{2} \Th^{-1} \PM{k}^{-4} D^{\mu} \PM{k} D_{\mu} D_{\nu} \PM{w} D^{\nu} \PM{w} \nonumber \\
& +\frac{1}{2} \Th^ {-1} \PM{k}^{-3} D_{\mu}\Delta \PM{w} D^{\mu} \PM{w} + 
\frac{1}{2} \Theta^{-1} \PM{k}^{-3} \cR_{\mu\nu} D^{\mu} \PM{w} D^{\nu} \PM{w} \nonumber \\
&  + \frac{1}{2} \Theta^{-1} \PM{k}^{-3}  D_{\mu}D_{\nu} \PM{w}D^{\mu} D^{\nu} \PM{w}  \nonumber \\
={}& \frac{1}{2} \PM{k}^{-7} \Theta^ {-1} (-4 \Theta^{-1} \PM{k}^7 \MP{w} D_{\mu} \PM{w} D^{\mu} \PM{k} - 
6 \PM{k}^6  D_{\mu} \PM{k} D^{\mu} \PM{k}  \nonumber  \\
&  - 2 \Theta^{-2} \MP{w}^2 \PM{k}^{12} + \PM{k}^4 D_{\mu} D_{\nu} \PM{w} D^{\mu} D^{\nu} \PM{w} ) 
 \nonumber \\
={}& \frac{1}{16} \Theta^3 \PM{k}^{-7} \PM{C}_{\mu\nu\si} \PM{C}^{\mu\nu\si},
\end{align}
where $ \Delta = D_{\mu} D^{\mu}$. The general case  $\al \in \mathbb{R}$ follows from \eqref{b1} by splitting the argument on the left hand side of \eqref{ddkw} as $\PM{k}^{\al + 1} / \PM{w}^{\al} =  \PM{k} (\PM{k} / \PM{w})^{\al}$ and applying the chain rule.
\end{proof} 

\begin{remark} \label{divrem}~
\begin{enumerate} 
\item \label{point:divrem:1}
The identity \eqref{ddkw} is equivalent to \eqref{eq:DivPsialpha} for $\alpha = 2\beta - 1$; hence remarks made in Section \ref{sec:divident} on the latter family for various values of $\beta$ carry over to the former.
\item
For $\al = 3$ the right hand side of \eqref{ddkw} simplifies to $\frac{1}{8} \Theta^{3}  \PM{B}_{\mu\nu} \PM{B}^{\mu\nu} / \PM{w}^{3}$.
\item 
When $\xi^{a}$ is hypersurface-orthogonal (which implies $\omega \equiv 0$, and $\P{w} = \M{w} = w$) the objects  $\PM{B}_{\mu\nu}$ coincide and we have
\begin{equation} 
\PM{B}_{\mu\nu} = B_{\mu\nu} = - 4 \Th^{-1} w^{-1} \left (1 \pm \Th^{1/2} \right) \PM{\cR}_{\mu\nu} 
\end{equation}
where $\PM{\cR}_{\mu\nu}$ are  the Ricci tensors with respect to the metrics $\PM{\gamma}_{\mu\nu}  =  \frac{1}{4} (\Th^{- 1/2} \pm 1)^2 \gamma_{\mu\nu}$.
\item
Still for $\omega \equiv 0$, each of $\PM{C}_{\mu\nu\si}$ reduces, via the field equations \eqref{lapw}, \eqref{Ric} to the Bach--Cotton tensor whose vanishing characterizes conformal flatness. The corresponding characterizations of the Lorentzian  Schwarzschild metric and the restriction of \eqref{ddkw} for certain values of $\al$ were employed in uniqueness proofs \cite{1967PhRv..164.1776I,MR398432}. For example, for $\al =0$ \eqref{ddkw} reduces  to a linear combination of \cite[Eq. (2.12)-(2.13)]{MR398432}
\item
In the general case $\PM{B}_{\mu\nu}$, $\PM{k}$ and $\PM{C}_{\mu\nu\si}$  have complex Lorentzian counterparts $B_{\mu\nu}$, $k$ and $C_{\mu\nu\si}$ which have analogous properties. They have been employed in local characterizations of the Kerr metric among the AF ones \cite{MR747036} and of larger classes of metrics if the asymptotic assumption is dropped, see  \cite{nozawa2021alternative} and references therein. 
\end{enumerate} 
\end{remark}

\subsection{The divergence identities on $\cM$}
We can now give an alternative proof of the divergence identitity \eqref{eq:DivPsialpha}.
\begin{proof} 
In suitable coordinates, both sides of \eqref{eq:DivPsialpha} are analytic on $\PM{U}_\eps$ and agree with the respective sides of \eqref{ddkw} on the  subset $\cM_0 \cap \PM{U}_\eps$ which is dense, open and connected by Theorem \ref{thm:quot}. Hence the validity of \eqref{eq:DivPsialpha} extends to $\PM{U}_\eps$ as required.
\end{proof}

\begin{remark} \label{quotint}
The family \eqref{ddkw} suggests an attempt for direct integration on $\cN_0$, without going the detour via $\cM$. In fact, this would directly yield relations \eqref{eq:magdim} which are just what is used in the uniqueness arguments. There are  problems with this strategy, however, which can be exemplified via the definition of the nut charge: Recall that on $\cM$, $N$ is defined as the flux of \eqref{eq:JT-def} through a hypersurface which may contain or intersect both maximal as well as exceptional orbits on $\cM$; accordingly $N$ it is not related in an obvious way to an analogous quantity defined from \eqref{divom} via a surface integral on $\cN_0$. This applies in particular to the key relation \eqref{nutgrav} for the surface gravities of the nut. To see this in concrete terms, relation \eqref{eq:magdim} would arise via integration of \eqref{divom} on $\cN$ with a ``quotient nut charge'' given by $N \kappa^1 /2\pi = 1/4 \kappa^2 $, where $|\kappa^1| \geq |\kappa^2|$, that is via division of $N$ by the length of the \emph{longer exceptional} orbit. In contrast, the ``quotient nut charge'' \cite[Eq. (5.9)]{gibbons:hawking:1979} is equal to  $N$ divided by the length of the \emph{maximal} orbit and thus reads $N \kappa^1 /2\pi w^1 = 1/(4 \kappa^2  w^1) =  1/(4 \kappa^1 w^2)$. In short, the integration procedure on $\cN$ is bound to fail without proper understanding of the integration process on the orbit space. We will not expose this in this work.
\end{remark}

\newcommand{\arxivref}[1]{\href{http://www.arxiv.org/abs/#1}{{arXiv:#1}}}
\newcommand{\prd}{Phys. Rev. D} 

\def\cprime{$'$}

\end{document}